\documentclass{amsart}

\usepackage{amssymb,amsmath,amsfonts,euscript,amscd,mathrsfs,amsthm,graphicx}

\numberwithin{equation}{section}
\numberwithin{figure}{section}


\newtheorem{lemma}{Lemma}[section]
\newtheorem{theorem}[lemma]{Theorem}
\newtheorem{corollary}[lemma]{Corollary}
\newtheorem{proposition}[lemma]{Proposition}

\newtheorem{definition}{Definition}[section]
\theoremstyle{remark}

\newcommand{\D}{\mathbb{D}}
\newcommand{\half}{\mathbb{H}}
\newcommand{\R}{\mathbb{R}}
\newcommand{\N}{\mathbb{N}}

\newcommand{\prob}[2]{\mathbf{P}^{#1} \left \{ #2 \right \}}
\newcommand{\abs}[1]{\left | #1  \right |}
\newcommand{\paren}[1]{\left ( #1 \right )}
\newcommand{\bracket}[1]{\left [ #1 \right ]}
\newcommand{\set}[1]{\left \{ #1 \right \}}
\newcommand{\ev}[2]{\mathbf{E}^{#1} \left[ #2 \right ] }

\DeclareMathOperator{\diam}{diam}
\DeclareMathOperator{\hcap}{hcap}
\DeclareMathOperator{\hm}{hm}
\DeclareMathOperator{\imag}{Im}

\DeclareMathOperator{\osc}{osc}
\DeclareMathOperator{\rad}{rad}

\DeclareMathOperator{\SLE}{SLE}

\DeclareMathOperator{\dist}{dist}
\newcommand{\C}{\mathbb{C}}

\begin{document}
\author{Shawn Drenning}
\title [ERBM and Loewner Equations in Multiply Connected Domains]{Excursion Reflected Brownian Motion and Loewner Equations in Multiply Connected Domains}
\date{\today}

\begin{abstract}
Excursion reflected Brownian motion (ERBM) is a strong Markov process defined in a finitely connected domain $D \subset \C$ that behaves like a Brownian motion away from the boundary of $D$ and picks a point according to harmonic measure from infinity to reflect from every time it hits a boundary component.  We give a new construction of ERBM using its conformal invariance and discuss the relationship between the Poisson kernel and Green's function for ERBM and conformal maps into certain classes of finitely connected domains.  One important reason for studying ERBM is the hope that it will be a useful tool in the study of SLE in finitely connected domains.  To this end, we show how the Poisson kernel for ERBM can be used to derive a Loewner equation for simple curves growing in a certain class of finitely connected domains.
\end{abstract}
\maketitle

\section{Introduction}

\subsection{Motivation and Results}
Oded Schramm \cite{MR1776084} introduced a one parameter family of random processes now called \emph{Schramm-Loewner evolution} ($\SLE$) as a proposed scaling limit for many discrete models arising in statistical mechanics which were expected to be conformally invariant in the limit.  Since then, $\SLE$ has been extensively studied and has proven to be an important tool in providing mathematical rigor to a number of predictions in statistical mechanics.  The definition of $\SLE$ in simply connected domains uses a classical result of Charles Loewner \cite{MR1512136}.  Namely, if $\gamma\paren{t}:\paren{0,\infty} \rightarrow \half$ is a simple curve parametrized such that $\gamma\paren{t}$ has half-plane capacity $a\paren{t}$ and $g\paren{t}:\half \backslash \gamma\paren{t} \rightarrow \half$ is the unique conformal map satisfying $\displaystyle \lim_{z \rightarrow \infty}g_t\paren{z}-z=0$, then $g_t\paren{z}$ satisfies the initial value problem
\begin{equation}\label{LE}
\dot{g}_t\paren{z}=\frac{\dot{a}\paren{t}}{g_t\paren{z}-U_t},~~~g_0\paren{z}=z,
\end{equation}
where $U_t$ is a real-valued function called the \emph{driving function}.  The differential equation in \eqref{LE} is one example of a \emph{Loewner equation}.  If \eqref{LE} is solved with $\dot{a}\paren{t}=2$ and $U_t=\kappa B_t$, where $B_t$ is a standard one-dimensional Brownian motion, then the random family of maps $g_t$ is \emph{generated by a curve} \cite{MR2153402} in the sense that the domain of $g_t$ is equal to the unbounded component of $\half \backslash \gamma\paren{t}$ for a random family of curves $\gamma\paren{t}$.  If $0 \leq \kappa \leq 4$, then $\gamma\paren{t}$ is almost surely a simple curve.  When $\kappa>4$, $\gamma\paren{t}$ almost surely has self-intersections and when $\kappa \geq 8$ it is almost surely a space-filling curve. Chordal $\SLE_{\kappa}$ in $\half$ from $0$ to $\infty$ is defined to be the random family of curves $\gamma\paren{t}$.  

It is natural to ask whether an $\SLE$ process can be defined in multiply connected domains $D \subset \half$ in an analogous way.  That is, is it possible to find an analog of \eqref{LE} and an appropriate driving function so that the solution of the resulting initial value problem is generated by a curve with the properties one would expect of $\SLE_{\kappa}$?  An added difficulty of the multiply connected case is that not all $n$-connected domains are conformally equivalent.  In the simply connected case, Schramm was able to show that any stochastic process that satisfies the \emph{domain Markov property} and is conformally invariant must come from \eqref{LE} with $U_t$ a Brownian motion.  Part of what makes this work is that $\half$ with a simple curve removed is conformally equivalent to $\half$.  In the multiply connected case, requiring that $\SLE$ have the domain Markov property and be conformally invariant is not enough to uniquely determine the driving function.  Bauer and Friedrich (\cite{MR2111355},\cite{MR2230350}, \cite{MR2357634}) defined a candidate for $\SLE$ in multiply connected domains by solving a Loewner equation for a curve growing in a multiply connected domain.  They did not determine the ``right'' driving function for the process to be $\SLE$, but they were able to narrow down the possible choices.  In separate work (\cite{MR2393989},\cite{ZhanWholePlane}), Zhan took a similar approach.  He showed that, in the case of the annulus, if in addition to satisfying the domain Markov property and conformal invariance, $\SLE$ is also assumed to be reversible, then the driving function is uniquely determined. 

Recent work of Lawler \cite{Lawler2011} takes another approach to defining $\SLE$ in multiply connected domains.  His definition is motivated by work \cite{MR1992830} of Lawler, Schramm, and Werner.  They showed that if $D\subset \half$ is a simply connected domain such that $\half \backslash D$ is bounded and $\half$ and $D$ agree in a neighborhood of $0$, then there is a local martingale $M_t$ with the property that $\SLE$ in $D$ is $\SLE$ in $\half$ weighted by $M_t$.  In \cite{MR2045953}, Lawler and Werner showed that this local martingale is related to the \emph{Brownian loop measure}.  This led Lawler to suggest in \cite{MR2518970} that $\SLE$ in a multiply connected domain could be defined by using the Brownian loop measure to specify its Radon-Nikodym derivative with respect to $\SLE$ in a simply connected domain.  In \cite{Lawler2011}, Lawler defines $\SLE$ in multiply connected domains in this way and in the case of the annulus, shows that the resulting process agrees with the one found by Zhan in \cite{ZhanWholePlane}.  Even though Lawler does not use a Loewner equation in a multiply connected domain to define $\SLE$, the analysis of a Loewner equation in a multiply connected domain is still an important aspect of his work.  The goal of this paper is to better understand the Loewner equations appearing in work on $\SLE$ in multiply connected domains.

The study of Loewner equations in multiply connected domains is not new and goes as far back as 1950 in work by Komatu \cite{MR0046437}. These equations frequently feature special functions that make the calculation work, but are introduced without motivation.  In most cases, we expect such functions can be given a probabilistic interpretation.  For instance, the special function in \eqref{LE} is $\paren{z,x} \mapsto \frac{1}{z-x}$ and the probabilistic interpretation is that the imaginary part of $\frac{1}{z-x}$ is equal to $-\pi H_{\half}\paren{z,x}$, where $H_{\half}\paren{z, \cdot}$ is the Poisson kernel for Brownian motion in $\half$.  We call a domain $D\subset \half$ a \emph{chordal standard domain} if it is obtained by removing a finite number of horizontal line segments from the upper half plane.  In this paper, we study a strong Markov process, \emph{excursion reflected Brownian motion} (ERBM), whose Poisson kernel can be used to prove a Loewner equation for a simple curve growing in a chordal standard domain.  Excursion reflected Brownian motion in a simply connected domain is just Brownian motion and, in this case, the Loewner equation we get is just \eqref{LE}

Roughly speaking, if $D \subset \C$ is a domain with $n$ ``holes,'' ERBM is a strong Markov process that has the distribution of a Brownian motion away from $\partial D$ and picks a point according to \emph{harmonic measure from $\infty$} to reflect from every time it hits $\partial D$.  To understand the behavior of ERBM, we consider the case that $D=\C \backslash \D$. In this case, every time ERBM hits $\partial \D$, it picks a point uniformly on $\partial \D$ to reflect from.  ERBM has what Walsh (\cite{MR509476}, pg. 37) has called a ``roundhouse singularity'' in a neighborhood of $\D$.  That is, in any neighborhood of a time that it hits $\partial \D$, it will hit $\partial \D$ uncountably many times and jump randomly from point to point on $\partial \D$.  Finally, an important property of ERBM is that it is conformally invariant.  This will be clear once we more precisely define what it means to ``pick a point according to harmonic measure from $\infty$ to reflect from.''

The existence of ERBM follows from more general work of Fukushima and Tanaka in \cite{MR2139028}.  Their work uses the theory of Dirichlet forms and does not take advantage of the conformal invariance of ERBM.  An alternative construction making explicit use of the conformal invariance of ERBM was proposed by Lawler in \cite{MR2247843}.  He proposed that ERBM could be defined in any domain with ``one hole'' by first constructing the process in $\C \backslash \D$ using excursion theory and then defining it in any domain conformally equivalent to $\C \backslash \D$ via conformal invariance.  To define ERBM in a domain with ``$n$ holes,'' multiple copies of the process defined in a domain with ``one hole'' can be pieced together.  We take this basic approach and give a new construction of ERBM.
 
A function is ER-harmonic if it satisfies the mean value property with respect to ERBM.  More precisely, a function $u$ is ER-harmonic if it is harmonic on $D$ and, for any curve $\eta$ surrounding a boundary component $A$ of $D$,
\[u(A)=\int_{\eta} u(z) \frac{H_{\partial U}(A,z)}{\mathcal{E}_U(A, \eta)}\abs{dz},\]
where $\displaystyle \frac{H_{\partial U}(A_i,z)}{\mathcal{E}_U(A_i, \eta)}$ is the density for the distribution of the first time ERBM started at $A$ hits $\eta$. It turns out that a harmonic function $u$ on $D$ that is constant on each connected component of $\partial D$ is ER-harmonic if and only if it is the imaginary part of a holomorphic function on $D$.  For this reason, the study of ER-harmonic functions is a useful tool in the study of conformal maps into certain classes of finitely connected domains.

Two important ER-harmonic functions are the Poisson kernel $H_D^{ER}\paren{z,w}$ and Green's function $G_D^{ER}\paren{z,w}$ for ERBM. In order to define these functions, it is necessary to choose at least one boundary component of $D$ at which to kill the ERBM.  Once this is done, the definitions and many of the properties of the Poisson kernel and Green's function for ERBM are similar to those for usual Brownian motion.  The Poisson kernel for ERBM was first considered by Lawler in \cite{MR2247843} as a way of understanding a classical theorem \cite{MR510197} of complex analysis stating that any $n$-connected domain $D \subset \C$ is conformally equivalent to a chordal standard domain.  He sketched a proof showing that the imaginary part of any such map is equal to a real multiple of the Poisson kernel for ERBM.  We give a complete proof here.  Furthermore, we use the Green's function for ERBM to give an interpretation of two other classical conformal mapping theorems.

We finish this section by stating our main result.  This result was conjectured by Lawler in \cite{MR2247843}.  Let $D$ be a chordal standard domain, $\gamma:\paren{0,\infty} \rightarrow D$ be a simple curve with $\gamma \paren{0}=0$, $D_t=D \backslash \gamma\paren{0,t}$, and $b\paren{t}$ be the \emph{excursion reflected half-plane capacity} of $\gamma\paren{0,t}$.  Excursion reflected half-plane capacity is a generalization of half-plane capacity to multiply connected domains and parameterizing our curves so the excursion reflected half-plane capacity is a differentiable function of time is the most convenient parametrization. 

\begin{theorem}\label{MainThm}
For each $t$, there is a unique conformal map $h_t:D_t \rightarrow h_t\paren{D_t}$ such that $h_t\paren{D_t}$ is a chordal standard domain and $\displaystyle \lim_{z \rightarrow \infty} h_t\paren{z}-z=0$.  Furthermore, this map satisfies the initial value problem
\[\dot{h}\paren{t}\paren{z}=-\dot{b}\paren{t}\mathcal{H}^{ER}_{h_t\paren{D_t}}\paren{h_t\paren{z},\tilde{U}_t},~~~h_0\paren{z}=z,\]
where $\tilde{U}_t=h_t\paren{\gamma\paren{t}}$ and $\mathcal{H}^{ER}_{h_t\paren{D_t}}\paren{\cdot,\tilde{U}_t}$ is a conformal map with imaginary part a real multiple of $H_D^{ER}\paren{\cdot, \tilde{U}_t}$.
\end{theorem}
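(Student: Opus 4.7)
The plan is to first establish the existence and uniqueness of $h_t$, then derive the Loewner equation via a semigroup decomposition and an infinitesimal analysis of the hydrodynamic map associated to the growing curve. Existence of a conformal map from $D_t$ onto \emph{some} chordal standard domain is the classical theorem cited in the introduction; post-composing with a translation achieves the hydrodynamic normalization $h_t(z) - z \to 0$ at $\infty$. For uniqueness, given two candidates $h^{(1)}_t$ and $h^{(2)}_t$, the composition $\phi = h^{(2)}_t \circ (h^{(1)}_t)^{-1}$ is a conformal map between chordal standard domains with $\phi(w) - w \to 0$; its imaginary part is a bounded harmonic function vanishing on $\R$ and constant on each horizontal slit (slits must map to slits under such a map). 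Invoking the characterization of conformal maps in terms of the ERBM Poisson kernel established earlier in the paper, these boundary conditions together with the vanishing at infinity force $\imag(\phi(w) - w) \equiv 0$, so $\phi$ is the identity.

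For $s < t$, set $D^s = h_s(D_s)$ and let $K_{s,t} = h_s(\gamma[s,t])$, a simple curve in $D^s$ attached at the tip $\tilde{U}_s$. Let $\phi_{s,t}$ be the unique hydrodynamic conformal map from $h_s(D_t) = D^s \setminus K_{s,t}$ onto a chordal standard domain. By the uniqueness just proved, $h_t = \phi_{s,t} \circ h_s$. Moreover, by the additivity of excursion reflected half-plane capacity, $K_{s,t}$ has er-hcap equal to $b(t) - b(s)$. The derivative $\dot{h}_t$ is then obtained from a first-order expansion of $\phi_{s,t}(w) - w$ as $t - s \to 0$.

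The heart of the argument is the infinitesimal estimate
\[
\phi_{s,t}(w) - w = -[b(t) - b(s)]\, \mathcal{H}^{ER}_{D^s}(w, \tilde{U}_s) + o(t-s),
\]
uniformly on compact subsets of $D^s \setminus \{\tilde{U}_s\}$. The strategy mirrors the simply connected case: the left-hand side is holomorphic on $h_s(D_t)$ with hydrodynamic expansion at infinity, and its imaginary part is harmonic, vanishes on $\R$, and is nearly constant on each horizontal slit (the change in slit heights being itself $O(b(t) - b(s))$). To leading order, this imaginary part is an ER-harmonic function whose singular behavior is localized near $\tilde{U}_s$, and the ER-harmonic characterization identifies it with a multiple of $H^{ER}_{D^s}(\cdot, \tilde{U}_s)$. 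Matching the $1/|w|$ coefficient at infinity against the er-hcap of $K_{s,t}$ fixes the constant as $b(t) - b(s)$. Dividing by $t - s$ and letting $t \to s$ then yields the stated ODE, with initial condition $h_0(z) = z$ built into the normalization.

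The principal technical obstacle is making this infinitesimal expansion quantitative: one must verify the $o(t-s)$ error uniformly on compacta bounded away from the tip, and show that differentiability of $b(t)$ suffices to extract a classical derivative $\dot{h}_t$. This requires careful control of the ERBM Poisson kernel near $\tilde{U}_s$, together with an analysis of how the slit heights of the image chordal standard domain shift under the addition of a small curve --- difficulties with no counterpart in the simply connected chordal case, where $\half \setminus K$ is automatically conformally equivalent to $\half$ by a single application of the Riemann mapping theorem.
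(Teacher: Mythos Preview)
Your outline follows the same broad architecture as the paper: existence/uniqueness of $h_t$, the semigroup relation $h_t=\phi_{s,t}\circ h_s$, an infinitesimal estimate identifying $\phi_{s,t}(w)-w$ with $-[b(t)-b(s)]\,\mathcal{H}^{ER}_{D^s}(w,\tilde U_s)$ to leading order, and then passage to the derivative. At this level the strategies coincide.

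Where your proposal diverges from the paper, and where it is thin, is in the execution of the ``principal technical obstacle'' you name at the end. The paper's central device is the factorization $h_t=\varphi_{g_t(D_t)}\circ g_t$, where $g_t$ is the \emph{classical} chordal Loewner chain for $\half\setminus\gamma_t$ and $\varphi_t=\varphi_{g_t(D_t)}$ is the hydrodynamic map onto a chordal standard domain. This factorization is what allows the paper to import the known continuity of $U_t=g_t(\gamma(t))$ and the diameter estimate for $g_s(\gamma(s,t))$ (Lemma~\ref{gDiamCurve}) and then upgrade them to statements about $\tilde U_t$ and $h_s(\gamma(s,t))$ via derivative estimates on $\varphi_t$ restricted to $\R$. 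The key observation (Proposition~\ref{phiDerivFin}) is $\varphi_t'(x)=\pi H^{ER}_{g_t(D_t)}(\infty,x)$, so control of $\varphi_t'$ reduces to Poisson kernel estimates that can be made uniform over the family of domains $g_t(D_t)$ (Lemmas~\ref{CPKDeriv0}--\ref{ERPKInfRHull}). This is the engine behind Proposition~\ref{ERDiamCurve}, which yields continuity of $\tilde U_t$ and, ultimately, continuity of $t\mapsto\mathcal{H}^{ER}_{h_t(D_t)}(h_t(z),\tilde U_t)$ (Lemma~\ref{ERCPKCont}); only then can one invoke Lemma~\ref{conRDeriv} to convert the right derivative into a genuine derivative.

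Your proposal works directly with $h_t$ and $\phi_{s,t}$, never invoking $g_t$. That is not wrong in principle, but it leaves you without a mechanism for proving that $\tilde U_t$ is well-defined and continuous, or that $\diam[h_s(\gamma(s,t))]\to 0$ as $t\to s$; these facts do not follow from the infinitesimal estimate alone, and without them you cannot pass from a formal right-derivative computation to a classical ODE. Your remark that the slit heights shift by $O(b(t)-b(s))$ is correct in spirit (it is essentially the content of Proposition~\ref{ERHCapEqualities}), but it does not by itself supply the needed continuity of the driving function. The paper's approach of bootstrapping from the simply connected theory via $\varphi_t$ is the missing idea.
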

If $D=\half$, then this theorem is just a restatement of \eqref{LE}.

\subsection{Outline of the Paper}

Section \ref{sectBG} sets notation and contains necessary background material.  In particular, we outline the proof of the chordal Loewner equation for a simple curve growing in $\half$ as in \cite{MR2129588}.  The basic structure of the proof of the analogous result for chordal standard domains is the same and uses some of the same preliminary results.

In Section \ref{sectERBM} we define and construct ERBM in finitely connected domains $D \subset \C$. First, we construct the process in $\C \backslash \D$ by explicitly defining a transition kernel for ERBM in terms of the transition kernels for Brownian motion and reflected Brownian motion and then using general theory to show that there actually is a strong Markov process with this transition kernel.  Finally, we check that the strong Markov process we obtain satisfies our definition of ERBM.  Our construction is motivated by a similar construction of Walsh's Brownian motion in \cite{MR1022917}.  Once we have ERBM in $\C \backslash \D$, we define ERBM in any domain conformally equivalent to $\C \backslash \D$ via conformal invariance.  In Section \ref{chapERBMFC} we show how countably many independent ERBMs in $1$-connected domains can be pieced together to construct an ERBM in an $n$-connected domain.  ERBM in $D$ induces a discrete time Markov chain on the connected components of the boundary of $D$, which we discuss in Section \ref{chapMC}.  This chain was observed by Lawler in \cite{MR2247843} and appears implicitly in classical work on conformal mapping of multiply connected domains.  We conclude the section with a brief discussion of ER-harmonic functions.  We prove a maximal principle for ER-harmonic functions and show how ERBM can be used to construct ER-harmonic functions.

Section \ref{sectPK} introduces the Poisson kernel $H_D^{ER}\paren{z,w}$ for ERBM and proves some of its basic properties.  We gather a number of estimates for $H_D^{ER}\paren{z,w}$ that are used in Section \ref{sectLE} and discuss the connection between $H_D^{ER}\paren{\cdot,w}$ and conformal maps into chordal standard domains.


Section \ref{sectGF} introduces the Green's function $G_D^{ER}\paren{z,w}$ for ERBM in $D$ and proves some of its basic properties.  In Section \ref{sectGFPOF} we use the theory of Green's function for ERBM to prove two formulas from Section \ref{sectERBM} necessary to show our construction of ERBM is well-defined.  We conclude the section by showing how $G_D^{ER}\paren{z,\cdot}$ can be used to construct conformal maps into circularly-slit annuli.

In Section \ref{sectLE} we prove Theorem \ref{MainThm}.  We start by defining $\mathcal{H}^{ER}\paren{\cdot,x}$, the \emph{complex Poisson kernel} for ERBM, for any finitely-connected domain $D \subset \half$ with $\R \subset \partial D$.  The complex Poisson kernel for ERBM, initially considered by Lawler in \cite{MR2247843}, has the property that for any $x \in \R$, $\mathcal{H}_D^{ER}\paren{\cdot,x}$ is a conformal map into a chordal standard domain with imaginary part equal to $\pi H_D^{ER}\paren{\cdot,x}$.  We use $\mathcal{H}_D^{ER}\paren{\cdot,x}$ to show that there is a unique conformal map $\varphi_D$ from $D$ into a chordal standard domain satisfying $\displaystyle \lim_{z \rightarrow \infty}\varphi_D\paren{z}-z=0$.  The map $h_t$ in Theorem \ref{MainThm} is equal to $\varphi_{g_t\paren{D_t}}\circ g_t$, where $g_t$ is as in \eqref{LE}.  There are two main steps to the proof of Theorem \ref{MainThm}.  First we prove the result at $t=0$.  This proof is similar in spirit to the proof of the analogous result for $g_t$.  The second main step is to show that $\tilde{U}_t=h_t\paren{\gamma\paren{t}}$ is a well-defined continuous function.  We do this by combining the analogous fact for $U_t=g_t\paren{\gamma\paren{t}}$ with derivative estimates for $\varphi_D\paren{x}$ restricted to $\R$.  The key observation is that $\varphi_D'\paren{x}=\pi H_D^{ER}\paren{\infty,x}$, where $H_D^{ER}\paren{\infty,x}$ is the ``normal derivative'' of $H_D^{ER}\paren{\cdot,x}$ at $\infty$.  This allows us to use appropriate Poisson kernel estimates to provide the necessary estimates for $\varphi_D'\paren{x}$.  Since $g_t\paren{D_t}$ varies with $t$, we need Poisson kernel estimates that are uniform over certain classes of domains.

I would like to thank my advisor Greg Lawler for suggesting this line of research and for many useful conversations pertaining to it.



\section{Background} \label{sectBG}
\subsection{Some Notation}

We denote the unit disk in $\C$ centered at the origin by $\D$ and the upper half-plane by $\half$. We let $\mathcal{Y}_n$ consist of all subdomains of $\C$ with $n$ ``holes.''  More precisely, let $\mathcal{Y}_n$ consist of all connected domains of the form 
\[D=\C \backslash \bracket{A_0 \cup A_1 \cup \cdots \cup A_n},\]
where $A_0, A_1, \ldots ,A_n$ are closed disjoint subsets of $\C$ such that $A_i$ is simply connected, bounded, and larger than a single point for $1 \leq i \leq n$ (we allow $A_0$ to be empty) and $\C \backslash A_0$ is simply connected.  We denote $\displaystyle \bigcup_{i=0}^{\infty} \mathcal{Y}_i$ by $\mathcal{Y}$.

We call $A \subset \half$ a \emph{compact $\half$-hull} if $A=\half \cap \overline{A}$ and $\half \backslash A$ is simply connected and denote the set of all compact $\half$-hulls by $\mathcal{Q}$.  We denote the subsets of $\mathcal{Y}$ and $\mathcal{Y}_n$ consisting of domains such that $A_0$ is the union of $\C \backslash \half$ and a compact $\half$-hull by $\mathcal{Y}^*$ and $\mathcal{Y}_n^*$ respectively. We call $D \in \mathcal{Y}$ a \emph{chordal standard domain} if $D$ is the upper half-plane with a finite number of horizontal line segments removed.  We denote the set of chordal standard domains and $n$-connected chordal standard domains by $\mathcal{CY}$ and $\mathcal{CY}_n$ respectively.  If $D$ is a domain with $\C \backslash A_0=\half$, then if $r>0$, we let
\[D^r=\set{z\in D:\abs{z}\geq r}.\]
We also let
\[H^r=\set{z\in \half:\abs{z}\leq r} \]
and
\[\D_+=\D \cap \half.\]
We denote the open annulus centered at $x\in \R$ with inner radius $r$ and outer radius $R$ by $A_{r,R}\paren{x}$ and $A_{r,R} \paren{x} \cap \half$ by $A^+_{r,R}\paren{x}$.  We write $A_{r,R}$ and $A^+_{r,R}$ for $A_{r,R}\paren{0}$ and $A^+_{r,R}\paren{0}$ respectively.  Finally, we denote the open ball of radius $r$ centered at $z$ by $B_r\paren{z}$ and, if $x \in \R$, $B_r\paren{x}\cap \half$ by $B_r^+\paren{x}$. 

If $A$ is a subset of $\C$, we let the radius of $A$, denoted $\rad \paren{A}$, be the infimum over all $r>0$ such that $A\subset r\D$ and the diameter of $A$, denoted $\diam \paren{A}$, be the supremum over all $x,y \in A$ of $\abs{x-y}$.

We will use $c$ to denote a real constant that is allowed to change from one line to the next.  We write $f\paren{z} \sim g\paren{z}$ as $z \rightarrow a$ if $\lim_{z \rightarrow a}\frac{f\paren{z}}{g\paren{z}}=1$ and $f\paren{z} \asymp g\paren{z}$ as $z \rightarrow a$ if $f\paren{z}=O\paren{\abs{g\paren{z}}}$ and $g\paren{z}=O\paren{\abs{f\paren{z}}}$ as $z \rightarrow a$.

\subsection{Poisson Kernel for Brownian Motion}
Let $D \in \mathcal{Y}$ and let $\tau_D$ be the first time that a Brownian motion $B_t$ leaves $D$.  If $\partial D$ has at least one regular point for Brownian motion, then for each $z \in D$, the distribution of $B_{\tau_D}$ defines a measure $\hm_D\paren{z,\cdot}$ on $\partial D$ (with the $\sigma$-algebra generated by Borel subsets of $\partial D$) called \emph{harmonic measure in $D$ from $z$}.  We say $\partial D$ is \emph{locally analytic} at $w \in \partial D$ if $\partial D$ is an analytic curve in a neighborhood of $w$.  If $\partial D$ is locally analytic at $w$, then in a neighborhood of $w$, $\hm_D\paren{z,\cdot}$ is absolutely continuous with respect to arc length and the density of $\hm_D\paren{z,\cdot}$ at $w$ with respect to arc length is called the \emph{Poisson kernel for Brownian motion} and is denoted $H_D\paren{z,w}$.  

Some of the domains we consider will have two-sided boundary points (a recurring example is when $D$ is a chordal standard domain).  If $w$ is a two-sided boundary point, we should really think of it as being two distinct boundary points, $w^+$ and $w^-$.  In such cases, by abuse of notation, we will sometimes write $H_D\paren{z,w}$ when we should consider $H_D\paren{z,w^+}$ and $H_D\paren{z,w^-}$ separately.

Harmonic measure is conformally invariant.  That is, if $f: D \rightarrow D'$ is a conformal map, then 
\[\hm_D\paren{z,V}=\hm_{D'}\paren{f\paren{z},f\paren{V}}.\]
Using this, we see that if $\partial D$ is locally analytic at $w$ and $\partial D'$ is locally analytic at $f\paren{w}$, then
\begin{equation} \label{PKCI}
H_{D'}\paren{f\paren{z},f\paren{w}}=\abs{f'\paren{w}}^{-1}H_D\paren{z,w}.
\end{equation}
It is well-known that
\begin{equation}\label{PKhalf}
H_{\half}\paren{x+iy,x'}=\frac{1}{\pi} \frac{y}{\paren{x-x'}^2+y^2}.
\end{equation}
Using \eqref{PKhalf}, it is straightforward to compute that if $\abs{x}>\epsilon$, then $H_{\half}\paren{\cdot,x}$ restricted to the boundary of $B_{\epsilon}^+\paren{0}$ is bounded above by 
\begin{equation}\label{PKhalfMax}
\max \set{\frac{\epsilon}{\pi\paren{x-\epsilon}^2},\frac{\epsilon}{\pi\paren{x+\epsilon}^2}}.
\end{equation}

Using \eqref{PKCI}, it is sometimes possible to explicitly compute the Poisson kernel for a simply connected domain.  Often though, having an estimate is good enough.  One particularly important estimate (\cite{MR2129588}, pg. 50) that we will use extensively is that if $\abs{z}\geq 2 \epsilon$, then
\begin{equation}\label{PKHalfRMHalfDisk}
H_{H^{\epsilon}}\paren{z,\epsilon e^{i \theta}}=2 H_{\half}\paren{z,0}  \sin \theta \bracket{1+O\paren{\frac{\epsilon}{\abs{z}}}},
\end{equation}
as $\frac{\epsilon}{\abs{z}} \rightarrow 0$.  In particular, if we fix a radius $r>0$, then the $O\paren{\frac{\epsilon}{\abs{z}}}$ term can be replaced with an $O\paren{\epsilon}$ term that is uniform over all $z$ with $\abs{z}>r$.

Using \eqref{PKCI}, \eqref{PKHalfRMHalfDisk}, and the map $z \mapsto \frac{-1}{z}$, we see that if $\abs{z}<r/2$, then
\begin{align}
H_{B_r^+\paren{0}}\paren{z,re^{i\theta}}&= \frac{1}{r^2}H_{H^{1/r}}\paren{\frac{-1}{z},\frac{-e^{-i\theta}}{r}} \nonumber \\
&=\frac{2}{r^2}H_{\half}\paren{\frac{-1}{z},0} \sin\paren{\theta}\bracket{1+O\paren{\frac{\abs{z}}{r}}} \nonumber \\
&=\frac{2\imag \bracket{z}}{r^2} \sin\paren{\theta}\bracket{1+O\paren{\frac{\abs{z}}{r}}} \label{PKHD},
\end{align}
as $\frac{\abs{z}}{r} \rightarrow 0$.  In particular for fixed $r$, the probability that a Brownian motion started at $z$ exits $B_r^+\paren{0}$ on $\abs{z}=r$ is comparable to $\frac{\imag \bracket{z}}{r}$ as $z \rightarrow 0$.

The function $H_D\paren{\cdot,w}$ can be characterized up to a positive multiplicative constant as the unique positive harmonic function on $D$ that is ``equal to'' the Dirac delta function at $w$ on $\partial D$. 

\begin{proposition}\label{noSubLinHar}
Let $D \in \mathcal{Y}$ be such that $\partial D$ is locally analytic at $w\in \partial D$.  Then $H_D\paren{\cdot,w}$ is up to a real constant multiple the unique positive harmonic function on $D$ that satisfies $H_D\paren{z,w} \rightarrow 0$ as $z \rightarrow w'$ for any $w' \in \partial D$ not equal to $w$.
\end{proposition}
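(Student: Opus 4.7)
The plan is to analyze any positive harmonic function $u$ on $D$ satisfying the vanishing boundary condition by studying its behavior near $w$ via Schwarz reflection. Positivity will force the singular part of $u$ at $w$ to be a single specific term (the one appearing in $H_D\paren{\cdot, w}$), after which subtracting an appropriate multiple of the Poisson kernel yields a function vanishing on all of $\partial D$, which must be identically zero by the maximum principle.

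Using the local analyticity of $\partial D$ at $w$, I would first conformally map a one-sided neighborhood of $w$ in $D$ onto a half-disk $B_r^+\paren{0}$ with $w \mapsto 0$ and the boundary arc sent to $\paren{-r, r}$. The function $u$ transports to a positive harmonic function on $B_r^+\paren{0}$ vanishing on $\paren{-r, 0} \cup \paren{0, r}$, and by Schwarz reflection extends to a harmonic function $\tilde{u}$ on $B_r\paren{0} \setminus \set{0}$ that is antisymmetric under $z \mapsto \overline{z}$. Expanding in the Fourier--Laurent basis on the punctured disk, antisymmetry removes the constant, the $\log \rho$ term, and all $\cos\paren{n\theta}$ terms, leaving
\[
\tilde{u}\paren{\rho e^{i\theta}} = \sum_{n \geq 1} \paren{a_n \rho^n + b_n \rho^{-n}} \sin\paren{n\theta}.
\]
If some $b_m$ with $m \geq 2$ were nonzero, the largest such would dominate as $\rho \to 0$; since $\sin\paren{m\theta}$ changes sign on $\paren{0,\pi}$ for $m \geq 2$, $u$ would take both signs in every neighborhood of $0$ in the half-disk, contradicting positivity. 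Hence $b_n = 0$ for $n \geq 2$, and the only possible singular contribution is a multiple of $\rho^{-1}\sin\theta = \imag\paren{z}/\abs{z}^2$, which up to a constant is precisely the local form of the half-plane Poisson kernel at $0$.

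Writing $c_u \geq 0$ and $c_H$ for the coefficient $b_1$ attached to $u$ and $H_D\paren{\cdot, w}$ respectively in a fixed local chart, one first verifies $c_H > 0$: otherwise $H_D\paren{\cdot, w}$ would extend to a bounded nonnegative harmonic function on $D$ vanishing on all of $\partial D$, forcing $H_D\paren{\cdot, w} \equiv 0$ by the maximum principle, a contradiction. The combination $v := c_H\, u - c_u\, H_D\paren{\cdot, w}$ is harmonic on $D$, vanishes on $\partial D \setminus \set{w}$ by hypothesis, and by construction has no singular part in its reflected expansion at $w$, so it extends harmonically across $w$ with $v\paren{0} = 0$ in the local chart. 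Thus $v$ is bounded near $w$, extends continuously to $0$ on all of $\partial D$, and the maximum principle yields $v \equiv 0$, giving $u = \paren{c_u/c_H}\, H_D\paren{\cdot, w}$. The main obstacle is the bookkeeping of behavior at infinity when $D \in \mathcal{Y}$ is unbounded: the hypothesis that $u$ vanishes at every boundary point must be interpreted on the Riemann sphere so that $\infty$ is included, ensuring $v$ is globally bounded and the maximum principle can be applied without further assumption.
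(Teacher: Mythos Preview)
The paper does not supply a proof of this proposition; it is stated as a known fact about the ordinary Poisson kernel. Your local--expansion approach is natural and essentially correct, but there is one genuine gap.

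You write: ``If some $b_m$ with $m \geq 2$ were nonzero, the largest such would dominate as $\rho \to 0$.'' This presupposes that only finitely many $b_m$ are nonzero, which you have not established. A harmonic function on a punctured disk can have infinitely many negative--index terms in its expansion (think of $\mathrm{Re}\,e^{1/z}$), so there need not be any ``largest such $m$'' to appeal to. The fix is short. Use the elementary inequality $\lvert\sin(m\theta)\rvert \leq m\sin\theta$ for $\theta \in [0,\pi]$. Since $u \geq 0$ on the upper half--disk,
\[
\tfrac{\pi}{2}\bigl|a_m\rho^m + b_m\rho^{-m}\bigr| = \left|\int_0^\pi u(\rho e^{i\theta})\sin(m\theta)\,d\theta\right| \leq m\int_0^\pi u(\rho e^{i\theta})\sin\theta\,d\theta = \tfrac{m\pi}{2}\bigl(a_1\rho + b_1\rho^{-1}\bigr).
\]
Multiplying through by $\rho^m$ and letting $\rho \to 0$ gives $|b_m| \leq 0$ for every $m \geq 2$, hence $b_m = 0$. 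With this in place the remainder of your argument goes through as written.

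Your closing remark about $\infty$ is well taken and in fact necessary: for unbounded $D \in \mathcal{Y}$ with the boundary taken in $\C$, the statement is false as written (e.g.\ $u(z) = \imag z$ on $\half$ is positive, harmonic, and vanishes on $\R\setminus\{0\}$ but is not a multiple of $H_\half(\cdot,0)$). The boundary condition must be read on the Riemann sphere so that $\infty$ is included; under that reading $v$ is bounded and the maximum principle applies.
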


Next, we prove an estimate analogous to \eqref{PKHalfRMHalfDisk} for $D \in \mathcal{Y}^*$. A useful observation \cite{MR2247843} that we will use in the proof of this estimate is that if $D_2 \subset D_1$ and $D_1$ and $D_2$ agree in a neighborhood of $w \in \partial D_1$, then
\begin{equation} \label{twoDomainPK}
H_{D_2}\paren{z,w}=H_{D_1}\paren{z,w}-\ev {z}{H_{D_1}\paren{B_{\tau_{D_2}},w}}.
\end{equation}

\begin{lemma} \label{PKStandardChordalRMHalfDisk}
Let $D \in \mathcal{Y}^*$ be such that $\partial D$ is locally analytic and $\partial D$ and $\partial \half$ agree in a neighborhood of $0$. If $\abs{z}>2\epsilon$, then
\[H_{D^{\epsilon}}\paren{z,\epsilon e^{i\theta}}=2 \sin \theta ~H_D\paren{z,0} \bracket{1+O\paren{\epsilon}},\]
where for any $r>0$, $O\paren{\epsilon}$ is uniform over all $z$ with $\abs{z}>r$.
\end{lemma}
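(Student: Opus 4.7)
The approach is to reduce to the half-plane estimate \eqref{PKHalfRMHalfDisk} via two applications of the two-domain formula \eqref{twoDomainPK}. Let $K := \partial D \cap \half$ denote the union of all hulls of $D$ lying inside $\half$; by hypothesis, $d_0 := \dist\paren{K, 0} > 0$, and I restrict to $\epsilon < d_0 / 2$ so that the small half-disk and $K$ are disjoint.

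First, I apply \eqref{twoDomainPK} with $\paren{D_2, D_1} = \paren{D^{\epsilon}, H^{\epsilon}}$, valid because $D^{\epsilon}$ and $H^{\epsilon}$ coincide in a neighborhood of $\epsilon e^{i\theta}$, and also with $\paren{D_2, D_1} = \paren{D, \half}$. In both expansions only exits landing on $K$ contribute to the expectation, since the half-plane Poisson kernels vanish on the shared boundary (the real line and, in the first case, also the half-circle). Applying \eqref{PKHalfRMHalfDisk} to $H_{H^{\epsilon}}\paren{z, \epsilon e^{i\theta}}$ (using $\abs{z} > 2\epsilon$) and to $H_{H^{\epsilon}}\paren{B_{\tau_{D^{\epsilon}}}, \epsilon e^{i\theta}}$ (using $\abs{B_{\tau_{D^{\epsilon}}}} \ge d_0$ on the event $\set{B_{\tau_{D^{\epsilon}}} \in K}$) introduces a multiplicative $\bracket{1 + O\paren{\epsilon}}$ error, uniform over $\abs{z} > r$ (via $\epsilon / \abs{z} \le \epsilon / r$) and over $K$. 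Recognizing the second two-domain identity in the resulting expression yields
\[
H_{D^{\epsilon}}\paren{z, \epsilon e^{i\theta}} = 2 \sin\theta \bracket{1 + O\paren{\epsilon}} \paren{H_D\paren{z, 0} + R\paren{z, \epsilon}},
\]
where $R\paren{z, \epsilon} := \ev{z}{H_{\half}\paren{B_{\tau_D}, 0} \,;\, B_{\tau_D} \in K,\, \sigma_{\epsilon} \le \tau_D}$ and $\sigma_{\epsilon}$ is the first hitting time of $\overline{B_{\epsilon}^+\paren{0}}$.

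The remaining and main step is to show $R\paren{z, \epsilon} = O\paren{\epsilon} H_D\paren{z, 0}$ uniformly on $\abs{z} > r$; this is where care is required, since the ratio $H_D / H_{\half}$ is not a priori bounded below (it can be small when $z$ is close to $K$). The strong Markov property at $\sigma_{\epsilon}$ rewrites
\[
R\paren{z, \epsilon} = \ev{z}{\paren{H_{\half} - H_D}\paren{B_{\sigma_{\epsilon}}, 0} \,;\, \sigma_{\epsilon} \le \tau_D}.
\]
On the half-circle $\abs{w} = \epsilon$, the integrand $H_{\half}\paren{w, 0} - H_D\paren{w, 0}$ equals $\ev{w}{H_{\half}\paren{B_{\tau_D}, 0}; B_{\tau_D} \in K}$ and is bounded by $M \prob{w}{B_{\tau_D} \in K}$, where $M := \sup_K H_{\half}\paren{\cdot, 0} < \infty$; in turn, $\prob{w}{B_{\tau_D} \in K} = O\paren{\imag w / d_0} = O\paren{\epsilon \sin\paren{\arg w}}$ by \eqref{PKHD} applied in $B^+_{d_0 / 2}\paren{0}$. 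Hence $R\paren{z, \epsilon} \le C \epsilon \cdot \ev{z}{\sin\paren{\arg B_{\sigma_{\epsilon}}} \,;\, \sigma_{\epsilon} \le \tau_D}$.

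To convert this absolute bound into the required multiplicative one, I close the argument using the strong-Markov identity
\[
H_D\paren{z, 0} = \int_0^{\pi} H_{D^{\epsilon}}\paren{z, \epsilon e^{i\phi}} \, H_D\paren{\epsilon e^{i\phi}, 0} \, \epsilon \, d\phi,
\]
obtained by applying the strong Markov property at $\sigma_{\epsilon}$ to $H_D\paren{\cdot, 0}$ against a shrinking interval about $0$ on $\R$ (using that to reach such an interval BM must first enter the half-disk). Combined with $H_D\paren{\epsilon e^{i\phi}, 0} = \sin\phi / \paren{\pi \epsilon} \bracket{1 + O\paren{\epsilon^2}}$ (from the same two-domain analysis restricted to the half-circle), this identifies $\ev{z}{\sin\paren{\arg B_{\sigma_{\epsilon}}} \,;\, \sigma_{\epsilon} \le \tau_D} = \pi \epsilon H_D\paren{z, 0} \bracket{1 + O\paren{\epsilon^2}}$, whence $R\paren{z, \epsilon} = O\paren{\epsilon^2} H_D\paren{z, 0}$, which is well within the claimed $O\paren{\epsilon}$.
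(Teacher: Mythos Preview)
Your proof is correct and shares the paper's overall strategy: apply \eqref{twoDomainPK} with $(D_2,D_1)=(D^{\epsilon},H^{\epsilon})$, invoke \eqref{PKHalfRMHalfDisk} both at $z$ and on $K$ (which is bounded away from $0$), and then control the discrepancy between the stopping times $\tau_{D^{\epsilon}}$ and $\tau_D$. The difference lies only in that last step. The paper rewrites the expectation as $\sum_i\int_{\partial A_i} H_\half(w,0)\,H_{D^{\epsilon}}(z,w)\,|dw|$, applies \eqref{twoDomainPK} a second time with $(D^{\epsilon},D)$ to replace $H_{D^{\epsilon}}(z,w)$ by $H_D(z,w)$, and bounds the correction $\ev{z}{H_D(B_{\tau_{D^{\epsilon}}},w)}$ by an \emph{additive} $O(\epsilon^2)$ (via the same two $O(\epsilon)$ hitting probabilities you use). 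You instead isolate the remainder $R(z,\epsilon)$ directly and, through the strong-Markov identity $H_D(z,0)=\int_0^{\pi} H_{D^{\epsilon}}(z,\epsilon e^{i\phi})\,H_D(\epsilon e^{i\phi},0)\,\epsilon\,d\phi$, convert your bound $R\le C\epsilon\,\ev{z}{\sin(\arg B_{\sigma_\epsilon});\sigma_\epsilon\le\tau_D}$ into the \emph{multiplicative} estimate $R=O(\epsilon^2)\,H_D(z,0)$. This is a real refinement: near $K$ the ratio $H_D(z,0)/H_\half(z,0)$ can be arbitrarily small, so the paper's final step---combining separate $[1+O(\epsilon)]$ factors on $H_\half(z,0)$ and on $\ev{z}{H_\half(B_{\tau_D},0)}$ into a single $[1+O(\epsilon)]$ on their difference---is not immediate, whereas your closing identity ties the error to $H_D(z,0)$ from the outset and makes the claimed uniformity over $\abs{z}>r$ transparent.
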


\begin{proof}
Using \eqref{twoDomainPK}, we have
\begin{equation} \label{ERPKEstimate2}
H_{D^{\epsilon}}\paren{z,\epsilon e^{i\theta}}=H_{H^{\epsilon}}\paren{z,\epsilon e^{i \theta}}-\ev {z} {H_{H^{\epsilon}}\paren{B_{\tau_{D_{\epsilon}}},\epsilon e^{i\theta}}}.
\end{equation}
We can rewrite $\ev {z} {H_{H^{\epsilon}}\paren{B_{\tau_{D_{\epsilon}}},\epsilon e^{i\theta}}}$ as (where by convention $H_{D^{\epsilon}}\paren{z,w}=0$ if $w \notin \partial D^{\epsilon}$)
\begin{align}
&\sum_{i=0}^n \int_{\partial A_i}H_{H^{\epsilon}}\paren{w,\epsilon e^{i\theta}} H_{D^{\epsilon}}\paren{z, w}~\abs{dw} \nonumber \\
=&2\sin \theta \sum_{i=0}^n \int_{\partial A_i} H_{\half}\paren{w,0}\bracket{1+O\paren{\epsilon}} H_{D^{\epsilon}}\paren{z,w} ~\abs{dw} \nonumber \\
=& 2\sin \theta \bracket{\sum_{i=0}^n \int_{\partial A_i} H_{\half}\paren{w,0} H_{D^{\epsilon}}\paren{z, w} ~\abs{dw}} \bracket{1+O\paren{\epsilon}} \label{ERPKEstimate3}.
\end{align}
Applying \eqref{twoDomainPK} again, we have
\begin{equation} \label{ERPKEstimate4}
H_{D^{\epsilon}}\paren{z, w}=H_{D}\paren{z, w}-\ev{z}{H_D\paren{B_{\tau_{D^{\epsilon}}},w}}.
\end{equation}
The probability that a Brownian motion in $D$ started at $z$ leaves $D^{\epsilon}$ on $\partial B^+_{\epsilon}\paren{0}$ is less than the probability that a Brownian motion in $H^{\epsilon}$ does the same thing, which is $O\paren{\epsilon}$ by \eqref{PKHalfRMHalfDisk}.  If $R$ is such that $B_R^+\paren{0} \subset D$, then in order for a Brownian motion in $D$ started on $\partial B^+_{\epsilon}\paren{0}$ to not exit $D$ on $\R$, it has to leave $B_R^+\paren{0}$ before hitting $\R$.  By \eqref{PKHD}, the probability of this event is $O\paren{\epsilon}$ and hence, the probability that a Brownian motion in $D$ started on $\partial B^+_{\epsilon}\paren{0}$ does not exit $D^{\epsilon}$ on $\R$ is $O\paren{\epsilon}$.  It follows that
\begin{equation}\label{ERPKEstimate5}
\ev{z}{H_D\paren{B_{\tau_{D^{\epsilon}}},w}}=O\paren{\epsilon^2}.
\end{equation}
Substituting \eqref{ERPKEstimate4} into \eqref{ERPKEstimate3} and using \eqref{ERPKEstimate5}, we see that \eqref{ERPKEstimate3} is equal to
\begin{align*}
& 2\sin \theta \bracket{\sum_{i=0}^n \int_{\partial A_i} H_{\half}\paren{w,0} H_D\paren{z, w} ~\abs{dw}} \bracket{1+O\paren{\epsilon}}\\
=& 2\sin \theta ~\ev {z}{H_{\half}\paren{B_{\tau_D},0}}\bracket{1+O\paren{\epsilon}}.
\end{align*}
Combining this with \eqref{PKHalfRMHalfDisk} and \eqref{twoDomainPK},  \eqref{ERPKEstimate2} becomes
\begin{equation} \label{ERPKEstimate6}
H_{D^{\epsilon}}\paren{z,\epsilon e^{i\theta}}=2\sin \theta ~H_D\paren{z,0}    \bracket{1+O\paren{\epsilon}}.
\end{equation}
\end{proof}

We will need an estimate for the derivative (in the second variable) of the Poisson kernel.

\begin{lemma}\label{PKDeriv}
Let $D \in \mathcal{Y}^*$ be such that $A_0=\C \backslash \half$ and $f_z\paren{x}:=H_D\paren{z,x}$.  If $x' \in \R$ and $r<\abs{z-x'}$ is such that $B_r^+\paren{x'}\subset D$, then $\abs{f_z'\paren{x'}} \leq \frac{4}{\pi r^2}.$
\end{lemma}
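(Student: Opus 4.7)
The plan is to compare $H_D\paren{z, \cdot}$ with $H_\half\paren{z, \cdot}$ via \eqref{twoDomainPK} and then differentiate. Since $A_0 = \C \setminus \half$ and $B_r^+\paren{x'} \subset D$, the domains $D$ and $\half$ agree on the full disk $B_r\paren{x'}$, so \eqref{twoDomainPK} with $D_1 = \half$ and $D_2 = D$ yields
\[ H_D\paren{z, x'} = H_\half\paren{z, x'} - \ev{z}{H_\half\paren{B_{\tau_D}, x'}}, \]
where the expectation is effectively supported on the event $\set{B_{\tau_D} \in \bigcup_{i \geq 1} A_i}$ (on its complement $B_{\tau_D}$ lies almost surely on $\R \setminus \set{x'}$, where the boundary value of $H_\half\paren{\cdot, x'}$ vanishes).

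From \eqref{PKhalf}, for any $u = a + iv \in \half$,
\[ \partial_{x'} H_\half\paren{u, x'} = \frac{2v\paren{a - x'}}{\pi \abs{u - x'}^4}, \]
and AM-GM in the form $2 v \abs{a - x'} \leq \paren{a - x'}^2 + v^2 = \abs{u - x'}^2$ gives the clean estimate $\abs{\partial_{x'} H_\half\paren{u, x'}} \leq \frac{1}{\pi \abs{u - x'}^2}$. Taking $u = z$ and using the hypothesis $r < \abs{z - x'}$ controls the first term by $\frac{1}{\pi r^2}$.

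For the expectation term, I will differentiate under the expectation. The key geometric observation is that each $A_i$ with $i \geq 1$ is disjoint from $A_0 \supset \R$, so $A_i \subset \half$; together with $A_i \cap B_r^+\paren{x'} = \emptyset$ (forced by $B_r^+\paren{x'} \subset D$), this gives $A_i \cap B_r\paren{x'} = \emptyset$, hence $\abs{B_{\tau_D} - x'} \geq r$ almost surely on the relevant event. The same AM-GM bound then produces $\abs{\partial_{x'} H_\half\paren{B_{\tau_D}, x'}} \leq \frac{1}{\pi r^2}$ almost surely, and a dominating function for the difference quotient on, say, $\abs{h} \leq r/2$ (obtained via the mean value theorem using the improved lower bound $\abs{B_{\tau_D} - \paren{x' + h}} \geq r/2$) justifies the interchange of derivative and expectation by dominated convergence. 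Summing the two contributions yields $\abs{f_z'\paren{x'}} \leq \frac{2}{\pi r^2} \leq \frac{4}{\pi r^2}$. The only delicate point is the vanishing of the integrand on $\set{B_{\tau_D} \in \R}$, which follows from the interpretation of $H_\half\paren{\cdot, x'}$ as a Dirac delta boundary value; everything else is a direct calculation.
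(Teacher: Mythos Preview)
Your argument is correct. The paper itself defers the proof to \cite{DreThesis}, so there is no in-text proof to compare against, but your approach is the natural one given the tools already set up in the paper: apply \eqref{twoDomainPK} with $D_1=\half$, differentiate the explicit formula \eqref{PKhalf} in $x'$, and use the geometric fact that $\bigcup_{i\geq 1}\partial A_i$ stays at distance at least $r$ from $x'$ so that dominated convergence justifies differentiating under the expectation. Your AM--GM bound $\abs{\partial_{x'}H_\half(u,x')}\leq \frac{1}{\pi\abs{u-x'}^2}$ is clean and in fact yields the sharper constant $\frac{2}{\pi r^2}$.
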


\begin{proof}
See \cite{DreThesis}.
\end{proof}

\subsection{Some Brownian Measures}
Let $D \in \mathcal{Y}$.  If $\partial D$ is locally analytic at $w$, then the \emph{boundary Poisson kernel} is defined by
\[H_{\partial D}\paren{w,z}=\frac{d}{dn} H_D\paren{w,z},\]
where $n$ is the inward pointing normal at $w$.  If $w$ is a two-sided boundary point, then we have two distinct boundary Poisson kernels, $H_{\partial D}\paren{w^+,z}$ and $H_{\partial D}\paren{w^-,z}$.  In such cases, by abuse of notation, we will sometimes write $H_{\partial D}\paren{w,z}$ when we should consider $H_{\partial D}\paren{w^+,z}$ and $H_{\partial D}\paren{w^-,z}$ separately.  If $f$ is a conformal map and $\partial f\paren{D}$ is locally analytic at $f\paren{w}$ and $f\paren{z}$, then
\begin{equation}\label{BdPKCI}
H_{\partial D}\paren{w,z}=\abs{f'\paren{w}}\abs{f'\paren{z}}H_{\partial f\paren{D}}\paren{f\paren{w},f\paren{z}}.
\end{equation}
If $D$ is as in Lemma \ref{PKStandardChordalRMHalfDisk}, then using Lemma \ref{PKStandardChordalRMHalfDisk}, we have that
\begin{equation} \label{BoundaryPKSCEstimate}
H_{\partial D^{\epsilon}}\paren{z,\epsilon e^{i\theta}}=2 \sin \theta ~H_{\partial D}\paren{z,0} \bracket{1+O\paren{\epsilon}},
\end{equation}
where for any $r>0$, $O\paren{\epsilon}$ is uniform over all $z \in \partial D$ with $\abs{z}>r$.

The definition of excursion reflected Brownian motion uses excursion measure.  Excursion measure is usually defined as a measure on paths between two boundary points of $D$.  Since we will only be interested in the norm of this measure, the definition we give of excursion measure is the norm of excursion measure as defined elsewhere (\cite{MR2129588}, \cite{MR2247843}).  

\begin{definition}
Suppose $D \subset \C$ is a domain with locally analytic boundary and $V$ and $V'$ are disjoint arcs in $\partial D$.  Then
\begin{equation*}
\mathcal{E}_D\paren{V,V'} :=\int_{V}\int_{V'} H_{\partial D}\paren{z,w} \abs{dz}\abs{dw}
\end{equation*}
is called \emph{excursion measure}. Excursion measure normalized to have total mass one is called \emph{normalized excursion measure} and is denoted $\overline{\mathcal{E}}_D\paren{V,\cdot}$.  
\end{definition}

Using \eqref{BdPKCI}, we can check that $\mathcal{E}_D$ is conformally invariant.  This allows us to define $\mathcal{E}_D\paren{V,V'}$ even if $D$ does not have locally analytic boundary.  We will often write $\mathcal{E}_D\paren{A,V}$ for $\mathcal{E}_D\paren{\partial A,V}$ and $\overline{\mathcal{E}}_D\paren{A,V}$ for $\overline{\mathcal{E}}_D\paren{\partial A,V}$.  We will also write $H_{\partial D}\paren{A,z}$ as shorthand for the quantity $\int_{\partial A} H_{\partial D}\paren{z,w} \abs{dz}$.  Using \eqref{BdPKCI}, we see that if $f: D \rightarrow D'$ is a conformal map, then 
\[H_{\partial D}\paren{A,z}=H_{\partial f\paren{D}}\paren{f\paren{A},f\paren{z}}\abs{f'\paren{z}}.\]
As a result, it is possible to define $H_{\partial D}\paren{A,z}$ even if $A$ does not have locally analytic boundary.

We conclude with a brief discussion of the Brownian bubble measure. Let $D \in \mathcal{Y}_n^*$ be such that $\partial D$ is locally analytic and $x\in \partial D \cap \R$.  Define the \emph{Brownian boundary bubble measure at $x$ of bubbles leaving D} by
\begin{equation} \label{BubDef}
\Gamma\paren{D; x}=\Gamma\paren{D;x~|~\half,x}=\pi\int_{\partial D} H_{\partial D}\paren{x,z}H_{\half}\paren{z,x}\abs{dz}.
\end{equation}
It is not hard to check (see \cite{DreThesis}) that
\begin{equation}\label{BubbleAt0}
\pi H_D\paren{z,0}=\pi H_{\half}\paren{z,0}-\imag\bracket{z}\Gamma\paren{D;0} \bracket{1+O\paren{\abs{z}}},~~~z \rightarrow 0.
\end{equation}

\subsection{Green's Function for Brownian Motion}

In what follows, let $D \in \mathcal{Y}$ be such that $\partial D$ has at least one regular point for Brownian motion.  In this setting, it is possible to define a (a.s. finite) Green's function for Brownian motion $G_D\paren{z,w}$ (see, for instance, \cite{MR2129588}).  By convention, we scale $G_D\paren{z,\cdot}$ so that it is the density for the occupation time of Brownian motion.  As a result, what we mean by $G_D$ may differ by a factor of $\pi$ from what appears elsewhere.

It is well-known that $G_D\paren{z,w}=G_D\paren{w,z}$ and that $G_D\paren{z,\cdot}$ can be characterized as the unique harmonic function on $D \backslash \set{z}$ such that $G_D\paren{z,w} \rightarrow 0$ as $w \rightarrow \partial D$ and 
\begin{equation} \label{GreenAsztow}
G_D\paren{z,w}=\frac{-\log \abs{z-w}}{\pi}+O\paren{1},
\end{equation}
as $z \rightarrow w$.  Another property of $G_D\paren{z,w}$ is that it is conformally invariant.  That is, if $f:D \rightarrow D'$ is a conformal map, then $G_{f\paren{D}}\paren{f\paren{z},f\paren{w}}=G_D\paren{z,w}$.  Finally, it is well-known that
\begin{equation} \label{GFUniDisk}
G_{r\D}\paren{0,z}=-\frac{\log r - \log \abs{z}}{\pi}
\end{equation}
and
\begin{equation}\label{Greenztoi}
G_{\half}\paren{x+iy,i}=\frac{1}{2\pi} \log \frac{x^2+\paren{y+1}^2}{x^2+\paren{y-1}^2}.
\end{equation}

The normal derivative of $G_D\paren{z,\cdot}$ at $w \in \partial D$ is equal to $2H_D\paren{z,w}$.  While we will not need this fact, we will need the following lemma that is used in the proof and that we will use when we prove a similar statement for the Green's function for ERBM.
\begin{lemma} \label{GreenInt}
\[\int_0^{\pi}G_{\half}\paren{ e^{i\theta},\epsilon i} \sin \theta ~d\theta =\epsilon + O\paren{\epsilon^2},\]
as $\epsilon \rightarrow 0$.
\end{lemma}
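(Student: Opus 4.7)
The plan is to use the scaling invariance of the Green's function to reduce to the explicit formula \eqref{Greenztoi} and then Taylor expand in $\epsilon$. Since $z \mapsto z/\epsilon$ is a conformal automorphism of $\half$ sending $\epsilon i$ to $i$, conformal invariance gives $G_{\half}\paren{z, \epsilon i} = G_{\half}\paren{z/\epsilon, i}$, and substituting into \eqref{Greenztoi} and clearing a factor of $\epsilon^{-2}$ inside the logarithm yields
\[G_{\half}\paren{z, \epsilon i} = \frac{1}{2\pi}\log\frac{x^2 + \paren{y+\epsilon}^2}{x^2 + \paren{y-\epsilon}^2}.\]

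Next, I would specialize to $z = e^{i\theta}$, using $\cos^2\theta + \sin^2\theta = 1$ to get
\[G_{\half}\paren{e^{i\theta}, \epsilon i} = \frac{1}{2\pi}\log\frac{1 + 2\epsilon\sin\theta + \epsilon^2}{1 - 2\epsilon\sin\theta + \epsilon^2}.\]
Writing this as $\log\paren{1+u} - \log\paren{1+v}$ with $u = 2\epsilon\sin\theta + \epsilon^2$ and $v = -2\epsilon\sin\theta + \epsilon^2$, so that $u - v = 4\epsilon\sin\theta$ and $u + v = 2\epsilon^2$, and Taylor expanding $\log\paren{1+u}-\log\paren{1+v}$, one obtains
\[G_{\half}\paren{e^{i\theta}, \epsilon i} = \frac{2\epsilon\sin\theta}{\pi} + O\paren{\epsilon^3},\]
with the implicit constant uniform in $\theta \in \bracket{0,\pi}$ because $\abs{\sin\theta} \leq 1$. (The $\epsilon^2$ term vanishes automatically since the expression is an odd function of $\epsilon$: substituting $\epsilon \mapsto -\epsilon$ swaps numerator and denominator of the logarithm.)

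Finally, integrating against $\sin\theta$ and using $\int_0^{\pi}\sin^2\theta\, d\theta = \pi/2$,
\[\int_0^{\pi} G_{\half}\paren{e^{i\theta}, \epsilon i}\sin\theta\, d\theta = \frac{2\epsilon}{\pi}\cdot\frac{\pi}{2} + O\paren{\epsilon^3} = \epsilon + O\paren{\epsilon^3},\]
which is slightly stronger than the claimed $\epsilon + O\paren{\epsilon^2}$. There is no real obstacle here: once one notices that scaling reduces the Green's function evaluated at the interior point $\epsilon i$ to the explicit formula evaluated at $i$, the remainder is a routine expansion of a logarithm, and the only care needed is to verify that the error constant in the Taylor bound does not depend on $\theta$.
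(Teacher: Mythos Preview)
Your argument is correct and complete: the scaling reduction to \eqref{Greenztoi} is valid by conformal invariance of $G_{\half}$, the algebra specializing to $z=e^{i\theta}$ is right, and the oddness observation cleanly explains why the $\epsilon^2$ term vanishes, yielding the sharper $\epsilon + O(\epsilon^3)$. The paper itself does not give a proof here but defers to \cite{DreThesis}, so there is no in-paper argument to compare against; your approach is in any case the natural one given that \eqref{Greenztoi} has already been recorded.
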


\begin{proof}
See \cite{DreThesis}.
\end{proof}

Finally, if $D_2 \subset D_1$, it is easy to check that
\begin{equation}
G_{D_2}\paren{z,w}=G_{D_1}\paren{z,w}-\ev{z}{G_{D_1}\paren{B_{\tau_{D_2}},w}} \label{twoDomainGF}.
\end{equation}

\subsection{Chordal Loewner Equation}
In this section, we outline the proof of the chordal Loewner equation for a simple curve growing in the upper half-plane as presented in \cite{MR2129588}.  Our purpose is both to motivate the proof we give of the analogous result in non-simply connected domains and to gather some preliminary results needed for that proof.  The omitted proofs can be found in one of \cite{MR2129588} or \cite{DreThesis}.

The Loewner equation we are interested in is a differential equation governing the behavior of the conformal map that maps the upper half-plane with a simple curve removed onto the upper half-plane.  The next proposition shows that there is a unique such conformal map with a specified asymptotic at infinity.

\begin{proposition} \label{HalfRMHullCon}
If $A \in \mathcal{Q}$, then there is a unique conformal map $g_A:\half \backslash A \rightarrow \half$ such that 
\[\displaystyle \lim_{z \rightarrow \infty}\bracket{g_A\paren{z}-z}=0.\]
In particular, $g_A$ has an expansion near infinity of the form
\[g_A\paren{z}=z+\frac{a_1}{z}+O\paren{{\abs{z}^{-2}}},~~~ z \rightarrow \infty.\]
\end{proposition}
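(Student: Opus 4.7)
The plan is to prove existence via the Riemann mapping theorem combined with a Schwarz reflection argument at infinity, and to get uniqueness from the classification of conformal automorphisms of $\half$. Since $\half \setminus A$ is a simply connected proper subdomain of $\C$, the Riemann mapping theorem produces some conformal map $\phi : \half \setminus A \to \half$. Letting $K = \overline{A} \subset \C$ (closure in $\C$, compact), I would extend $\phi$ by Schwarz reflection across $\R \setminus K$ to obtain a conformal map $\tilde\phi$ on the symmetric domain $\tilde D = \C \setminus (K \cup K^*)$, where $K^* = \{\bar z : z \in K\}$. This requires first verifying that $\phi$ has continuous real boundary values on $\R \setminus K$; since these are open analytic arcs of $\partial(\half \setminus A)$, this is standard local boundary regularity. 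Compactness of $K$ ensures $\tilde D$ contains a punctured neighborhood of $\infty$ in $\hat{\C}$, so $\tilde\phi$ has an isolated singularity at $\infty$.

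The next step is to analyze this singularity. Injectivity of $\tilde\phi$ rules out essential singularities, and a removable singularity would force $\tilde\phi$ to be bounded on a neighborhood of $\infty$, contradicting the unboundedness of the image (which contains $\half$). Hence $\tilde\phi$ has a simple pole at $\infty$ with Laurent expansion
\[ \tilde\phi(z) = \alpha z + \beta + c_1/z + c_2/z^2 + \cdots. \]
The reflection symmetry $\tilde\phi(\bar z) = \overline{\tilde\phi(z)}$ forces all coefficients to be real, and the fact that $\phi$ maps $\half \setminus A$ into $\half$ forces $\alpha > 0$. Setting $g_A(z) := (\tilde\phi(z) - \beta)/\alpha$ then yields a conformal map $\half \setminus A \to \half$ with the required normalization and the expansion $g_A(z) = z + a_1/z + O(|z|^{-2})$, where $a_1 = c_1/\alpha$.

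For uniqueness, if $g_1, g_2$ are two such maps, then $g_1 \circ g_2^{-1}$ is a conformal self-map of $\half$, hence a real M\"obius transformation. The normalization $g_i(z) - z \to 0$ forces each $g_i$ to send $\infty$ to $\infty$, so the composition has the form $w \mapsto aw + b$ with $a > 0$ and $b \in \R$. Inverting the Laurent expansion gives $g_2^{-1}(w) = w + O(1/w)$, and composing with $g_1(z) = z + O(1/z)$ yields $g_1 \circ g_2^{-1}(w) = w + o(1)$, forcing $a = 1$ and $b = 0$.

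The main obstacle is the Schwarz reflection step together with the subsequent singularity analysis at $\infty$. The hull can meet $\R$ in a complicated compact set, so one must carefully verify continuity up to the boundary on the possibly disconnected union of analytic arcs $\R \setminus K$ before the reflection is legitimate. Once $\tilde\phi$ is in hand, the simple-pole conclusion at $\infty$ is a standard consequence of injectivity together with the unboundedness of the image, and the rest of the argument is routine bookkeeping of the Laurent coefficients.
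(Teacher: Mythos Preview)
Your proof is correct and follows the standard route; the paper itself does not prove this proposition but cites it as background from \cite{MR2129588}, where essentially the same Riemann-mapping-plus-Schwarz-reflection argument is given.

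One minor imprecision worth tightening: the sentence ``a removable singularity would force $\tilde\phi$ to be bounded on a neighborhood of $\infty$, contradicting the unboundedness of the image'' is not a complete argument as written, since boundedness of $\tilde\phi$ on $\{|z|>R\}$ does not by itself preclude $\tilde\phi(\tilde D)\supset\half$ --- the unbounded part of the image could in principle come from points of $\tilde D$ near the compact set $K\cup K^*$. The clean way to close this is to note that if the singularity were removable, the extended injective map $\tilde\phi:\tilde D\cup\{\infty\}\to\hat\C$ would omit $\infty$ from its image, so its inverse would send $w\to\infty$ toward the compact boundary $K\cup K^*$; but $\phi^{-1}$ maps $\half$ onto the unbounded set $\half\setminus A$, a contradiction. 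This is routine and does not affect the overall correctness of your outline.
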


The constant $a_1$ is called the \emph{half-plane capacity (from infinity)} for $A$ and is denoted $\hcap\paren{A}$.  There are several different ways to compute $\hcap\paren{A}$.

\begin{proposition}\label{HCapEqualities}
Suppose $A \in \mathcal{Q}$, $B_t$ is a Brownian motion in $\half$, and $\tau$ is the first time that $B_t$ leaves $\half \backslash A$.  Then for all $z \in \half \backslash A$,
\[\imag\bracket{z-g_A\paren{z}}=\ev {z}{\imag
\bracket{B_{\tau}}}.\]
Also, $\hcap\paren{A}$ is equal to each of the following.
\begin{enumerate}
\item $\displaystyle \lim_{y \rightarrow \infty}y\ev{iy}{\imag\bracket{B_{\tau}}}.$
\item $\displaystyle  \frac{2r}{\pi}\int_0^{\pi}\ev{re^{i\theta}}{\imag\bracket{B_{\tau}}}\sin \theta ~d\theta$ for any $r>0$ such that $\rad\paren{A}<r$.
\end{enumerate}
\end{proposition}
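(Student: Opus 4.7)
The plan is to prove the three assertions by viewing $u(z) = \imag[z - g_A(z)]$ as a bounded harmonic function on $\half \setminus A$ and reading off its value in two different ways: probabilistically via the Brownian exit distribution, and analytically via the Laurent expansion at infinity.

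For the first identity, I would observe that $u$ is harmonic on $\half \setminus A$ since $z - g_A(z)$ is holomorphic there, and that $u$ extends continuously to $\partial(\half \setminus A)$. On $\R \setminus A$, Schwarz reflection through the real line (which applies because $g_A$ maps $\half \setminus A$ conformally onto $\half$) shows $g_A$ is real, so $u = 0$. On $\partial A \cap \half$, the map $g_A$ takes values on $\R$, so $u(z) = \imag[z]$. Finally, $u(z) \to 0$ as $z \to \infty$ by the normalization in Proposition \ref{HalfRMHullCon}, and $u$ is bounded on $\half \setminus A$. Solving the Dirichlet problem with Brownian motion then gives $u(z) = \ev{z}{u(B_\tau)} = \ev{z}{\imag[B_\tau]}$ (the real-line contribution vanishes since $\imag[B_\tau] = 0$ there), which is the first formula.

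For the second formula, I would first note that $\hcap(A)$ is real: Schwarz reflection extends $g_A$ holomorphically across $\R \setminus \overline{A}$ with $g_A(\bar z) = \overline{g_A(z)}$, so the Laurent coefficients near $\infty$ are real. Writing $g_A(z) = z + a_1/z + O(|z|^{-2})$, we get $\imag[z - g_A(z)] = a_1 \imag[z]/|z|^2 + O(|z|^{-2})$. Evaluating at $z = iy$ yields $u(iy) = a_1/y + O(y^{-2})$; multiplying by $y$ and sending $y \to \infty$, combined with the first formula, gives $a_1 = \lim_{y \to \infty} y \ev{iy}{\imag[B_\tau]}$.

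For the third formula, I would exploit the same asymptotic but extract $a_1$ by integrating over a semicircle rather than sending a single point to infinity. Fix $r > \rad(A)$; then on the domain $\half^r = \{z \in \half : |z| > r\}$ the function $u$ is bounded, harmonic, vanishes on $\R \cap \partial \half^r$, and tends to $0$ at infinity, so the Poisson representation gives
\[u(z) = r \int_0^\pi u(re^{i\theta}) H_{\half^r}(z, re^{i\theta}) \, d\theta\]
for $|z| > r$. Specialize to $z = iy$ with $y \to \infty$: the estimate \eqref{PKHalfRMHalfDisk} shows $H_{\half^r}(iy, re^{i\theta}) = 2 H_{\half}(iy, 0) \sin\theta \, [1 + O(r/y)] = (2/(\pi y)) \sin\theta \, [1 + O(1/y)]$, and on the other hand $u(iy) = a_1/y + O(y^{-2})$. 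Equating the two expressions, multiplying by $y$, and taking $y \to \infty$ yields $a_1 = (2r/\pi) \int_0^\pi u(re^{i\theta}) \sin\theta \, d\theta$, which by the first formula is the asserted identity.

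The main technical issue is verifying the boundary behavior and continuity of $u$ needed to justify the Dirichlet-problem identifications (especially near the endpoints of $A$ meeting $\R$, where $\partial A$ may be rough). This is standard and reduces to the fact that any compact $\half$-hull has a boundary that is regular for Brownian motion, so harmonic functions with continuous boundary data on the ``nice'' parts of $\partial(\half \setminus A)$ are recovered by their Brownian exit expectations; the rest is routine, using only the expansion from Proposition \ref{HalfRMHullCon} and the Poisson-kernel asymptotic \eqref{PKHalfRMHalfDisk} already established.
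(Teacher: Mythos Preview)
Your proof is correct and follows essentially the same approach as the paper (which defers to \cite{MR2129588} for this statement but reproduces the identical argument for the ERBM analog in Proposition~\ref{ERHCapEqualities}): identify $\imag[z-g_A(z)]$ as a bounded harmonic function with the right boundary values to get the probabilistic representation, read off $\hcap(A)$ from the expansion at infinity for (1), and then use the Poisson-kernel estimate \eqref{PKHalfRMHalfDisk} on the exterior half-disk to pass from (1) to (2). Your remark about Schwarz reflection forcing $\hcap(A)\in\R$ is a useful addition that the paper leaves implicit.
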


We give one more interpretation of $\hcap\paren{A}$.

\begin{proposition}\label{HCapMeas}
Let $A \in \mathcal{Q}$ and $D=\half \backslash A$ and for any Borel subset $V$ of $\partial A$, define
\[\mu_{A}\paren{V}=\lim_{y \rightarrow \infty}  y\hm_D\paren{iy,V}.\]
The following statements hold.
\begin{enumerate}
\item  If $\rad\paren{A}<R$, then $\displaystyle \mu_{A}\paren{V}=\frac{2R}{\pi} \int_0^{\pi} \hm_{D}\paren{Re^{i\theta},V} \sin \theta ~d\theta.$
\item  $\mu_A$ is a measure.
\item  $\displaystyle \hcap\paren{A}=\int \imag\bracket{z} d\mu_{A}\paren{z}.$
\end{enumerate}
\end{proposition}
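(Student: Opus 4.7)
The proof splits into the three claims, with (1) doing most of the work. My plan for (1) is to apply the strong Markov property at the time $\sigma$ when a Brownian motion started at $iy$ (with $y > R > \rad\paren{A}$) first hits $\partial B_R\paren{0}$. Since $V \subset \partial A \subset B_R\paren{0}$ and Brownian paths are continuous, any path exiting $D$ in $V$ must cross the upper semicircle $\partial B_R^+\paren{0}$ before leaving $\half$. On the event $\set{\sigma < \tau_{\half}}$ the density of $B_\sigma$ along $\partial B_R^+\paren{0}$ is precisely the Poisson kernel $H_{H^R}\paren{iy, \cdot}$, so
\[\hm_D\paren{iy, V} = \int_0^\pi H_{H^R}\paren{iy, Re^{i\theta}}\, \hm_D\paren{Re^{i\theta}, V}\, R\, d\theta.\]
By \eqref{PKHalfRMHalfDisk} we have $y\, H_{H^R}\paren{iy, Re^{i\theta}} = \frac{2\sin\theta}{\pi}\bracket{1 + O\paren{R/y}}$, with the error term uniform in $\theta$. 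Since $\hm_D\paren{Re^{i\theta}, V} \le 1$, dominated convergence finishes (1) and simultaneously verifies that the limit defining $\mu_A\paren{V}$ exists.

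For (2), the formula in (1) represents $\mu_A\paren{V}$ as the integral of $V \mapsto \hm_D\paren{Re^{i\theta}, V}$, a probability measure for each $\theta$, against the finite positive measure $\frac{2R}{\pi}\sin\theta\, d\theta$ on $\bracket{0,\pi}$. Countable additivity follows by monotone convergence applied to a disjoint union, and non-negativity together with $\mu_A\paren{\emptyset}=0$ are immediate.

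For (3), I would start from Proposition \ref{HCapEqualities}(2), express $\ev{Re^{i\theta}}{\imag\bracket{B_\tau}}$ as $\int_{\partial A} \imag\bracket{w}\, d\hm_D\paren{Re^{i\theta}, w}$ (using that $\imag\bracket{w} = 0$ for $w \in \R \cap \partial D$), and swap the order of integration by Fubini. Extending (1) from indicators to bounded non-negative Borel functions in the standard way, the result of the swap is $\int_{\partial A} \imag\bracket{w}\, d\mu_A\paren{w}$, giving (3).

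The main obstacle is the uniform asymptotic in (1): we need the $O\paren{R/y}$ error in \eqref{PKHalfRMHalfDisk} to be uniform in $\theta \in \bracket{0,\pi}$, and we need an integrable dominating function on $\bracket{0,\pi}$ for dominated convergence. Both are routine once uniformity is tracked through the derivation of \eqref{PKHalfRMHalfDisk}, but this is the one step genuinely requiring care; (2) and (3) are then soft consequences of the formula in (1).
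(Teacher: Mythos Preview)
Your proof is correct and follows the standard approach. The paper itself does not prove this proposition; it is one of the background results in Section~\ref{sectBG} whose proof is deferred to \cite{MR2129588} or \cite{DreThesis}, and your argument is essentially the one found there: stop the Brownian motion at the first exit from $\half\setminus\overline{B_R(0)}$, apply the strong Markov property together with the uniform estimate \eqref{PKHalfRMHalfDisk} to obtain (1), read off (2) from the integral representation, and deduce (3) from Proposition~\ref{HCapEqualities}(2) via Fubini.

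One minor point of phrasing: when you say ``on the event $\{\sigma<\tau_{\half}\}$ the density of $B_\sigma$ along $\partial B_R^+(0)$ is precisely $H_{H^R}(iy,\cdot)$,'' it is cleaner to say that $H_{H^R}(iy,Re^{i\theta})$ is the density of the exit distribution from $H^R$ restricted to the semicircle (the remainder of the exit distribution sits on $\R\setminus[-R,R]$, where it contributes nothing to $\hm_D(iy,V)$ since $V\subset\overline{B_R(0)}$). This is what you use, and it makes the displayed identity for $\hm_D(iy,V)$ exact rather than conditional. Your concern about uniformity of the $O(R/y)$ term in $\theta$ is well placed but easily handled: the estimate \eqref{PKHalfRMHalfDisk} already gives a bound of the form $yH_{H^R}(iy,Re^{i\theta})\le C\sin\theta$ for all $y\ge 2R$, which serves as the dominating function.
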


Using Proposition \ref{HCapEqualities}, we can find a uniform bound on the difference between $g_A\paren{z}$ and $z+\frac{\hcap\paren{A}}{z}$ in terms of $\hcap\paren{A}$ and $\rad\paren{A}$.

\begin{proposition}\label{SCLEat0}
There is a $c < \infty$ such that for all $A \in \mathcal{Q}$ and $\abs{z} \geq 2\rad\paren{A}$,
\[ \left | z - g_A\paren{z} + \frac{\hcap\paren{A}}{z} \right | \leq c \frac{\hcap\paren{A}\rad\paren{A}}{\abs{z}^2}.\]
\end{proposition}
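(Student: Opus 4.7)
First I would extend $\psi\paren{z} := z - g_A\paren{z}$ by Schwarz reflection to a holomorphic function on $\C \setminus \paren{A \cup \overline{A}}$, write its Laurent series at infinity, and bound each higher-order Laurent coefficient directly by $\hcap\paren{A}$ using Proposition \ref{HCapEqualities}(2).

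Concretely, since $g_A$ takes real boundary values on $\R \setminus \overline{A}$, $\psi$ extends to a holomorphic function on $\C \setminus \paren{A \cup \overline{A}}$ satisfying $\psi\paren{\overline{z}} = \overline{\psi\paren{z}}$. As $A \cup \overline{A} \subset \overline{B_{\rad\paren{A}}\paren{0}}$ and $\psi\paren{z} \to 0$ at infinity, there is a Laurent expansion
\[\psi\paren{z} = \sum_{n \geq 1} \frac{b_n}{z^n},\]
valid on $\set{\abs{z} > \rad\paren{A}}$ with real coefficients $b_n$ (by Schwarz symmetry), and Proposition \ref{HalfRMHullCon} identifies $b_1 = -\hcap\paren{A}$. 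For $R > \rad\paren{A}$ and $w = Re^{i\theta}$, taking imaginary parts gives
\[\imag\bracket{\psi\paren{Re^{i\theta}}} = -\sum_{n \geq 1} \frac{b_n}{R^n} \sin(n\theta);\]
since this function of $\theta$ is odd about $0$ by Schwarz symmetry, integrating against $\sin(n\theta)$ inverts the series to
\[\frac{b_n}{R^n} = -\frac{2}{\pi} \int_0^\pi \imag\bracket{\psi\paren{Re^{i\theta}}} \sin(n\theta)\, d\theta.\]

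The key estimate combines two elementary facts: (i) $\abs{\sin(n\theta)} \leq n \sin\theta$ for $\theta \in \paren{0,\pi}$, proved by induction using $\sin((n+1)\theta) = \sin(n\theta)\cos\theta + \cos(n\theta)\sin\theta$; and (ii) the nonnegativity $\imag\bracket{\psi\paren{Re^{i\theta}}} = \ev{Re^{i\theta}}{\imag\bracket{B_\tau}} \geq 0$. Combined with Proposition \ref{HCapEqualities}(2), which reads
\[\hcap\paren{A} = \frac{2R}{\pi} \int_0^\pi \imag\bracket{\psi\paren{Re^{i\theta}}} \sin\theta\, d\theta,\]
these yield, for every $n \geq 1$,
\[\abs{\frac{b_n}{R^n}} \leq \frac{2n}{\pi} \int_0^\pi \imag\bracket{\psi\paren{Re^{i\theta}}} \sin\theta\, d\theta = \frac{n\, \hcap\paren{A}}{R},\]
so $\abs{b_n} \leq n\, \hcap\paren{A}\, R^{n-1}$.

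Finally, for $\abs{z} \geq 2R$, the identity $\sum_{n \geq 2} n x^n = x^2(2-x)/(1-x)^2$ gives $\sum_{n \geq 2} n x^n \leq 8x^2$ when $x = R/\abs{z} \leq 1/2$, so
\[\abs{z - g_A\paren{z} + \frac{\hcap\paren{A}}{z}} = \abs{\sum_{n \geq 2} \frac{b_n}{z^n}} \leq \frac{\hcap\paren{A}}{R} \sum_{n \geq 2} n \paren{\frac{R}{\abs{z}}}^n \leq \frac{8\, \hcap\paren{A}\, R}{\abs{z}^2}.\]
Letting $R \downarrow \rad\paren{A}$ (with the boundary case $\abs{z} = 2\rad\paren{A}$ handled by choosing $R = \rad\paren{A}(1+\delta)$ for small $\delta$) gives the claimed bound with universal constant. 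The only mildly delicate point is the interlocking of Proposition \ref{HCapEqualities}(2) with $\abs{\sin(n\theta)} \leq n\sin\theta$: the former pins the first Fourier sine coefficient of $\imag\bracket{\psi}$ at $\hcap\paren{A}/R$, and the latter propagates this control to every higher coefficient with only a linear-in-$n$ loss, which is exactly enough for the Laurent tail to sum to the desired $O\paren{\hcap\paren{A}\,\rad\paren{A}/\abs{z}^2}$.
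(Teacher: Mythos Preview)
Your proof is correct. It differs from the approach the paper references (Lawler's book \cite{MR2129588}), which---as reflected in the paper's own proof of the multiply-connected analogue, Proposition~\ref{LEat0}---first bounds the imaginary part $v(z)=\imag\bracket{z-g_A(z)+\hcap(A)/z}$ via the Poisson-kernel estimate \eqref{PKHalfRMHalfDisk} and Proposition~\ref{HCapEqualities}(2), and then upgrades this to a bound on the full holomorphic function using the harmonic derivative bound Lemma~\ref{HarDerBd} together with integration along a path to infinity. Your route instead expands $\psi$ in a Laurent series and controls each coefficient by Fourier inversion combined with the elementary inequality $\abs{\sin(n\theta)}\leq n\sin\theta$; this is more self-contained, avoids the derivative-bound machinery entirely, and delivers an explicit universal constant (in fact $c=6$ works, since $(2-x)/(1-x)^2$ is increasing on $[0,1/2]$ with value $6$ at $x=1/2$). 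The paper's approach, by contrast, transplants directly to the multiply-connected setting of Proposition~\ref{LEat0}, where no global Laurent expansion about infinity is available for $z-h_A^D(z)$, which is why it is the template used there. One minor simplification: once you have $\abs{b_n}\leq n\,\hcap(A)\,R^{n-1}$ for every $R>\rad(A)$, letting $R\downarrow\rad(A)$ gives $\abs{b_n}\leq n\,\hcap(A)\,\rad(A)^{n-1}$ directly, and the tail sum then works for all $\abs{z}\geq 2\rad(A)$ without any separate boundary-case argument.
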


This result can be interpreted as a proof of the chordal Loewner equation at $t=0$.  That is, if we think of $A$ as being the trace of a simple curve at a small time $t$, this result shows that the time derivative of $g_A$ at $0$ is equal to the time derivative of $\hcap\paren{A}$ at $0$ divided by $z$.
 
In what follows, let $\gamma: \left[0, \infty \right) \rightarrow \C$ be a simple curve such that $\gamma\paren{0}\in \R$ and $\gamma\paren{0,\infty} \subset \half$.  Let $a\paren{t}=\hcap\paren{\gamma_t}$ and assume (reparametrizing if necessary) that $a\paren{t}$ is $C^1$.  For each $t \geq 0$, let $\gamma_t := \gamma\bracket{0,t}$ and $g_t : \half \backslash \gamma_t \rightarrow \half$ be the unique conformal transformation satisfying $\displaystyle \lim_{z \rightarrow \infty} g_t\paren{z}-z=0$. For each $s>0$, let $\gamma^s\paren{t} = g_s\paren{\gamma\paren{s+t}}$ and $g_{s,t}=g_{\gamma^{s}_{t-s}}$.  Observe that $g_t = g_{s,t} \circ g_s$.  The next proposition shows that $g_t$ maps the ``tip'' of $\gamma_t$ to a unique $U_t \in \R$ and that the resulting function $t \mapsto U_t$ is continuous.

\begin{proposition}\label{UtCont}
For all $t>0$, there is a unique $U_t \in \R$ such that
\begin{equation*}
\displaystyle \lim_{z \rightarrow \gamma\paren{t}} g_t\paren{z}=U_t,
\end{equation*}
where the limit is taken over $z \in \half \backslash \gamma_t$.  Furthermore,
\begin{equation*}
U_t=\lim_{s \rightarrow t^-} g_s\paren{\gamma\paren{t}}
\end{equation*}
and $t \mapsto U_t$ is continuous.
\end{proposition}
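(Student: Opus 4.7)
The plan is to prove three assertions in order: (a) existence and uniqueness of $U_t$, (b) the formula $U_t = \lim_{s \to t^-} g_s(\gamma(t))$, and (c) continuity of $t \mapsto U_t$. Throughout I use the semigroup factorization $g_t = g_{s,t} \circ g_s$ for $s < t$.

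For (a), I would invoke the Carath\'{e}odory extension theorem in the prime-end topology. Because $\gamma$ is a simple curve, $\half \setminus \gamma_t$ is simply connected and its prime-end boundary is a Jordan curve: the two sides of the slit give distinct prime ends, but the tip $\gamma(t)$ is a single prime end approached from both sides. Consequently $g_t$ extends to a homeomorphism of prime-end closures, producing a unique $U_t \in \R$ at the tip.

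For (b), observe that for $s < t$ the point $\gamma(t)$ is interior to $\half \setminus \gamma_s$, so $g_s(\gamma(t))$ is well defined and equals the tip of the simple curve $\gamma^s_{t-s} \subset \half$; applying (a) to this curve gives that $g_{s,t}$, extended at this tip, equals $U_t$. The key ingredient is the diameter bound $\diam(\gamma^s_{t-s}) \to 0$ as $s \to t^-$. Granting it, an elementary conformal estimate (via a translated version of Proposition \ref{SCLEat0}) shows that $g_{s,t}$ maps the two sides of the short slit to an interval on $\R$ of length $O(\diam(\gamma^s_{t-s}))$ lying within that distance of $U_s$ and containing $U_t$. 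Thus $|g_s(\gamma(t)) - U_s| \leq \diam(\gamma^s_{t-s})$ and $|U_t - U_s| = O(\diam(\gamma^s_{t-s}))$, proving both (b) and the left-continuity half of (c). Right-continuity is proved by the same argument with $s > t$ and $g_s = g_{t,s} \circ g_t$; now the diameter bound $\diam(\gamma^t_{s-t}) \to 0$ follows immediately from the continuity of the fixed extended map $g_t$ applied to $\gamma[t, s]$, which shrinks into a neighborhood of the prime end $\gamma(t)$.

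The main obstacle is the left-sided diameter estimate $\diam(\gamma^s_{t-s}) \to 0$: here the conformal map $g_s$ itself varies with $s$, and its modulus of continuity at the tip $\gamma(s)$ is not uniform in $s$. I would combine Carath\'{e}odory kernel convergence $g_s \to g_t$ locally uniformly on $\half \setminus \gamma_t$ with the continuous extension of $g_t$ at the prime end $\gamma(t)$: given $\eta > 0$, $g_t$ maps some neighborhood of $\gamma(t)$ in $\half \setminus \gamma_t$ into $B_\eta(U_t)$, so on any compact sub-neighborhood $g_s$ maps into $B_{2\eta}(U_t)$ once $s$ is sufficiently close to $t$. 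A diagonal argument, together with a Koebe-type distortion estimate used to pass from interior approximants $\gamma(t) + \epsilon e^{i\alpha} \in \half \setminus \gamma_t$ to the point $\gamma(t)$ itself (which is interior to $\half \setminus \gamma_s$ for $s < t$), then converts this into the required uniform bound on $g_s(\gamma[s, t])$.
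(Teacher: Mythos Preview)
Your reduction is sound: part (a) via prime ends is a clean alternative to the paper's route, and once the diameter bound $\diam\bigl(g_s(\gamma[s,t])\bigr)\to 0$ as $s\to t^-$ is in hand, your derivation of (b) and of continuity is correct (the estimate $|g_A(z)-z|\leq 3\,\rad(A)$ from Lemma~\ref{gDerivR} does the job you attribute to Proposition~\ref{SCLEat0}).  The gap is in your proof of that diameter bound.

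Carath\'eodory kernel convergence gives $g_s\to g_t$ only on compact subsets of $\half\setminus\gamma_t$, and the arc $\gamma(s,t]$ is not contained in any such compact set: its points accumulate at the boundary point $\gamma(t)$, and for $r$ just above $s$ the point $\gamma(r)$ sits at distance $o(1)$ from $\gamma_s$.  Your proposed fix---pass to interior approximants and close the gap with Koebe distortion---does not work as stated, because Koebe only controls $g_s$ on a disk of radius comparable to $\dist\bigl(\gamma(r),\partial(\half\setminus\gamma_s)\bigr)$, and that distance collapses exactly where you need the estimate.  No diagonal argument repairs this: you are trying to extract boundary regularity of a varying family of maps from interior convergence, and without a further quantitative input there is no reason the moduli of continuity at the tips should be uniform in $s$.

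The paper's route supplies precisely this missing quantitative input.  It states Lemma~\ref{gDiamCurve}, which gives the explicit bound
\[
\diam\bigl[g_s(\gamma(s,t))\bigr]\;\leq\; c\,\sqrt{\diam(\gamma[0,t_0])\cdot\osc(\gamma,t-s,t_0)},
\]
and notes that the main tool in its proof is the Beurling estimate.  The mechanism is that Beurling bounds the harmonic measure of $\gamma(s,t)$ in $\half\setminus\gamma_s$ seen from far away (the short arc is shielded by the rest of the slit and the real line), and by conformal invariance this equals the harmonic measure of $g_s(\gamma(s,t))$ in $\half$, which is comparable to its diameter.  This is the substantive estimate your argument lacks; once you invoke Lemma~\ref{gDiamCurve} in place of the kernel-convergence paragraph, the rest of your outline goes through.
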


The main tool in the proof of Proposition \ref{UtCont} is the following technical lemma.

\begin{lemma}\label{gDiamCurve}
There exists a constant $c < \infty$ such that if $0 \leq s < t \leq t_0< \infty$, then
\[\diam \bracket{g_s\paren{\gamma\paren{s,t}}}\leq c \sqrt{\diam\paren{\gamma\bracket{0,t_0}}\osc\paren{\gamma,t-s,t_0}}\]
and
\[\left\|g_s-g_t\right\|_{\infty} \leq c \sqrt{\diam \paren{\gamma \bracket{0,t_0}}\osc\paren{\gamma,t-s,t_0}},\]
where
\[\osc\paren{\gamma,\delta,t_0}=\sup \set{\abs{\gamma\paren{s}-\gamma\paren{t}}: 0 \leq s<t \leq t_0; \abs{t-s}<\delta} \]
and
$g_s - g_t$ is considered as a function on $\half \backslash \gamma_t$.
\end{lemma}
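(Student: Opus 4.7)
The plan is to prove the two bounds in sequence: first the diameter estimate on $g_s(\gamma(s,t))$, and then derive the sup-norm estimate from it using the semigroup identity $g_t = g_{s,t} \circ g_s$.

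Set $R = \diam(\gamma[0,t_0])$ and $\rho = \osc(\gamma, t-s, t_0)$, so that $\gamma[s,t] \subset \overline{B_\rho(\gamma(s))}$ by the definition of $\osc$. Let $\eta := g_s(\gamma(s,t])$; by Proposition \ref{UtCont} this is a connected set in $\overline{\half}$ attached to $U_s \in \R$. The main step is a H\"older-$\tfrac12$ continuity bound at the boundary: for the hydrodynamically normalized map $g_s : \half \setminus \gamma_s \to \half$ with $\rad(\gamma_s) \le R$, one has
\[ |g_s(w) - g_s(w')| \le c\sqrt{R\, |w-w'|} \]
whenever $w, w' \in \partial(\half\setminus \gamma_s)$ and $|w-w'|$ is at most $R$. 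Applying this with $w = \gamma(s)$ (where $g_s(\gamma(s)) = U_s$) and $w' = \gamma(u)$ for each $u \in (s,t]$ gives $|g_s(\gamma(u)) - U_s| \le c\sqrt{R\rho}$, so that $\diam(\eta) \le 2c\sqrt{R\rho}$.

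The H\"older estimate itself is the main obstacle, and is where I would invoke Beurling's projection theorem. The idea: fix a large $Y$ and consider $z_Y = g_s^{-1}(iY)$; by Proposition \ref{SCLEat0}, $z_Y = iY + O(R^2/Y)$, so $\dist(z_Y, \gamma(s)) \asymp Y$ for $Y \gg R$. Beurling's projection theorem in the simply connected domain $\half \setminus \gamma_s$ bounds the harmonic measure at $z_Y$ of $\overline{B_\rho(\gamma(s))} \cap \overline{\half\setminus\gamma_s}$ by $c\sqrt{R\rho}/Y$, since the relevant set is contained in a disk of radius $\rho$ attached (through $\gamma_s$, of radius $R$) to the boundary. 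By conformal invariance of harmonic measure, this equals the harmonic measure at $iY$ of the image set in $\half$, which contains $\eta$. A standard lower bound on harmonic measure of sets attached to $\R$ then forces $\diam(\eta) \le c\sqrt{R\rho}$. The delicate point is applying Beurling \emph{uniformly} over $s \in [0,t_0]$, which is why one works with the $\rad(\gamma_s) \le R$ bound rather than $\hcap$.

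For the sup-norm estimate, write $g_t - g_s = g_{s,t} \circ g_s - g_s$, where $g_{s,t}$ is the hydrodynamic map for the hull $\eta$. Substituting $w = g_s(z)$ gives $\|g_t - g_s\|_\infty = \|g_{s,t} - \mathrm{id}\|_{\infty, \,\half\setminus\eta}$. A standard consequence of Proposition \ref{SCLEat0} and the bound $\hcap(\eta) \le c\,\rad(\eta)^2$ is $\|g_A - \mathrm{id}\|_\infty \le c\,\rad(A)$ for any $A \in \mathcal{Q}$: for $|w| \ge 2\rad(A)$ this follows directly from Proposition \ref{SCLEat0}, and for $|w| < 2\rad(A)$ one uses that $g_A$ maps $\partial A$ into a bounded interval of $\R$ centered near the projection of $A$, controlled by $\rad(A)$. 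Applying this with $A = \eta$ gives $\|g_t - g_s\|_\infty \le c\,\rad(\eta) \le c\,\diam(\eta) \le c\sqrt{R\rho}$, completing the proof.
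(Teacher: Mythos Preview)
Your overall architecture is correct and matches what the paper indicates: the paper does not prove Lemma~\ref{gDiamCurve} in full but states that the two ingredients are the Beurling estimate (for the diameter bound) and Lemma~\ref{gDerivR} (for the sup-norm bound via $\|g_A - \mathrm{id}\|_\infty \le 3\rad(A)$). Your sketch uses exactly these tools, and your two-scale harmonic-measure estimate $c\sqrt{R\rho}/Y$ is the right intermediate bound.

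There is, however, one genuine circularity. You invoke Proposition~\ref{UtCont} to say that $\eta = g_s(\gamma(s,t])$ is attached to $U_s \in \R$, and you later write $g_s(\gamma(s)) = U_s$. But the paper explicitly says that Lemma~\ref{gDiamCurve} is \emph{the main tool in the proof of Proposition~\ref{UtCont}}, so you cannot cite it here. The fix is easy: you only need that $\overline{\eta}$ is connected and touches $\R$, which follows from continuity of $g_s$ on $\half\setminus\gamma_s$ together with Lemma~\ref{gDerivR} (any cluster point of $g_s(\gamma(u))$ as $u\to s^+$ lies in $\R$, and these cluster points are bounded). Then apply your Beurling argument to bound $\diam(\eta)$ directly, without reference to a single attachment point $U_s$.

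A smaller point: your stated H\"older bound $|g_s(w)-g_s(w')|\le c\sqrt{R|w-w'|}$ for arbitrary $w,w'\in\partial(\half\setminus\gamma_s)$ is stronger than needed and not obviously true in that generality (consider $w,w'$ on $\R$ far from $\gamma_s$, where $g_s$ is nearly the identity and certainly not $\tfrac12$-H\"older with constant $\sqrt{R}$). What your Beurling argument actually proves is the weaker, sufficient statement that the image of the specific set $\gamma(s,t]\subset B_\rho(\gamma(s))$ has diameter $\le c\sqrt{R\rho}$; just state it that way.
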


The main tool in proving Lemma \ref{gDiamCurve} is the Beurling estimate, which we will not discuss here.  The proof also needs the following useful lemma.

\begin{lemma} \label{gDerivR}
Let $A \in \mathcal{Q}$ and $\rad\paren{A}=r$. Then for all $x>r$,
\[x \leq g_A\paren{x} \leq x+\frac{r^2}{x}\]
and for all $x<-r$,
\[x+\frac{r^2}{x} \leq g_A\paren{x} \leq x. \]
Furthermore, if $z \in \half \backslash A$, then
\[\abs{g_A\paren{z}-z} \leq 3\rad\paren{A}.\]
\end{lemma}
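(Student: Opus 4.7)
By the symmetry $z \mapsto -\bar{z}$ (which sends any hull in $\mathcal{Q}$ to another of the same radius and conjugates the associated conformal map in the natural way), part (b) follows from (a), so I focus on (a) and (c). The central technical device is Schwarz reflection: since $g_A$ is real-analytic on $\R \backslash A \supset (-\infty,-r) \cup (r,\infty)$, it extends to an analytic function $\tilde{g}_A$ on $\tilde{D} := \C \backslash (A \cup A^*)$, where $A^* := \set{\bar{w} : w \in A}$, satisfying $\tilde{g}_A(\bar{z}) = \overline{\tilde{g}_A(z)}$. Because $A \cup A^* \subset \overline{B_r\paren{0}}$, the exterior $\Omega_r := \set{\abs{z} > r}$ lies in $\tilde{D}$, and $\tilde{g}_A$ is univalent on $\Omega_r$ with Laurent expansion $\tilde{g}_A(z) = z + \sum_{n \geq 1} c_n z^{-n}$ having real coefficients. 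The Gronwall area theorem yields $\sum_n n \abs{c_n}^2/r^{2n} \leq r^2$, so in particular $\hcap(A) = c_1 \in \bracket{0, r^2}$; the exterior Koebe $1/4$ theorem further shows that the omitted set $\C \backslash \tilde{g}_A(\Omega_r)$, which by Schwarz symmetry reduces to the real interval $\tilde{I} := \tilde{g}_A(\partial A)$, is contained in $\bracket{-2r, 2r}$.

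For the lower bound $g_A(x) \geq x$ in (a), the function $g_A(z) - z$ is analytic on $\tilde{D}$ with imaginary part $-\ev{z}{\imag B_\tau} \leq 0$ on $\half \backslash A$ (Proposition \ref{HCapEqualities}) and asymptotic $\hcap(A)/z + O(1/z^2)$ at infinity with $\hcap(A) \geq 0$; a minimum-principle argument on $(r, \infty)$, combined with the positive asymptotic at infinity and continuity of $g_A$ up to the boundary, forces $g_A(x) - x \geq 0$. For the upper bound $g_A(x) \leq x + r^2/x$, I would compare $g_A$ with the map $g_T(z) = z + r^2/z$ of the extremal half-disk $T := \overline{B_r^+\paren{0}} \supset A$. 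The uniqueness of the normalized conformal map gives the composition identity $g_T = g_{T'} \circ g_A$ on $\half \backslash T$, where $T' := g_A(T \cap \overline{\half \backslash A})$ is a hull with $\hcap(T') = r^2 - \hcap(A) \geq 0$ by additivity of half-plane capacity. This reduces the desired inequality to $g_{T'}(y) \geq y$ for $y = g_A(x)$---a lower-bound statement for the auxiliary hull $T'$. This is the main obstacle: executing the reduction without circularity requires controlling $\rad(T')$ via the Koebe bound above and running an iteration or continuity argument in the shape of $A$.

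For (c), having established (a) and (b), we know $\tilde{I} \subset \bracket{-2r, 2r}$, so on the compact boundary $\partial(A \cup A^*) \subset \overline{B_r\paren{0}}$,
\[\abs{\tilde{g}_A(z) - z} \leq \abs{\tilde{g}_A(z)} + \abs{z} \leq 2r + r = 3r.\]
Since $\tilde{g}_A(z) - z$ is analytic on the unbounded region $\tilde{D}$ and vanishes at infinity, the maximum modulus principle yields $\abs{\tilde{g}_A(z) - z} \leq 3r$ throughout $\tilde{D}$, and in particular on $\half \backslash A$.
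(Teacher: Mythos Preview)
The paper does not prove this lemma; it is quoted as background from \cite{MR2129588}, so there is no in-paper proof to compare against. Your outline is sound, but the execution has a genuine gap and a misplaced worry.

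The gap is the lower bound $g_A(x)\ge x$: ``a minimum-principle argument on $(r,\infty)$'' is not a proof, and nothing you wrote pins down why $g_A(x)-x$ cannot dip below zero somewhere on $(r,\infty)$. One clean fix is to work with the inverse: set $F(w)=g_A^{-1}(w)-w$ on $\half$. Proposition~\ref{HCapEqualities} gives $\imag g_A(z)\le\imag z$, hence $\imag F\ge 0$, and $F(\infty)=0$; the Herglotz representation then yields
\[
F(w)=\frac{1}{\pi}\int_\beta^\alpha \frac{\imag g_A^{-1}(t)}{t-w}\,dt,
\]
where $[\beta,\alpha]$ is the image interval of $\partial A$. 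For real $y>\alpha$ the integrand is nonpositive, so $g_A^{-1}(y)\le y$, and substituting $y=g_A(x)$ gives $g_A(x)\ge x$ for every real $x$ to the right of $A$---a statement slightly stronger than (a).

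With that in hand, your worry about circularity in the upper bound evaporates. The composition $g_T=g_{T'}\circ g_A$ with $T=\overline{B_r^+(0)}$ is exactly right, and applying the lower bound to the auxiliary hull $T'$ at $y=g_A(x)$ requires only that $y$ lie to the right of $T'$ on $\R$, not that $y>\rad(T')$. This is automatic: $g_A$ is increasing on $(r,\infty)$ and the rightmost real boundary point of $T'=g_A(T\cap(\half\setminus A))$ is $g_A(r^+)$. No control of $\rad(T')$ and no iteration are needed. Your argument for (c) is correct; the only point to make explicit is that $\tilde g_A(z)-z$ is bounded on $\tilde D$ before invoking the maximum principle. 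This holds because $\tilde g_A:\tilde D\to\C\setminus[\beta,\alpha]$ is a homeomorphism fixing $\infty$, so the cluster set of $\tilde g_A$ at any point of $\partial(A\cup A^*)$ lies in $[\beta,\alpha]\subset[-2r,2r]$, which both supplies the boundary bound $\le 3r$ and rules out blow-up.
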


We also need the following lemma.

\begin{lemma}\label{conRDeriv}
Suppose $u : \left [0, t_0 \right) \rightarrow \C$ is a continuous function such that the right derivative
\[u_+'\paren{t}=\lim_{\epsilon \rightarrow 0^+} \frac{u\paren{t+\epsilon}-u\paren{t}}{\epsilon} \]
exists for all $t \in [0,t_0)$ and is a continuous function.  Then $u'\paren{t}=u_+'\paren{t}$ for all $t \in \paren{0,t_0}$.
\end{lemma}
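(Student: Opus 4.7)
The plan is to reduce the statement to a standard fact about right derivatives of real-valued continuous functions: if a continuous function has right derivative zero everywhere, then it is constant. Granted this, writing $v(t)=u(t)-\int_0^t u_+'(s)\,ds$ gives a continuous function (by continuity of $u_+'$) whose right derivative is identically zero, so $v$ is constant, yielding $u(t)=u(0)+\int_0^t u_+'(s)\,ds$, and then the fundamental theorem of calculus together with continuity of $u_+'$ gives $u'(t)=u_+'(t)$ for every $t\in(0,t_0)$.

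First I would split $u$ into its real and imaginary parts, which reduces everything to the real-valued case, since the right derivative and integration behave componentwise. So assume $u:[0,t_0)\to\R$.

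The main obstacle is the monotonicity principle: if $w:[a,b]\to\R$ is continuous and $w_+'(t)\geq 0$ for all $t\in[a,b)$, then $w$ is non-decreasing. I would prove this by the standard perturbation argument. Fix $\epsilon>0$ and set $w_\epsilon(t)=w(t)+\epsilon t$, so $(w_\epsilon)_+'(t)\geq\epsilon>0$. Suppose for contradiction that there exist $s<t$ in $[a,b]$ with $w_\epsilon(s)>w_\epsilon(t)$, and let $r^*=\sup\{r\in[s,t]:w_\epsilon(r)\geq w_\epsilon(s)\}$. By continuity $w_\epsilon(r^*)\geq w_\epsilon(s)>w_\epsilon(t)$, so $r^*<t$, and for every $r\in(r^*,t]$ we have $w_\epsilon(r)<w_\epsilon(s)\leq w_\epsilon(r^*)$, giving $(w_\epsilon)_+'(r^*)\leq 0$, a contradiction. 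Hence $w_\epsilon$ is non-decreasing; letting $\epsilon\downarrow 0$, so is $w$.

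With the monotonicity principle in hand, I would apply it to both $v(t)=u(t)-\int_0^t u_+'(s)\,ds$ and $-v(t)$ on any compact subinterval of $[0,t_0)$. Using continuity of $u_+'$ (which is where the hypothesis is actually needed), one checks directly from the definition that $v_+'(t)=u_+'(t)-u_+'(t)=0$ for every $t$; thus $v_+'\geq 0$ and $(-v)_+'\geq 0$, so $v$ is both non-decreasing and non-increasing, hence constant. Therefore $u(t)=u(0)+\int_0^t u_+'(s)\,ds$, and since $u_+'$ is continuous, the standard fundamental theorem of calculus gives that $u$ is differentiable on $(0,t_0)$ with $u'(t)=u_+'(t)$, completing the proof. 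The only subtle point is verifying $v_+'(t)=0$ honestly from the definition, which is a short $\epsilon$–$\delta$ computation using that $u_+'$ is continuous at $t$ so that $\frac{1}{\epsilon}\int_t^{t+\epsilon}u_+'(s)\,ds\to u_+'(t)$.
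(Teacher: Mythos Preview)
Your proof is correct. The paper does not actually give its own proof of this lemma; it is listed among the background results whose proofs are deferred to \cite{MR2129588} or the author's thesis, so there is nothing to compare against. Your argument---reducing to the real-valued case, establishing the monotonicity principle for continuous functions with nonnegative right derivative via the $\epsilon$-perturbation and supremum trick, and then applying it to $v(t)=u(t)-\int_0^t u_+'(s)\,ds$ and $-v(t)$---is the standard route and is complete as written.
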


We state and discuss the proof of the chordal Loewner equation.

\begin{theorem}\label{chordalLE}
For all $z \in \half \backslash \gamma_{t_0}$ and $0 \leq t \leq t_0$, $g_t\paren{z}$ is a solution to the initial value problem
\begin{equation}\label{chordalLE1}
\dot{g}_t\paren{z}=\frac{\dot{a}\paren{t}}{g_t\paren{z}-U_t}, ~~~g_0\paren{z}=z.
\end{equation}
\end{theorem}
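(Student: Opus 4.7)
The plan is to compute the right derivative of $t \mapsto g_t(z)$ using Proposition~\ref{SCLEat0} applied to short increments of the curve, and then invoke Lemma~\ref{conRDeriv} to upgrade to a two-sided derivative.

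Fix $z \in \half \backslash \gamma_{t_0}$ and $t \in [0, t_0)$. For small $\epsilon > 0$, let $A_{t,\epsilon} := g_t\paren{\gamma\bracket{t,t+\epsilon}}$, a compact $\half$-hull whose removal is effected by the normalized conformal map $g_{t,t+\epsilon}$ in the factorization $g_{t+\epsilon} = g_{t,t+\epsilon} \circ g_t$ already noted in the excerpt. Comparing the $z^{-1}$ coefficients in the expansions at infinity on both sides of this factorization gives $\hcap\paren{A_{t,\epsilon}} = a(t+\epsilon) - a(t)$. By Lemma~\ref{gDiamCurve}, $\diam\paren{A_{t,\epsilon}} \to 0$ as $\epsilon \to 0^+$, and by Proposition~\ref{UtCont} applied at time $t$, $g_t\paren{\gamma(s)} \to U_t$ as $s \to t^+$, so the shifted hull $A_{t,\epsilon} - U_t$ satisfies $\rad\paren{A_{t,\epsilon} - U_t} \to 0$.

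Applying Proposition~\ref{SCLEat0} to the translated hull $A_{t,\epsilon} - U_t$ at the point $g_t(z) - U_t$ --- valid once $\epsilon$ is small enough that $\abs{g_t(z) - U_t} \geq 2\rad\paren{A_{t,\epsilon} - U_t}$ --- yields
\[
g_{t,t+\epsilon}\paren{g_t(z)} - g_t(z) = \frac{a(t+\epsilon) - a(t)}{g_t(z) - U_t} + O\!\paren{\frac{\paren{a(t+\epsilon) - a(t)}\,\rad\paren{A_{t,\epsilon} - U_t}}{\abs{g_t(z) - U_t}^2}}.
\]
Dividing by $\epsilon$ and letting $\epsilon \to 0^+$, using that $a$ is $C^1$, produces the right derivative $\partial_t^+ g_t(z) = \dot{a}(t)/\paren{g_t(z) - U_t}$. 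To apply Lemma~\ref{conRDeriv} it remains to verify that $t \mapsto \dot{a}(t)/\paren{g_t(z) - U_t}$ is continuous: $\dot{a}$ is continuous by hypothesis, $U_t$ is continuous by Proposition~\ref{UtCont}, $g_t(z)$ is continuous by the $\|g_s - g_t\|_\infty$ bound in Lemma~\ref{gDiamCurve}, and the denominator stays bounded away from zero on $[0,t_0]$ because $g_t(z) \in \half$ has strictly positive imaginary part while $U_t \in \R$.

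The main obstacle is the quantitative control of $\rad\paren{A_{t,\epsilon} - U_t}$ needed to make the error term above truly $o(\epsilon)$ after division by $\epsilon$. Both ingredients --- that $\diam\paren{A_{t,\epsilon}}$ goes to zero, and that the location of the hull indeed converges to $U_t$ rather than drifting --- are delicate and rely on the Beurling-type estimates underlying Lemma~\ref{gDiamCurve}, combined with the characterization of $U_t$ as the tip-limit of $g_t$ in Proposition~\ref{UtCont}. Once this uniform shrinkage is in hand, everything else is routine.
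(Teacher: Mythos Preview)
Your proof is correct and follows essentially the same approach as the paper: apply Proposition~\ref{SCLEat0} to the increment $g_{s,s+\epsilon}(g_s(z)) - g_s(z)$ to obtain the right derivative, then use Proposition~\ref{UtCont} and Lemma~\ref{conRDeriv} to conclude. Your final paragraph overstates the difficulty: once $\rad(A_{t,\epsilon}-U_t)\to 0$ is established from Lemma~\ref{gDiamCurve} and Proposition~\ref{UtCont}, the error term divided by $\epsilon$ is $O\bigl((a(t+\epsilon)-a(t))/\epsilon\bigr)\cdot\rad(A_{t,\epsilon}-U_t)\to 0$ automatically since $a$ is $C^1$, so no further quantitative control is needed.
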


An important observation that we will return to when we prove the analogous result for multiply connected domains is that the imaginary part of $\frac{\dot{a}\paren{t}}{g_t\paren{z}-U_t}$ is equal to $-\pi \dot{a}\paren{t} H_{\half}\paren{z,U_t}$.  Using the Schwarz reflection principle, we can show that \eqref{chordalLE1} holds for $x \in \R$ as well.  The idea behind the proof of Theorem \ref{chordalLE} is to apply Proposition \ref{SCLEat0} to
\[g_{s+\epsilon}\paren{z}-g_s\paren{z}=g_{s,s+\epsilon}\paren{g_s\paren{z}}-g_s\paren{z} \]
to conclude that $g_s\paren{z}$ has a right derivative equal to $\frac{\dot{a}\paren{s}}{g_s\paren{z}-U_s}$ and then to apply Proposition \ref{UtCont} and Lemma \ref{conRDeriv}.

If $t \mapsto U_t$ is a continuous function and $t \mapsto a\paren{t}$ is an increasing $C^1$ function, then a converse to Theorem \ref{chordalLE} holds.  More precisely, for each $t\geq 0$ it is possible to find a $K_t \in \mathcal{Q}$ and conformal map $g_t:K_t \rightarrow \half$ such that
\[\dot{g}_t\paren{z}=\frac{\dot{a}\paren{t}}{z-U_t},~~~g_0\paren{z}=z.\]
A family of maps $g_t$ arising in this way is called a \emph{generalized Loewner Chain} with \emph{driving function} $U_t$.  While we will not make use of this important fact, we will need some facts about generalized Loewner chains.

It can be checked that if $g_t$ is a generalized Loewner chain, then for all $z \in \half \backslash K_t$,
\begin{equation}\label{gderiv}
g_t'\paren{z}=\exp \set{-\int_0^t \frac{\dot{a}\paren{s}ds}{\paren{g_s\paren{z}-U_s}^2}}.
\end{equation}
Using this, we can derive an estimate for the spatial derivative of $g_t$ restricted to the real line.

\begin{lemma}\label{gDerivOnR}
Let $g_t$ be a generalized Loewner chain and let $r_t>0$ be such that $\gamma_t \subset B_{r_t}\paren{\gamma\paren{0}}$.  Then there is a $0<c \leq 9$ such that
\[1-\frac{c r_t^2}{x^2} \leq g_t'\paren{x}\leq 1, \]
for $x \in \R$ with $\abs{x}>3r_t.$ 
\end{lemma}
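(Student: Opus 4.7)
The formula \eqref{gderiv} expresses $g_t'(x) = \exp(-I(x))$, where $I(x) := \int_0^t \dot a(s)/(g_s(x) - U_s)^2 \, ds \geq 0$. So the upper bound $g_t'(x) \leq 1$ is immediate. For the lower bound, the elementary inequality $e^{-y} \geq 1 - y$ (valid for $y \geq 0$) reduces the claim to showing
\[I(x) \leq \frac{c\, r_t^2}{x^2}\]
for some constant $0 < c \leq 9$ whenever $|x| > 3 r_t$.

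Two estimates drive this bound. Taking $\gamma(0) = 0$ without loss of generality so that $\rad(\gamma_s) \leq r_s$, Lemma \ref{gDerivR} gives $|g_s(x)| \geq |x|$ (with the same sign as $x$) for $|x| > r_s$; its third assertion applied in the limit $z \to \gamma(s) \in \overline{\half \backslash \gamma_s}$ yields $|U_s - \gamma(s)| \leq 3 r_s$, hence $|U_s| \leq 4 r_s \leq 4 r_t$. This can be sharpened to $|U_s| \leq 3 r_s$ by observing that $U_s$ lies in the image interval $g_s(\gamma_s) \cap \R \subset [-3 r_s, 3 r_s]$. In parallel, Proposition \ref{HCapEqualities}(2), taking $R \to r_t^+$ and bounding $\imag(B_\tau) \leq r_t$, gives $a(t) = \hcap(\gamma_t) \leq 4 r_t^2/\pi$.

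Putting these together, $I(x) \leq \frac{4 r_t^2/\pi}{(|x| - 3 r_t)^2}$; for $|x|$ sufficiently larger than $r_t$ this is at most $c r_t^2/x^2$ with $c \leq 9$. The main obstacle is the marginal range where $|x|$ is only slightly above $3 r_t$, in which the naive lower bound on $(g_s(x) - U_s)^2$ becomes too weak. Here the constraint $c \leq 9$ in the conclusion is precisely calibrated so that $1 - 9 r_t^2/x^2$ vanishes at $|x| = 3 r_t$; the inequality $g_t'(x) \geq 1 - 9 r_t^2/x^2$ then holds trivially at the boundary via $g_t'(x) > 0$, and extends by continuity to the full range together with the estimate proved for $|x| \gg r_t$. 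The detailed bookkeeping of constants is carried out in \cite{DreThesis}.
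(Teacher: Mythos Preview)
Your strategy---writing $g_t'(x)=\exp(-I(x))$ via \eqref{gderiv} with $I(x)\geq 0$, getting $g_t'(x)\leq 1$ for free, and reducing the lower bound via $e^{-y}\geq 1-y$ to an estimate $I(x)\leq c r_t^2/x^2$---is exactly right, and is presumably what the paper's reference to \cite{DreThesis} contains. But the closing ``continuity'' argument is a genuine gap: knowing that $g_t'(x)\geq 1-9r_t^2/x^2$ holds at $|x|=3r_t$ (where the right side vanishes) and for $|x|\gg r_t$ does not force it to hold in between. Nothing prevents $g_t'$ from dipping below the barrier on an intermediate interval.

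The repair is to sharpen your two constants so the direct estimate already covers the full range $|x|>3r_t$. First, $|U_s|\leq 2\rho_s\leq 2r_t$ rather than $3r_t$: with $\rho_s=\rad(\gamma_s)$, the first inequality of Lemma~\ref{gDerivR} gives $g_s(x)\leq x+\rho_s^2/x$ for $x>\rho_s$, and since $g_s$ is increasing on $(\rho_s,\infty)$ with image disjoint from the collapsed interval $[\alpha_s,\beta_s]\ni U_s$, one gets $\beta_s\leq\lim_{x\to\rho_s^+}(x+\rho_s^2/x)=2\rho_s$; similarly $\alpha_s\geq -2\rho_s$. Second, $a(t)\leq r_t^2$ rather than $4r_t^2/\pi$, by monotonicity of half-plane capacity: $\gamma_t\subset r_t\overline{\D_+}$ and $\hcap(r_t\overline{\D_+})=r_t^2$. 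With these in hand, for $|x|\geq 3r_t$ one has $|g_s(x)-U_s|\geq |x|-2r_t\geq |x|/3$, hence
\[
I(x)\;\leq\;\frac{a(t)}{(|x|/3)^2}\;\leq\;\frac{9\,r_t^2}{x^2},
\]
which yields $g_t'(x)\geq 1-9r_t^2/x^2$ on the entire range, with $c=9$.
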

\begin{proof}
See \cite{DreThesis}.
\end{proof}

We will need the following well-known result, which provides bounds for the derivatives of harmonic functions.

\begin{lemma}\label{HarDerBd}
Let $u$ be a real-valued harmonic function on a domain $D \subset \C$.  For each $k \in \N$, there is a $c\paren{k}>0$ such that if $j\leq k$ is a non-negative integer, then
\[\abs{\partial_x^j \partial_y^{k-j}u\paren{z}} \leq c\paren{k} \dist \paren{z,\partial D}^{-k} \left\|u \right \|_{\infty}.  \]
\end{lemma}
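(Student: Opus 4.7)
The plan is to prove the bound by induction on $k$, leveraging the elementary fact that every partial derivative of a harmonic function is again harmonic on the same domain. Thus if I can establish the $k=1$ case, higher-order bounds will follow by iteration.

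For the base case, I would fix $z \in D$, set $r = \dist(z, \partial D)$, and work on the ball $B_\rho(z) \subset D$ for $\rho < r$. Writing $\partial_x u(z) = \frac{1}{\pi \rho^2} \int_{B_\rho(z)} \partial_x u \, dA$ via the area mean value property and then applying the divergence theorem converts this into a boundary integral of $u$ against the horizontal component of the outward unit normal. A trivial estimate then yields $|\partial_x u(z)| \leq (2/\rho) \|u\|_\infty$, and sending $\rho \to r^{-}$ gives $c(1) = 2$ (with the identical argument covering $\partial_y u$).

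For the inductive step, assume the bound at order $k-1$ and let $v = \partial_x^{j} \partial_y^{k-1-j} u$, which is harmonic on all of $D$. The key observation is that every $z' \in B_{r/2}(z)$ satisfies $\dist(z', \partial D) \geq r/2$, so the inductive hypothesis supplies the \emph{uniform} estimate
\[\|v\|_{L^\infty(B_{r/2}(z))} \leq c(k-1)\,(r/2)^{-(k-1)} \|u\|_\infty.\]
Applying the $k=1$ bound to $v$ at the point $z$, using the ball $B_{r/2}(z)$ as the relevant sub-domain, gives $|\partial v(z)| \leq 2\,(r/2)^{-1}\|v\|_{L^\infty(B_{r/2}(z))}$, which telescopes into the recursion $c(k) \leq 2^{k+1}\, c(k-1)$ and closes the induction.

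I do not anticipate a real obstacle here; this is a textbook consequence of the mean value property together with self-improvement on shrinking concentric balls. The one point requiring attention is to shrink from $B_r(z)$ to $B_{r/2}(z)$ \emph{before} invoking the inductive hypothesis, so that the hypothesis is available uniformly across the smaller ball rather than only at its center, which is what allows the $k=1$ bound to be reapplied to $v$ at $z$.
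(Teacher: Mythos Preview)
Your argument is correct and is the standard textbook proof of this estimate. Note, however, that the paper does not actually prove Lemma~\ref{HarDerBd}: it is introduced as ``the following well-known result'' and stated without proof, so there is no argument in the paper to compare against. Your mean-value-plus-divergence approach for $k=1$ followed by the shrinking-ball induction is exactly the classical derivation (cf.\ Evans, \emph{Partial Differential Equations}, \S2.2, or Axler--Bourdon--Ramey, \emph{Harmonic Function Theory}), and the care you take to pass to $B_{r/2}(z)$ before invoking the inductive hypothesis is precisely the point that makes the iteration work.
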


We conclude the section by looking at the effect of applying a locally real conformal transformation on the time derivative of the half-plane capacity of a continuously increasing $K_t \in \mathcal{Q}$.  This result is stated in \cite{MR2129588} and a detailed proof can be found in \cite{DreThesis}.

\begin{proposition}\label{LRConHCap}
Let $F:B_R\paren{0} \rightarrow \C$ be a conformal map that maps reals to reals and $\gamma$ be as in Theorem \ref{chordalLE}.  Then
\begin{equation}\label{LRConHCap1}
\lim_{t \rightarrow 0^+} \frac{\hcap \paren{F\paren{\gamma_t}}}{t}=F'\paren{0}^2 \dot{a}\paren{0}.
\end{equation}
In particular, the limit in \eqref{LRConHCap1} exists if and only if $\dot{a}\paren{0}$ exists.
\end{proposition}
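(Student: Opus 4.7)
Since $\hcap$ is invariant under real translations of the hull, we may replace $F$ by $z\mapsto F(z)-F(0)$ to assume $F(0)=0$. The fact that $F$ is real on the reals forces $F'(0)\in\R$, and since $\gamma_t\subset\half$ while $F(\gamma_t)$ is the hull whose capacity we are computing (so $F(\gamma_t)\subset\half$), Schwarz reflection forces $F'(0)>0$. Writing $\tilde F(z)=F(z)/F'(0)$, we have $\tilde F(0)=0$, $\tilde F'(0)=1$, and by the scaling law $\hcap(\lambda A)=\lambda^2\hcap(A)$ for $\lambda>0$,
\[\hcap(F(\gamma_t))=F'(0)^2\,\hcap(\tilde F(\gamma_t)).\]
Thus it suffices to prove $\hcap(\tilde F(\gamma_t))/\hcap(\gamma_t)\to 1$ as $t\to 0^+$; multiplying by $F'(0)^2$ and factoring out $\hcap(\gamma_t)/t = a(t)/t$ yields both the existence/nonexistence equivalence and the claimed value $F'(0)^2\dot a(0)$.

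\textbf{Integral representation.} Fix a small $r<R/2$ on which $\tilde F$ is defined and invertible. For $t$ so small that $\rad(\gamma_t),\rad(\tilde F(\gamma_t))<r/2$, Proposition~\ref{HCapEqualities}(2) gives
\[\hcap(\tilde F(\gamma_t))=\frac{2r}{\pi}\int_0^{\pi}u(re^{i\theta})\sin\theta\,d\theta,\qquad \hcap(\gamma_t)=\frac{2r}{\pi}\int_0^{\pi}v(re^{i\theta})\sin\theta\,d\theta,\]
where $u(z)=\ev{z}{\imag\bracket{B_\tau^{\tilde F(\gamma_t)}}}$ and $v(z)=\ev{z}{\imag\bracket{B_\tau^{\gamma_t}}}$, with $\tau$ the first exit time from the complement of the indicated hull in $\half$. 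We will show $u(re^{i\theta})=v(re^{i\theta})(1+o(1))$ in an appropriate integrated sense.

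\textbf{Conformal pull-back and error estimates.} For each $\theta$, decompose the Brownian trajectory from $z=re^{i\theta}$ into the event $\mathcal A$ that it is absorbed in $\R\cup \tilde F(\gamma_t)$ before exiting $\tilde F(B_R\cap\half)$, and the complementary event $\mathcal B$. On $\mathcal A$, conformal invariance of Brownian motion up to a time change under $\tilde F^{-1}$ rewrites the expectation as one for a Brownian motion in $B_R\cap\half$ started at $\zeta=\tilde F^{-1}(z)=z+O(r^2)$, stopped on $\gamma_t\cup\R$, with boundary value $\imag\bracket{\tilde F(\xi)}=\imag\bracket{\xi}+O(|\xi|^2)$ on $\gamma_t$ (since $\tilde F'(0)=1$). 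The Taylor error $O(\rad(\gamma_t)^2)$, combined with the hitting probability $\mathbf{P}^{\zeta}(B_\tau\in\gamma_t)=O(\rad(\gamma_t)\sin\theta/r)$ from \eqref{PKHD}, integrates to a contribution of size $O(\rad(\gamma_t)^3)$. For event $\mathcal B$, Proposition~\ref{SCLEat0} yields $\ev{w}{\imag\bracket{B_\tau^{\tilde F(\gamma_t)}}}=O(\hcap(\tilde F(\gamma_t))/R)$ for $w$ on $\tilde F(\partial B_R\cap\half)$ (distance $\asymp R$ from the origin), while $\mathbf{P}^z(\mathcal B)=O(r\sin\theta/R)$, giving a total $O((r/R)^2)\,\hcap(\tilde F(\gamma_t))$. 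Lastly, the displacement $\zeta-z=O(r^2)$ combined with the gradient bound $|\nabla v(z)|\lesssim \hcap(\gamma_t)/|z|^2$ at scale $r$ (from differentiating the expansion of $z-g_{\gamma_t}(z)$ in Proposition~\ref{SCLEat0}) yields $v(\zeta)-v(z)=O(\hcap(\gamma_t))$ pointwise, hence an $O(r\,\hcap(\gamma_t))$ error after integration.

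\textbf{Limit and main obstacle.} Assembling the estimates,
\[\frac{\hcap(\tilde F(\gamma_t))}{\hcap(\gamma_t)}=1+O(r)+O\!\left(\frac{r^2}{R^2}\right)+O\!\left(\frac{\rad(\gamma_t)^3}{\hcap(\gamma_t)}\right)\qquad(t\to 0^+).\]
For a simple curve from the boundary, $\hcap(\gamma_t)\asymp\rad(\gamma_t)^2$, so the last term is $O(\rad(\gamma_t))\to 0$ as $t\to 0$ with $r$ fixed; sending $t\to 0^+$ first and then $r\to 0^+$ delivers the required limit. The main technical obstacle is keeping every implicit constant uniform in $t$ as the hull shrinks; this is handled by the explicit uniformity built into the Poisson kernel estimate \eqref{PKHD} for the half-disk and the expansion of Proposition~\ref{SCLEat0}, both of which are scale-invariant and so apply uniformly to the family $\{\gamma_t\}_{t>0}$.
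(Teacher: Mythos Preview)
Your overall strategy is sound: reduce to $\tilde F$ with $\tilde F(0)=0$, $\tilde F'(0)=1$, then show $\hcap(\tilde F(\gamma_t))/\hcap(\gamma_t)\to 1$ via the integral formula of Proposition~\ref{HCapEqualities} together with conformal invariance of Brownian motion. The paper does not give its own proof (it cites \cite{MR2129588} and \cite{DreThesis}), so there is nothing to compare against; the question is simply whether your argument closes.

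There is one genuine gap. In the final assembly you produce an error term $O(\rad(\gamma_t)^3/\hcap(\gamma_t))$ and dispose of it by asserting that ``for a simple curve from the boundary, $\hcap(\gamma_t)\asymp\rad(\gamma_t)^2$.'' The upper bound is true, but the lower bound is \emph{false} in general: a curve can approach $\R$ tangentially. For instance, if $\gamma(s)=s+is^{N}$ then $\rad(\gamma_t)\asymp t$ while $\hcap(\gamma_t)\le c\,\rad(\gamma_t)\cdot\max_{\xi\in\gamma_t}\imag[\xi]\le c\,t^{N+1}$, so $\rad(\gamma_t)^3/\hcap(\gamma_t)\to\infty$. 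Thus the Taylor error as you bounded it need not vanish.

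The fix is to sharpen the Taylor estimate by exploiting that $\tilde F$ has real Taylor coefficients. Writing $\imag[\tilde F(\xi)]=\int_0^{\imag[\xi]}\mathrm{Re}\,\tilde F'(\mathrm{Re}[\xi]+is)\,ds$ (which follows from $\tilde F(\bar\xi)=\overline{\tilde F(\xi)}$) and using $\tilde F'(0)=1$ gives
\[
\imag[\tilde F(\xi)]=\imag[\xi]\bigl(1+O(|\xi|)\bigr),\qquad \xi\in\gamma_t,
\]
i.e.\ the error is $O(\rad(\gamma_t))\,\imag[\xi]$ rather than $O(\rad(\gamma_t)^2)$. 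Feeding this into the pulled-back expectation yields an error of size $O(\rad(\gamma_t))\,\ev{\zeta}{\imag[B_\tau]}=O(\rad(\gamma_t))\,v(\zeta)$, which after integration contributes $O(\rad(\gamma_t))\,\hcap(\gamma_t)$ to the capacity. The relative error is then $O(\rad(\gamma_t))\to 0$ with no appeal to a lower bound on $\hcap(\gamma_t)$. With this correction (and the routine observation that the $O(r^2/R^2)\,\hcap(\tilde F(\gamma_t))$ term can be absorbed into the left side once $r/R$ is small), your argument goes through.
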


\section{Excursion Reflected Brownian Motion} \label{sectERBM} 

\subsection{Definition}
\label{ChapERBMDef}
We start this section by giving a precise definition of excursion reflected Brownian motion in $D \in \mathcal{Y}$.  Later we will see that for any $D \in \mathcal{Y}$, there is a unique process satisfying the conditions of our definition.

The Jordan curve theorem says that any Jordan curve $\eta$ separates $\C$ into exactly two connected components.  We will call the bounded connected component the \emph{interior} of $\eta$ and the unbounded connected component the \emph{exterior} of $\eta$. If $A \subset \C$ is in the interior of $\eta$, we will say $\eta$ \emph{surrounds} $A$.

\begin{definition}
\label{characterizationERBM}
Let $E=D\cup \set{A_1, \ldots, A_n}$ be equipped with the quotient topology and let $E_{\partial}=E \cup \set{A_0}$ be the one-point compactification of $E$. A stochastic process $B^{ER}_{D}$ with state space $E_{\partial}$ is called an \emph{excursion reflected Brownian motion} (ERBM) if it satisfies the following properties.

\begin{enumerate}
\item $B_{D}^{ER}$ has the strong Markov property.

\item If we start the process at $z \in D$ and let
\[T=\inf \set{t:  B_{D}^{ER}\paren{t} \in \partial D},\]
then for $0\leq t \leq T$, $B_{D}^{ER}\paren{t}$ is a Brownian motion in $D$ killed at $\partial D$.

\item Let $\eta_1, \ldots, \eta_n$ be pairwise disjoint smooth Jordan curves in $D$ such that $\eta_i$ surrounds $A_i$ and does not surround $A_j$ for $j\neq i$.  If
\[\sigma=\inf \set{t:B_{D}^{ER}\paren{t} \in \eta_i},\]
then $B_{D}^{ER}\paren{\sigma}$ has the distribution of $\overline{\mathcal{E}}_{U_i}\paren{A_i,\cdot}$, where $U_i$ is the region bounded by $\partial A_i$ and $\eta_i$.

\item  $B_{D}^{ER}$ is conformally invariant (this will be made more precise in Proposition \ref{ERBMconformalinvariance}) and the radial part of $B_{\C \backslash \D}^{ER}$ has the same distribution as the radial part of a reflected Brownian motion in $\C \backslash \D$.
\end{enumerate}
\end{definition}

We will often refer to ERBM in $D$ or $E$ when we really mean the process with the enlarged state space $E_{\partial}$.

\subsection{Excursion Reflected Brownian Motion in $\C \backslash \D$}
\label{ChapERBMCD}
The first step in constructing ERBM is to construct it in $E=\C \backslash \D \cup \set{\overline{\D}}$. We will mimic the construction of Walsh's Brownian motion given in \cite{MR1022917}.  The idea of the construction is that if a process exists that satisfies Definition \ref{characterizationERBM}, we can determine what its transition semigroup must be.  Once we know what its transition semigroup must be, we use general theory to show that there actually is a process with that semigroup.  Finally, once we have the process, we check that it actually satisfies Definition \ref{characterizationERBM}.  For the remainder of this section, let $A_0=\overline{\D}$.

Since the radial part of ERBM has the same distribution as the radial part of reflected Brownian motion, to describe the semigroup for ERBM, we need the Feller-Dynkin semigroup for reflected Brownian motion.  There is much in the literature on reflected Brownian motion and it is possible to define it in very general domains.  However, in $\C \backslash \D$ it is possible to give a simple construction.   Let $B_1$ and $B_2$ be independent one-dimensional Brownian motions and define reflected Brownian motion in $\half$ to be the process $B_1+i\abs{B_2}$.  We can then define reflected Brownian motion in $\C \backslash \D$ to be the image of reflected Brownian motion in $\half$ under the map $z \mapsto e^{-i z}$ with the appropriate time change (taking this approach, it is still necessary to check that the resulting process is Feller-Dynkin).
 
\begin{proposition}\label{semigroupERBMThm}
Let $T_t^+$ be the Feller-Dynkin semigroup for reflected Brownian motion in $\C\backslash \D$ and $T_t^0$ be the Feller-Dynkin semigroup for reflected Brownian motion killed when it hits $\D$.  If $f\in C_0\paren{E}$, define an operator $P_t$ by
\begin{equation}
P_t f\paren{r,\theta}=T^+_t \overline{f}\paren{r,\theta}+T_t^0 \paren{f-\overline{f}}\paren{r,\theta},
\label{semigroupERBM}
\end{equation}
where $\displaystyle \overline{f}\paren{r,\theta}=\frac{1}{2\pi} \int_0^{2\pi} f\paren{r,\theta}d\theta.$  If there is a Feller-Dynkin process $B_{\C \backslash \D}^{ER}$ in $\C \backslash \D$ satisfying Definition \ref{characterizationERBM}, its semigroup is $P_t.$
\end{proposition}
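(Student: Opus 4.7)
The plan is to verify \eqref{semigroupERBM} by decomposing any $f \in C_0(E)$ as $f = \overline{f} + g$ where $g := f - \overline{f}$ satisfies $\overline{g} \equiv 0$, and showing
\begin{align*}
\ev{z}{\overline{f}(B^{ER}_t)} &= T^+_t \overline{f}(z),\\
\ev{z}{g(B^{ER}_t)} &= T^0_t g(z),
\end{align*}
so that by linearity $P_t f(z) = T^+_t \overline{f}(z) + T^0_t g(z)$, matching \eqref{semigroupERBM}. The first identity is immediate from property (4) of Definition \ref{characterizationERBM}: since $\overline{f}$ depends only on $r = |z|$, its expectation is determined by the law of the radial part, which coincides with the radial part of reflected Brownian motion.

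For the second identity I will let $T = \inf\set{s \geq 0 : B^{ER}_s = \overline{\D}}$ be the first time the process reaches the quotient point, and split
\[ \ev{z}{g(B^{ER}_t)} = \ev{z}{g(B^{ER}_t); t < T} + \ev{z}{g(B^{ER}_t); t \geq T}. \]
By property (2), $B^{ER}_s$ agrees with a Brownian motion on $\set{t < T\,}$, so the first term equals $\ev{z}{g(B_t); t < \tau_{\C \backslash \D}} = T^0_t g(z)$. The strong Markov property at $T$ then reduces the proof to establishing that $\ev{\overline{\D}}{g(B^{ER}_s)} = 0$ for every $s > 0$.

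To obtain this vanishing, I plan to exploit the rotational symmetry of the setup. Property (3) applied to the circle $\eta_r = \set{|w| = 1+r}$, with $U_r$ the annulus $\set{1 < |w| < 1+r}$, says the first hit of $\eta_r$ under $\mathbf{P}^{\overline{\D}}$ is distributed as $\overline{\mathcal{E}}_{U_r}(\overline{\D},\cdot)$; by the rotational symmetry of $U_r$ this is the uniform measure on $\eta_r$. Using this uniform reentry distribution, and iterating through successive returns to $\overline{\D}$ via the strong Markov property together with the rotational symmetry of ordinary Brownian motion (which governs the inter-return excursions by property (2)), one concludes that the angular marginal of $B^{ER}_s$ under $\mathbf{P}^{\overline{\D}}$ is uniform for every $s > 0$. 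Integrating $g$ against any such measure then yields zero since $\overline{g} \equiv 0$.

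The main obstacle will be making this last rotational-symmetry step rigorous near the roundhouse singularity at $\overline{\D}$, where $B^{ER}$ re-enters $\overline{\D}$ uncountably often in any neighborhood of $T$. Property (3) only describes first hits of circles $\eta_r$ strictly outside $\overline{\D}$, so one must carefully pass to the limit $r \downarrow 0$. The Feller-Dynkin assumption, combined with right-continuity of the sample paths, should allow this passage, but controlling the interleaving of reflection events and Brownian excursions will require some care.
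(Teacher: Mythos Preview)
Your approach is correct and essentially the same as the paper's. The only organizational difference is that the paper decomposes along the event $A_t = \{T \leq t\}$ first (writing $\ev{z}{f(B^{ER}_t)} = \ev{z}{f(B^{ER}_t);A_t} + \ev{z}{f(B^{ER}_t);A_t^c}$) and then uses rotational invariance on $A_t$ to replace $f$ by $\overline{f}$, whereas you decompose $f = \overline{f} + g$ first and then split on $\{t < T\}$ versus $\{t \geq T\}$ for the $g$ piece; these are dual ways of arriving at the same identity. The concern you flag about rigorously establishing rotational invariance from property~(3) near the roundhouse singularity is real, but the paper handles it with the same level of informality, simply asserting that ``on the set $A_t$, the angular part of $B^{ER}_{\C\backslash\D}$ is uniformly distributed'' as a consequence of (3) and (4).
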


\begin{proof}
Let $B_t$ and $R_t$ be respectively Brownian motion and reflected Brownian motion in $\C \backslash \D$ and let $\mathbf{E}$, $\mathbf{E}_1$, and $\mathbf{E}_2$ be the expectations with respect to the probability measures induced by $B_{\C \backslash \D}^{ER}$, $B_t$, and $R_t$ respectively.  Let $\Omega$ be the underlying probability space $B_{\C \backslash \D}^{ER}$, $B_t$, and $R_t$ are defined on and let $\tau$ be the first time $B_{\C \backslash \D}^{ER}$ hits $\D$.  With respect to the appropriate filtration, $\tau$ is a stopping time \cite{MR1796539}.  Finally, let
\[A_t=\set{\omega \in \Omega: \tau\leq t}.\]
By abuse of a notation, we will also denote the set of $\omega$ such that $R_s$ has hit $\D$ by time $t$ and the set of $\omega$ such that $B_s$ has hit $\D$ by time $t$ by $A_t$.  Using (2) of Definition \ref{characterizationERBM}, we have that $B_{\C \backslash \D}^{ER}$ has the distribution of a Brownian motion up until time $\tau$.  Using (3) and (4) of Definition \ref{characterizationERBM}, we have that on the set $A_t$, the angular part of $B_{\C \backslash \D}^{ER}$ is uniformly distributed and the radial part is that of a reflected Brownian motion.  Combining these facts, we have that if $f\in C_0\paren{E}$, then
\begin{align*}\displaystyle
P_t f\paren{x}&= \ev{x}{f\paren{B_{\C \backslash \D}^{ER}\paren{t}}}\\
&= \ev{x}{\mathbf{1}_{A_t}f\paren{B_{\C \backslash \D}^{ER}\paren{t}}}+\ev{x}{\mathbf{1}_{A_t^c}f\paren{B_{\C \backslash \D}^{ER}\paren{t}}} \\
&= \mathbf{E}_2^x \bracket{\overline{f}\paren{R_t}}+\mathbf{E}_1^x \bracket{f\paren{B_t}}-\mathbf{E}_2^x \bracket{\mathbf{1}_{A_t^c}\overline{f}\paren{R_t}}-\mathbf{E}_1^x \bracket{\mathbf{1}_{A_t}f\paren{B_t}}\\
&= T_t^+ \overline{f}\paren{x}+T_t^0 f\paren{x}-\mathbf{E}_1^x \bracket{\mathbf{1}_{A_t^c}\overline{f}\paren{B_t}}-\mathbf{E}_1^x \bracket{\mathbf{1}_{A_t}\overline{f}\paren{B_t}}\\
&= T^+_t \overline{f}\paren{r,\theta}+T_t^0 \paren{f-\overline{f}}\paren{r,\theta}.
\end{align*}
In the second to last equality, we use the fact that by convention we define $\overline{f}\paren{\D}$ to be $f\paren{\D}$.
\end{proof}

Next we show that $P_t$, as defined in Proposition \ref{semigroupERBM}, is a Feller-Dynkin semigroup.  We will continue to use the setup given in the beginning of the proof of Proposition \ref{semigroupERBM}.
\begin{proposition}
$P_t$ is a Feller-Dynkin semigroup on $C_0\paren{E}$.  That is,
\begin{enumerate}
\item $P_t: C_0\paren{E} \rightarrow C_0\paren{E}$
\item If $f \in C_0\paren{E}$ and $0 \leq f \leq 1,$ then $0 \leq P_t f \leq 1$.
\item $P_0$ is the identity on $C_0\paren{E}$ and $P_t P_s=P_{t+s}$
\item $\displaystyle \lim_{t \rightarrow 0} \left\| P_t f - f\right\|_{\infty} =0$ for all $f \in C_0\paren{E}$.
\end{enumerate}
\end{proposition}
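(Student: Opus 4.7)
The plan is to verify the four properties directly from the decomposition formula, exploiting three structural observations. First, $\overline{f}(r,\theta)$ depends only on the radial variable, so by rotational invariance of reflected Brownian motion, $T_t^+\overline{f}$ is again radial. Second, continuity of $f \in C_0(E)$ on the quotient space forces $f$ to take a single value on $\partial \D$, namely $f(\overline{\D})$, so $(f-\overline{f})\vert_{\partial \D} \equiv 0$; since reflected Brownian motion killed on $\partial \D$ coincides in law with ordinary Brownian motion killed on $\partial \D$, $T_t^0$ may be realized by either process. Third, Fubini combined with the rotational symmetry of Brownian motion yields the commutation identity $\overline{T_t^0 g} = T_t^0\overline{g}$.

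For (1) and (2), continuity and vanishing at infinity of $P_t f$ on $\C \setminus \D$ follow from the Feller properties of $T_t^+$ and $T_t^0$. At the collapsed point $\overline{\D}$, letting $r \searrow 1$ yields $T_t^0(f-\overline{f})(r,\theta) \to 0$ (by the second observation) and $T_t^+\overline{f}(r,\theta) \to T_t^+\overline{f}(1)$, a number independent of $\theta$ (by the first observation); this defines $P_tf(\overline{\D})$ unambiguously. For the bound in (2), rewrite
\[P_tf(x) = \ev{x}{f(B_t)\mathbf{1}_{A_t^c}} + \ev{x}{\overline{f}(R_t)\mathbf{1}_{A_t}},\]
where $A_t$ is the event of hitting $\D$ by time $t$; since the first hitting time of $\D$ has the same law under $B$ and $R$, for $0 \le f \le 1$ this sum is at most $\mathbf{P}^x(A_t^c) + \mathbf{P}^x(A_t) = 1$, and nonnegativity is clear.

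For the semigroup property (3), the third observation gives $\overline{P_t f} = T_t^+\overline{f}$ (because $\overline{T_t^0(f-\overline{f})} = T_t^0\overline{(f-\overline{f})} = 0$), and therefore $P_t f - \overline{P_t f} = T_t^0(f-\overline{f})$. Substituting this back into the definition of $P_s$,
\[P_sP_tf = T_s^+T_t^+\overline{f} + T_s^0T_t^0(f-\overline{f}) = T_{s+t}^+\overline{f} + T_{s+t}^0(f-\overline{f}) = P_{s+t}f,\]
while $P_0 = I$ is immediate from $T_0^+ = T_0^0 = I$. For (4), the triangle inequality gives
\[\|P_tf - f\|_\infty \le \|T_t^+\overline{f} - \overline{f}\|_\infty + \|T_t^0(f-\overline{f}) - (f-\overline{f})\|_\infty,\]
both terms of which vanish as $t \to 0$ by strong continuity of the Feller-Dynkin semigroups $T_t^+$ and $T_t^0$.

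The main obstacle is the semigroup identity in (3), which hinges entirely on the commutation $\overline{T_t^0 g} = T_t^0\overline{g}$; this in turn requires the rotational symmetry of Brownian motion together with a Fubini argument. A related subtlety in (1) is that $P_tf$ must be genuinely well-defined as a continuous function on the quotient space $E$ (not merely on $\C \setminus \D$), and this is again made possible only by the rotational invariance built into $T_t^+$, which forces $T_t^+\overline{f}$ to take a single well-defined value on $\partial \D$.
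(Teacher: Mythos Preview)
Your proof is correct and follows essentially the same route as the paper: the crux of (iii) is the identity $\overline{P_t f}=T_t^+\overline{f}$, which both you and the paper obtain from the commutation $\overline{T_t^0 g}=T_t^0\overline{g}$ via rotational invariance, and (ii) is handled by the same decomposition into the events $A_t$ and $A_t^c$. The only minor deviations are that you treat continuity at the collapsed point $\overline{\D}$ more explicitly in (i), and for (iv) you invoke strong continuity of $T_t^+$ and $T_t^0$ directly in the sup norm, whereas the paper first reduces to pointwise convergence via a standard lemma; both arguments are valid.
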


\begin{proof}
If $f \in C_0\paren{E}$, then $\overline{f}$ is also in $C_0\paren{E}$.  We have
\begin{align*}
\abs{P_t f\paren{r,\theta}-P_t f\paren{r',\theta'}} \leq& \abs{T_t^+ \overline{f}\paren{r,\theta} - T_t^+ \overline{f}\paren{r',\theta'}}\\
&  +\abs{T_t^0f\paren{r,\theta}-T_t^0f\paren{r',\theta'}}\\
&  + \abs{T_t^0\overline{f}\paren{r,\theta}-T_t^0 \overline{f}\paren{r',\theta'}}.
\end{align*}
The fact that $P_t f$ is continuous follows from the fact that $T_t^0 f$, $T_t^0 \overline{f}$, and $T_t^+ \overline{f}$ are all continuous.  Since both $f$ and $\overline{f}$ vanish at infinity, so does $P_t f$.  This proves (i).

From the proof of Proposition \ref{semigroupERBM} we have
\begin{align*}
P_t f\paren{x}&=\mathbf{E}_2^x \bracket{\mathbf{1}_{A_t}\overline{f}\paren{R_t}}+\mathbf{E}_1^x \bracket{\mathbf{1}_{A_t^c}f\paren{B_t}}.
\end{align*}
If $0 \leq f \leq 1$, then
\begin{align*}
0 &\leq  \mathbf{E}_2^x \bracket{\mathbf{1}_{A_t}\overline{f}\paren{R_t}}+\mathbf{E}_1^x \bracket{\mathbf{1}_{A_t^c}f\paren{B_t}} \\
&\leq \mathbf{E}_2^x \bracket{\mathbf{1}_{A_t}}+\mathbf{E}_1^x \bracket{\mathbf{1}_{A_t^c}}\\
&= 1,
\end{align*}
from which (ii) follows.

It is clear that $P_0$ is the identity on $C_0\paren{E}$. Observe that
\begin{align*}
\overline{P_s f}\paren{r,\theta}&=\overline{T_s^+ \overline{f}}\paren{r,\theta} + \overline{T_s^0 f}\paren{r,\theta} -\overline{T_s^0\overline{f}}\paren{r,\theta}\\
&=T_s^+ \overline{f}\paren{r,\theta} + \int_0^{2\pi}T_s^0 f\paren{r,\theta}d\theta -T_s^0 \overline{f}\paren{r,\theta}\\
&=T_s^+ \overline{f}\paren{r,\theta} + T_s^0 \overline{f}\paren{r,\theta}-T_s^0 \overline{f}\paren{r,\theta}\\ 
&= T_s^+ \overline{f}\paren{r,\theta}
\end{align*}
and thus,
\begin{equation*}
P_s f\paren{r,\theta}- \overline{P_s f}\paren{r,\theta}=T_s^0\paren{f-\overline{f}}\paren{r,\theta}.
\end{equation*}
Using these two facts and the fact that (iii) holds for $T_t^+$ and $T_t^0$, we have
\begin{align*}
P_t P_s f\paren{r,\theta}&= T_t^+ \overline{P_s f}\paren{r,\theta}+T_t^0\paren{P_s f\paren{r,\theta}- \overline{P_s f}\paren{r,\theta}}\\
&= T_t^+ T_s^+ \overline{f}\paren{r,\theta}+T_t^0 T_s^0 \paren{f-\overline{f}}\paren{r,\theta}\\
&=T_{t+s}^+ \overline{f}\paren{r,\theta}+T_{t+s}^0\paren{f-\overline{f}}\paren{r,\theta}\\
&=P_{t+s} f\paren{r,\theta},
\end{align*}
from which (iii) follows.

Since (i)-(iii) hold, by (say) Lemma III.6.7 of \cite{MR1796539}, to prove (iv) it is enough to show that for all $f \in C_0\paren{E}$ and $z \in E$ we have
\[\displaystyle \lim_{t \rightarrow 0}P_t f\paren{z}=f\paren{z}.\]
Since $T_t^+$ and $T_t^0$ satisfy (i)-(iv), we have
\begin{align*}
\displaystyle \lim_{t \rightarrow 0} P_t f\paren{z}&= \lim_{t \rightarrow 0} T_t^+ \overline{f}\paren{z}+\lim_{t \rightarrow 0} T_t^0 \paren{f-\overline{f}}\paren{z}\\
&=\overline{f}\paren{z}+\paren{f\paren{z}-\overline{f}\paren{z}}\\
&= f\paren{z},
\end{align*}
which proves (iv).
\end{proof}

Using (say) Theorem III.7.1 of \cite{MR1796539}, given any measure $\mu$ on $E$, we can define a unique Feller-Dynkin process
 \begin{equation} \label{ERBMdef}
B^{ER}_{\C\backslash \D}:=\paren{\Omega,\mathscr{F},\{\mathscr{F}_t:t\geq 0\},\{B^{ER}_{\C\backslash \D}(t):t\geq 0\},\mathbf{P}^{\mu}}
\end{equation}
with semigroup $P_t$.  Furthermore, the filtration $\mathscr{F}_t$ is independent of the measure $\mu$ and $X$ has the strong Markov property with respect to $\mathscr{F}_t$.  We denote the angular and radial parts of $B^{ER}_{\C\backslash \D}$ at time $t$ by $\theta_t$ and $R_t$ respectively.

Next we check that the process $B^{ER}_{\C\backslash \D}$ defined in \eqref{ERBMdef} satisfies Definition \ref{characterizationERBM}.
\begin{proposition}
$B^{ER}_{\C \backslash \D}$ has the distribution of a Brownian motion up until the first time it hits $\partial D$.
\label{ERBMlikeBM}
\end{proposition}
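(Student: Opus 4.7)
The plan is to identify the sub-probability transition kernel of $B^{ER}_{\C \backslash \D}$ killed at $\tau := \inf\set{t \geq 0 : B^{ER}_{\C\backslash\D}\paren{t} = A_0}$ and to show it coincides with that of a standard Brownian motion killed at $\tau_{\D}$, the first hitting time of $\partial \D$. Since both processes are strong Markov, the proposition then follows. Concretely, the goal is to establish, for $z \in \C \backslash \overline{\D}$ and $f \in C_0\paren{E}$ with $f\paren{A_0} = 0$, the identity
\[ \ev{z}{f\paren{B^{ER}_{\C\backslash\D}\paren{t}}\mathbf{1}_{\tau > t}} = \mathbf{E}_1^z\bracket{f\paren{B_t}\mathbf{1}_{\tau_{\D} > t}}. \]

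First I would expand $P_t f\paren{z}$ in two different ways. The calculation from the proof of Proposition \ref{semigroupERBMThm} gives
\[ P_t f\paren{z} = \mathbf{E}_1^z\bracket{f\paren{B_t}\mathbf{1}_{\tau_{\D} > t}} + \mathbf{E}_2^z\bracket{\overline{f}\paren{R_t}\mathbf{1}_{\tau_R \leq t}}. \]
On the other hand, the strong Markov property of the Feller--Dynkin process $B^{ER}_{\C\backslash\D}$ at $\tau$, together with the identity $P_s f\paren{A_0} = T^+_s \overline{f}\paren{A_0}$ (since $T^0_s$ kills on $\partial \D$) and the rotational invariance of reflected Brownian motion (which makes $T^+_s \overline{f}$ constant on $\partial \D$ for radial $\overline{f}$), yields
\[ P_t f\paren{z} = \ev{z}{f\paren{B^{ER}_{\C\backslash\D}\paren{t}}\mathbf{1}_{\tau > t}} + \ev{z}{\mathbf{1}_{\tau \leq t}\,T^+_{t-\tau}\overline{f}\paren{A_0}}. \]
An analogous application of the strong Markov property of $R_t$ at $\tau_R$ rewrites $\mathbf{E}_2^z\bracket{\overline{f}\paren{R_t}\mathbf{1}_{\tau_R \leq t}}$ as $\mathbf{E}_2^z\bracket{\mathbf{1}_{\tau_R \leq t}\,T^+_{t-\tau_R}\overline{f}\paren{A_0}}$, so matching the two expansions reduces the proposition to showing that $\tau$ under $\prob{z}{\cdot}$ has the same law as $\tau_R$ under $\mathbf{P}_2^z$.

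For this equality of hitting-time laws, I would specialize the semigroup identity $P_t g = T^+_t \overline{g} + T^0_t\paren{g - \overline{g}}$ to radial $g$: then $\overline{g} = g$, so $P_t g = T^+_t g$. Combined with the Markov properties of both processes, this forces $\abs{B^{ER}_{\C\backslash\D}\paren{t}}$ to have the same finite-dimensional distributions, hence the same law, as $\abs{R_t}$. Since $\tau = \inf\set{t : \abs{B^{ER}_{\C\backslash\D}\paren{t}} = 1}$ and $\tau_R = \inf\set{t : \abs{R_t} = 1}$, we conclude $\tau \stackrel{d}{=} \tau_R$, and Fubini then identifies the two correction terms in the expansions above, proving the claim.

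The main obstacle is the passage from equality of radial marginals to equality of hitting-time laws, which requires a.s.\ continuity of the radial part of $B^{ER}_{\C\backslash\D}$ so that $\tau$ is genuinely its first hitting time of $1$. This is obtained from the Feller--Dynkin property of $B^{ER}_{\C\backslash\D}$, which yields c\`adl\`ag paths, together with the fact that the radial part has the same process-level law as the continuous diffusion $R_t$, forcing it to admit a continuous modification.
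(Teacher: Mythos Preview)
Your proof is correct and follows the same underlying route as the paper's: both extract the claim from the semigroup formula~\eqref{semigroupERBM}. The paper's proof is a single sentence (``This follows immediately from~\eqref{semigroupERBM}''), which really points to the algebraic identity
\[
P_t f \;=\; T_t^0 f \;+\; (T_t^+ - T_t^0)\overline{f} \;=\; T_t^0 f \;+\; \mathbf{E}_2^{\,\cdot}\!\left[\overline{f}(R_t)\,\mathbf{1}_{\tau_R\le t}\right],
\]
obtained by noting that reflected and ordinary Brownian motion agree before the first boundary hit. Your argument makes this precise and then carries out the additional step the paper suppresses: identifying the second term with $\ev{\cdot}{\mathbf{1}_{\tau\le t}\,P_{t-\tau}f(A_0)}$ via the strong Markov property and the equality in law of $\tau$ and $\tau_R$. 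In doing so you effectively reprove Proposition~\ref{radialpartERBM} (the radial part of $B^{ER}_{\C\setminus\D}$ is that of reflected Brownian motion) as an ingredient, whereas the paper states that result separately and immediately afterwards. So your version is a fully fleshed-out form of what the paper leaves implicit; there is no genuine methodological difference.
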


\begin{proof}
This follows immediately from \eqref{semigroupERBM}.
\end{proof}

\begin{proposition} \label{radialpartERBM}
$R_t$ has the same distribution as the radial part of a reflected Brownian motion in $\C \backslash \D$.
\end{proposition}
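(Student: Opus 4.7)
The strategy is to show that the semigroup $P_t$, when restricted to functions depending only on the radial coordinate, reduces to the reflected Brownian motion semigroup $T_t^+$. Once this is established, the fact that $R_t$ has the same distribution as the radial part of reflected Brownian motion follows essentially for free by applying the Markov property of $B^{ER}_{\C\backslash\D}$.

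The first and essential step is the following observation. Let $g \in C_0\paren{[1,\infty)}$ and define $f \in C_0\paren{E}$ by $f\paren{r,\theta}=g\paren{r}$, extended continuously by $f\paren{\overline{\D}}=g\paren{1}$. Then the angular average satisfies
\[
\overline{f}\paren{r,\theta}=\frac{1}{2\pi}\int_0^{2\pi} g\paren{r}\,d\theta=g\paren{r}=f\paren{r,\theta},
\]
so the second term in \eqref{semigroupERBM} vanishes identically and we obtain $P_t f=T_t^+ f$. Since reflected Brownian motion in $\C\backslash\D$ is rotationally invariant (its law is preserved under the $S^1$-action, as is visible from the explicit construction via $z\mapsto e^{-iz}$), the function $T_t^+ f$ also depends only on $r$; write it as $T_t^+ f\paren{r,\theta}=Q_t g\paren{r}$, where $Q_t$ is the transition semigroup of the radial part of reflected Brownian motion.

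Next, I would combine this identity with the strong Markov property of $B^{ER}_{\C\backslash\D}$ to match finite-dimensional distributions. For $0\leq t_1<\cdots<t_n$ and bounded continuous radial test functions $g_1,\ldots,g_n$, iterated conditioning gives
\[
\ev{x}{g_1\paren{R_{t_1}}\cdots g_n\paren{R_{t_n}}} = Q_{t_1}\bracket{g_1\cdot Q_{t_2-t_1}\bracket{g_2\cdots Q_{t_n-t_{n-1}}g_n}}\paren{\abs{x}},
\]
which is exactly the corresponding expectation for the radial part of a reflected Brownian motion started at $\abs{x}$. Hence all finite-dimensional distributions of $R_t$ coincide with those of the radial part of reflected Brownian motion.

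Finally, to upgrade from finite-dimensional distributions to equality in law as processes, I would note that $R_t$ has continuous paths: any discontinuity of $B^{ER}_{\C\backslash\D}$ can occur only through the identified point $\overline{\D}\in E$, at which the radial coordinate is $1$ both just before and just after, so the radial projection is continuous. Both processes then live on $C\paren{[0,\infty),[1,\infty)}$ with matching finite-dimensional distributions, so their laws agree. The only mild obstacle is making the path-continuity of $R_t$ fully rigorous from the Feller-Dynkin construction; everything else is a direct consequence of the identity $P_t f=T_t^+ f$ on radial functions.
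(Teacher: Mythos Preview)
Your proposal is correct and follows essentially the same approach as the paper: both proofs rest on the observation that for radial test functions $f$ one has $\overline{f}=f$, so $P_t f = T_t^+ f$, and then invoke the Markov property to identify the law of $R_t$ with that of the radial part of reflected Brownian motion. The paper's version is slightly terser, computing $\ev{\mu}{g(R_{S+t})\mid\mathscr{F}_S}=R_t^+ g(R_S)$ for an arbitrary stopping time $S$ and stopping there, whereas you spell out the finite-dimensional distribution matching and the path-continuity upgrade; these are the same argument at different levels of detail.
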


\begin{proof}
We mimic the proof of Lemma 2.2 in \cite{MR1022917}.  Let $g \in C_0\paren{\left[1,\infty \right)}$ and define $f \in C_0\paren{E}$ by $f\paren{r,\theta}=g\paren{r}.$  Observe that $\overline{f}=f$ and $f\paren{X_t}=g\paren{R_t}$.  If $S$ is any $\mathscr{F}_t$-stopping time, then
\begin{align*}
\ev{\mu}{g\paren{R_{S+t}}|\mathscr{F}_S}&=\ev{\mu}{f\paren{X_{S+t}}|\mathscr{F}_S}\\
&= P_t f\paren{X_S}\\
&= T_t^+ \overline{f}\paren{R_S,\theta_S}+T_t^0\paren{f-\overline{f}}\paren{R_S,\theta_S}\\
&= T_t^+ f \paren{R_S,\theta_S}\\
&= R_t^+ g\paren{R_S},
\end{align*}
where $R_t^+$ is the semi-group for the radial part of reflected Brownian motion in $\C \backslash \D$. The result follows.
\end{proof}

\begin{proposition} \label{excursiondistERBM}
Let $\eta$ be a smooth Jordan curve surrounding $\D$, $U$ be the region bounded by $\eta$ and $\partial \D$, and $\tau$ be the first time $B^{ER}_{\C \backslash \D}$ hits $\eta$. If $V$ is a smooth arc in $\eta$, then
\begin{equation*}
\alpha:= \prob{\D}{B^{ER}_{\C \backslash \D}(\tau) \in V}=\overline{\mathcal{E}}_U\paren{\D,V}.
\end{equation*}
\end{proposition}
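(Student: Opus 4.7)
The overall plan is to exploit rotational symmetry of the construction and a renewal argument. The key observation is that the semigroup formula $P_t f(\overline{\D}) = T_t^+ \overline{f}(\overline{\D})$ in \eqref{semigroupERBM} depends on $f$ only through its angular average $\overline{f}$; replacing $f$ by any rotate leaves $\overline{f}$ unchanged, so the law of $B^{ER}_{\C\backslash\D}(t)$ under $\prob{\overline{\D}}{\cdot}$ is rotationally invariant for every $t>0$. Combining with the strong Markov property applied at returns to $\overline{\D}$, every emergence from $\overline{\D}$ occurs at a uniformly distributed angle on $\partial \D$, independent of the past.

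Fix a small $\epsilon>0$ with $\set{\abs{z}=1+\epsilon}\subset U$, and let $p_\epsilon(\theta,V)$ denote the probability that a Brownian motion started at $(1+\epsilon)e^{i\theta}$ exits $U$ in $V\subset\eta$, with $q_\epsilon(\theta):=p_\epsilon(\theta,\eta)$. Define alternating stopping times $\rho_{2k-1}$ (the $k$-th hit of $\abs{z}=1+\epsilon$) and $\rho_{2k}$ (the subsequent return to $\overline{\D}$). By the previous paragraph, $B^{ER}_{\C\backslash\D}(\rho_{2k-1})=(1+\epsilon)e^{i\Theta_k}$ with $\Theta_k$ i.i.d.\ uniform, and between $\rho_{2k-1}$ and $\rho_{2k}$ the process is an ordinary Brownian motion in $\C\backslash\D$ by property (2) of Definition \ref{characterizationERBM}. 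Summing over the number of excursions needed to reach $\eta$ via a renewal/geometric argument gives
\begin{equation*}
\alpha = \sum_{k=1}^{\infty}(1-\overline{q_\epsilon})^{k-1}\overline{p_\epsilon}(V) = \frac{\overline{p_\epsilon}(V)}{\overline{q_\epsilon}},
\end{equation*}
where $\overline{p_\epsilon}(V)=\tfrac{1}{2\pi}\int_0^{2\pi}p_\epsilon(\theta,V)\,d\theta$ and $\overline{q_\epsilon}=\overline{p_\epsilon}(\eta)$.

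Finally, I would let $\epsilon\to 0$. Since $\partial U$ is smooth and $(1+\epsilon)e^{i\theta}$ sits at normal distance $\epsilon$ from $\partial\D$, the definition of the boundary Poisson kernel yields
\begin{equation*}
H_U\paren{(1+\epsilon)e^{i\theta},w} = \epsilon\,H_{\partial U}\paren{e^{i\theta},w} + O(\epsilon^2),
\end{equation*}
uniformly in $\theta\in[0,2\pi)$ and $w\in\eta$. Integrating and using $\abs{dz}=d\theta$ on $\partial\D$, one gets $\overline{p_\epsilon}(V)=\tfrac{\epsilon}{2\pi}\mathcal{E}_U(\D,V)+O(\epsilon^2)$ and analogously for $\overline{q_\epsilon}$; the $\epsilon$ factors cancel and the ratio converges to $\mathcal{E}_U(\D,V)/\mathcal{E}_U(\D,\eta)=\overline{\mathcal{E}}_U(\D,V)$, as desired. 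I expect the main technical obstacle to be verifying the $O(\epsilon^2)$ remainder uniformly in $\theta$ and $w$; this is a quantitative version of the first-order Taylor expansion of the Poisson kernel in the normal direction along a smooth boundary, in the spirit of \eqref{PKHD} and \eqref{PKHalfRMHalfDisk}.
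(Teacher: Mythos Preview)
Your argument is correct and follows essentially the same route as the paper: both use the rotational invariance built into \eqref{semigroupERBM} to see that the first hit of the circle $\{\abs{z}=1+\epsilon\}$ from $\overline{\D}$ is uniform, set up the same renewal identity (you write it as a geometric series, the paper as an equivalent self-referential integral equation) to obtain $\alpha=\overline{p_\epsilon}(V)/\overline{q_\epsilon}$, and then send $\epsilon\to 0$. The only minor difference is in justifying the limit: the paper invokes boundedness of $\partial_n H_U(\cdot,w)$ near $\partial\D$ together with dominated convergence, which is slightly lighter than the uniform $O(\epsilon^2)$ expansion you propose.
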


\begin{proof}
Let $C_{\epsilon}$ be the circle of radius $\epsilon$ centered at the origin.  Since it is clear from \eqref{semigroupERBM} that $B_{\C\backslash \D}^{ER}$ is rotationally invariant, the result follows in the case that $\eta=C_{\epsilon}$.

Let $p\paren{z}$ be the probability that a Brownian motion started at $z$ exits $U$ on $\eta$.  For small enough $\epsilon$, $C_{\epsilon}$ is in the interior of $\eta$.  For such an $\epsilon$, using the strong Markov property for ERBM and Proposition \ref{ERBMlikeBM}, we see that
\begin{align*}
2\pi \alpha &= \int_{C_{\epsilon}}\bracket{\int_V H_U\paren{z,w} \abs{dw}+\paren{ 1-p\paren{z}}\alpha}\abs{dz} \\
&= 2 \pi \alpha+ \int_{C_{\epsilon}} \bracket{\int_V H_U\paren{z,w} \abs{dw}-p\paren{z}\alpha}\abs{dz}\\
&= 2 \pi \alpha +\int_{C_{\epsilon}} \bracket{\int_V H_U\paren{z,w} \abs{dw}-\alpha\int_{\eta} H_U\paren{z,w}\abs{dw}}\abs{dz}.
\end{align*}
As a result, for small enough $\epsilon$, we have
\[\displaystyle\alpha=\frac{\int_{C_{\epsilon}}\int_V H_U\paren{z,w} \abs{dw}\abs{dz}}{\int_{C_{\epsilon}}\int_{\eta} H_U\paren{z,w}\abs{dw}\abs{dz}}.\]
Since the derivative of $H_U\paren{\cdot,w}$ is bounded in a neighborhood of $\D$, using the dominated convergence theorem, we see that
\begin{align*}
\alpha &=\lim_{\epsilon \rightarrow 0} \frac{\int_{C_{\epsilon}}\int_V \frac{H_U\paren{z,w}}{\epsilon} \abs{dw}\abs{dz}}{\int_{C_{\epsilon}}\int_{\eta} \frac{H_U\paren{z,w} }{\epsilon}\abs{dw}\abs{dz}}\\
&= \frac{\int_{C_{\epsilon}}\int_V H_{\partial U}\paren{z,w} \abs{dw}\abs{dz}}{\int_{C_{\epsilon}}\int_{\eta} H_{\partial U}\paren{z,w} \abs{dw}\abs{dz}}\\
&= \frac{\mathcal{E}_U\paren{\D,V}}{\mathcal{E}_U\paren{\D,\eta}}.
\end{align*}
\end{proof}

\begin{proposition}
There is a unique process Feller-Dynkin process with state space $E=\C\backslash D\cup \set{\D}$ satisfying Definition \ref{characterizationERBM}.  Furthermore, this process can be defined so as to have continuous sample paths in the topology of $E$.
\end{proposition}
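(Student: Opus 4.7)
The plan is to combine the semigroup identification of Proposition \ref{semigroupERBMThm} with the general Feller-Dynkin existence theory, and then upgrade the automatic c\`{a}dl\`{a}g property to genuine continuity using the fact that the radial part behaves like reflected Brownian motion.

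For uniqueness, the argument is immediate from what has already been established: Proposition \ref{semigroupERBMThm} shows that any Feller-Dynkin process on $E$ satisfying Definition \ref{characterizationERBM} must have transition semigroup $P_t$, and a Feller-Dynkin process is determined in distribution by its semigroup together with its initial distribution. Hence any two such processes agree in law.

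For existence, I would verify that the Feller-Dynkin process $B^{ER}_{\C \backslash \D}$ constructed in \eqref{ERBMdef} satisfies Definition \ref{characterizationERBM} clause by clause. Clause (1) is the strong Markov property built into Theorem III.7.1 of \cite{MR1796539}; clause (2) is Proposition \ref{ERBMlikeBM}; clause (3) is Proposition \ref{excursiondistERBM}, since in the present one-hole setting the only admissible configuration is a single smooth Jordan curve $\eta_1$ surrounding $A_1=\overline{\D}$; and the radial-part assertion of clause (4) is Proposition \ref{radialpartERBM}, with conformal invariance deferred to Proposition \ref{ERBMconformalinvariance}.

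The main obstacle is the continuity of sample paths, because Theorem III.7.1 of \cite{MR1796539} only yields a c\`{a}dl\`{a}g modification in the topology of $E$. The strategy is to first select a version of the radial part $R_t$ that is almost surely continuous; this is available because, by Proposition \ref{radialpartERBM}, $R_t$ is distributed as the radial part of a reflected Brownian motion, which admits a continuous version. Then, for any fixed time $t$, I would split into two cases. If $R_t>1$, continuity of $R$ supplies a neighborhood of $t$ on which $R_s>1$; on that neighborhood $B^{ER}_{\C \backslash \D}$ coincides with a Brownian motion in $\C \setminus \overline{\D}$ by clause (2), which is manifestly continuous. If $R_t=1$, then $B^{ER}_{\C \backslash \D}(t)=A_1$ in the quotient topology on $E$, and as $s\to t$ continuity of $R$ forces $R_s\to 1$, so the underlying Euclidean position approaches $\partial\overline{\D}$; under the quotient map this is exactly convergence to the single point $A_1\in E$. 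Combining the two cases shows that the c\`{a}dl\`{a}g modification is in fact continuous, completing the proof.
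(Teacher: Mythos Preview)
Your argument is correct and matches the paper's approach: the paper likewise cites Propositions \ref{ERBMlikeBM}, \ref{radialpartERBM}, and \ref{excursiondistERBM} for existence, Proposition \ref{semigroupERBMThm} for uniqueness, and then asserts that continuity follows from the c\`{a}dl\`{a}g ($R$-process) property combined with the continuity of Brownian motion and of the radial part, leaving the case analysis you spell out as ``easy to check.'' The only refinement worth noting is that in your $R_t>1$ case you should pair clause (2) with the Markov property at a deterministic time $q<t$ (on the event $\{R_s>1 \text{ for all } s\in[q,t]\}$) to justify Brownian behavior on a two-sided neighborhood of $t$, since clause (2) by itself speaks only of the process from its starting point up to the first boundary hit.
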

\begin{proof}
Propositions \ref{ERBMlikeBM}, \ref{radialpartERBM}, and \ref{excursiondistERBM} combine to show that the process defined in \eqref{ERBMdef} satisfies Definition \ref{characterizationERBM}.  The uniqueness statement follows from Proposition \ref{semigroupERBM}.  By construction, $B_{\C \backslash \D}^{ER}$ is an $R$-process.  Combining this with Propositions \ref{ERBMlikeBM} and \ref{radialpartERBM}, and the fact that Brownian motion and reflected Brownian motion have continuous sample paths, it is easy to check that the sample paths of $B_{\C \backslash \D}^{ER}$ are continuous.
\end{proof}

\subsection{Excursion Reflected Brownian Motion in Conformal Annuli}
\label{chapERBMCA}
Let $A$ be any compact, connected subset of $\C$ larger than a single point and
\[f: \C \backslash \D \rightarrow \C \backslash A\]
be a conformal map sending $\infty$ to $\infty$.  It is a straightforward exercise to verify that $f$ is unique up to an initial rotation.  Let $\sigma_t$ be the $\mathscr{F}_t$ stopping time given by
\[\int_0^{\sigma_t} \abs{f'\paren{B_{\C\backslash \D}^{ER}\paren{s}}}^2 ds=t\]
and define
\[B_{\C\backslash A}^{ER}\paren{t}=f\paren{B_{\C \backslash D}^{ER}\paren{\sigma_t}}\]
and $\tilde{\mathscr{F}}_t=\mathscr{F}_{\sigma_t}.$
We define ERBM in $\C \backslash A$ to be the process
\begin{equation*}
B^{ER}_{\C \backslash A}:=\paren{\Omega,\mathscr{F},\set{\tilde{\mathscr{F}}_t: t\geq 0}, \set{B^{ER}_{\C\backslash A}},\set{\mathbf{P}^x}}.
\end{equation*}
Since $B_{\C \backslash D}^{ER}$ is rotationally invariant and $f$ is unique up to an initial rotation, it is clear that the distribution of $B_{\C\backslash A}^{ER}$ does not depend on $f$.  It is also not hard to check that the strong Markov property is preserved (see the discussion on pg. 277 of \cite{MR1796539}).  To ensure that $B_{\C\backslash A}^{ER}\paren{t}$ exists for all $t<\infty$, we need to verify that
\begin{equation}
\int_0^{\infty} \abs{f'\paren{B_{\C\backslash \D}^{ER}\paren{s}}}^2 ds=\infty~~\rm{a.s.}.
\label{ERBMalltime}
\end{equation}
In order for $B_{\C\backslash A}^{ER}\paren{t}$ not to have a limit as $t \rightarrow \infty$, we need to verify that for all $t<\infty$,
\begin{equation}
\int_0^{t} \abs{f'\paren{B_{\C\backslash \D}^{ER}\paren{s}}}^2 ds<\infty~~\rm{a.s.}.
\label{ERBMnolimit}
\end{equation}
We temporarily put these considerations aside.

\begin{proposition}
Suppose $f: \C \backslash \D \rightarrow D_1$ and $g : D_1 \rightarrow D_2$ are conformal maps.  Then the process
\[B^{ER}_{D_2}\paren{t}=B^{ER}_{D_1}\paren{\sigma_t},\]
where
\[\int_0^{\sigma_t} \abs{g'\paren{B^{ER}_{D_1}\paren{s}}}^2 ds=t\]
is an ERBM in $D_2$. \label{ERBMconformalinvariance}
\end{proposition}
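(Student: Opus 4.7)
My plan is to express both the process on the left-hand side and the direct construction of ERBM in $D_2$ as time changes of the base process $B^{ER}_{\C \backslash \D}$, and then verify pathwise agreement. By the construction in Section \ref{chapERBMCA}, we have $B^{ER}_{D_1}\paren{u} = f\paren{B^{ER}_{\C \backslash \D}\paren{\tau_u}}$, where the time change $\tau$ is determined by $\int_0^{\tau_u}\abs{f'\paren{B^{ER}_{\C \backslash \D}\paren{r}}}^2 dr = u$. Applying that same construction directly with the composition $g \circ f : \C \backslash \D \rightarrow D_2$ produces a candidate $\widetilde{B}^{ER}_{D_2}\paren{t} := \paren{g \circ f}\paren{B^{ER}_{\C \backslash \D}\paren{\rho_t}}$, where $\rho_t$ is determined by $\int_0^{\rho_t}\abs{\paren{g\circ f}'\paren{B^{ER}_{\C \backslash \D}\paren{r}}}^2 dr = t$. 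Since $g\paren{B^{ER}_{D_1}\paren{\sigma_t}} = \paren{g\circ f}\paren{B^{ER}_{\C \backslash \D}\paren{\tau_{\sigma_t}}}$, proving the proposition reduces to showing the single identity $\tau_{\sigma_t} = \rho_t$.

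The key calculation combines the chain rule $\abs{\paren{g \circ f}'\paren{z}}^2 = \abs{g'\paren{f\paren{z}}}^2\abs{f'\paren{z}}^2$ with a change of variables in the defining integral for $\sigma_t$. Starting from $\int_0^{\sigma_t}\abs{g'\paren{B^{ER}_{D_1}\paren{s}}}^2 ds = t$, I substitute $u = \tau_s$, so that $ds = \abs{f'\paren{B^{ER}_{\C \backslash \D}\paren{u}}}^2 du$ and $s \in \bracket{0, \sigma_t}$ corresponds to $u \in \bracket{0, \tau_{\sigma_t}}$. Using $B^{ER}_{D_1}\paren{s} = f\paren{B^{ER}_{\C \backslash \D}\paren{\tau_s}}$, the integral becomes
\[
\int_0^{\tau_{\sigma_t}}\abs{g'\paren{f\paren{B^{ER}_{\C \backslash \D}\paren{u}}}}^2\abs{f'\paren{B^{ER}_{\C \backslash \D}\paren{u}}}^2 du = \int_0^{\tau_{\sigma_t}}\abs{\paren{g \circ f}'\paren{B^{ER}_{\C \backslash \D}\paren{u}}}^2 du = t,
\]
so that by the defining property of $\rho$, $\tau_{\sigma_t} = \rho_t$, as required.

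The main technical obstacle is justifying the substitution, which requires $\tau_u$ and $\sigma_t$ to be strictly increasing continuous functions whose inverses exist on the relevant intervals. Strict monotonicity holds because $f'$ and $g'$ are non-vanishing (as $f$ and $g$ are conformal) and the sample paths of $B^{ER}_{\C \backslash \D}$ and $B^{ER}_{D_1}$ almost surely spend zero Lebesgue time at any point where these derivatives could be exceptional. Continuity of the time changes together with their finiteness on bounded intervals and divergence at infinity correspond exactly to properties \eqref{ERBMalltime} and \eqref{ERBMnolimit}, which the text has set aside for separate verification. Once these are in place, the pathwise identification established above, combined with the rotational invariance of $B^{ER}_{\C \backslash \D}$ (which renders the law of $\widetilde{B}^{ER}_{D_2}$ independent of the particular choice of $g \circ f$ among conformal maps $\C \backslash \D \rightarrow D_2$), shows that the process in question is indeed an ERBM in $D_2$.
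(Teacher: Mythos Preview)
Your proof is correct and follows essentially the same approach as the paper's. Both arguments reduce the claim to showing that the composite time change through $f$ then $g$ agrees with the single time change through $g\circ f$, and both verify this via the chain rule together with a change-of-variables in the defining integrals; the paper introduces the map $T\paren{t}=\int_0^t\abs{f'\paren{B^{ER}_{\C\backslash\D}\paren{s}}}^2\,ds$ explicitly while you work with its inverse $\tau$, but the content is the same, and you correctly flag the reliance on \eqref{ERBMnolimit} for the bijectivity needed in the substitution.
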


\begin{proof}
Let $\sigma_r$ satisfy
\[\int_0^{\sigma_r} \abs{g'\paren{f\paren{B^{ER}_{\C \backslash \D}\paren{s}}}f'\paren{B^{ER}_{\C \backslash \D}\paren{s}}}^2 ds=r\]
and define a map $T:\bracket{0,\sigma_r} \rightarrow \left[0, \infty \right)$ by
\begin{equation}
t \mapsto \int_0^t \abs{f'\paren{B^{ER}_{\C \backslash \D}\paren{s}}}^2 ds.
\end{equation}
It is straightforward to verify that $T$ is a bijection (we use \eqref{ERBMnolimit} here) onto $\bracket{0,T\paren{\sigma_r}}$ with derivative $\abs{f'\paren{B^{ER}_{\C \backslash \D}\paren{s}}}^2$.  Using the change of variables formula, we have
\begin{align*}
r&=\int_0^{\sigma_r} \abs{g'\paren{f\paren{B^{ER}_{\C \backslash \D}\paren{s}}}f'\paren{B^{ER}_{\C \backslash \D}\paren{s}}}^2 ds\\
&=\int_0^{\sigma_r} \abs{g'\paren{B^{ER}_{D_1}\paren{T\paren{s}}}f'\paren{B^{ER}_{\C \backslash \D}\paren{s}}}^2 ds\\
&=\int_0^{T\paren{\sigma_r}} \abs{g'\paren{B^{ER}_{D_1}\paren{s}}}^2 ds.
\end{align*}
As a result, $B^{ER}_{D_2}\paren{r}=g\paren{B^{ER}_{D_1}\paren{T\paren{\sigma_r}}}=g\paren{f\paren{B^{ER}_{\C \backslash \D}\paren{\sigma_r}}}$ and thus, the process in $D_2$ defined by $g$ is the same as the process defined by $g\circ f$.  The result follows.
\end{proof}

\begin{proposition}
The process $B_{\C \backslash A}^{ER}$ satisfies Definition \ref{characterizationERBM}.
\end{proposition}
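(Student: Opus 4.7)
The plan is to verify each of the four clauses of Definition \ref{characterizationERBM} separately by transporting the corresponding property of $B^{ER}_{\C \backslash \D}$ through the conformal map $f: \C \backslash \D \rightarrow \C \backslash A$. Property (4) is almost free: the conformal invariance half is exactly Proposition \ref{ERBMconformalinvariance}, and the radial-part condition in Definition \ref{characterizationERBM}(4) only constrains the process on $\C \backslash \D$, where it was established in Proposition \ref{radialpartERBM}. For the strong Markov property (1), the time change $\sigma_t$ is a continuous additive functional adapted to $\mathscr{F}_t$, and the general fact that such a time change of a strong Markov process remains strong Markov is the principle cited on p.~277 of \cite{MR1796539}; the only careful step is to confirm that $\tilde{\mathscr{F}}_t = \mathscr{F}_{\sigma_t}$ is the correct filtration, which is immediate from the construction.

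For property (2), fix a starting point $z \in \C \backslash A$ and run $B^{ER}_{\C \backslash \D}$ from $f^{-1}(z)$. By Proposition \ref{ERBMlikeBM}, up until the first hitting time $\tau$ of $\partial \D$, this process is an ordinary Brownian motion in $\C \backslash \D$. The clock $\sigma_t$ satisfying $\int_0^{\sigma_t} \abs{f'\paren{B^{ER}_{\C \backslash \D}\paren{s}}}^2 ds = t$ is precisely the classical conformal time change, and the standard conformal invariance of Brownian motion (Lévy's theorem) shows that $f\paren{B^{ER}_{\C \backslash \D}\paren{\sigma_t}}$ is a Brownian motion in $\C \backslash A$ up until its first hit of $\partial A$, which is exactly what is required.

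Property (3) is the main step. Given a smooth Jordan curve $\eta$ in $\C \backslash A$ surrounding $A$, set $\tilde \eta := f^{-1}\paren{\eta}$. Because $f$ is a conformal map on a neighborhood of $\tilde \eta \subset \C \backslash \D$, the curve $\tilde \eta$ is itself a smooth Jordan curve and it surrounds $\D$. If $\tilde U$ is the region bounded by $\partial \D$ and $\tilde \eta$, then $f\paren{\tilde U} = U$, where $U$ is the region bounded by $\partial A$ and $\eta$. Since the time change does not affect the location where curves are first crossed, the first-hit distribution of $B^{ER}_{\C \backslash A}$ on $\eta$ is the $f$-image of the first-hit distribution of $B^{ER}_{\C \backslash \D}$ on $\tilde \eta$. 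Starting the process at $A$ corresponds to starting $B^{ER}_{\C \backslash \D}$ at $\D$, and Proposition \ref{excursiondistERBM} gives $\overline{\mathcal{E}}_{\tilde U}\paren{\D, \cdot}$ as the latter distribution. Pushing forward by $f$ and invoking the conformal invariance of normalized excursion measure (a direct consequence of the boundary Poisson kernel transformation rule \eqref{BdPKCI}, which makes the $\abs{f'}$ factors in numerator and denominator cancel), one obtains $\overline{\mathcal{E}}_U\paren{A, \cdot}$ as required.

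The main obstacle is the bookkeeping in property (3): one must check that $\tilde \eta$ is regular enough to apply Proposition \ref{excursiondistERBM} and that the boundary Poisson kernel transformation really does push forward the \emph{normalized} excursion measure correctly. Both are routine once set up, but they require care because $\tilde U$ is only a conformal (not a congruent) copy of $U$. Along the way, we tacitly use \eqref{ERBMalltime} and \eqref{ERBMnolimit} so that the time change $\sigma_t$ is well-defined as a strictly increasing bijection of $\bracket{0,\infty}$; these will be verified separately later in the paper.
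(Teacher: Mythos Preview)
Your proposal is correct and follows essentially the same approach as the paper. The paper's proof is a terse one-liner---property (1) and (4) are ``already discussed,'' property (2) ``follows from the conformal invariance of Brownian motion,'' and property (3) ``follows from the conformal invariance of excursion measure''---and your write-up simply unpacks each of these references in the expected way, including the same citation to \cite{MR1796539} for the strong Markov property under time change and the same acknowledgment that \eqref{ERBMalltime} and \eqref{ERBMnolimit} are being deferred.
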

\begin{proof}
The 1st and 4th property have already been discussed.  The 2nd property follows from the conformal invariance of Brownian motion and the 3rd property follows from the conformal invariance of excursion measure.
\end{proof}

If $K$ is a closed subset of $\C \backslash A$ it makes sense to discuss ERBM in $\C \backslash A$ killed at $K$.  Most often we will do this when $K$ is a simple, closed curve $\eta$ surrounding $A$ and refer to the corresponding process as ERBM in $D$, where $D$ is the region bounded by $\eta$ and $\partial A$.

\subsection{Excursion Reflected Brownian Motion in Finitely Connected Domains} \label{ERBMGeneralDef}
\label{chapERBMFC}
Let $D\in \mathcal{Y}_n$ and $\eta_i$, for $1\leq i \leq n$, be as in Definition \ref{characterizationERBM}.  Denote the domain bounded by $\eta_i$ and $\partial A_i$ by $U_i$. We will now define a process $B^{ER}_D$ in $D$ satisfying Definition \ref{characterizationERBM}.  Intuitively, we define $B^{ER}_D\paren{t}$ pathwise to be a Brownian motion up until the first time it hits an $A_i$, then let it be an ERBM in $U_i$ until it hits $\eta_i$, then let it be a Brownian motion until it hits another $A_i$ and so on.  Adding rigor to this intuition is not hard, but is notationally cumbersome. For $i=1,2, \ldots, n$ and $j=1,2, \ldots$ let $B_{U_i}^{\paren{ j}}$ be an ERBM in $U_i$ started at $A_i$ and $B_j$ be a Brownian motion in $\C$ started at the origin.  We can construct these processes on the same probability space $\Omega$ so that they are all independent.  Let $z \in E= D\cup \set{A_0, \ldots ,A_n}$ and define
\begin{equation*}
B^{ER}_D\paren{ t}=\begin{cases}
z & \text{if } t=0\\
A_0 & \text{if } t > \tau \\
B^{ER}_D\paren{ \sigma_j} + B_i\paren{ t-\sigma_i} & \text{if } \sigma_j < t \leq \tau_j \\
B^{\paren{ j}}_{U_i}\paren{ t-\tau_{j}} \text{ where } B^{ER}_D\paren{ \tau_j}=A_i & \text{if } \tau_j < t \leq \sigma_{j+1}
\end{cases}
\end{equation*}
where
\begin{align*}
\tau &=\inf \set{t: B^{ER}_D\paren{ t} \in A_0}, \\
\sigma_1 &=0, \\
\sigma_j &= \inf \set{t \geq \tau_{j-1}: B_D^{ER}\paren{ t} \in \eta_i} \text{ for } j\geq 2, \\
\tau_j &=  \inf \set{t \geq \sigma_j: B_D^{ER}\paren{ t} \in A_i \text{ for some } i}.
\end{align*}
It is not hard to check that the distribution of $B_D^{ER}\paren{t}$ does not depend on the choice of $\eta_i$ and that $B_D^{ER}\paren{t}$ satisfies Definition \ref{characterizationERBM}.


\subsection{A Markov Chain Associated with ERBM} \label{MarkovChainERBM}
\label{chapMC}
Let $h_j$ be the unique bounded harmonic function on $D$ that is equal to $1$ on $\partial A_j$ and $0$ on $\partial A_i$ for $i \neq j$ (note that $h_j\paren{z}$ is the probability that a Brownian motion started at $z$ exits $D$ at $A_j$).  ERBM on $D$ induces a discrete time Markov chain $X$ with state space $\set{A_0, \ldots A_n}$ (see \cite{MR2247843} pg. 37).  The probability that the chain moves from $A_i$ to $A_j$ is equal to the probability that $A_j$ is the first boundary component of $D$ that $B_D^{ER}$ started at $A_i$ hits after the first time it hits $\eta_i$.  That is, the chain has transition probabilities $p_{00}=1$ and
\begin{equation*}
p_{i j}=\int_{\eta_i} h_j\paren{z} \frac{H_{\partial U_i}\paren{A_i,z}}{\mathcal{E}_{U_i}\paren{A_i, \eta_i}} \abs{dz},
\end{equation*}
for $i\neq 0$.  This Markov chain is not entirely satisfactory since it is highly dependent on the particular choice of $\eta_1, \ldots ,\eta_n$.  However, this chain does induce another chain $Y$ with transition probabilities
\begin{equation*}
q_{i j}=\frac{p_{i j}}{1-p_{ii}},
\end{equation*}
for $i \neq j$.  $Y$ is obtained from $X$ by erasing all of the loops and it is not hard to see that its transition probabilities are independent of the choice of $\eta_1, \ldots ,\eta_n$.  Since $q_{j0}>0$ for all $1 \leq j \leq n$, the eigenvalues of the transition matrix, $\mathbf{Q}$, for $Y$ restricted to $A_1, \ldots, A_n$ have absolute value strictly less than one and, using standard results from Markov chain theory, we have that the Green's matrix
\begin{equation}\label{GreenMatrixFormula}
\mathbf{I}+\mathbf{Q}+\mathbf{Q}^2+\cdots + \mathbf{Q}^n+ \cdots=\paren{\mathbf{I}-\mathbf{Q}}^{-1}.
\end{equation}
is well-defined.

\subsection{Excursion Reflected Harmonic Functions}
\begin{definition}\label{ERHarmonicDef}
A function
\[v:E \rightarrow \R\]
is called \emph{ER-harmonic} if it satisfies
\begin{enumerate}
\item $v$ is continuous on $E$ and is harmonic when restricted to $D$
\item For $1 \leq i \leq n$, if $\eta$ is a Jordan curve surrounding $A_i$, then
\begin{equation} \label{ERBMMeanvalue}
v\paren{A_i}=\int_{\eta} v\paren{z} \frac{H_{\partial U_i}\paren{A_i,z}}{\mathcal{E}_{U_i}\paren{A_i, \eta}} \abs{dz},
\end{equation}
where $U_i$ is the region bounded by $\eta$ and $\partial A_i.$
\end{enumerate}
\end{definition}

If it is clear what is meant, we will sometimes speak of the ER-harmonicity of a function with domain $D$ rather than $E$.  By an ER-harmonic function on $D-\set{z}$ or $D-\set{A_i}$ we mean a function that satisfies Definition \ref{ERHarmonicDef} except that (2) is not necessarily satisfied for curves surrounding $z$ and $A_i$ respectively.

The following is a useful criterion for a function to be ER-harmonic.

\begin{lemma}\label{ERHarmonicCondition}
Let $\eta$ and $\eta'$ be smooth Jordan curves surrounding $A_j$ and not surrounding $A_i$ for $i\neq j$. Then for any harmonic function $v$ on $D$ we have
\begin{enumerate}
\item
\begin{equation*}
\int_{\eta}\frac{d}{dn}~ v\paren{z}\abs{dz}=\int_{\eta'}\frac{d}{dn}~v\paren{z}\abs{dz},
\end{equation*}
where $n$ is the outward pointing normal
\item
\begin{equation*}
\int_{\eta} v\paren{z} H_{\partial U_j}\paren{A_j,z}\abs{dz}=v\paren{A_j}\mathcal{E}_{U_j}\paren{A_j,\eta}+ \int_{\eta} \frac{d}{dn}~ v\paren{z}\abs{dz},
\end{equation*}
where $U_j$ is the region bounded by $\eta$ and $\partial A_j$.  In particular, if $v$ is continuous on $E$, then $v$ is ER-harmonic if and only if for each $i$ there is an $\eta_i$ surrounding $A_i$ with
\[\int_{\eta_i} \frac{d}{dn}~ v\paren{z}\abs{dz}=0.\]
\end{enumerate}
\end{lemma}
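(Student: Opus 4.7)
The plan is to obtain Part (1) from the divergence theorem, and then to deduce Part (2) by combining Part (1) with Green's second identity for $v$ and the auxiliary harmonic function $p$ on $U_j$ with $p\equiv 1$ on $\partial A_j$ and $p\equiv 0$ on $\eta$. To prove Part (1), I note that after inserting an intermediate curve if necessary, I may assume $\eta'$ lies inside $\eta$, in which case the annular region $R$ between them sits in $D$ and $v$ is harmonic on $R$. The divergence theorem applied to $\nabla v$ gives $\int_{\partial R}\partial_n v\,|dz|=0$ for the outward normal $n$ to $R$, and since this $n$ equals $\nu_o$ on $\eta$ and $-\nu_o$ on $\eta'$ (writing $\nu_o$ for the outward normal to each curve), identity (1) follows.

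For Part (2), the first step is to compute, using $p(w)=\int_{\partial A_j} H_{U_j}(w,z)\,|dz|$, that on $\eta$
\begin{equation*}
\partial_{\nu_o} p(w) \;=\; -H_{\partial U_j}(A_j,w).
\end{equation*}
The sign arises because $\nu_o$ on $\eta$ is outward from $U_j$, opposite to the inward normal appearing in the definition of $H_{\partial U_j}$, and the symmetry $H_{\partial U_j}(w,z)=H_{\partial U_j}(z,w)$ converts the resulting integral to $H_{\partial U_j}(A_j,w)$. Integrating over $\eta$ then yields $\int_\eta \partial_{\nu_o} p\,|dz|=-\mathcal{E}_{U_j}(A_j,\eta)$. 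Next I would apply Green's second identity to $v$ and $p$ on $U_j$:
\begin{equation*}
0 \;=\; \int_\eta (v\,\partial_{\nu_o} p - p\,\partial_{\nu_o} v)\,|dz| \;-\; \int_{\partial A_j}(v\,\partial_{\nu_o} p - p\,\partial_{\nu_o} v)\,|dz|,
\end{equation*}
the minus sign arising because the outward normal to $U_j$ on $\partial A_j$ is $-\nu_o$. Substituting $p=0$ on $\eta$ and $p=1$, $v=v(A_j)$ on $\partial A_j$ (using continuity of $v$ on $E$), and then invoking Part (1) to transfer each of the two integrals over $\partial A_j$ back to integrals over $\eta$, a rearrangement yields Part (2). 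The ER-harmonicity criterion is then immediate: Part (2) identifies the defect $v(A_i)\,\mathcal{E}_{U_i}(A_i,\eta) - \int_\eta v\,H_{\partial U_i}(A_i,z)\,|dz|$ with $-\int_\eta \partial_{\nu_o} v\,|dz|$, and by Part (1) the vanishing of the latter is an $\eta$-independent condition.

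The main obstacle is regularity: the Green's identity computation and the application of Part (1) with $\eta'$ taken to be $\partial A_j$ both require $v$ and $p$ to be $C^1$ up to $\partial A_j$, which need not be smooth for arbitrary $A_j\in\mathcal{Y}$. My plan to bypass this is through conformal invariance; every quantity in Part (2) transforms correctly (the boundary Poisson kernel by \eqref{BdPKCI}, excursion measure by its own invariance, and $\int_\eta \partial_n v\,|dz|$ by a direct chain-rule check), so I would map $U_j$ conformally onto a round annulus where the inner boundary is a smooth circle, prove the identity there, and pull back.
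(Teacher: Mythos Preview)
Your approach is correct. The paper itself does not give a proof of this lemma; it simply cites \cite{MR2247843}, p.~17. Your argument via the divergence theorem for Part (1) and Green's second identity against the harmonic measure $p$ for Part (2) is the standard route and almost certainly coincides with what appears in the cited reference. The regularity issue you flag is real, and your fix---reducing to a round annulus by conformal invariance, where Schwarz reflection makes $v$ (constant on the inner circle) smooth up to the boundary---is exactly the right move; all three terms in Part (2) are conformally invariant for the reasons you state. One small remark: when you invoke Part (1) to pass from $\partial A_j$ to $\eta$, you are applying it to $p$ (harmonic only on $U_j$) and to $v$ up to $\partial A_j$ (not a curve in $D$), so strictly speaking you are re-running the divergence-theorem step inside the mapped annulus rather than literally quoting Part (1); this is fine, but worth saying explicitly in a final write-up.
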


\begin{proof}
See \cite{MR2247843} pg. 17.
\end{proof}

As with harmonic functions, if we specify suitable boundary conditions, there is a unique ER-harmonic function with these boundary conditions.  The key to proving this uniqueness is a maximal principle for ER-harmonic functions.

\begin{lemma}[Maximal principle for ER-harmonic functions] \label{ERBMMaximalprinciple}
Let $v: E \cup \partial A_0 \rightarrow \R$ be a bounded, continuous function that is ER-harmonic when restricted to $E$.  Then
\begin{enumerate}
\item  The maximum of value of $v$ is equal to the maximum value of $v$ restricted to $\partial A_0$.
\item  If there is a $z \in E$ such that $v$ attains its maximum at $z$, then $v$ is constant.
\end{enumerate}
\end{lemma}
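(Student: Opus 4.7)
The plan is to combine the classical strong maximum principle for harmonic functions with the ER-mean value property \eqref{ERBMMeanvalue} in order to reduce the statement to the usual harmonic setting. Let $M=\sup_{z\in E\cup\partial A_0}v\paren{z}$. Continuity of $v$ on $E$ in the quotient topology means that $v$ extends continuously to each $\partial A_i$ with the constant value $v\paren{A_i}$, so the usual maximum principle applied to $v$ as a bounded harmonic function on $D$ with continuous boundary values gives $v\leq M$ throughout $\overline{D}$.

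I would first dispose of the case where $M$ is attained at some $z_0\in D$: since $v$ is harmonic on the connected domain $D$, the strong maximum principle forces $v\equiv M$ on $D$, hence $v\equiv M$ on all of $E\cup\partial A_0$ by continuity, and both conclusions of the lemma follow at once.

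It remains to handle the case where $M$ is attained at some $A_i$ with $i\geq 1$ but not at any point of $D$. I would pick a smooth Jordan curve $\eta\subset D$ surrounding $A_i$ and not surrounding $A_j$ for $j\neq i$, and let $U_i$ be the region bounded by $\eta$ and $\partial A_i$. Since $\eta\subset\overline{D}$, we have $v\leq M$ on $\eta$, and the ER-mean value property \eqref{ERBMMeanvalue} gives
\[M=v\paren{A_i}=\int_\eta v\paren{z}\,d\mu\paren{z},\]
where $d\mu\paren{z}=\frac{H_{\partial U_i}\paren{A_i,z}}{\mathcal{E}_{U_i}\paren{A_i,\eta}}\abs{dz}$ is a probability measure on $\eta$. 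Combined with $v\leq M$ on $\eta$, this equality forces $v\equiv M$ wherever the density of $d\mu$ is strictly positive, and by continuity of $v$ we conclude that $v\equiv M$ on all of $\eta$. But then $v$ attains its maximum at every point of $\eta$, each of which lies in $D$, and applying the strong maximum principle exactly as in the easy case finishes the argument.

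The main subtlety is verifying strict positivity of the density of $d\mu$ on $\eta$, which is what lets us pass from the integral equality to the pointwise identity $v\equiv M$ on $\eta$. This follows from the conformal invariance \eqref{BdPKCI} of the boundary Poisson kernel together with the fact that the doubly connected region $U_i$ is conformally equivalent to a round annulus; on a round annulus, the boundary Poisson kernel between the two bounding circles is manifestly positive everywhere by rotational symmetry.
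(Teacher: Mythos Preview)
Your proof is correct and follows essentially the same route as the paper's: reduce (1) to (2), dispose of an interior maximum via the strong maximum principle for harmonic functions, and handle a maximum at some $A_i$ by using the ER-mean value identity \eqref{ERBMMeanvalue} to produce a point of $D$ where the maximum is also attained. Your version is simply more explicit than the paper's one-line treatment of the $A_i$ case, in particular supplying the positivity of the excursion density (via conformal equivalence to a round annulus) that the paper leaves as ``it is clear.''
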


\begin{proof}
It is clear that (2) implies (1), so it is enough to prove (2).  Let $z$ be a point where $v$ attains its maximum.  If $z \in D$, then by the strong maximal principle for harmonic functions \cite{MR1625845}, $v$ is constant.  If $z = A_i$, then using \eqref{ERBMMeanvalue} it is clear there is some $z' \in D$ where $v$ also attains its maximum and thus, $v$ is constant.
\end{proof}

\begin{proposition} \label{ERHarmonicFromERBM}
Suppose that  $\partial A_0$ has at least one regular point for Brownian motion and let $F: \partial A_0 \rightarrow \R$ be a bounded, measurable function.  Define
\[v: \overline{D} \rightarrow \R\]
by
\[v \paren{z}=\ev{z}{F\paren{B^{ER}_D\paren{\tau_D}}},\]
where $\tau_D$ is the first time an ERBM hits $A_0$.  Then $v$ is a bounded $ER$-harmonic function on $D$ that is continuous at all regular points of $\partial A_0$ at which $F$ is continuous.  Furthermore, if every point of $\partial A_0$ is regular and $F$ is continuous, then $v$ is the unique ER-harmonic function that is equal to $F$ on $\partial A_0$.
\end{proposition}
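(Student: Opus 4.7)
\textbf{Proof proposal for Proposition \ref{ERHarmonicFromERBM}.} The plan is to verify in order: boundedness, harmonicity on $D$, the ER-mean value property at each $A_i$ with $i\neq 0$, continuity on $E$ and at regular boundary points where $F$ is continuous, and finally uniqueness via Lemma~\ref{ERBMMaximalprinciple}. Boundedness is immediate from $\|v\|_\infty\leq \|F\|_\infty$, so the real content is the mean value properties and continuity.

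For harmonicity on $D$, fix $z\in D$ and choose $r>0$ with $\overline{B_r(z)}\subset D$. Let $\sigma$ be the first exit time of $B_D^{ER}$ from $B_r(z)$. By property (2) of Definition~\ref{characterizationERBM}, $B_D^{ER}$ agrees with a Brownian motion up to its first hit of $\partial D$, so under $\mathbf{P}^z$, $B_D^{ER}(\sigma)$ is uniform on $\partial B_r(z)$. The strong Markov property then gives
\[
v(z)=\ev{z}{\ev{B_D^{ER}(\sigma)}{F(B_D^{ER}(\tau_D))}}=\ev{z}{v(B_D^{ER}(\sigma))},
\]
which is the circle mean value property, hence $v$ is harmonic on $D$. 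For the ER-mean value property at $A_i$ ($i\neq 0$), let $\eta$ be a Jordan curve surrounding $A_i$ (and no other $A_j$), let $U_i$ be the region between $\partial A_i$ and $\eta$, and let $\sigma$ be the first time $B_D^{ER}$ started at $A_i$ hits $\eta$. Property (3) of Definition~\ref{characterizationERBM} says $B_D^{ER}(\sigma)$ has law $\overline{\mathcal{E}}_{U_i}(A_i,\cdot)$, so by the strong Markov property,
\[
v(A_i)=\ev{A_i}{v(B_D^{ER}(\sigma))}=\int_{\eta} v(z)\,\frac{H_{\partial U_i}(A_i,z)}{\mathcal{E}_{U_i}(A_i,\eta)}\,\abs{dz},
\]
which is the ER-harmonic identity \eqref{ERBMMeanvalue}.

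For continuity on $E$, harmonicity already gives continuity on $D$. At $A_i$ with $i\neq 0$, if $z\to A_i$ in $D$ then Brownian motion from $z$ hits $\partial D$ at $A_i$ with probability tending to $1$, so conditioning on the first boundary component hit and using the strong Markov property at that hit (with the post-hit expectation equal to $v(A_j)$ if it is $A_j\neq A_0$, and to $F$ at the hit point if it is $A_0$) shows $v(z)\to v(A_i)$. At a regular boundary point $z_0\in\partial A_0$ where $F$ is continuous, the same decomposition combined with the standard Brownian regularity criterion yields $\mathbf{P}^z(B_{\tau'}\in A_0)\to 1$ and $B_{\tau'}\to z_0$ in probability as $z\to z_0$, where $\tau'$ is the first $\partial D$-hitting time; dominated convergence gives $v(z)\to F(z_0)$.

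For uniqueness, suppose $u$ is any other bounded ER-harmonic function on $E$ continuous up to $\partial A_0$ with $u|_{\partial A_0}=F$. Then $w=u-v$ is bounded, continuous on $E\cup\partial A_0$, ER-harmonic on $E$, and vanishes on $\partial A_0$. Applying Lemma~\ref{ERBMMaximalprinciple} to $w$ and $-w$ gives $w\equiv 0$. The main obstacle I expect is the continuity at $\partial A_0$: one needs the standard regularity characterization (the usual proof via small-time exit estimates for Brownian motion, using the fact that the other $A_j$ are at positive distance from $z_0$) to rule out detours through the reflecting boundary components before hitting $\partial A_0$ near $z_0$.
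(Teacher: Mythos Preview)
Your proof is correct and follows essentially the same approach as the paper's, which is itself only a brief sketch: boundedness is immediate, harmonicity and the ER-mean value property come from the strong Markov property together with properties (2) and (3) of Definition~\ref{characterizationERBM}, continuity at regular points is handled as in the classical Brownian case, and uniqueness is via Lemma~\ref{ERBMMaximalprinciple}. You have simply filled in the details that the paper leaves to the reader (including the continuity at the $A_i$, which the paper does not explicitly mention but which is needed for ER-harmonicity as in Definition~\ref{ERHarmonicDef}).
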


\begin{proof}
It is clear from the fact that $F$ is bounded that $v$ is also bounded.  The proof that $v$ is harmonic and continuous at the regular points of $A_0$ at which $F$ is continuous is similar to the proof of the corresponding result for Brownian motion (see \cite{MR2604525}).  The fact that \eqref{ERBMMeanvalue} holds follows from the strong Markov property for ERBM and (3) of Definition \ref{characterizationERBM}.  The uniqueness statement follows from a straightforward application of Lemma \ref{ERBMMaximalprinciple}.
\end{proof}

\section{The Poisson Kernel for ERBM}\label{sectPK}
\subsection{Definition and Basic Properties}

Throughout this section, let $D \in \mathcal{Y}$ be such that it is possible to define a Green's function $G_D\paren{z,w}$ for Brownian motion.  Recall that we normalize $G_D\paren{z,\cdot}$ so that it is a density for the expected amount of time a Brownian motion started at $z$ spends in a set before exiting $D$.

\begin{definition}
\[G^{ER}_{D}\paren{z,\cdot}:E \rightarrow \R\]
is a \emph{Green's function for ERBM} if for any Borel subset $V \subset D$
\begin{equation}\label{GrDef}
\mu_z\paren{V}:=\ev{z}{\int_0^{\tau_{D}} \mathbf{1}_{V}\paren{B_{D}^{ER}\paren{t}} dt}=\int_V G^{ER}_{D}\paren{z,w} dw,
\end{equation}
where $\tau_{D}=\inf \set{t: B_{D}^{ER}\paren{t} \in \partial A_0}$.
\end{definition}

Using the definition of ERBM and the analogous fact for Brownian motion, it is easy to prove that the probability that ERBM started at $z$ is in a set of Lebesgue measure zero at some fixed time is $0$.  Combining this fact with Fubini's theorem, we see that if $V$ has Lebesgue measure zero, then
\[\mu_z\paren{V}=\int_0^{\tau_{D}} \prob{z}{B_{D}^{ER}\paren{t} \in V} dt =0.\]
As a result, we can define $G^{ER}_{D}\paren{z,\cdot}$ as a Radon-Nikodym derivative.  Furthermore, we have 
\begin{equation}\label{RNGreenERBM}
\displaystyle G_D^{ER}\paren{z,w}=\lim_{\epsilon \rightarrow 0} \frac{\mu_z\paren{B\paren{w,\epsilon}}}{m\paren{B\paren{w,\epsilon}}}
\end{equation}
is a Green's function for ERBM, where $m$ is Lebesgue measure.  A priori, there is no reason the Green's function as defined cannot be infinite on a set of positive measure.  This potential issue will be resolved by Proposition \ref{ERBMGreenDecomp} and \eqref{GreenMatrixGreenFunction}.

We have only given a probabilistic definition of $G_D^{ER}\paren{z,\cdot}$ and our definition is unique only as an element of $L^1\paren{D}$.  It is also possible to give an analytic characterization of $G_D^{ER}\paren{z,\cdot}$.  More specifically, we will prove that there is a version of $G_D^{ER}\paren{z,\cdot}$ that is the unique ER-harmonic function on $D-\set{z}$ satisfying certain boundary conditions (that depend on whether or not $z$ is equal to some $A_i$).  In particular, this will allow us to talk about ``the'' Green's function for ERBM rather than ``a'' Green's function.  We start by proving an analog of \eqref{ERBMPKD} for $G_D^{ER}\paren{z,\cdot}.$

\begin{proposition}\label{ERBMGreenDecomp}
\[G_{D}\paren{z,w}+\sum_{i=1}^n h_i\paren{z}G_{D}^{ER}\paren{A_i,w}\] is a version of $G_D^{ER}\paren{z, \cdot}$.
\end{proposition}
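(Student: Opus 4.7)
The plan is to condition on $\tau := \inf\{t : B_D^{ER}(t) \in \partial D\}$, the first time ERBM hits the boundary, and then apply the strong Markov property to decompose the occupation time measure $\mu_z$ from \eqref{GrDef}.

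First I would split the integral defining $\mu_z$ at time $\tau$:
\[\mu_z(V) = \ev{z}{\int_0^{\tau} \mathbf{1}_V(B_D^{ER}(t))\,dt} + \ev{z}{\int_{\tau}^{\tau_D} \mathbf{1}_V(B_D^{ER}(t))\,dt}.\]
For the first term, property (2) of Definition \ref{characterizationERBM} says that up until time $\tau$, the process $B_D^{ER}$ is distributed as a Brownian motion in $D$ killed at $\partial D$. Hence that first expectation is exactly the Brownian occupation time measure of $V$, which by the definition/normalization of $G_D$ equals $\int_V G_D(z,w)\,dw$.

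For the second term, I would decompose according to which boundary component is hit at time $\tau$. On $\{B_D^{ER}(\tau) \in \partial A_0\}$ the process is killed at time $\tau = \tau_D$, contributing zero. On $\{B_D^{ER}(\tau) \in \partial A_i\}$ for $i \geq 1$, the strong Markov property (property (1) of Definition \ref{characterizationERBM}) tells us that the post-$\tau$ process has the distribution of an ERBM started at $A_i$, and so the post-$\tau$ contribution to the occupation time of $V$ is $\mu_{A_i}(V) = \int_V G_D^{ER}(A_i,w)\,dw$. Because $B_D^{ER}$ is a Brownian motion up to time $\tau$, the probability that it hits $\partial A_i$ first is precisely the Brownian harmonic measure of $\partial A_i$ from $z$, which is $h_i(z)$. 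Summing,
\[\ev{z}{\int_{\tau}^{\tau_D} \mathbf{1}_V(B_D^{ER}(t))\,dt} = \sum_{i=1}^n h_i(z) \int_V G_D^{ER}(A_i, w)\,dw.\]

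Adding the two contributions gives
\[\mu_z(V) = \int_V \bracket{G_D(z,w) + \sum_{i=1}^n h_i(z) G_D^{ER}(A_i, w)} dw\]
for every Borel $V \subset D$, which by the characterization of $G_D^{ER}(z,\cdot)$ as a Radon-Nikodym derivative in \eqref{RNGreenERBM} identifies the bracketed function as a version of $G_D^{ER}(z,\cdot)$. The only delicate point is verifying that the strong Markov decomposition applies in integrated (Fubini) form to the occupation time; but this is routine since the integrand is nonnegative and measurable, and $\tau$ is a stopping time for the filtration carried by $B_D^{ER}$.
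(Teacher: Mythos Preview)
Your proof is correct and is exactly the argument the paper has in mind: the paper's proof is the single sentence ``This follows easily using the strong Markov property for ERBM and the fact that up until the first time it hits $\partial D$, ERBM has the distribution of a Brownian motion,'' and you have simply written out that decomposition in full detail.
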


\begin{proof}
This follows easily using the strong Markov property for ERBM and the fact that up until the first time it hits $\partial D$, ERBM has the distribution of a Brownian motion.
\end{proof}

As we expect, $G_D^{ER}\paren{z, \cdot}$ is conformally invariant.  To prove this we need the following lemma, which is a straightforward exercise in measure theory.

\begin{lemma}
If $g\in L^1\paren{D}$, then for all Borel $V \subset D$ we have
\begin{equation*}
\ev{z}{\int_0^{\tau_{D}} \mathbf{1}_{V}\paren{B^{ER}_{D}\paren{t}}g\paren{B^{ER}_{D}\paren{t}} dt}=\int_V G^{ER}_{D}\paren{z,w}g\paren{w} dw.
\end{equation*}
\label{greendefExtension}
\end{lemma}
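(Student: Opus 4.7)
The plan is to prove this by the standard ``measure-theoretic ladder'' argument: verify the identity for indicators of Borel sets using only the definition \eqref{GrDef}, extend by linearity to simple functions, extend by monotone convergence to nonnegative measurable functions, and finally split $g = g^+ - g^-$ for a general $L^1$ integrand. No property of ERBM beyond what is encoded in \eqref{GrDef} is needed.

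First I would take $g = \mathbf{1}_{V'}$ for an arbitrary Borel $V' \subset D$. Then $\mathbf{1}_V(B_D^{ER}(t)) g(B_D^{ER}(t)) = \mathbf{1}_{V \cap V'}(B_D^{ER}(t))$, and applying \eqref{GrDef} to the set $V \cap V'$ gives
\[
\ev{z}{\int_0^{\tau_D} \mathbf{1}_{V \cap V'}(B_D^{ER}(t))\, dt} = \int_{V \cap V'} G_D^{ER}(z,w)\, dw = \int_V G_D^{ER}(z,w) \mathbf{1}_{V'}(w)\, dw,
\]
which is exactly the claim for $g = \mathbf{1}_{V'}$. By linearity of both sides in $g$, the identity extends at once to nonnegative simple functions $g = \sum_{k=1}^N c_k \mathbf{1}_{V_k'}$.

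Next I would handle a general nonnegative measurable $g$ by choosing an increasing sequence of nonnegative simple functions $g_n \uparrow g$ pointwise. On the left, $\mathbf{1}_V(B_D^{ER}(t)) g_n(B_D^{ER}(t)) \uparrow \mathbf{1}_V(B_D^{ER}(t)) g(B_D^{ER}(t))$ for every $t$ and every sample path, so by the monotone convergence theorem applied first inside the expectation (pathwise, under Lebesgue $dt$) and then under $\ev{z}{\cdot}$, the left side converges to $\ev{z}{\int_0^{\tau_D} \mathbf{1}_V(B_D^{ER}(t)) g(B_D^{ER}(t))\, dt}$. On the right, $G_D^{ER}(z,w) g_n(w) \uparrow G_D^{ER}(z,w) g(w)$, so monotone convergence gives $\int_V G_D^{ER}(z,w) g_n(w)\, dw \to \int_V G_D^{ER}(z,w) g(w)\, dw$. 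Hence the identity holds for all nonnegative measurable $g$.

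Finally, for $g \in L^1(D)$ I would decompose $g = g^+ - g^-$. Applying the previous step to $|g| = g^+ + g^-$ shows that both $\int_V G_D^{ER}(z,w) g^{\pm}(w)\, dw$ are finite (since $g \in L^1$ and $G_D^{ER}(z,\cdot) \geq 0$ is integrable on $V$ via Proposition \ref{ERBMGreenDecomp} together with $\mu_z(D) = \ev{z}{\tau_D} < \infty$ on the measure-theoretic side), so both sides of the identity are well-defined and finite for $g^+$ and $g^-$ separately. Subtracting the two identities yields the result. The only substantive point to check is that the integrals in the final subtraction are finite so that the subtraction makes sense; this is where Proposition \ref{ERBMGreenDecomp} ensures $G_D^{ER}(z,\cdot)$ is integrable against any $L^1$ function, so there is no genuine obstacle.
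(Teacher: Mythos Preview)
Your approach is exactly the standard measure-theoretic ladder the paper has in mind when it calls this ``a straightforward exercise in measure theory'' (the paper gives no proof). Steps 1--3 are correct, and the nonnegative case is all the paper ever uses (with $g=\abs{f'}^2$ in Proposition~\ref{ERBMGreenCI} and Lemma~\ref{allpathlemma}).

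There is, however, a genuine slip in your final paragraph. You claim that $\int_V G_D^{ER}(z,w)\,g^{\pm}(w)\,dw<\infty$ because $g\in L^1(D)$ and $G_D^{ER}(z,\cdot)$ is integrable. Neither premise does the job: the product of two $L^1$ functions need not be $L^1$, and in fact one can choose $g\in L^1$ concentrated near $z$ (e.g.\ $g(w)=\abs{w-z}^{-2}(\log\abs{w-z})^{-2}$ on a small disk) so that $\int G_D^{ER}(z,w)\abs{g(w)}\,dw=\infty$, since $G_D^{ER}(z,w)\sim -\pi^{-1}\log\abs{w-z}$. Moreover, your side remark that $\mu_z(D)=\ev{z}{\tau_D}<\infty$ is false for unbounded $D$ such as those in $\mathcal{Y}^*$ (already for Brownian motion in $\half$ the exit time has infinite mean). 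So the subtraction $g^+-g^-$ is not justified in general, and the lemma as literally stated for arbitrary $g\in L^1(D)$ needs either an additional integrability hypothesis or the understanding that both sides may equal $+\infty-\infty$. Since the paper's applications only require nonnegative $g$, your Steps 1--3 already suffice for everything downstream; just drop the flawed finiteness claim or restrict to $g\ge 0$.
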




\begin{proposition}\label{ERBMGreenCI}
If $f: D \rightarrow D'$ is a conformal map, then
\[G^{ER}_{D}\paren{f^{-1}\paren{z},f^{-1}\paren{\cdot}}\]
is a version of $G^{ER}_{D'}\paren{z,\cdot}$.
\end{proposition}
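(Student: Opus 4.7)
The plan is to follow the pattern of the usual proof for the Green's function of Brownian motion, but using the time-changed conformal invariance of ERBM (Proposition \ref{ERBMconformalinvariance}) and Lemma \ref{greendefExtension} to handle the Jacobian factor that appears.

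First, I would fix a Borel set $V \subset D'$ and unpack the definition:
\[
\mu_z(V) = \ev{z}{\int_0^{\tau_{D'}} \mathbf{1}_V\paren{B^{ER}_{D'}(t)}\,dt}.
\]
By Proposition \ref{ERBMconformalinvariance}, started from $z$, the process $B^{ER}_{D'}(t)$ equals $f\paren{B^{ER}_D(\sigma_t)}$ where $B^{ER}_D$ starts at $f^{-1}(z)$ and $\sigma_t$ is the stopping time defined by $\int_0^{\sigma_t}\abs{f'(B^{ER}_D(s))}^2\,ds = t$. Differentiating this relation yields $\sigma_t' = \abs{f'(B^{ER}_D(\sigma_t))}^{-2}$, so the substitution $u = \sigma_t$, $dt = \abs{f'(B^{ER}_D(u))}^2\,du$, together with the fact that $\tau_{D'}$ corresponds via the time change to $\tau_D$, gives
\[
\int_0^{\tau_{D'}} \mathbf{1}_V\paren{B^{ER}_{D'}(t)}\,dt = \int_0^{\tau_D} \mathbf{1}_{f^{-1}(V)}\paren{B^{ER}_D(u)}\,\abs{f'(B^{ER}_D(u))}^2\,du.
\]

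Next, I would take expectations and apply Lemma \ref{greendefExtension} to the ERBM $B^{ER}_D$ with the bounded measurable weight $g(w) = \abs{f'(w)}^2$ (one should localize to a set where this is integrable, which is immediate since we can intersect with any relatively compact subdomain first and then monotone converge). This gives
\[
\mu_z(V) = \int_{f^{-1}(V)} G^{ER}_D\paren{f^{-1}(z),w}\,\abs{f'(w)}^2\,dw.
\]
Finally, apply the standard real change of variables $w = f^{-1}(w')$ for $w' \in V$. Since $f$ is holomorphic, the Jacobian of $f^{-1}$ as a map $\R^2 \to \R^2$ has absolute value $\abs{f'(f^{-1}(w'))}^{-2}$, so $dw = \abs{f'(f^{-1}(w'))}^{-2}\,dw'$. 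The two conformal factors cancel, leaving
\[
\mu_z(V) = \int_V G^{ER}_D\paren{f^{-1}(z),f^{-1}(w')}\,dw',
\]
which by \eqref{GrDef} shows $G^{ER}_D\paren{f^{-1}(z),f^{-1}(\cdot)}$ is a version of $G^{ER}_{D'}(z,\cdot)$.

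The only real subtlety is the bookkeeping of the time change: making sure that $\tau_{D'}$ in the original clock matches $\tau_D$ in the new clock, and that $\sigma_t$ is bijective onto $[0,\tau_D)$. Both follow from the considerations \eqref{ERBMalltime} and \eqref{ERBMnolimit} that were used in defining conformally invariant ERBM, so no genuinely new analytic work is required; the argument is essentially just pushing Brownian-style Green's function invariance through the time-changed definition of ERBM, and the Jacobian cancellation is the same as in the classical case.
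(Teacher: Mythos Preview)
Your proof is correct and follows essentially the same approach as the paper: both arguments combine the time-change conformal invariance of ERBM (Proposition \ref{ERBMconformalinvariance}), Lemma \ref{greendefExtension}, and the planar change-of-variables formula, with the Jacobian $\abs{f'}^2$ cancelling in exactly the way you describe. The paper runs the chain of equalities in the opposite direction (starting from $\int_{V'} G^{ER}_D(f^{-1}(z),f^{-1}(w))\,dw$ and ending at the occupation-time expectation), and it likewise flags the dependence on \eqref{ERBMnolimit} for the time-change bijection, just as you do.
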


\begin{proof}
It is enough to show that $G^{ER}_{D}\paren{f^{-1}\paren{z},f^{-1}\paren{\cdot}}$ satisfies \eqref{GrDef} for all open subsets of $D'$.  Let $V'$ be an open subset of $D'$ and $V=f^{-1}\paren{V'}$. Using Lemma \ref{greendefExtension} and the change of variables formula, we have
\begin{align}
\int_{V'} G^{ER}_{D}\paren{z,f^{-1}\paren{w}} dw &= \int_{V}G^{ER}_{D}\paren{z,w} \abs{f'\paren{w}}^2 dw\nonumber\\ \label{proofCI2}
&=\ev{z}{\int_0^{\tau_D} \abs{\mathbf{1}_{V}\paren{B^{ER}_{D}\paren{t}}f'\paren{B^{ER}_{D}\paren{t}}}^2 dt}.
\end{align}
Let
\begin{equation}
u\paren{t}=\int_0^t \abs{f'\paren{B^{ER}_{D}\paren{s}}}^2 ds.
\label{proofCI3}
\end{equation}
Substituting $u^{-1}\paren{r}$ for $t$ and using the conformal invariance of ERBM, we see that \eqref{proofCI2} is equal to
\begin{equation}
\ev{z}{\int_0^{\tau_{D'}} \mathbf{1}_{V'}\paren{B^{ER}_{D}\paren{t}} dt},
\label{proofCI4}
\end{equation}
which completes the proof.
\end{proof}
In the proof of Proposition \ref{ERBMGreenCI}, observe that we can only conclude that \eqref{proofCI2} is equal to \eqref{proofCI4} if \eqref{proofCI3} is almost surely finite for all $t<\infty$.  This will be addressed when we prove \eqref{ERBMnolimit}.

In order to prove $G_D^{ER}\paren{z,\cdot}$ is ER-harmonic, we will need to be able to compute $G_{A_{1,r}}\paren{z,\cdot}$.  Using Proposition \ref{ERBMGreenDecomp}, to do this, it is enough to compute $G_{A_{1,r}}^{ER}\paren{A_1,\cdot}.$

\begin{lemma}  \label{GreensFunctionAnnulus} Let $A_{1,r} \in \mathcal{Y}_1$ be the annulus with $A_1=\overline{\D}$ and $\partial A_0=\partial B_r\paren{0}$ for some $r>1$ and $B_t$ be a Brownian motion in $r \D$. If $V$ is a Borel set bounded away from $A_1$, then
\begin{equation*}
\ev{A_1}{\int_0^{\tau_{A_{1,r}}} \mathbf{1}_{V}\paren{B_{A_{1,r}}^{ER}\paren{t}} dt}=\ev{0}{\int_0^{\tau_{r\D}} \mathbf{1}_{V}\paren{B_t}}dt,
\end{equation*}
where $\tau_{A_{1,r}}$ and $\tau_{r\D}$ are respectively the first time $B_{A_{1,r}}^{ER}$ leaves $A_{1,r}$ and $B_t$ leaves $r\D$.  Furthermore, we have
\begin{equation*}
G_{A_{1,r}}^{ER}\paren{A_1,z}=\frac{-\log \abs{z}+\log r}{\pi}.
\end{equation*}
\end{lemma}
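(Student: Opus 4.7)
Proof plan:

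\emph{Strategy.} I fix a Borel set $V\subset A_{1,r}$ bounded away from $A_1$ and choose $\delta>0$ small enough that $V\subset A_{1+\delta,r}$. Both expectations in the first identity will be shown to equal the common quantity
\[\frac{1}{1-\bar p_\delta}\, \ev{U_\delta}{\int_0^{\tau_{A_{1+\delta,r}}}\mathbf{1}_V(B_t)\,dt},\]
where $U_\delta$ denotes the uniform distribution on $\partial B_{1+\delta}$ and $\bar p_\delta$ is the probability that a BM started from $U_\delta$ exits $A_{1+\delta,r}$ at $\partial B_{1+\delta}$ (rather than at $\partial B_r$).

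\emph{Excursion decomposition.} For each of the two processes let $\sigma_0^+<\sigma_0^-<\sigma_1^+<\sigma_1^-<\ldots$ be the successive outward and inward crossings of $\partial B_{1+\delta}$. Since $V\subset A_{1+\delta,r}$, the total $V$-occupation time decomposes over the intervals $[\sigma_k^+,\sigma_k^-\wedge\tau]$ (where $\tau$ is the respective killing time), and the strong Markov property at each $\sigma_k^+$ reduces the $k$-th term to $\ev{x}{\int_0^{\tau_{A_{1+\delta,r}}}\mathbf{1}_V(B_t)\,dt}$ evaluated at $x = $ position at $\sigma_k^+$. For both processes this position, conditional on $\{\sigma_k^+<\tau\}$, is uniform on $\partial B_{1+\delta}$: for the BM this is immediate from the rotational symmetry of BM started at $0$; for the ERBM it follows from the rotational invariance of the ERBM started at $A_1$ (inherited from the rotationally invariant semigroup \eqref{semigroupERBM}) and is consistent with Definition~\ref{characterizationERBM}(3) applied to $\eta=\partial B_{1+\delta}$, since $\overline{\mathcal{E}}_{U_\delta}(A_1,\cdot)$ is rotationally invariant. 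The probability of a further outward crossing given an inward crossing is $\bar p_\delta$ for both processes: the BM a.s.\ returns to $\partial B_{1+\delta}$ because $B_{1+\delta}$ is bounded, and the ERBM a.s.\ returns because $\partial\D$ is a reflecting (not killing) boundary. Summing the resulting geometric series yields the displayed quantity for both processes, proving the first identity.

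\emph{Conclusion and main obstacle.} From the first identity, $\int_V[G^{ER}_{A_{1,r}}(A_1,w)-G_{r\D}(0,w)]\,dw=0$ for every $V$ bounded away from $A_1$, so the two Green's functions agree almost everywhere on $A_{1,r}$ and, by harmonicity, everywhere; substituting \eqref{GFUniDisk} yields the closed form. The main obstacle is the uniformity of $B^{ER}(\sigma_k^+)$ for $k\ge 1$: one cannot invoke property~(3) directly, because the ERBM need not have returned to $A_1$ between $\sigma_{k-1}^-$ and $\sigma_k^+$. The argument instead rests on the global rotational invariance of the ERBM trajectory started at $A_1$, which forces each marginal of the outward-crossing positions to be uniform, even after conditioning on the (rotationally symmetric) event $\{\sigma_k^+<\tau_{A_{1,r}}\}$.
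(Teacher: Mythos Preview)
Your strategy---decompose the occupation time into excursions, use rotational symmetry to get uniform hitting distributions on an intermediate circle, and match the pieces---is exactly the paper's approach. The gap is in the decomposition itself: ``successive outward and inward crossings of $\partial B_{1+\delta}$'' are not well-defined stopping times for a continuous process. Once a Brownian path touches $\partial B_{1+\delta}$ it crosses that circle uncountably often in every neighborhood of the hitting time, so there is no ``next'' crossing in either direction. Correspondingly, your displayed formula is ill-posed: a Brownian motion started on $\partial B_{1+\delta}$ exits the open annulus $A_{1+\delta,r}$ instantly, so $\tau_{A_{1+\delta,r}}=0$ and $\bar p_\delta$ as you describe it cannot carry the meaning you need.

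The paper repairs this by using \emph{two} concentric levels rather than one. With $C_\epsilon=\set{\abs{z}=1+\epsilon}$, set $\sigma_1=0$, let $\tau_j$ be the first hit of $C_\epsilon$ after $\sigma_j$, and let $\sigma_{j+1}$ be the first subsequent hit of $\partial\D$ (for the Brownian motion) or visit to the state $A_1$ (for the ERBM). These are honest stopping times; the intervals $[\sigma_j,\tau_j]$ contribute nothing to the $V$-occupation since the process stays inside $B_{1+\epsilon}\paren{0}$, and on each $[\tau_j,\sigma_{j+1}]$ both processes are ordinary Brownian motions in $A_{1,r}$ started from a point uniform on $C_\epsilon$. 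This choice also dissolves the obstacle you flag: once the ERBM has actually returned to $A_1$, Definition~\ref{characterizationERBM}(3) applies directly to the next hit of $C_\epsilon$, so no appeal to global rotational invariance is needed on the ERBM side (though that argument is valid, and is the natural way to handle the Brownian side).
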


\begin{proof}
Since $V$ is bounded away from $\overline{D}$, there exists an $\epsilon>0$ such that $V$ is contained in the region bounded by the circle
\[C_{\epsilon}=\set{z \in \C:\abs{z}=1+\epsilon} \]
and the outer boundary of $A_{1,r}$.  Let $\sigma_1=0$, $\tau_j$ be the first time after $\sigma_j$ that $B_{A_{1,r}}^{ER}$ hits $C_{\epsilon}$, and $\sigma_j$ for $j>1$ be the first time after $\tau_{j-1}$ that $B_{A_{1,r}}^{ER}$ hits $A_1$.  Similarly, let $\sigma_1'=0$, $\tau'_j$ be the first time after $\sigma'_j$ that $B_t$ hits $C_{\epsilon}$, and $\sigma'_j$ for $j>1$ be the first time after $\tau'_{j-1}$ that $B_t$ hits the circle of radius $1$, and $\sigma'_1=0$.  It follows from the strong Markov property for ERBM and (3) of Definition \ref{characterizationERBM} that given that $\tau_j<\infty$, the distribution of $B_{A_{1,r}}^{ER}\paren{\tau_j}$ is uniform on $C_{1+\epsilon}$.  It is an easy exercise to check that given that $\tau'_j<\infty$, the distribution of $B_{\tau_j}$ is uniform on $C_{1+\epsilon}$.  Using these two facts, the strong Markov property for ERBM, and the fact that an ERBM has the distribution of a Brownian motion up until the first time it hits the boundary of $A_{1,r}$, we see that
\begin{equation*}
\ev{B_{A_{1,r}}^{ER}\paren{\tau_j}}{\int_{\tau_j}^{\sigma_{j+1}} \mathbf{1}_{V}\paren{B_{A_{1,r}}^{ER}\paren{t}} dt}=\ev{B_{\tau'_j}}{\int_{\tau'_j}^{\sigma'_{j+1}} \mathbf{1}_{V}\paren{B_{r\D}\paren{t}} dt}.
\end{equation*}
Combined with the fact that
\begin{equation*}
\ev{B_{A_{1,r}}^{ER}\paren{\sigma_j}}{\int_{\sigma_j}^{\tau_j} \mathbf{1}_{V}\paren{B_{A_{1,r}}^{ER}\paren{t}} dt}=\ev{B_{\sigma'_j}}{\int_{\sigma'_j}^{\tau'_j} \mathbf{1}_{V}\paren{B_{r\D}\paren{t}} dt}=0,
\end{equation*}
the first result follows.

Using the first part of the proposition, we see that 
\[G_{A_{1,r}}^{ER}\paren{A_1,z}=G_{r\D}\paren{0,z}.\]
Combining this with \eqref{GFUniDisk}, the second part of the proposition follows.
\end{proof}

A quantity that will help us understand $G_D^{ER}\paren{z,\cdot}$ is the density for the amount of time ERBM started at $A_i$ spends in a set from the time it hits a curve $\eta_i$ surrounding $A_i$ until the time it hits $\partial D$ again.  The next lemma establishes the existence and some properties of this density.

\begin{lemma} \label{GreenERBMOneTime}
For $i=1, \ldots, n$, let $\eta_i$ and $U_i$ be as in Definition \ref{characterizationERBM}.  The function
\begin{equation*}
T_i\paren{w}:=G^{ER}_{U_i}\paren{A_i,w}+\int_{\eta_i}G_D\paren{z,w} \frac{H_{\partial U_i}\paren{A_i,z}}{\mathcal{E}_{U_i}\paren{A_i, \eta}} \abs{dz},
\end{equation*}
where by convention we let $G^{ER}_{U_i}\paren{A_i,w}=0$ for $w \notin U_i$, has the following properties.
\begin{enumerate}
\item $T_i\paren{w}$ is a density for the expected amount of time ERBM started at $A_i$ spends in a set up until $\tau_2$, where $\tau_2$ is as in Section \ref{ERBMGeneralDef}
\item $T_i\paren{w}$ is harmonic on $D \backslash \eta_i$
\item If $i\neq j$, then $\displaystyle \frac{1}{2}\int_{\eta_j}\frac{d}{dn}~ T_i\paren{w}   \abs{dw}=p_{i j}$, where $n$ is the outward-pointing normal and $p_{i j}$ is as in Section \ref{MarkovChainERBM}
\item If $\eta'_i$ is a smooth curve in the interior of $U_i$ that is homotopic to $\eta_i$, then
\[\frac{1}{2}\int_{\eta'_i}\frac{d}{dn}~ T_i\paren{w}   \abs{dw}=p_{ii}-1.\]
\end{enumerate}
\end{lemma}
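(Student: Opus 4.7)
The plan is to treat the four parts in order, with most of the work going into the divergence-theorem computations of (3) and (4). For (1), I would decompose the ERBM path up to $\tau_2$ at the stopping time $\sigma_2$. Conditions (2) and (3) of Definition \ref{characterizationERBM} say that on $[0,\sigma_2]$ the process is an ERBM in $U_i$ started at $A_i$ and killed on $\eta_i$, while on $[\sigma_2,\tau_2]$ it is a Brownian motion in $D$ started from $B_D^{ER}(\sigma_2)$, which is distributed on $\eta_i$ with density $\frac{H_{\partial U_i}(A_i,z)}{\mathcal{E}_{U_i}(A_i,\eta_i)}$ and is killed on $\partial D$. The strong Markov property at $\sigma_2$ then writes the expected occupation density as $G^{ER}_{U_i}(A_i,w)$ plus the integral term, matching the definition of $T_i$. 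Part (2) is immediate: $G^{ER}_{U_i}(A_i,\cdot)$ is harmonic on $U_i$ (via Proposition \ref{ERBMGreenDecomp}, the explicit formula in Lemma \ref{GreensFunctionAnnulus}, and conformal invariance), and the integral term is harmonic in $w$ off $\eta_i$ since $G_D(z,\cdot)$ is harmonic away from $z$ while $z$ is integrated over $\eta_i$.

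For (3), let $V_j$ be the region bounded by $\eta_j$ and $\partial A_j$. Since $V_j\cap U_i=\emptyset$, $T_i$ is harmonic on $V_j$, extends continuously to zero on $\partial A_j$ by dominated convergence from $G_D(z,w)\to 0$, and is smooth across $\eta_j$; hence the divergence theorem yields
\[\int_{\eta_j}\tfrac{d}{dn}T_i(w)\,|dw|+\int_{\partial A_j}\tfrac{d}{dn}T_i(w)\,|dw|=0,\]
with $n$ outward of $V_j$. The $G^{ER}_{U_i}$ summand contributes nothing near $\partial A_j$. On $\partial A_j$, outward of $V_j$ coincides with outward of $D$, so the Brownian identity $\frac{d}{dn_{\mathrm{in}}}G_D(z,w)=2H_D(z,w)$ makes the $\partial A_j$ integral equal to $-2\int_{\eta_i}h_j(z)\frac{H_{\partial U_i}(A_i,z)}{\mathcal{E}_{U_i}(A_i,\eta_i)}|dz|=-2p_{ij}$, and the claim follows.

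For (4), I would run the same divergence theorem on the annular region $V'_i$ bounded by $\eta'_i$ and $\partial A_i$, where $T_i$ is again harmonic. The integral summand again contributes $-2p_{ii}$ on $\partial A_i$ by the identical Brownian Poisson-kernel computation. The new ingredient is $\int_{\partial A_i}\frac{d}{dn}G^{ER}_{U_i}(A_i,w)\,|dw|=2$ (outward of $D$), which I would obtain by mapping $U_i$ conformally onto an annulus $A_{1,r_0}$, plugging in the explicit formula $G^{ER}_{A_{1,r_0}}(A_1,\zeta)=(\log r_0-\log|\zeta|)/\pi$ from Lemma \ref{GreensFunctionAnnulus}, and noting that this flux is conformally invariant because the factor of $|f'|$ gained by the normal derivative cancels the $|f'|^{-1}$ picked up by the transformed arclength. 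Combining gives $\int_{\partial A_i}\partial_n T_i|dw|=2-2p_{ii}$, so the divergence theorem forces $\int_{\eta'_i}\partial_n T_i|dw|=2(p_{ii}-1)$. The main technical hazard throughout is careful bookkeeping of normal orientations on the inner versus outer boundaries of $V_j$ and $V'_i$, together with the observation that on the inner boundaries (the $\partial A_j$ and $\partial A_i$) outward-of-$V$ coincides with outward-of-$D$, which is exactly what makes the Poisson-kernel identity usable with the correct sign.
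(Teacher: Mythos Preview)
Your proposal is correct and follows essentially the same route as the paper: parts (1) and (2) match exactly, and parts (3) and (4) both rest on the flux identity $\int_{\eta_j}\partial_n G_D(z,w)\,|dw|=2h_j(z)$ obtained by pushing the contour to $\partial A_j$. The only organizational difference is in (4): the paper keeps the integration on $\eta'_i$ and extracts the extra $-2$ from a small circle around the logarithmic singularity of $G_D(z,\cdot)$, whereas you push all the way to $\partial A_i$ and obtain the $-2$ as the explicit flux of $G^{ER}_{U_i}(A_i,\cdot)$ computed on the model annulus---the two bookkeepings are equivalent and yield the same result.
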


\begin{proof}
It is clear using the strong Markov property for ERBM, the fact that ERBM has the distribution of a Brownian motion up until the first time it hits $\partial D$, and (3) of Definition \ref{characterizationERBM} that the first statement holds.

Denote the second summand in the definition of $T_i\paren{w}$ by $S_i\paren{w}$.  If $w \notin \eta_i$ and $\epsilon$ is small enough such that $B\paren{w,\epsilon}$ does not intersect $\eta_i$, then  using Fubini's theorem and the fact that $G_D\paren{z,\cdot}$ is harmonic, we have
\begin{align*}
\frac{1}{2\pi}\int_0^{2\pi} S_i\paren{w+\epsilon e^{i\theta}} d\theta &= \frac{1}{2\pi}\int_0^{2\pi}\bracket{\int_{\eta_i}G_D\paren{z,w+\epsilon e^{i\theta}} \frac{H_{\partial U_i}\paren{A_i,z}}{\mathcal{E}_{U_i}\paren{A_i, \eta}} \abs{dz}} d\theta\\
&=\int_{\eta_i} \frac{H_{\partial U_i}\paren{A_i,z}}{\mathcal{E}_{U_i}\paren{A_i, \eta}}\bracket{ \frac{1}{2\pi}\int_0^{2\pi} G_D\paren{z,w+\epsilon e^{i\theta}} d\theta} \abs{dz}\\
&=\int_{\eta_i} \frac{H_{\partial U_i}\paren{A_i,z}}{\mathcal{E}_{U_i}\paren{A_i, \eta}}G_D\paren{z,w} \abs{dz}\\
&=S_i\paren{w}.
\end{align*}
This shows that $S_i\paren{w}$ satisfies the spherical mean value property at $w$ and, thus, is harmonic on $D \backslash \eta_i$.  It follows that to finish the proof of the second statement, we just have to show that $G^{ER}_{U_i}\paren{A_i,\cdot}$ is harmonic away from $\eta_i$.  Let $f_i:A_{1,r_i} \rightarrow U_i$ be a conformal map mapping the outer boundary of $A_{1,r_i}$ to the outer boundary of $U_i$.  Using Proposition \ref{ERBMGreenCI} and Lemma \ref{GreensFunctionAnnulus}, we see that
\begin{equation}\label{GreenERBMAnnulus}
G^{ER}_{U_i}\paren{A_i,w}=G^{ER}_{D_{r_i}}\paren{A_1,f_i\paren{w}}=\frac{-\log \abs{f_i\paren{w}} + \log r_i}{\pi}.
\end{equation}
Since $\log\abs{z}$ is harmonic and precomposing a harmonic function with a conformal map yields a harmonic function, $G^{ER}_{U_i}\paren{A_i,\cdot}$ is harmonic away from $\eta_i$.

The proof of the third statement uses the fact that if $z$ is in the exterior of $\eta_j$, then
\begin{equation}\label{GreenNormalPoisson1}
\int_{\eta_j}\frac{d}{dn}G_D\paren{z,w} \abs{dw}=2 h_j\paren{z}.
\end{equation}
In the case that $\partial A_j$ is a smooth Jordan curve, this is true because the normal derivative of $G_D\paren{z,w}$ is $2 H_D\paren{z,w}$ on $\partial A_j$ and the integral of the normal derivative of a harmonic function is the same over any two homotopic curves.  If the boundary of $A_j$ is not a smooth Jordan curve, we can map $D$ conformally to a region where the image of $\partial A_j$ is a smooth Jordan curve \cite{MR1344449} and use the conformal invariance of the Green's function, the change of variables formula, the fact that conformal maps preserve angles and the result in the case that $\partial A_j$ is a smooth Jordan curve.  If $i\neq j$, using Fubini's theorem, the dominated convergence theorem, and \eqref{GreenNormalPoisson1}, we have
\begin{align*}
\int_{\eta_j}\frac{d}{dn}~T_i\paren{w} \abs{dw} &= \int_{\eta_j}\frac{d}{dn} \int_{\eta_i}G_D\paren{z,w} \frac{H_{\partial U_i}\paren{A_i,z}}{\mathcal{E}_{U_i}\paren{A_i, \eta}} \abs{dz} \abs{dw}\\
&= \int_{\eta_i}\frac{H_{\partial U_i}\paren{A_i,z}}{\mathcal{E}_{U_i}\paren{A_i, \eta}} \int_{\eta_j}\frac{d}{dn}~G_D\paren{z,w} \abs{dw} \abs{dz}\\
&= 2 \int_{\eta_i}\frac{H_{\partial U_i}\paren{A_i,z}}{\mathcal{E}_{U_i}\paren{A_i, \eta}}h_j\paren{z} \abs{dz}\\
&= 2 p_{i j}
\end{align*}

The proof of the fourth statement is similar to the proof of the third statement and will rely on calculating
$\displaystyle\int_{\eta'_i}\frac{d}{dn}G_D\paren{z,w} \abs{dw}.$  If $z$ is a point in the interior of $\eta'_i$ and $\eta^{''}_i$ is a smooth Jordan curve in the interior of $\eta'_i$ such that $z$ is in the exterior of $\eta^{''}_i$, then by setting up the appropriate contour integral and using Green's theorem, it is not hard to see that (with the normals appropriately oriented)
\[\int_{\eta'_i}\frac{d}{dn}~G_D\paren{z,w} \abs{dw}= \int_{\eta^{''}_i} \frac{d}{dn}~G_D\paren{z,w} \abs{dw} + \int_{B_{\epsilon}\paren{z}} \frac{d}{dn} G_D\paren{z,w} \abs{dw}.\]
Using \eqref{GreenNormalPoisson1} and the fact that $G_D\paren{z,w}=-\frac{\log \abs{z-w}}{\pi}+g_z\paren{w}$, where $g_z$ is harmonic on $D$, we have
\begin{align*}
\int_{\eta'_i}\frac{d}{dn}~G_D\paren{z,w} \abs{dw}&= \int_{\eta^{''}_i} \frac{d}{dn}~G_D\paren{z,w} \abs{dw}+\int_{B_{\epsilon}\paren{z}} \frac{d}{dn}~G_D\paren{z,w} \abs{dw}\\
&=2 h_i\paren{z}-\int_{B\paren{z,\epsilon}} \frac{d}{dn}~\frac{\log \abs{z-w}}{\pi} \abs{dw}\\
&=2 \paren{h_i\paren{z}-1}.
\end{align*}
Using this and arguing as in the proof of the third statement, we have
\begin{align*}
\int_{\eta'_i}\frac{d}{dn}~T_i\paren{w} &= \int_{\eta'_i}\frac{d}{dn} \int_{\eta_i}G_D\paren{z,w} \frac{H_{\partial U_i}\paren{A_i,z}}{\mathcal{E}_{U_i}\paren{A_i, \eta}} \abs{dz} \abs{dw}\\
&= \int_{\eta_i}\frac{H_{\partial U_i}\paren{A_i,z}}{\mathcal{E}_{U_i}\paren{A_i, \eta}} \int_{\eta'_i}\frac{d}{dn}~G_D\paren{z,w} \abs{dw} \abs{dz}\\
&= 2 \int_{\eta_i}\frac{H_{\partial U_i}\paren{A_i,z}}{\mathcal{E}_{U_i}\paren{A_i, \eta}}\paren{h_i\paren{z}-1} \abs{dz}\\
&= 2 \paren{p_{ii}-1}.
\end{align*}
\end{proof}

We have all of the tools necessary to prove that $G_{D}^{ER}\paren{z,\cdot}$ is ER-harmonic.  In what follows, we continue to use the set up of the previous lemma.

\begin{proposition} \label{ERBMGreenERHarmonic}
There are versions of $G_{D}^{ER}\paren{\cdot,z}$ and $G_{D}^{ER}\paren{z,\cdot}$ that are ER-harmonic on $D-\set{z}$.
\end{proposition}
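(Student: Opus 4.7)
The plan is to use Proposition \ref{ERBMGreenDecomp} to reduce both claims to constructing a specific pointwise version of $G_D^{ER}\paren{A_i,\cdot}$ that is ER-harmonic on $D\setminus\set{A_i}$, and to handle $G_D^{ER}\paren{\cdot,w}$ directly via the strong Markov property. Starting an ERBM at $A_i$, the sequence of visits to $\bigcup_j\partial A_j$ is the chain $X$ of Section \ref{MarkovChainERBM}; let $\mathbf{P}$ be its transient submatrix $\paren{p_{ij}}_{1\le i,j\le n}$. Since $p_{j0}>0$ for each $j$, $\mathbf{P}$ has spectral radius strictly less than one, so the fundamental matrix $\mathbf{N}=\paren{\mathbf{I}-\mathbf{P}}^{-1}=\sum_{k\ge 0}\mathbf{P}^k$ is well-defined and $N_{ij}$ is the expected number of visits of $X$ to $A_j$ starting at $A_i$. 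Decomposing the trajectory into the excursion cycles of Lemma \ref{GreenERBMOneTime} (one cycle per visit of $X$) gives the explicit pointwise formula
\[
G_D^{ER}\paren{A_i,w}=\sum_{j=1}^n N_{ij}\,T_j\paren{w},
\]
which I adopt as my version.

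By Lemma \ref{GreenERBMOneTime}(2) each $T_j$ is harmonic off $\eta_j$, so this sum is harmonic off $\bigcup_j\eta_j$; since the sum equals the intrinsic $G_D^{ER}\paren{A_i,\cdot}$ and is therefore independent of the choice of the $\eta_j$'s, any prescribed $w_0\in D$ can be avoided by choosing the $\eta_j$'s appropriately, and $G_D^{ER}\paren{A_i,\cdot}$ is harmonic on all of $D$. For ER-harmonicity at $A_k$ with $k\ne i$ it suffices by Lemma \ref{ERHarmonicCondition}(2) to show that $\int_\eta \frac{d}{dn}G_D^{ER}\paren{A_i,w}\,\abs{dw}=0$ for some curve $\eta$ surrounding $A_k$. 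Take $\eta$ in the interior of $U_k$: for $j\ne k$, $T_j$ is harmonic in a neighborhood of $\eta$, so Lemma \ref{ERHarmonicCondition}(1) and Lemma \ref{GreenERBMOneTime}(3) give $\int_\eta \frac{d}{dn}T_j\,\abs{dw}=2p_{jk}$, while Lemma \ref{GreenERBMOneTime}(4) gives $\int_\eta \frac{d}{dn}T_k\,\abs{dw}=2\paren{p_{kk}-1}$. The identity $\mathbf{N}\mathbf{P}=\mathbf{N}-\mathbf{I}$ then produces
\[
\int_\eta \frac{d}{dn}G_D^{ER}\paren{A_i,w}\,\abs{dw}=2\sum_j N_{ij}p_{jk}-2N_{ik}=-2\delta_{ik},
\]
which vanishes when $i\ne k$, establishing the claim for $G_D^{ER}\paren{A_i,\cdot}$.

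For $G_D^{ER}\paren{z,\cdot}$ with $z\in D$, Proposition \ref{ERBMGreenDecomp} together with \eqref{GreenNormalPoisson1} (giving $\int_\eta \frac{d}{dn_w}G_D\paren{z,w}\,\abs{dw}=2h_k\paren{z}$) and the cancellation $-2\sum_i h_i\paren{z}\delta_{ik}=-2h_k\paren{z}$ from the preceding paragraph yield ER-harmonicity of $G_D^{ER}\paren{z,\cdot}$ on $D\setminus\set{z}$. For $G_D^{ER}\paren{\cdot,w}$ I apply the strong Markov property of ERBM at the first hitting time $\sigma$ of a curve $\eta$ surrounding $A_j$: for any Borel $V\subset D\setminus\overline{U_j}$ the time before $\sigma$ contributes nothing, while the strong Markov property and property (3) of Definition \ref{characterizationERBM} give $\mu_{A_j}\paren{V}=\int_\eta \overline{\mathcal{E}}_{U_j}\paren{A_j,dz}\,\mu_z\paren{V}$. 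Passing to Radon--Nikodym derivatives yields $G_D^{ER}\paren{A_j,w}=\int_\eta G_D^{ER}\paren{z,w}\,\overline{\mathcal{E}}_{U_j}\paren{A_j,dz}$ for a.e.\ $w\in D\setminus\overline{U_j}$, extended by harmonicity. Since $\eta$ can be chosen arbitrarily close to $A_j$, this ER-mean value property holds for every $w\in D\setminus\set{A_j}$, and harmonicity in $z$ on $D\setminus\set{w}$ is immediate from the decomposition.

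The main obstacle is the algebraic book-keeping in the middle paragraph: assembling the choice-dependent building blocks $T_j$ into a single function harmonic on all of $D$ and matching Lemma \ref{GreenERBMOneTime}(3)--(4) against $\mathbf{N}\mathbf{P}=\mathbf{N}-\mathbf{I}$ to produce exactly $-2\delta_{ik}$. The cancellation is mechanical once the test curve $\eta$ is placed in the interior of $U_k$ so that part (4) of the lemma applies, but distinguishing the artificial singularities of each $T_j$ across its own $\eta_j$ from the genuine singularity of $G_D^{ER}\paren{A_i,\cdot}$ at $A_i$ is the most delicate aspect.
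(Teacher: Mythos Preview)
Your argument is essentially the paper's proof: you use the same decomposition $G_D^{ER}\paren{A_i,\cdot}=\sum_j N_{ij}T_j$ (the paper writes it as $\paren{\mathbf{I}-\mathbf{Q}}^{-1}\mathbf{D}\mathbf{T}$, which is algebraically identical to your $\paren{\mathbf{I}-\mathbf{P}}^{-1}\mathbf{T}$ since $\mathbf{I}-\mathbf{Q}=\mathbf{D}\paren{\mathbf{I}-\mathbf{P}}$), vary the $\eta_j$'s to get harmonicity on all of $D$, compute the same period integrals $-2\delta_{ik}$ via Lemma~\ref{GreenERBMOneTime}(3)(4), and handle $G_D^{ER}\paren{\cdot,w}$ by the strong Markov property exactly as the paper indicates. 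The one point you leave implicit is continuity of $G_D^{ER}\paren{A_i,\cdot}$ at each $A_k$, which Lemma~\ref{ERHarmonicCondition}(2) requires before concluding ER-harmonicity; the paper dispatches this in one line using the explicit formula \eqref{GreenERBMAnnulus} for $G^{ER}_{U_j}\paren{A_j,\cdot}$.
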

\begin{proof}
Using Proposition \ref{ERBMGreenDecomp}, in order to show that there is a harmonic version of $G_{D}^{ER}\paren{z,\cdot}$, it is enough to show that there is a harmonic version of $G^{ER}_D\paren{A_i,\cdot}$ for each $1 \leq i \leq n$.  Let $\mathbf{T}$ be the vector function with $i$th component $T_i\paren{w}$ and let $\mathbf{D}$ be the diagonal matrix with $ii$ entry $\frac{1}{1-p_{ii}}$.  Using the strong Markov property for ERBM and Lemma \ref{GreenERBMOneTime}, we see that $\mathbf{D}\mathbf{T}$ is the vector function whose $i$th component is the density for the expected amount of time ERBM started at $A_i$ spends in a set up until the first time it hits an $A_j$ with $j\neq i$.  Using \eqref{GreenMatrixFormula} and the strong Markov property for ERBM, we see that the $i$th component of
\begin{equation} \label{GreenMatrixGreenFunction}
\mathbf{D} \mathbf{T}+\mathbf{Q} \mathbf{D} \mathbf{T}+\mathbf{Q}^2 \mathbf{D} \mathbf{T}+ \ldots  = \paren{\mathbf{I}-\mathbf{Q}}^{-1} \mathbf{D} \mathbf{T}
\end{equation}
is a version of $G^{ER}_D\paren{A_i,\cdot}$.  Since $T_i\paren{\cdot}$ is harmonic away from each $\eta_i$, it follows that there is a version of $G_D^{ER}\paren{A_i,\cdot}$ that is harmonic away from each $\eta_i$.  By choosing different $\eta_i$'s and repeating this procedure, we can get a version of $G_D^{ER}\paren{A_i,\cdot}$ that is harmonic away from a sequence of Jordan curves $\eta_1',\ldots, \eta_n'$ which are disjoint from each $\eta_i$.  Finally, since any two versions of $G_D^{ER}\paren{A_i,\cdot}$ are equal almost everywhere, we can find a version of $G_D^{ER}\paren{A_i,\cdot}$ that is harmonic everywhere.

Using (3) and (4) of Lemma \ref{GreenERBMOneTime} and the fact that the $i$th component of \eqref{GreenMatrixGreenFunction} is a version of $G_D^{ER}\paren{A_i,\cdot}$, we see that 
\begin{equation}\label{GreenERBMNormalInt} \int_{\eta_j} \frac{d}{dn}~G^{ER}_D\paren{A_i,w}\abs{dw}= \begin{cases}
0 & \text{ if }  j\neq i \\
-2 &\text{ if }  j=i
\end{cases}.
\end{equation}
It is easy to see using its definition and \eqref{GreenERBMAnnulus} that each $T_i\paren{\cdot}$, and thus each $G_D^{ER}\paren{A_i,\cdot}$, can be extended to a continuous function on $E$.  Combining this with Lemma \ref{ERHarmonicCondition}, we have that that there is a version of $G_D^{ER}\paren{A_i,\cdot}$ that is ER-harmonic on $D-\set{A_i}$.  Finally, using \eqref{GreenNormalPoisson1}, \eqref{GreenERBMNormalInt}, and Lemma \ref{ERHarmonicCondition}, it follows that the version of $G_D^{ER}\paren{z,\cdot}$ defined in Proposition \ref{ERBMGreenDecomp} is ER-harmonic on $D-\set{z}$.

An argument similar to the one showing that $H_D^{ER}\paren{\cdot,z}$ is ER-harmonic shows that $G_D^{ER}\paren{\cdot,z}$ is ER-harmonic.
\end{proof}

We can now give an analytic characterization of $G_D^{ER}\paren{z,\cdot}$.

\begin{proposition}\label{ERBMGreenUnique}
If $z \in D$, then $G_D^{ER}\paren{z,\cdot}$ is the unique ER-harmonic function on $D-\set{z}$ satisfying
\begin{itemize}
\item  $G_D^{ER}\paren{z,w}=-\frac{\log \abs{z-w}}{\pi}+O\paren{1}$ as $w \rightarrow z$
\item  $G_D^{ER}\paren{z,w} \rightarrow 0$ as $w \rightarrow w'$ for any $w' \in \partial A_0$.
\end{itemize}
Furthermore, $G_D^{ER}\paren{A_i,\cdot}$ is the unique ER-harmonic function on $D-\set{A_i}$ that is equal to $G_D^{ER}\paren{A_i,A_i}$ on $\partial A_i$ and $0$ on $\partial A_0$.

\end{proposition}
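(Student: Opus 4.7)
The plan is to combine the decomposition of Proposition \ref{ERBMGreenDecomp} with the ER-harmonicity from Proposition \ref{ERBMGreenERHarmonic} for existence, and to run maximum principle arguments for uniqueness. For existence, when $z \in D$ the formula $G_D^{ER}\paren{z,w} = G_D\paren{z,w} + \sum_{i=1}^n h_i\paren{z} G_D^{ER}\paren{A_i,w}$ is ER-harmonic on $D - \set{z}$; the logarithmic singularity at $w=z$ comes from \eqref{GreenAsztow} and the continuity (hence local boundedness) of each $G_D^{ER}\paren{A_i,\cdot}$ in a neighborhood of the interior point $z$. For the $\partial A_0$ boundary values, $G_D\paren{z,w} \to 0$ as $w \to \partial D$, and each $G_D^{ER}\paren{A_i,w} \to 0$ as $w \to \partial A_0$ because every summand of $T_i\paren{w}$ in the representation \eqref{GreenMatrixGreenFunction} vanishes there. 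For $z = A_i$ the corresponding existence statement is immediate from continuity on $E$ together with the same boundary argument.

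For uniqueness of $G_D^{ER}\paren{z,\cdot}$ with $z \in D$, I would suppose $u$ is another such function and set $v = u - G_D^{ER}\paren{z,\cdot}$. The log singularities cancel so $v$ is bounded in a neighborhood of $z$, hence extends to a harmonic function at $z$ by the removable singularities theorem for bounded harmonic functions. Then $v$ is ER-harmonic on all of $D$ with $v \equiv 0$ on $\partial A_0$, so $v \equiv 0$ by the uniqueness part of Proposition \ref{ERHarmonicFromERBM}.

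The case $z = A_i$ is the main obstacle: the difference $v = u - G_D^{ER}\paren{A_i,\cdot}$ need not be ER-harmonic at $A_i$, since \eqref{GreenERBMNormalInt} shows that $G_D^{ER}\paren{A_i,\cdot}$ carries a normal-derivative contribution of $-2$ on $\eta_i$ which $u$ a priori need not match, and so Proposition \ref{ERHarmonicFromERBM} does not apply directly. I would instead argue via a maximum principle. Such a $v$ is continuous on $E$, harmonic on $D$, ER-harmonic at each $A_j$ with $j \neq i$, vanishes on $\partial A_0 \cup \partial A_i$, and satisfies $v\paren{A_i} = 0$. If $v$ attained a positive maximum $M$, it could not occur on $\partial A_0$, on $\partial A_i$, or at $A_i$; an interior maximum in $D$ would force $v \equiv M$ by the strong maximum principle, contradicting $v \equiv 0$ on $\partial A_0$; and a maximum at some $A_j$ with $j \neq 0, i$ would, via the ER-mean value equality $v\paren{A_j} = \int_{\eta_j} v\paren{z} \frac{H_{\partial U_j}\paren{A_j, z}}{\mathcal{E}_{U_j}\paren{A_j,\eta_j}} \abs{dz}$ together with $v \leq M$ and continuity, force $v \equiv M$ on $\eta_j$ and hence on all of $D$ by the strong maximum principle, again contradicting $v \equiv 0$ on $\partial A_0$. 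Therefore $v \leq 0$, and by symmetry $v \geq 0$, so $v \equiv 0$.
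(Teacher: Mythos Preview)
Your proof is correct and follows the same approach as the paper's. The paper's own proof is very terse: for $z\in D$ it says existence follows from Proposition \ref{ERBMGreenDecomp} and the classical Green's function asymptotics, and uniqueness is ``similar to the corresponding result for $G_D(z,\cdot)$'' --- exactly your removable-singularity plus maximum-principle argument. For $z=A_i$ the paper simply says the statement ``follows from an extension of Proposition \ref{ERHarmonicFromERBM},'' and your explicit maximum-principle argument (treating $\partial A_i$ as an additional killing boundary where $v$ vanishes, while retaining the ER mean-value condition at the other $A_j$) is precisely the content of that extension spelled out.
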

\begin{proof}
If $z \in D$, the asymptotics for $G_D^{ER}\paren{z,\cdot}$ at the boundary are clear and the asymptotic at $z$ follows from Proposition \ref{ERBMGreenDecomp} and the corresponding result for $G_D\paren{z,\cdot}$.  The uniqueness follows from a proof similar to the corresponding result for $G_D\paren{z,\cdot}$ (see \cite{MR2129588}, pg. 54).  The second statement follows from an extension of Proposition \ref{ERHarmonicFromERBM}.
\end{proof}

In what follows, when we write $G_D^{ER}\paren{z,\cdot}$ or $G_D^{ER}\paren{\cdot,w}$ we will mean a version that is ER-harmonic.

\begin{corollary}\label{GreenSym}
$G_D^{ER}\paren{z,w}=G_D^{ER}\paren{w,z}$ for all $z, w \in E$.
\end{corollary}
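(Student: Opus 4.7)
The plan is to fix one variable at a time and invoke the uniqueness characterization in Proposition~\ref{ERBMGreenUnique}; the residual cases where one variable equals some $A_i$ will then follow from the continuity built into ER-harmonicity. First I fix $w \in D$ and compare the two functions $f\paren{z} := G_D^{ER}\paren{z, w}$ and $g\paren{z} := G_D^{ER}\paren{w, z}$ on $E - \set{w}$. By Proposition~\ref{ERBMGreenERHarmonic} both are ER-harmonic on $D - \set{w}$, and $g$ is precisely the function singled out by Proposition~\ref{ERBMGreenUnique} by its $-\log \abs{z-w}/\pi + O\paren{1}$ singularity at $w$ and its vanishing at $\partial A_0$. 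Inserting the decomposition of Proposition~\ref{ERBMGreenDecomp} into $f$ shows that $f$ inherits both properties: the summand $G_D\paren{z, w}$ supplies the log singularity and vanishes on $\partial A_0$, while the harmonic correction $\sum_i h_i\paren{z} G_D^{ER}\paren{A_i, w}$ is bounded on $D$ and also vanishes on $\partial A_0$. Hence $f = g$, which establishes the identity whenever $w \in D$ and $z \in E - \set{w}$.

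To extend to $w = A_i$, I would rely on the fact that both $z \mapsto G_D^{ER}\paren{z, A_i}$ and $z \mapsto G_D^{ER}\paren{A_i, z}$ have continuous extensions to $E - \set{A_i}$: the first from Proposition~\ref{ERBMGreenDecomp}, using that $h_j$ and each $G_D^{ER}\paren{A_j, \cdot}$ extend continuously to $E$ (as noted in the proof of Proposition~\ref{ERBMGreenERHarmonic}); the second because it is itself ER-harmonic. For any $z \in E - \set{A_i}$, I would choose a sequence $w_k \in D$ with $w_k \to A_i$, apply the already-proved identity $G_D^{ER}\paren{z, w_k} = G_D^{ER}\paren{w_k, z}$ (first approximating $z$ by a sequence in $D$ in the sub-case $z \in \set{A_1, \ldots, A_n}$, which introduces only a harmless double limit), and pass to the limit on each side using continuity.

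The only step that demands any real attention is checking that $f\paren{z} = G_D^{ER}\paren{z, w}$ actually has the continuous extension to all of $E$ that the limiting argument requires; but this drops out of the decomposition in Proposition~\ref{ERBMGreenDecomp} together with continuity facts already established for $h_j$ and for $G_D^{ER}\paren{A_j, \cdot}$. Conceptually, this is nothing more than the ER-analogue of the standard symmetry argument for the classical Brownian Green's function, with a layer of continuity bookkeeping added to absorb the collapsed boundary components $A_1, \ldots, A_n$.
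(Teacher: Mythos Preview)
Your argument is correct and follows the same route as the paper: verify that $G_D^{ER}\paren{\cdot,w}$ meets the hypotheses of Proposition~\ref{ERBMGreenUnique} and invoke uniqueness. The paper's proof is a single sentence to this effect; you have simply unpacked the verification (using Proposition~\ref{ERBMGreenDecomp} for the singularity and boundary behaviour) and added an explicit continuity argument for the case $w=A_i$, which the paper leaves implicit.
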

\begin{proof}
$G_D^{ER}\paren{\cdot,z}$ satisfies the conditions of Proposition \ref{ERBMGreenUnique} and thus, is the same function as $G_D^{ER}\paren{z,\cdot}$.
\end{proof}

We conclude this section by proving that the normal derivative on $\partial A_0$ of the Green's function for ERBM is a multiple of the Poisson kernel for ERBM.  We need a lemma.

\begin{lemma}
Let $D \in \mathcal{Y}_n^*$ be a domain such that $\partial D$ and $\partial \half$ agree in a neighborhood of $0$ and fix $R>0$ such that $B_R^+\paren{0}\subset D$.  Then if $\epsilon<r<R$,
\[\int_0^{\pi}G_D^{ER}\paren{re^{i \theta}, \epsilon i} \sin \theta  ~d\theta= \epsilon\bracket{\frac{1}{r}+O\paren{r}}, \]
as $\epsilon, r \rightarrow 0$.
\end{lemma}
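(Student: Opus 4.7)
The plan is to decompose the ERBM Green's function via Proposition~\ref{ERBMGreenDecomp},
\[G_D^{ER}(z, \epsilon i) = G_D(z, \epsilon i) + \sum_{i=1}^n h_i(z)\, G_D^{ER}(A_i, \epsilon i),\]
integrate each summand against $\sin\theta\,d\theta$ on $[0,\pi]$, and show that the leading $\epsilon/r$ comes entirely from the $G_\half$ portion of $G_D(\cdot, \epsilon i)$, all other contributions being $O(\epsilon r)$.

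First I would handle $\int_0^\pi G_D(re^{i\theta},\epsilon i)\sin\theta\,d\theta$. Since $D\subset\half$ and agrees with $\half$ near $0$, formula \eqref{twoDomainGF} gives
\[G_D(re^{i\theta},\epsilon i)=G_\half(re^{i\theta},\epsilon i)-\ev{re^{i\theta}}{G_\half(B_{\tau_D},\epsilon i)}.\]
Scale invariance $G_\half(re^{i\theta},\epsilon i)=G_\half(e^{i\theta},(\epsilon/r)i)$ together with Lemma~\ref{GreenInt} applied at scale $\epsilon/r$ produces the main term $\epsilon/r$ plus an error of size $O(\epsilon^2/r^2)$. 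For the excursion correction, $G_\half(\cdot,\epsilon i)$ vanishes on $\R$, so only exits through some $\partial A_j$ ($j\ge 1$) contribute; such exit points satisfy $|B_{\tau_D}|\ge R$, so the explicit formula \eqref{Greenztoi} yields $G_\half(B_{\tau_D},\epsilon i)=O(\epsilon)$ uniformly on this event. The probability of this event is bounded by $\sum_j h_j(re^{i\theta})$, which is at most the probability a Brownian motion from $re^{i\theta}$ escapes $B_R^+(0)$ through its upper arc; by \eqref{PKHD} this is $O(r\sin\theta)$. The product integrates to $O(\epsilon r)$.

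Next, for the pole contribution $\sum_i h_i(re^{i\theta})G_D^{ER}(A_i,\epsilon i)$: the same harmonic-measure estimate gives $\int_0^\pi h_i(re^{i\theta})\sin\theta\,d\theta=O(r)$, so it remains to show $G_D^{ER}(A_i,\epsilon i)=O(\epsilon)$. By the symmetry of Corollary~\ref{GreenSym} this equals $G_D^{ER}(\epsilon i,A_i)$, which is harmonic in its first variable on a neighborhood of $0$ (the pole $A_i$ is away) and vanishes on $\R\cap\partial D$ near $0$ (since $G_D^{ER}(\cdot,A_i)$ vanishes on $\partial A_0$). Schwarz reflection then forces the linear vanishing $G_D^{ER}(\epsilon i,A_i)=O(\epsilon)$ as $\epsilon\to 0$, so this sum contributes $O(\epsilon r)$ to the integral.

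Assembling everything yields $\int_0^\pi G_D^{ER}(re^{i\theta},\epsilon i)\sin\theta\,d\theta = \epsilon/r + O(\epsilon^2/r^2) + O(\epsilon r) = \epsilon[\,1/r + O(r)\,]$ in the relevant regime. The main technical obstacle is making sure the two independent error scales --- the $O(\epsilon^2/r^2)$ term coming from the expansion of the half-plane integral and the $O(\epsilon r)$ term coming from the escape estimates --- can be consolidated into the single form $\epsilon\,O(r)$; this relies on the uniformity provided by the fixed half-disk $B_R^+(0)\subset D$ (via \eqref{PKHD}) and the Schwarz-reflection bound on $G_D^{ER}(A_i,\cdot)$ near $0$, which together allow the constants in all the estimates to be chosen independent of $\epsilon$ and $r$.
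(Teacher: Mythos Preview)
Your approach is essentially the same as the paper's: decompose $G_D^{ER}$ via Proposition~\ref{ERBMGreenDecomp}, split $G_D$ into $G_{\half}$ plus a correction via \eqref{twoDomainGF}, extract the main term $\epsilon/r$ from Lemma~\ref{GreenInt} by scaling, and bound both the excursion correction $\ev{re^{i\theta}}{G_{\half}(B_{\tau_D},\epsilon i)}$ and the pole terms $h_i(re^{i\theta})G_D^{ER}(A_i,\epsilon i)$ by $O(\epsilon r)$ using \eqref{PKHD} and \eqref{Greenztoi}.

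The one genuine difference is in how you establish $G_D^{ER}(A_i,\epsilon i)=O(\epsilon)$. The paper computes this directly from the representation \eqref{GreenMatrixGreenFunction} together with Lemma~\ref{GreenERBMOneTime}, \eqref{twoDomainGF}, and \eqref{Greenztoi}. You instead invoke the symmetry of Corollary~\ref{GreenSym}, view it as $G_D^{ER}(\epsilon i,A_i)$, and use Schwarz reflection across $\R$ near $0$ to get the linear vanishing. Your route is cleaner and avoids the matrix machinery, though it does rely on Corollary~\ref{GreenSym}, which in the paper's logical order has already been established. Both arguments are valid.

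The concern you flag at the end --- that the $O(\epsilon^2/r^2)$ error from Lemma~\ref{GreenInt} does not obviously fit inside $\epsilon\,O(r)$ under the bare hypothesis $\epsilon<r$ --- is real, and the paper's proof does not address it either. In the only place the lemma is used (Proposition~\ref{GreenND}) one sends $\epsilon\to 0$ with $r$ fixed before letting $r\to 0$, so the issue is harmless in practice; but strictly speaking the asymptotic as stated needs $\epsilon/r^2\to 0$, not merely $\epsilon<r\to 0$.
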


\begin{proof}
Using Proposition \ref{ERBMGreenDecomp} and \eqref{twoDomainGF}, we see that
\[G_D^{ER}\paren{re^{i \theta}, \epsilon i}=G_{\half}\paren{re^{i \theta}, \epsilon i}-\ev{re^{i\theta}}{G_{\half}\paren{B_{\tau_D},\epsilon i}} +\sum_{i=1}^n h_i\paren{re^{i \theta}} G_D^{ER}\paren{A_i,\epsilon i}. \]
As a result, using Lemma \ref{GreenInt}, to complete the proof, it is enough to show that
\[\sum_{i=1}^n h_i\paren{re^{i \theta}} G_D^{ER}\paren{A_i,\epsilon i}-\ev{re^{i\theta}}{G_{\half}\paren{B_{\tau_D},\epsilon i}} =O\paren{r\epsilon},~~~\epsilon, r \rightarrow 0,\] where $O\paren{r \epsilon}$ is uniform over $\theta \in \paren{0,\pi}$.  

It is easy to check using Lemma \ref{GreenERBMOneTime}, \eqref{twoDomainGF}, \eqref{Greenztoi}, and \eqref{GreenMatrixGreenFunction} that $G_D^{ER}\paren{A_i, \epsilon i}=O\paren{\epsilon}$.  Using \eqref{PKHD}, it follows that for $1 \leq i \leq n$, $h_i\paren{re^{i\theta}}$ is $O\paren{r}$.  Since the probabilty a Brownian motion started at $re^{i\theta}$ does not exit $D$ on $\R$ is $O\paren{r}$, using \eqref{Greenztoi}, it follows that $\ev{re^{i\theta}}{G_{\half}\paren{B_{\tau_D},\epsilon i}}$ is $O\paren{r\epsilon}$.
\end{proof}


\begin{proposition} \label{GreenND}
Let $D \in \mathcal{Y}_n$ be such that $\partial D$ is locally analytic at $x \in \partial A_0$.  Then the (inner) normal derivative of $G_D^{ER}\paren{z,\cdot}$ at $x$ is $2H_D^{ER}\paren{z,x}$.
\end{proposition}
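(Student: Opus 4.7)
The plan is to compute the inner normal derivative of $G_D^{ER}\paren{z,\cdot}$ at $x$ by combining Proposition~\ref{ERPKEstimate}, which gives a sharp asymptotic for the ER-Poisson kernel at a small semicircle, with the lemma just proved, which computes the $\sin\theta$-weighted integral of $G_D^{ER}\paren{re^{i\theta},\epsilon i}$; these two inputs have been arranged so as to fit together exactly.

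First, by Propositions~\ref{ERPKCI} and~\ref{ERBMGreenCI}, after precomposing with a local conformal map carrying $x$ to $0$ and straightening $\partial A_0$ onto a segment of $\R$, I may assume $D\in\mathcal{Y}_n^*$, $x=0$, and $\partial D$ agrees with $\partial\half$ in a neighborhood of $0$. The inner normal at $0$ is then $+i$ and $G_D^{ER}\paren{z,0}=0$, so the claim reduces to
\[\lim_{\epsilon\to 0^+}\frac{G_D^{ER}\paren{z,\epsilon i}}{\epsilon}=2H_D^{ER}\paren{z,0}.\]
Fix $z$ and choose $0<\epsilon<r$ so small that $B_r^+\paren{0}\subset D$ and $\abs{z}>2r$, and set $D^r=D\backslash\overline{B_r^+\paren{0}}$. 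Applying the strong Markov property of $B_D^{ER}$ at the first hit of $\overline{B_r^+\paren{0}}$---noting that $G_D^{ER}\paren{\cdot,\epsilon i}$ vanishes on $\partial A_0\cap\partial D^r$ and that ERBM in $D$ killed on $A_0\cup\overline{B_r^+\paren{0}}$ is exactly ERBM in $D^r$ killed on $\partial A_0^{D^r}$---I obtain the representation
\[G_D^{ER}\paren{z,\epsilon i}=\int_0^\pi G_D^{ER}\paren{re^{i\theta},\epsilon i}\,H_{D^r}^{ER}\paren{z,re^{i\theta}}\,r\,d\theta.\]

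Proposition~\ref{ERPKEstimate} (with the role of $\epsilon$ there played by our $r$) yields
\[H_{D^r}^{ER}\paren{z,re^{i\theta}}=2\sin\theta\,H_D^{ER}\paren{z,0}\bracket{1+O\paren{r}}\]
uniformly in $\theta$, while the lemma just proved gives $\int_0^\pi G_D^{ER}\paren{re^{i\theta},\epsilon i}\sin\theta\,d\theta=\epsilon\bracket{1/r+O\paren{r}}$. Substituting both into the representation and simplifying produces
\[G_D^{ER}\paren{z,\epsilon i}=2\epsilon H_D^{ER}\paren{z,0}\bracket{1+O\paren{r}}.\]
Since the left-hand side does not depend on $r$, for each small $r>0$ this bounds $\limsup_{\epsilon\to 0^+}$ and $\liminf_{\epsilon\to 0^+}$ of $G_D^{ER}\paren{z,\epsilon i}/\epsilon$ within $O\paren{r}\cdot 2H_D^{ER}\paren{z,0}$ of $2H_D^{ER}\paren{z,0}$; sending $r\to 0$ completes the proof. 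The main subtlety is not either estimate but the clean justification of the integral representation above: I must verify that $G_D^{ER}\paren{\cdot,\epsilon i}$ extends continuously across the reflecting components $A_1,\ldots,A_n$ of $D^r$, so that the strong Markov identity holds in the form stated; this follows from the continuity of $G_D^{ER}\paren{\cdot,\epsilon i}$ on $E$ established during the proof of Proposition~\ref{ERBMGreenERHarmonic}.
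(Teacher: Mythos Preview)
Your proof is correct and follows essentially the same route as the paper: reduce by conformal invariance to the case $D\in\mathcal{Y}_n^*$ with $x=0$, represent $G_D^{ER}(z,\epsilon i)$ as an integral over the semicircle $\{re^{i\theta}\}$ against $H_{D^r}^{ER}$, apply Proposition~\ref{ERPKEstimate} and the preceding lemma, and let $r\to 0$. Your added remark justifying the integral representation via continuity of $G_D^{ER}(\cdot,\epsilon i)$ on $E$ is a welcome clarification that the paper leaves implicit.
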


\begin{proof}
We start by proving the result when $D \in \mathcal{Y}_n$ is a domain such that $\C \backslash A_0 =\half$ and $x=0$.  Fix $R$ such that $B_R^+\paren{0}\subset D$.  Using Proposition \ref{ERPKEstimate}, we have that if $r<R$, then
\begin{align*}\displaystyle 
\lim_{\epsilon \rightarrow 0}\frac{G_D^{ER}\paren{z,\epsilon i}}{\epsilon}&=\lim_{\epsilon \rightarrow 0} \frac{r \int_0^{\pi} H_{D^{r}}\paren{z,r e^{i\theta}}G_D^{ER}\paren{r e^{i\theta},\epsilon i}~d\theta}{\epsilon} \\
&=\lim_{\epsilon \rightarrow 0} \frac{2rH_D^{ER}\paren{z,0} \bracket{\int_0^{\pi}G_D^{ER}\paren{re^{i \theta}, \epsilon i} \sin \theta  ~d\theta}\bracket{1+O\paren{r}}}{\epsilon}\\
&= 2H_D^{ER}\paren{z,0} \bracket{1+O\paren{r}}.
\end{align*}
Taking the limit as $r \rightarrow 0$, the result follows.

For arbitrary $D$, let $f:D \rightarrow D'$ be a conformal map such that $D' \in \mathcal{Y}_n$ is such that $\C \backslash A_0=\half$ and $x$ is mapped to $0$.  Using the Schwarz reflection principle, $f\paren{z}$ can be extended to a map conformal in a neighborhood of $x$ and $G_D^{ER}\paren{z,\cdot}$ can be extended to a function harmonic in a neighborhood of $x$.  Combining this with the fact that conformal maps preserve angles, the chain rule, \eqref{PKCI}, the conformal invariance of $G_D^{ER}\paren{z,\cdot}$, and the result for $D'$, the result follows.

\end{proof}

\subsection{Some Poisson Kernel Estimates}

In this section, we gather some estimates for the Poisson kernel for ERBM that we will need in Chapter \ref{sectLE}.

It is possible to compute $H_D^{ER}\paren{A_i,\cdot}$ in terms of the boundary Poisson kernel and excursion measure.  To do this, we will need an analog of \eqref{twoDomainPK} for $H_D^{ER}\paren{z,\cdot}$.  Namely, if $D_2 \subset D_1$ are domains in $\mathcal{Y}$ such that $\partial D_2$ and $\partial D_1$ agree in a neighborhood of $x$, the strong Markov property for ERBM gives
\begin{equation}\label{twoDomainERPK}
H^{ER}_{D_2}\paren{w,x}=H^{ER}_{D_1}\paren{w,x}-\ev {w}{H^{ER}_{D_1}\paren{B_{\tau_{D_2}},x}}.
\end{equation}  

Let  $\tau^i$ be the first time an ERBM hits a boundary component of $D$ other than $\partial A_i$.  The distribution of $B_D^{ER}\paren{\tau^i}$ defines a measure on $\partial D$.  The next lemma computes the density of this measure restricted to $\partial A_0$.  In Lemma \ref{ERPKOneTime} and Proposition \ref{ERPKfromAI}, we suppose that $D$ has locally analytic boundary.

\begin{lemma}\label{ERPKOneTime}
If $1\leq i \leq n$, then
\[\displaystyle T_i\paren{w}:=\frac{H_{\partial D}\paren{A_i,w}}{\sum_{j \neq i} \mathcal{E}_D\paren{A_i,A_j}}\]
is a density for the distribution of $B_D^{ER}\paren{\tau^i}$ restricted to $\partial A_0$.
\end{lemma}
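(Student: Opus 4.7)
The plan is to decompose the $B_D^{ER}$ trajectory from $A_i$ into i.i.d.\ excursions via Definition \ref{characterizationERBM}(3) and the construction in Section \ref{chapERBMFC}, use the strong Markov property to sum over the number of failed escape attempts, and then identify the per-excursion hitting density with the boundary Poisson kernel via a strong Markov decomposition for ordinary Brownian motion.

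First I would fix a smooth Jordan curve $\eta_i$ surrounding $A_i$ and no other $A_j$, and let $U_i$ be the region between $\partial A_i$ and $\eta_i$. By (3) of Definition \ref{characterizationERBM} and the pathwise construction in Section \ref{chapERBMFC}, a single ``excursion'' of $B_D^{ER}$ from $A_i$ is built by first sampling the hit on $\eta_i$ from $\overline{\mathcal{E}}_{U_i}\paren{A_i,\cdot}$ and then running an ordinary Brownian motion from that hit point until it meets $\partial D$. Let $p_i$ be the probability that this Brownian motion returns to $\partial A_i$ and let $R_i\paren{w}$ be the density on $\partial D \setminus \partial A_i$ of its terminal position. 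Iterating the strong Markov property at the successive returns to $A_i$ shows that the density of $B_D^{ER}\paren{\tau^i}$ at $w$ equals $\sum_{k \geq 0} p_i^k R_i\paren{w} = R_i\paren{w}/\paren{1-p_i}$, with the consistency relation $1 - p_i = \int_{\partial D \setminus \partial A_i} R_i\paren{w} \abs{dw}$.

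Applying the strong Markov property at the first hit on $\eta_i$ and the Brownian hitting density formula yields
\[ R_i\paren{w} = \frac{1}{\mathcal{E}_{U_i}\paren{A_i,\eta_i}} \int_{\eta_i} H_{\partial U_i}\paren{A_i,z} H_D\paren{z,w} \abs{dz}. \]
The crux of the proof is then the identity
\[ \int_{\eta_i} H_{\partial U_i}\paren{A_i,z} H_D\paren{z,w} \abs{dz} = H_{\partial D}\paren{A_i,w}, \]
which I plan to derive by starting from the strong Markov relation
\[ H_D\paren{z'',w} = \int_{\eta_i} H_{U_i}\paren{z'',z} H_D\paren{z,w} \abs{dz}, \qquad z'' \in U_i,~w \in \partial D \setminus \partial A_i \]
(any Brownian path from $z''$ to $w$ must first cross $\eta_i$), taking the inward normal derivative at a point $z' \in \partial A_i$ to obtain $H_{\partial D}\paren{z',w} = \int_{\eta_i} H_{\partial U_i}\paren{z',z} H_D\paren{z,w} \abs{dz}$, and then integrating over $z' \in \partial A_i$ and invoking Fubini.

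Given this identity, $R_i\paren{w} = H_{\partial D}\paren{A_i,w} / \mathcal{E}_{U_i}\paren{A_i,\eta_i}$ and $1 - p_i = \sum_{j \neq i} \mathcal{E}_D\paren{A_i,A_j}/\mathcal{E}_{U_i}\paren{A_i,\eta_i}$, so the arbitrary normalization $\mathcal{E}_{U_i}\paren{A_i,\eta_i}$ cancels in the ratio and produces $T_i\paren{w}$ on all of $\partial D \setminus \partial A_i$, in particular on $\partial A_0$. The main obstacle is justifying the differentiation under the integral and the subsequent Fubini interchange in the proof of the key identity; local analyticity of $\partial D$ makes the relevant Poisson kernels smooth up to the boundary, so standard dominated-convergence estimates for harmonic measure near smooth arcs (analogous to those used in Lemma \ref{PKStandardChordalRMHalfDisk}) should handle this, but it is the one non-routine technical point.
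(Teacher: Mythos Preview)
Your proposal is correct and follows essentially the same route as the paper: both set up the excursion decomposition through $\eta_i$, identify the density of $B_D^{ER}(\tau^i)$ as $\frac{\int_{\eta_i} H_{\partial U_i}(A_i,z)H_D(z,w)\,|dz|}{(1-p_{ii})\mathcal{E}_{U_i}(A_i,\eta_i)}$, prove the key identity $\int_{\eta_i} H_{\partial U_i}(A_i,z)H_D(z,w)\,|dz| = H_{\partial D}(A_i,w)$, and then compute $(1-p_{ii})\mathcal{E}_{U_i}(A_i,\eta_i)=\sum_{j\neq i}\mathcal{E}_D(A_i,A_j)$ from it. The only difference is cosmetic: the paper cites the strong Markov property for the key identity in one line, whereas you unpack it via the Brownian relation $H_D(z'',w)=\int_{\eta_i}H_{U_i}(z'',z)H_D(z,w)\,|dz|$ followed by normal differentiation and Fubini---which is exactly what underlies the paper's one-liner.
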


\begin{proof}
Let $\eta_i$ be a smooth Jordan curve surrounding $A_i$ and not surrounding $A_j$ for $j \neq i$ and let $U_i$ be the region bounded by $\partial A_i$ and $\eta_i$.  Using (1), (2), and (3) of Definition \ref{characterizationERBM} and the definition of $p_{i i}$,  we see that
\[\tilde{T}_i\paren{w}:=\frac{\int_{\eta_i}  H_D\paren{z,w}H_{\partial U_i}\paren{A_i,z}\abs{dz}}{\paren{1-p_{ii}} \mathcal{E}_{U_i}\paren{A_i,\eta_i}},\]
is a density for the distribution of $B_D^{ER}\paren{\tau^i}$ restricted to $\partial A_0$.  The strong Markov property for ERBM implies
\begin{equation}\label{ERPKOneTime1}
\int_{\eta_i}  H_D\paren{z,w}H_{\partial U_i}\paren{A_i,z}\abs{dz}=H_{\partial D}\paren{A_i,w}.
\end{equation}
Using \eqref{ERPKOneTime1}, we have
\begin{align*}
{\paren{1-p_{ii}} \mathcal{E}_{U_i}\paren{A_i,\eta_i}}&= \int_{\eta_i} \paren{1-h_i \paren{z}} H_{\partial U_i}\paren{A_i,z}\abs{dz}\\
&= \int_{\eta_i} \paren{\sum_{j \neq i} h_j\paren{z}} H_{\partial U_i}\paren{A_i,z}\abs{dz}\\
&= \sum_{j \neq i} \int_{\eta_i} \int_{A_j} H_D\paren{z,w} H_{\partial U_i}\paren{A_i,z}\abs{dw}\abs{dz}\\
&= \sum_{j \neq i} \int_{A_j} \int_{\eta_i} H_D\paren{z,w} H_{\partial U_i}\paren{A_i,z}\abs{dz}\abs{dw}\\
&= \sum_{j \neq i} \int_{A_j} H_{\partial D}\paren{A_i,w}\abs{dw}\\
&= \sum_{j \neq i}\mathcal{E}_D\paren{A_i,A_j}
\end{align*}
The result follows.
\end{proof}

The following proposition follows immediately.
\begin{proposition} \label{ERPKfromAI}
Fix $w \in \partial A_0$ and let $\mathbf{H}$ be the $n \times 1$ vector with $i$th component equal to $H_D^{ER}\paren{A_i,w}$, $\mathbf{T}$ be the $n \times 1$ vector with $i$th component equal to $T_i\paren{w}$, and $\mathbf{Q}$ be as in Section \ref{MarkovChainERBM}.  Then
\[\mathbf{H}=\mathbf{T}+\mathbf{Q} \mathbf{T}+ \mathbf{Q}^2\mathbf{T} + \ldots = \paren{\mathbf{I}-\mathbf{Q}}^{-1} \mathbf{T}.\]
\end{proposition}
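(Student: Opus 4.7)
The plan is to derive the identity by a single first-step decomposition using the strong Markov property at the stopping time $\tau^i$ introduced just before Lemma \ref{ERPKOneTime}, and then to read off the matrix equation and invert.

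Starting an ERBM at $A_i$ and applying the strong Markov property at $\tau^i$, I would decompose the ER-harmonic measure on $\partial A_0$: for any Borel $V \subset \partial A_0$,
\[
\hm_D^{ER}\paren{A_i, V} = \prob{A_i}{B_D^{ER}\paren{\tau^i} \in V} + \sum_{j \neq 0, i} \prob{A_i}{B_D^{ER}\paren{\tau^i} \in \partial A_j}\hm_D^{ER}\paren{A_j, V}.
\]
Lemma \ref{ERPKOneTime} identifies the density of the first term on $\partial A_0$ as $T_i\paren{w}$. The probability $\prob{A_i}{B_D^{ER}\paren{\tau^i} \in \partial A_j}$ is, by unwinding the construction of the loop-erased chain $Y$ in Section \ref{chapMC}, exactly $q_{ij}$: the chain $X$ records the first boundary component hit after visiting $\eta_i$ (allowing $X$ to return to $A_i$), so stripping these self-loops yields the first component $\neq A_i$ the ERBM visits, which is $B_D^{ER}\paren{\tau^i}$. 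Differentiating in $V$ at $w$ gives
\[
H_D^{ER}\paren{A_i, w} = T_i\paren{w} + \sum_{\substack{j=1 \\ j \neq i}}^n q_{ij} H_D^{ER}\paren{A_j, w}.
\]

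In matrix form this is $\mathbf{H} = \mathbf{T} + \mathbf{Q}\mathbf{H}$, hence $\paren{\mathbf{I} - \mathbf{Q}}\mathbf{H} = \mathbf{T}$. Since Section \ref{chapMC} has already established that the eigenvalues of $\mathbf{Q}$ lie strictly inside the unit disk, $\mathbf{I} - \mathbf{Q}$ is invertible with Neumann expansion $\sum_{k \geq 0} \mathbf{Q}^k$, yielding both claimed forms simultaneously. The main obstacle I expect is the bookkeeping step of identifying $q_{ij}$ with $\prob{A_i}{B_D^{ER}\paren{\tau^i} \in \partial A_j}$: the definition of $p_{ij}$ (via an auxiliary curve $\eta_i$) and of $q_{ij}$ (via loop-erasure) must be traced through to confirm that they are insensitive to $\eta_i$ and exactly describe the ``first exit from $A_i$'s orbit'' event. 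After that the argument is a routine application of the strong Markov property and linear algebra.
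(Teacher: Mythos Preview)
Your proposal is correct and is exactly the argument the paper has in mind: the paper's proof consists of the single sentence ``The following proposition follows immediately,'' and what you have written is precisely the one-step strong Markov decomposition at $\tau^i$ (using Lemma \ref{ERPKOneTime} for the $\partial A_0$ part and the loop-erasure interpretation of $q_{ij}$ for the $\partial A_j$ part) that makes this immediate. Your identification of $\prob{A_i}{B_D^{ER}(\tau^i)\in\partial A_j}$ with $q_{ij}$ is correct, since summing the geometric series over returns to $A_i$ via $\eta_i$ gives $p_{ij}/(1-p_{ii})$.
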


Next we prove the analog of Lemma \ref{PKStandardChordalRMHalfDisk} for $H_D^{ER}\paren{z,\cdot}$.

\begin{proposition} \label{ERPKEstimate}
Let $D \in \mathcal{Y}^*$ be a domain with piecewise analytic boundary and such that $\partial D$ and $\partial \half$ agree in a neighborhood of $0$. If $\abs{z}>2\epsilon$, then
\begin{equation}
H^{ER}_{D^{\epsilon}}\paren{z,\epsilon e^{i\theta}}=2 H^{ER}_{D}\paren{z,0}  \sin \theta \bracket{1+O\paren{\epsilon}},~~~\epsilon \rightarrow 0.
\end{equation}
Furthermore, for any $r>0$, $O\paren{\epsilon}$ is uniform over all $z\in \half$ such that $\abs{z}>r.$
\end{proposition}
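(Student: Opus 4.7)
The strategy is to mirror Lemma \ref{PKStandardChordalRMHalfDisk} and reduce to it via the decomposition \eqref{ERBMPKD}, which expresses
\[
H^{ER}_{D^\epsilon}\paren{z,\epsilon e^{i\theta}}=H_{D^\epsilon}\paren{z,\epsilon e^{i\theta}}+\sum_{j=1}^n h^{D^\epsilon}_j\paren{z}\,H^{ER}_{D^\epsilon}\paren{A_j,\epsilon e^{i\theta}},
\]
where $h^{D^\epsilon}_j\paren{z}$ is the probability that Brownian motion in $D^\epsilon$ started at $z$ exits at $\partial A_j$. Lemma \ref{PKStandardChordalRMHalfDisk} applies directly to give $H_{D^\epsilon}\paren{z,\epsilon e^{i\theta}}=2\sin\theta\,H_D\paren{z,0}\bracket{1+O\paren{\epsilon}}$ uniformly for $\abs{z}>r$, and the hitting-probability difference $h^D_j\paren{z}-h^{D^\epsilon}_j\paren{z}$ is bounded by the probability that Brownian motion from $z$ hits $\partial B_\epsilon^+$ before any $A_i$, which by \eqref{PKHD} is $O\paren{\epsilon}$ uniformly in $\abs{z}>r$.

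The heart of the argument will be to establish
\[
H^{ER}_{D^\epsilon}\paren{A_j,\epsilon e^{i\theta}}=2\sin\theta\,H^{ER}_D\paren{A_j,0}\bracket{1+O\paren{\epsilon}}.
\]
For this I would apply Proposition \ref{ERPKfromAI} to express the vector with entries $H^{ER}_{D^\epsilon}\paren{A_j,\epsilon e^{i\theta}}$ as $\paren{\mathbf{I}-\mathbf{Q}^{D^\epsilon}}^{-1}\mathbf{T}^{D^\epsilon}$, where the $i$th entry of $\mathbf{T}^{D^\epsilon}$ is $H_{\partial D^\epsilon}\paren{A_i,\epsilon e^{i\theta}}/\sum_{k\neq i}\mathcal{E}_{D^\epsilon}\paren{A_i,A_k}$. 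The boundary Poisson kernel numerators are handled by \eqref{BoundaryPKSCEstimate}, giving $H_{\partial D^\epsilon}\paren{A_i,\epsilon e^{i\theta}}=2\sin\theta\,H_{\partial D}\paren{A_i,0}\bracket{1+O\paren{\epsilon}}$. The excursion-measure denominators and the entries of $\mathbf{Q}^{D^\epsilon}$ each depend on $\epsilon$ only through the removal of the small half-disk at $0$, so each should differ from its counterpart for $D$ by $O\paren{\epsilon}$, a fact one verifies by applying \eqref{twoDomainPK} and standard Poisson kernel estimates. Since $\paren{\mathbf{I}-\mathbf{Q}^D}^{-1}$ is bounded on $\R^n$ by \eqref{GreenMatrixFormula}, matrix-valued continuity then gives the claim. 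Substituting the three estimates back into the decomposition and using \eqref{ERBMPKD} applied to $\paren{D,0}$ produces
\[
H^{ER}_{D^\epsilon}\paren{z,\epsilon e^{i\theta}}=2\sin\theta\bracket{H_D\paren{z,0}+\sum_j h^D_j\paren{z}H^{ER}_D\paren{A_j,0}}\bracket{1+O\paren{\epsilon}}=2\sin\theta\,H^{ER}_D\paren{z,0}\bracket{1+O\paren{\epsilon}}.
\]

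The hardest part will be ensuring that the perturbation bounds for $\mathbf{T}^{D^\epsilon}$ and $\mathbf{Q}^{D^\epsilon}$ are uniform in $\epsilon$, and that all $O\paren{\epsilon}$ constants are uniform over $z$ with $\abs{z}>r$. These bounds should ultimately reduce to the Poisson kernel estimates \eqref{PKhalf}, \eqref{PKHalfRMHalfDisk}, \eqref{PKHD}, and \eqref{BoundaryPKSCEstimate}, combined with the fact that each inner hole $A_i$ is bounded away from $0$, so that $B_\epsilon^+$ is disjoint from the inner holes for all sufficiently small $\epsilon$.
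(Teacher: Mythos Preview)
Your proposal is correct and follows essentially the same route as the paper: decompose via \eqref{ERBMPKD}, handle the Brownian part with Lemma \ref{PKStandardChordalRMHalfDisk}, control $h_j^{D^\epsilon}-h_j^D$ by $O(\epsilon)$, and reduce the $H^{ER}_{D^\epsilon}(A_j,\epsilon e^{i\theta})$ terms to the matrix identity of Proposition \ref{ERPKfromAI}, using \eqref{BoundaryPKSCEstimate} for the numerator of $\mathbf{T}^{D^\epsilon}$ and perturbation of $\mathbf{Q}^{D^\epsilon}$ plus smoothness of matrix inversion. The paper in fact obtains the slightly sharper bounds $h_j-h_j^\epsilon=O(\epsilon^2)$ and $\mathbf{Q}^\epsilon=\mathbf{Q}+O(\epsilon^2)$ (via \eqref{ERPKEstimate4}--\eqref{ERPKEstimate5}), and is somewhat terse about the excursion-measure denominators in $\mathbf{T}^\epsilon$ that you explicitly flag; your plan to control those by \eqref{twoDomainPK}-type estimates is the right one.
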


\begin{proof}
For $1 \leq i \leq n$, let $h_i^{\epsilon}\paren{z}$ be the unique bounded harmonic function on $D^{\epsilon}$ that is $1$ on $\partial A_i$ and $0$ on the other boundary components of $D^{\epsilon}$ and let $q_{ij}^{\epsilon}$ be the probability that the Markov chain induced by $B_{D^{\epsilon}}^{ER}$ moves from $A_i$ to $A_j$. Let $T_i^{\epsilon}\paren{w}$ be the density introduced in Lemma \ref{ERPKOneTime} for $B_{D^{\epsilon}}^{ER}$, $\mathbf{T}^{\epsilon}$ be the $n \times 1$ vector with $i$th component $T_i^{\epsilon}\paren{\epsilon e^{i\theta}}$, $\mathbf{Q}^{\epsilon}$ be the matrix with $ij$ entry $q_{ij}^{\epsilon}$, and $\mathbf{H}^{\epsilon}$ be the $n \times 1$ vector with $i$th component $H^{ER}_{D^{\epsilon}}\paren{A_i,\epsilon e^{i\theta}}$.  Proposition \ref{ERPKfromAI} implies
\begin{equation}\label{ERPKEstimate7}
\mathbf{H}^{\epsilon}=\paren{\mathbf{I}-\mathbf{Q}^{\epsilon}}^{-1} \mathbf{T}^{\epsilon}.
\end{equation}

Let $\mathbf{Q}=\mathbf{Q}^0$ and $\mathbf{H}=\mathbf{H}^0$.  Using \eqref{ERPKEstimate4} and \eqref{ERPKEstimate5} we see
\begin{equation}\label{ERPKEstimate71}
h_i\paren{z}=h_i^{\epsilon}\paren{z}+O\paren{\epsilon^2},
\end{equation}
where $O\paren{\epsilon}$ is uniform over all $z \in \half$ such that $\abs{z}>r$.  It follows that $\mathbf{Q}^{\epsilon}=\mathbf{Q}+O\paren{\epsilon^2}$.  Since inversion of matrices is a smooth operation (and in particular, Lipschitz), we conclude
\begin{equation}\label{ERPKEstimate8}
\paren{\mathbf{I}-\mathbf{Q}^{\epsilon}}^{-1}=\paren{\mathbf{I}-\mathbf{Q}}^{-1}+O\paren{\epsilon^2}.
\end{equation}
Substituting \eqref{ERPKEstimate8} into \eqref{ERPKEstimate7} and using \eqref{BoundaryPKSCEstimate}, we obtain
\begin{align}
\mathbf{H}^{\epsilon}&=\paren{\mathbf{I}-\mathbf{Q}^{\epsilon}}^{-1} \mathbf{T}^{\epsilon}\nonumber \\
&=2 \sin \theta \bracket{ \paren{\mathbf{I}-\mathbf{Q}}^{-1}+O\paren{\epsilon^2}}\mathbf{T} \bracket{1+O\paren{\epsilon}} \nonumber\\
&=2 \sin \theta  \paren{\mathbf{I}-\mathbf{Q}}^{-1}\mathbf{T} \bracket{1+O\paren{\epsilon}}\nonumber\\
&=2 \sin \theta  ~\mathbf{H} \bracket{1+O\paren{\epsilon}} \label{ERPKEstimate10}.
\end{align}
Finally, using \eqref{ERPKEstimate71}, \eqref{ERPKEstimate10}, and Lemma \ref{PKStandardChordalRMHalfDisk}, we see
\begin{align*}
H^{ER}_{D^{\epsilon}}\paren{z,\epsilon e^{i\theta}}&=H_{D^{\epsilon}}\paren{z,\epsilon e^{i\theta}}+\sum_{i=1}^n h_i^{\epsilon}\paren{z}H^{ER}_{D^{\epsilon}}\paren{A_i,\epsilon e^{i\theta}}\\
&=2\sin \theta \bracket{H_D\paren{z,0}    + \sum_{i=1}^n h_i^{\epsilon}\paren{z}H_D^{ER}\paren{A_i,0}}\bracket{1+O\paren{\epsilon}}\\
&=2\sin \theta \bracket{H_D\paren{z,0}    + \sum_{i=1}^n h_i\paren{z}H_D^{ER}\paren{A_i,0}}\bracket{1+O\paren{\epsilon}}\\
&=2 \sin \theta ~H_D^{ER}\paren{z,0}\bracket{1+O\paren{\epsilon}}.
\end{align*}
\end{proof}

For any $0<r<R$, there are bounds for $H_D^{ER}\paren{z,0}$ and $H_D^{ER}\paren{z,x}$, for $\abs{x}>2R$, that are uniform over all $D\in \mathcal{Y}^*$ that agree with $\half$ outside of $A_{r,R}$.

\begin{lemma}\label{uniPKAiBd}
Let $D \in \mathcal{Y}_n^*$ be such that $B_r^+\paren{0} \subset D$ and $z \in D$ for all $z \notin B_R^+\paren{0}.$  Then there are constants $c_r,c_R<\infty$ such that for each $1 \leq i \leq n$, $H_D^{ER}\paren{A_i,0} \leq c_r$ and, if $\abs{x} \geq 2R$, $H_D^{ER}\paren{A_i,x} \leq c_Rx^{-2}$.
\end{lemma}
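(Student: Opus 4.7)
The plan is to first establish the bound at $w=0$ via a bootstrap using the maximum principle for ER-harmonic functions (Lemma~\ref{ERBMMaximalprinciple}), and then to deduce the bound at $w=x$, $|x|\geq 2R$, from the $w=0$ case by applying the M\"obius inversion $T(z)=-1/z$, which is a conformal automorphism of $\half$ sending $\infty$ to $0$.

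For the $w=0$ bound, I would apply the ER-maximum principle to $f(z):=H_D^{ER}(z,0)$ on the subdomain $V_\rho := D\setminus\overline{B_\rho^+(0)}$, for a parameter $\rho\in(0,r)$ to be chosen, viewing $V_\rho\in\mathcal Y_n^*$ with enlarged outer component $A_0\cup\overline{B_\rho^+(0)}$. Since $f$ extends continuously to $0$ on $\partial A_0\setminus B_\rho^+(0)$, the maximum of $f$ on $V_\rho\cup\{A_1,\ldots,A_n\}$ must be attained on the new half-circle $\gamma_\rho := \partial B_\rho(0)\cap\half$. On $\gamma_\rho$, the decomposition \eqref{ERBMPKD} gives
\[
f(z) \leq H_\half(z,0) + \Bigl(\sum_{i=1}^n h_i(z)\Bigr) M(0) \leq \frac{1}{\pi\rho} + \frac{C\rho}{r}\,M(0),
\]
where $M(0):=\max_i H_D^{ER}(A_i,0)$, the first term uses \eqref{PKhalf}, and the bound on the total hitting probability follows from \eqref{PKHD}, using that $B_r^+(0)\subset D$ forces any Brownian motion from $z\in\gamma_\rho$ to escape $B_r^+(0)$ through its upper half-circle before it can hit any $A_i$. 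Choosing $\rho=r/(2C)$ closes the bootstrap and yields $M(0)\leq 4C/(\pi r)=:c_r$.

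For the bound at $w=x$ with $|x|\geq 2R$, the geometric hypotheses transform well under $T$: because $B_r^+(0)\subset D$ forces the H-hull portion of $A_0$ to be a positive distance from $0$, its image is a compact H-hull in $\half$, so $T(D)\in\mathcal Y_n^*$; moreover $B_{1/R}^+(0)\subset T(D)$ and $T(D)$ agrees with $\half$ outside $B_{1/r}^+(0)$. The image $w':=T(x)=-1/x$ lies on $\R$ with $|w'|\leq 1/(2R)$, so translating $T(D)$ by $-w'$ produces a domain still in $\mathcal Y_n^*$ that contains $B_{1/(2R)}^+(0)$ and agrees with $\half$ outside $B_{1/r+1/(2R)}^+(0)$. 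Applying the already-proved $w=0$ bound to this translate gives $H_{T(D)}^{ER}(T(A_i),w')\leq c_{1/(2R)}$. Then by conformal invariance (Proposition~\ref{ERPKCI}),
\[
H_D^{ER}(A_i,x) = |T'(x)|\,H_{T(D)}^{ER}(T(A_i),T(x)) = \frac{1}{x^{2}}\,H_{T(D)}^{ER}(T(A_i),-1/x) \leq \frac{c_{1/(2R)}}{x^{2}} =: \frac{c_R}{x^{2}}.
\]

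The main technical point is verifying that $T$ carries the admissible class into itself (the worry being that an H-hull containing $0$ would become non-compact under $T$); the hypothesis $B_r^+(0)\subset D$ rules this out. Beyond this verification, all the analytic work is concentrated in the single bootstrap at $w=0$, which rests on the combination of the ER-maximum principle, the decomposition \eqref{ERBMPKD}, and the classical estimate \eqref{PKHD} for exiting a small half-disk through its upper arc.
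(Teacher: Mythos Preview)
Your proof is correct and takes a genuinely different route from the paper's. The paper argues pathwise: it decomposes the ERBM started at $A_j$ into successive visits to $\partial B^+_{r/2}(0)$, noting that after each visit the process must, as an ordinary Brownian motion, escape $B_r^+(0)$ through its upper arc before it can touch any $A_i$ again. This gives a geometric tail $q_n\le p^{n-1}q_1$ with $p<1$ depending only on $r,R$, and then
\[
H_D^{ER}(A_j,0)=\sum_{n\ge 1} q_n\Bigl[\tfrac{r}{2}\!\int_0^\pi H_D\bigl(\tfrac{r}{2}e^{i\theta},0\bigr)\,p_n(\theta)\,d\theta\Bigr]\le \frac{2}{\pi r}\sum_{n\ge 1} q_n\le \frac{2}{\pi r}\cdot\frac{q_1}{1-p}.
\]
For the second statement the paper reruns the same excursion argument at $\partial B^+_{3R/2}(0)$, replacing the bound $H_\half(\tfrac{r}{2}e^{i\theta},0)\le \tfrac{2}{\pi r}$ by $H_\half(\tfrac{3R}{2}e^{i\theta},x)\le \tfrac{24R}{\pi x^2}$ for $|x|>2R$.

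Your argument trades this excursion decomposition for an ER--maximum--principle bootstrap on $M(0)=\max_i H_D^{ER}(A_i,0)$; this is cleaner analytically, though it silently uses $M(0)<\infty$ a priori (which is available from Proposition~\ref{ERPKfromAI}). Your treatment of the $|x|\ge 2R$ case via the inversion $T(z)=-1/z$ is a genuine economy over the paper: instead of repeating the estimate on a large circle, you transport the question back to a neighborhood of the origin in $T(D)$ and invoke the first part, with the factor $x^{-2}$ appearing automatically as $|T'(x)|$ from Proposition~\ref{ERPKCI}. The paper's approach keeps the probabilistic mechanism in view and treats both statements symmetrically; yours leans harder on conformal invariance and avoids duplication. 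One small remark: for the bootstrap you only need $V_\rho\in\mathcal Y_n$, not $V_\rho\in\mathcal Y_n^*$, since Lemma~\ref{ERBMMaximalprinciple} is stated for general domains in $\mathcal Y$.
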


\begin{proof}
Fix $j$ and let $\sigma_1$ be the first time $B_D^{ER}$ started at $A_j$ hits $\partial B^+_{r/2}\paren{0}$ and, for $k>1$, let $\sigma_k$ be the first time after $\sigma_{k-1}$ that $B_D^{ER}$ has hit an $A_i$ with $i\geq 1$ and then returned to $\partial  B^+_{r/2}\paren{0}$.  Let $p_n\paren{\theta}$ be the density for the distribution of $B_{D}^{ER}\paren{\sigma_n}$ conditioned on $\sigma_n<\infty$ and $q_n$ be the probability that $\sigma_n< \infty$.  Using the strong Markov property for $B_D^{ER}$ and \eqref{PKhalf}, we see
\begin{align}
H_{D}^{ER}\paren{A_j,0} &=  \sum_{n=1}^{\infty} q_n \bracket{\frac{r}{2}\int_0^{\pi} H_{D}\paren{\paren{r/2}e^{i\theta},0}  p_n\paren{\theta}~d\theta} \label{uniPKBound1}\\
&\leq \sum_{n=1}^{\infty} q_n \bracket{ \frac{r}{2}\int_0^{\pi} H_{\half}\paren{\paren{r/2}e^{i\theta},0}  p_n\paren{\theta}~d\theta } \nonumber \\
&\leq \sum_{n=1}^{\infty} q_n \bracket{\frac{r}{2}\int_0^{\pi} \frac{2}{\pi r}  p_n\paren{\theta}~d\theta} \nonumber \\
&= \frac{2}{\pi r}\sum_{n=1}^{\infty} q_n. \nonumber
\end{align}
To complete the proof, it is enough to show that $\sum_{n=1}^{\infty} q_n$ is less than infinity.  If $\sigma_n< \infty$, in order for $\sigma_{n+1}$ to be less than infinity, a Brownian motion started on $\partial B_{r/2}^+\paren{0}$ will have to hit $\partial B_R^+\paren{0}$ before it hits the real line.  It is easy to verify that there is a $p<1$ uniformly bounding the probability of this event.  It follows that $q_{n} \leq p^{n-1} q_1$ and hence, $\sum_{n=1}^{\infty} q_n \leq \frac{q_1}{1-p}$. This proves the first statement.  

Observe that \eqref{PKhalfMax} implies $H_{\half}\paren{\frac{3R}{2}e^{i\theta},x}<\frac{24R}{\pi x^2}$ for all $\theta$ and $x$ such that $\abs{x}>2R$.  Using this fact, the proof of the second statement is similar to the proof of the first.


\end{proof}

\begin{lemma}\label{uniPKBound}
Let $D \in \mathcal{Y}_n^*$ and suppose that $B_r^+\paren{0}\subset D$.  Then there is a constant $c_r>0$ such that $H_D^{ER}\paren{z,0}\leq c_r$ for all $z$ with $\abs{z}>r$.  If $w \in D$ for all $w \in \half$ such that $\imag\bracket{w}<r'$, then there is a constant $c_{r'}>0$ such that $H_D\paren{z,x}<c_{r'}$ for all $z$ with $\imag\bracket{z}>r'$ and $x \in \R$.
\end{lemma}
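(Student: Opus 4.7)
The plan is to reduce both assertions to the standard half-plane Poisson kernel estimate via the two-domain identity \eqref{twoDomainPK}. For the ERBM statement, one extra step is required: use \eqref{ERBMPKD} to split $H_D^{ER}\paren{z,0}$ into an ordinary Brownian Poisson piece and a weighted sum of the values $H_D^{ER}\paren{A_i,0}$, each of which was already controlled uniformly by Lemma \ref{uniPKAiBd}.

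More concretely, since $B_r^+\paren{0} \subset D$, $\partial D$ agrees with $\R$ in a neighborhood of $0$, so applying \eqref{twoDomainPK} with $D_1 = \half$ and $D_2 = D$ gives
\[H_D\paren{z,0} = H_{\half}\paren{z,0} - \ev{z}{H_{\half}\paren{B_{\tau_D},0}} \leq H_{\half}\paren{z,0}.\]
From \eqref{PKhalf}, the ratio $\imag\bracket{z}/\abs{z}^2$ is at most $1/\abs{z} \leq 1/r$ whenever $\abs{z} \geq r$, so $H_{\half}\paren{z,0} \leq 1/\paren{\pi r}$ uniformly. Combining this with \eqref{ERBMPKD}, the trivial bound $0 \leq h_i\paren{z} \leq 1$, and the constants supplied by Lemma \ref{uniPKAiBd} for $H_D^{ER}\paren{A_i,0}$, one obtains an estimate of the form $H_D^{ER}\paren{z,0} \leq \paren{\pi r}^{-1} + n\, c_r$, which yields the first conclusion with a constant depending only on $r$ and $n$.

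For the second assertion, the hypothesis that the open strip $\set{w \in \half : \imag\bracket{w} < r'}$ lies in $D$ guarantees that $\partial D$ coincides with $\R$ in a neighborhood of every $x \in \R$, so \eqref{twoDomainPK} again produces $H_D\paren{z,x} \leq H_{\half}\paren{z,x}$, and \eqref{PKhalf} gives
\[H_{\half}\paren{z,x} = \frac{\imag\bracket{z}}{\pi \abs{z-x}^2} \leq \frac{1}{\pi \imag\bracket{z}} < \frac{1}{\pi r'}\]
whenever $\imag\bracket{z} > r'$. No serious obstacle arises; the only thing to keep in mind is that the final constants depend solely on $r$, $r'$, and the fixed connectivity $n$ of $\mathcal{Y}_n^*$, rather than on the positions or sizes of the individual holes $A_1,\ldots,A_n$, which is precisely what uniformity in the statement requires.
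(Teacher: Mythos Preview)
Your proof is correct and follows essentially the same route as the paper: both use the decomposition \eqref{ERBMPKD}, the domination $H_D\leq H_{\half}$ (the paper states this directly, you obtain it via \eqref{twoDomainPK}), the elementary bound $H_{\half}(z,0)\leq (\pi r)^{-1}$ for $|z|\geq r$, and Lemma~\ref{uniPKAiBd} for the terms $H_D^{ER}(A_i,0)$. The only cosmetic difference is that the paper first invokes the strong Markov property to reduce to bounding $H_D^{ER}(\cdot,0)$ on the semicircle $\partial B_r^+(0)$ and then applies these same estimates there, whereas you apply them directly for all $|z|>r$; since the bounds hold on that larger set anyway, the extra reduction is not needed and nothing is gained or lost either way.
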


\begin{proof}
An ERBM started at $z \in D$ with $\abs{z}>r$ has to hit $\partial B_r^+\paren{0}$ before it can hit $0$.  As a result, the strong Markov property for ERBM implies that to prove the first statement, it is enough to find a bound for $H_D^{ER}\paren{\cdot,0}$ restricted to $\partial B_r^+\paren{0}$.  Since
\begin{equation*}
H_{D}^{ER}\paren{re^{i\theta},0}=H_{D}\paren{re^{i\theta},0}+\sum_{j=1}^n h_j\paren{re^{i\theta}} H_{D}^{ER}\paren{A_j,0},
\end{equation*}
the necessary bound follows from Lemma \ref{uniPKAiBd} and the fact that 
\begin{equation*}
H_{D}\paren{re^{i\theta},0} \leq H_{\half}\paren{re^{i\theta},0}\leq \frac{1}{\pi r}.
\end{equation*}
The proof of the second statement is similar.
\end{proof}

There is an analog to Lemma \ref{PKDeriv} for ERBM.

\begin{lemma}\label{ERPKDeriv}
Let $D \in \mathcal{Y}^*$ and define $f_z\paren{x}:=H_D^{ER}\paren{z,x}$.  There is a $c>0$ such that if $x \in \R$ and $r<\abs{z-x}$ are such that $B_r^+\paren{x}\subset D$, then $\abs{f_z'\paren{x}}\leq c r^{-2}$.
\end{lemma}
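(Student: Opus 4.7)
The plan is to decompose
\[H_D^{ER}(z,x)=H_D(z,x)+\sum_{i=1}^n h_i(z)H_D^{ER}(A_i,x)\]
using \eqref{ERBMPKD} and treat the two parts separately. The Brownian term is controlled directly by Lemma \ref{PKDeriv}. Since $0\leq h_i(z)\leq 1$ uniformly in $z$, it suffices to produce, for each $i$, a bound $|\partial_x H_D^{ER}(A_i,x)|\leq c_D/r^2$ with $c_D$ depending only on $D$.

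To handle $\partial_x H_D^{ER}(A_i,x)$, Proposition \ref{ERPKfromAI} gives $\mathbf{H}(x)=(\mathbf{I}-\mathbf{Q})^{-1}\mathbf{T}(x)$, and since $(\mathbf{I}-\mathbf{Q})^{-1}$ is independent of $x$, it is enough to bound $|\partial_x T_j(x)|$ for each $j$. By Lemma \ref{ERPKOneTime}, $T_j(x)=H_{\partial D}(A_j,x)/\sum_{k\neq j}\mathcal{E}_D(A_j,A_k)$, and the denominator is a positive constant depending only on $D$. So the problem reduces to estimating $\partial_x H_{\partial D}(A_j,x)$, where by the convention of the excerpt $H_{\partial D}(A_j,x)=\int_{\partial A_j}H_{\partial D}(w,x)\,|dw|$.

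The key step is the identification
\[H_{\partial D}(A_j,x)=\partial_y h_j(x+iy)|_{y=0^+}.\]
This follows from the symmetry $H_{\partial D}(w,x)=H_{\partial D}(x,w)$ (a standard reciprocity inherited from the symmetry of the Green's function, already implicit in the symmetric definition of excursion measure and in \eqref{BdPKCI}), together with the fact that the inward normal at $x\in\R$ is $+\hat y$, so $H_{\partial D}(w,x)=\partial_y H_D(x+iy,w)|_{y=0^+}$. Interchanging the $y$-derivative with the $w$-integral over $\partial A_j$ and using $\int_{\partial A_j}H_D(z,w)\,|dw|=\hm_D(z,\partial A_j)=h_j(z)$ yields the identity. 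Consequently $\partial_x T_j(x)$ is a constant (depending only on $D$) times the mixed second partial $\partial_x\partial_y h_j$ at $(x,0)$.

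To bound this mixed partial I would extend $h_j$ by Schwarz reflection: $h_j$ is harmonic on $B_r^+(x)\subset D$, bounded by $1$, and vanishes on the segment $(x-r,x+r)\subset\R\cap\partial D$, so the reflection $\tilde h_j(\bar z)=-h_j(z)$ produces a harmonic function on $B_r(x)$ with sup-norm at most $1$. Lemma \ref{HarDerBd} with $k=2$ then gives $|\partial_x\partial_y h_j(x)|\leq c(2)/r^2$, so $|\partial_x T_j(x)|\leq c_D/r^2$, and summing over $i,j$ together with the Brownian estimate from Lemma \ref{PKDeriv} finishes the proof. The main obstacle is really the identification in the third paragraph; once one recognizes the reciprocity of $H_{\partial D}$ and that integrating the Poisson kernel over $\partial A_j$ recovers $h_j$, the rest is the same Schwarz-reflection/harmonic-derivative estimate one uses to prove Lemma \ref{PKDeriv} itself.
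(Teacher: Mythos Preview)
Your argument is correct and takes a genuinely different route from the paper's. Both proofs share the decomposition \eqref{ERBMPKD} and invoke Lemma~\ref{PKDeriv} for the Brownian piece, but they diverge on how to bound $\partial_x H_D^{ER}(A_i,x)$. The paper appeals to the renewal representation \eqref{uniPKBound1}: it writes $H_D^{ER}(A_i,x)$ as a sum over successive visits of the ERBM to the half-circle $\partial B_{r/2}^+(x)$, differentiates each summand via Lemma~\ref{PKDeriv}, and controls the series using the geometric bound on $\sum q_n$ established after \eqref{uniPKBound1}. Your argument instead passes through the matrix identity of Proposition~\ref{ERPKfromAI}, identifies $H_{\partial D}(A_j,x)$ with $\partial_y h_j(x)$ via reciprocity of the boundary Poisson kernel, and then reads off the $r^{-2}$ bound from Schwarz reflection together with Lemma~\ref{HarDerBd}. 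Your route makes the $r^{-2}$ scaling appear more transparently (it is exactly the second-derivative estimate for the bounded harmonic function $h_j$ on $B_r(x)$), and it isolates the $D$-dependence of the constant cleanly in $\|(\mathbf{I}-\mathbf{Q})^{-1}\|$ and the excursion measures; the paper's route avoids having to quote the symmetry $H_{\partial D}(w,x)=H_{\partial D}(x,w)$, which is standard but not stated explicitly here.

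One small caveat: Proposition~\ref{ERPKfromAI} and Lemma~\ref{ERPKOneTime} are stated in the paper under the standing assumption that $\partial D$ is locally analytic, whereas Lemma~\ref{ERPKDeriv} is asserted for general $D\in\mathcal{Y}^*$. This is harmless, since your identification $H_{\partial D}(A_j,x)=\partial_y h_j(x)$ and the harmonic estimate on $h_j$ make sense without analyticity of $\partial A_j$ (only the real segment near $x$ needs to be analytic, which it is), and both sides transform identically under conformal maps; but you should remark on this when invoking Proposition~\ref{ERPKfromAI}.
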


\begin{proof}
We may assume without loss of generality that $x=0$.  Define $f_i\paren{x}$ to be equal to $H_D^{ER}\paren{A_i,x}$.  Since
\[f_z\paren{x}=H_D\paren{z,x}+\sum_{i=1}^n h_i\paren{z}f_i\paren{x},\]
using Lemma \ref{PKDeriv}, to complete the proof it is enough to show there is a $c>0$ such that $f_i'\paren{0}<c r^{-2}$ for all $1 \leq i \leq n$.
Using \eqref{uniPKBound1} (and the notation preceding it), we have
\[ f_i\paren{x} =  \sum_{n=1}^{\infty} q_n \bracket{\frac{r}{2}\int_0^{\pi} H_{D}\paren{\paren{r/2}e^{i\theta},0}  p_n\paren{\theta}~d\theta}.\]
Differentiating both sides of this equation and using the bounded convergence theorem, Lemma \ref{PKDeriv}, and the computation following \eqref{uniPKBound1}, the result follows.
\end{proof}

The next lemma gives an estimate on the effect on the Poisson kernel of removing a compact $\half$-hull from a domain $D$.

\begin{lemma}\label{ERPKEffHull}
Let $D \in \mathcal{Y}$ be such that $A_0=\C \backslash \half$ and suppose that there are real constants $0<r<R$ such that $w \in D$ for all $w \notin A_{r,R}$ and a constant $r'$ such that $w \in D$ for all $w \in \half$ with $\imag \bracket{w}<r'$.  Let $A$ be a compact $\half$-hull contained in $B_{r/2}^+\paren{0}$.  If $\abs{x}>\rad\paren{A}+\sqrt{\rad\paren{A}}$ and $\abs{z}>r$, then there is a $c>0$ depending only on $r$, $r'$, and $R$ such that
\[H_D^{ER}\paren{z,x}-H_{D\backslash A}^{ER}\paren{z,x} \leq c H_D^{ER}\paren{z,0}\rad\paren{A}.\]
Furthermore, if $\abs{x}>2R$, there is a $c>0$ depending only on $r$ and $R$ such that
\[H_D^{ER}\paren{z,x}-H_{D\backslash A}^{ER}\paren{z,x} \leq c H_D^{ER}\paren{z,0}x^{-2}\rad \paren{A}^2.\]
\end{lemma}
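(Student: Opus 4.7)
My plan is to start from the two-domain Poisson kernel identity \eqref{twoDomainERPK} applied with $D_1=D$ and $D_2=D\setminus A$. Since $A\subset\overline{B_{\rad(A)}^+\paren{0}}$ and $|x|>\rad(A)$, the point $x$ lies outside $A$, so $\partial D$ and $\partial(D\setminus A)$ agree in a neighborhood of $x$. Because $H_D^{ER}(w,x)=0$ for $w\in\R\setminus\{x\}$ (contributions from exits on $\R$ vanish), the identity reduces to
\[
H_D^{ER}(z,x)-H_{D\setminus A}^{ER}(z,x)=\int_{\partial A\cap\half}H_D^{ER}(w,x)\,H_{D\setminus A}^{ER}(z,w)\,|dw|,
\]
whose total boundary mass is $\mathbf{P}^z[B_{\tau_{D\setminus A}}^{ER}\in\partial A]$. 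The remaining work is to bound the integrand uniformly for $w\in\partial A\cap\half$ and to bound the hitting probability (or its $\imag w$-weighted analog) by $c\,\rad(A)\,H_D^{ER}(z,0)$.

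To bound $H_D^{ER}(w,x)$ I would use the decomposition \eqref{ERBMPKD}, writing $H_D^{ER}(w,x)=H_D(w,x)+\sum_i h_i(w)\,H_D^{ER}(A_i,x)$. Since $D\subset\half$ and $|w|\leq\rad(A)$, the formula \eqref{PKhalf} gives $H_D(w,x)\leq H_\half(w,x)\leq\imag w/[\pi(|x|-\rad(A))^2]$, which is at most $1/\pi$ once $|x|>\rad(A)+\sqrt{\rad(A)}$, and at most $4\imag w/(\pi x^2)$ once $|x|>2R$. The estimate \eqref{PKHD} applied inside $B_r^+\paren{0}\subset D$ gives $h_i(w)=O(\imag w/r)$, and I would combine this with Lemma \ref{uniPKAiBd} (which yields $H_D^{ER}(A_i,x)\leq c_Rx^{-2}$ for $|x|>2R$) and an analogous uniform constant bound on $H_D^{ER}(A_i,x)$ over the compact range of $x$ relevant in the first case. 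The upshot is $H_D^{ER}(w,x)\leq C_1$ whenever $|x|>\rad(A)+\sqrt{\rad(A)}$ and $H_D^{ER}(w,x)\leq C_2\imag w/x^2$ whenever $|x|>2R$, for constants $C_1,C_2$ depending only on $r$, $R$, and $r'$.

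The crux of the argument is bounding $\mathbf{P}^z[B_{\tau_{D\setminus A}}^{ER}\in\partial A]$ by $c\,\rad(A)\,H_D^{ER}(z,0)$. Because $A\subset\overline{B_{\rad(A)}^+\paren{0}}\subset H^{\rad(A)}$ and $|z|>r>\rad(A)$, any ERBM in $D\setminus A$ can reach $\partial A$ only after first entering $H^{\rad(A)}$ through the semicircle $\partial H^{\rad(A)}\cap\half$. Up to this first semicircle hit, ERBM in $D\setminus A$ and ERBM in $D^{\rad(A)}$ are indistinguishable (same reflection rule at the holes $A_i$, same killing on $\R$ outside $H^{\rad(A)}$), so the density of the first-entry location equals $H_{D^{\rad(A)}}^{ER}(z,\rad(A)e^{i\phi})$. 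Since $D\in\mathcal{Y}^*$ and $\partial D$ agrees with $\partial\half$ in a neighborhood of $0$, Proposition \ref{ERPKEstimate} applies with $\epsilon=\rad(A)$ and gives $H_{D^{\rad(A)}}^{ER}(z,\rad(A)e^{i\phi})\leq c\,H_D^{ER}(z,0)\sin\phi$ uniformly in $|z|>r$, which integrates to $\mathbf{P}^z[B_{\tau_{D\setminus A}}^{ER}\in\partial A]\leq c\,\rad(A)\,H_D^{ER}(z,0)$. The first bound then follows from $C_1\cdot\mathbf{P}^z[\partial A]\leq c\,\rad(A)\,H_D^{ER}(z,0)$, and the second from $(C_2/x^2)\int_{\partial A\cap\half}\imag w\,H_{D\setminus A}^{ER}(z,w)|dw|\leq (C_2/x^2)\,\rad(A)\,\mathbf{P}^z[\partial A]\leq (c\,\rad(A)^2/x^2)\,H_D^{ER}(z,0)$, using $\imag w\leq\rad(A)$ on $\partial A\cap\half$. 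The main technical delicacy is the coupling of the two processes up to the first semicircle hit and the uniformity (in $D$) of Proposition \ref{ERPKEstimate}; once these are in hand, the factors of $\rad(A)$ and $\rad(A)^2$ appear essentially automatically from the simple observation that $\imag w\leq\rad(A)$ on $\partial A\cap\half$.
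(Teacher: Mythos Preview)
Your argument is correct and follows essentially the same route as the paper: start from \eqref{twoDomainERPK}, bound the probability that ERBM reaches the small half-disk via Proposition \ref{ERPKEstimate} (yielding the factor $\rad(A)\,H_D^{ER}(z,0)$), and bound $H_D^{ER}(w,x)$ on that disk using the decomposition \eqref{ERBMPKD} together with \eqref{PKhalfMax}, Lemma \ref{uniPKAiBd}, and Lemma \ref{uniPKBound}. The only cosmetic difference is that the paper takes the supremum of $H_D^{ER}(\cdot,x)$ over the semicircle $\partial B_{\rad(A)}^+(0)$ rather than integrating against a density on $\partial A$ (thereby sidestepping any regularity assumption on $\partial A$), but the estimates and the resulting powers of $\rad(A)$ are identical.
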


\begin{proof}
Using \eqref{twoDomainERPK}, we see that
\[H_D^{ER}\paren{z,x}-H_{D\backslash A}^{ER}\paren{z,x}=\ev{z}{H^{ER}_{D}\paren{B_{\tau_{D\backslash A}}^{ER},x}}.\]
Let $\epsilon=\rad\paren{A}$.  We can bound $\ev{z}{H^{ER}_{D}\paren{B_{\tau_{D\backslash A}}^{ER},x}}$ by the probability that an ERBM started at $z$ hits $\partial B_{\epsilon}^+\paren{0}$ before leaving $D$ multiplied by the maximum value of $H_D^{ER}\paren{\cdot,x}$ restricted to $\partial B_{\epsilon}^+\paren{0}$.  Proposition \ref{ERPKEstimate} implies the probability an ERBM started at $z$ hits $\partial B_{\epsilon}^+\paren{0}$ before leaving $D$ is
\begin{equation}\label{ERPKEffHull1}
4 \epsilon H_D^{ER}\paren{z,0}\bracket{1+O\paren{\epsilon}}.
\end{equation}
Next, recall that
\begin{equation}\label{ERPKEffHull2}
H_D^{ER}\paren{\epsilon e^{i\theta},x}=H_D\paren{\epsilon e^{i\theta},x}+\sum_{i=1}^n h_i\paren{\epsilon e^{i\theta}} H_D^{ER}\paren{A_i,x}.
\end{equation}
Since $H_D\paren{\epsilon e^{i\theta},x}<H_{\half}\paren{\epsilon e^{i\theta},x}$, \eqref{PKhalfMax} implies $H_D\paren{\epsilon e^{i\theta},x}$ is uniformly bounded for $\abs{x}>\epsilon+\sqrt{\epsilon}$ and less than a constant depending only on $R$ multiplied by $\epsilon x^{-2}$ for $\abs{x}>2R$.  The remark following \eqref{PKHD} implies $\sum_{i=1}^n h_i\paren{\epsilon e^{i\theta}}=O\paren{\epsilon}$ as $\epsilon \rightarrow 0$.  Lemma \ref{uniPKBound} implies $H_D^{ER}\paren{A_i,x}$ is bounded by a constant depending only on $r'$ for all $x$ and Lemma \ref{uniPKAiBd} implies $H_D^{ER}\paren{A_i,x}$ is bounded by a constant depending only on $R$ multiplied by $x^{-2}$ for $\abs{x}>2R$.  Combining these facts with \eqref{ERPKEffHull1} and \eqref{ERPKEffHull2}, the results follow.
\end{proof}

\subsection{Conformal Mapping Using $H_D^{ER}(\cdot,w)$} \label{sectPKconMap}
\label{chapPKconMap}
Recall that a domain is called a chordal standard domain if it obtained by removing a finite number of horizontal line segments from the upper half-plane.  It is a classical theorem of complex analysis \cite{MR510197} that every $D \in \mathcal{Y}_n$ is conformally equivalent to a chordal standard domain.  Furthermore, this equivalence is unique up to a scaling and real translation.  In this section, we discuss the relationship between this conformal equivalence and $H_D^{ER}\paren{\cdot,w}$.  In what follows, assume that $\partial A_0$ is locally analytic at $w \in \partial A_0$.  

There is an analytic characterization of $H_D^{ER}\paren{\cdot, w}.$ 

\begin{proposition} \label{ERBMPoissonKernelCharacterization}
$H_D^{ER}\paren{\cdot,w}$ is up to a real constant multiple the unique positive ER-harmonic function that satisfies $H_D^{ER}\paren{z,w} \rightarrow 0$ as $z \rightarrow w'$ for any $w' \in \partial A_0$ not equal to $w$.
\end{proposition}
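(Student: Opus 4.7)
The function $v := H_D^{ER}(\cdot, w)$ satisfies all the stated conditions: positivity and the decay at boundary points of $\partial A_0 \setminus \set{w}$ follow from the explicit formula \eqref{explicitPoisson} for ER-harmonic measure, while ER-harmonicity follows from the strong Markov property for ERBM (it is implicit in Proposition \ref{ERHarmonicFromERBM}). The task is therefore to show uniqueness up to a positive multiple. Given any other positive ER-harmonic function $u$ on $D$ with $u(z) \to 0$ at every $w' \in \partial A_0 \setminus \set{w}$, the plan is to produce a constant $c > 0$ with $u \equiv cv$.

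The main step is a boundary-Harnack statement at $w$: $u(z)/v(z) \to c > 0$ as $z \to w$ inside $D$. Because $\partial A_0$ is locally analytic at $w$, I would pick a neighborhood $V$ of $w$ disjoint from $A_1, \ldots, A_n$ and conformally map $D \cap V$ onto the upper half-disk $B_r^+\paren{0}$, sending $w$ to $0$ and the analytic arc of $\partial A_0 \cap V$ onto $(-r, r)$. The pullbacks of $u$ and $v$ are then positive harmonic functions on $B_r^+\paren{0}$ vanishing continuously on $(-r, r) \setminus \set{0}$. For each such function $\phi$, I would subtract the Dirichlet solution on $B_r^+\paren{0}$ with $\phi$'s continuous values on the upper arc $\set{re^{i\theta} : 0 < \theta < \pi}$ and boundary value $0$ on $(-r, r)$; the remainder is a nonnegative harmonic function vanishing on all of $\partial B_r^+\paren{0} \setminus \set{0}$, and Proposition \ref{noSubLinHar} identifies it as a nonnegative multiple of $H_{B_r^+\paren{0}}(\cdot, 0)$. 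This yields
\[
\phi = c_\phi\, H_{B_r^+\paren{0}}(\cdot, 0) + \phi_0, \qquad c_\phi \geq 0,
\]
with $\phi_0 \geq 0$ harmonic and extending continuously by $0$ to all of $(-r, r)$. The coefficient $c_v$ is strictly positive: by \eqref{ERBMPKD}, $v \geq H_D(\cdot, w)$, and $H_D(\cdot, w)$ already has the characteristic $H_{B_r^+\paren{0}}(\cdot, 0)$-type blow-up at $w$. If instead $c_u = 0$, then $u$ extends continuously to $w$ with value $0$; combined with its behavior elsewhere on $\partial A_0$, this makes $u$ a bounded ER-harmonic function vanishing continuously on all of $\partial A_0$, and Lemma \ref{ERBMMaximalprinciple} forces $u \equiv 0$, contradicting positivity. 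Thus $c_u > 0$, and setting $c := c_u/c_v$ the singular pieces of $u$ and $cv$ cancel, so $u - cv$ extends continuously to $w$ with value $0$.

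At this point $u - cv$ is an ER-harmonic function on $D$ that is bounded, continuous on $E \cup \partial A_0$, and identically $0$ on $\partial A_0$. Applying Lemma \ref{ERBMMaximalprinciple} to both $u - cv$ and $-(u - cv)$ pins it down to $u \equiv cv$. The principal obstacle is the first step: the local Martin-type decomposition at $w$ and the identification $c_v > 0$. The local analyticity of $\partial A_0$ at $w$ is what makes that step tractable, since it reduces the picture to positive harmonic functions on a half-disk vanishing on the diameter except at the origin --- exactly the setting where Proposition \ref{noSubLinHar} applies. The maximum-principle step that finishes the proof is then routine.
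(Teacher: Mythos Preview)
Your argument is correct but follows a genuinely different path from the paper's. The paper handles uniqueness by the global ``subtract the hole values'' trick: given a competitor $f$, it sets $g(z)=f(z)-\sum_{i=1}^n h_i(z)\,f(A_i)$, observes that $g$ is harmonic on $D$ and vanishes on all of $\partial D\setminus\{w\}$ (not just $\partial A_0\setminus\{w\}$), applies Proposition~\ref{noSubLinHar} once on the whole domain to get $g=c\,H_D(\cdot,w)$, and then invokes \eqref{ERBMPKD} to conclude $f-c\,H_D^{ER}(\cdot,w)$ is ER-harmonic and vanishes on $\partial A_0$. You instead run a local boundary-Harnack / Martin decomposition in a half-disk at $w$, match the singular coefficients of $u$ and $v$, and only then appeal to the ER maximum principle. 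Both routes end with Lemma~\ref{ERBMMaximalprinciple}. The paper's reduction is slicker in one respect: because $f-c\,H_D^{ER}(\cdot,w)$ is literally a finite combination of the bounded functions $h_i$, its boundedness is automatic, whereas in your approach you must (and implicitly do) argue separately that $u-cv$ is bounded on $E\cup\partial A_0$ before the maximum principle applies. Your approach, on the other hand, never needs the structural formula \eqref{ERBMPKD} for the uniqueness half and would transfer to settings where such a decomposition is unavailable.

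One small correction: citing Proposition~\ref{ERHarmonicFromERBM} for the ER-harmonicity of $H_D^{ER}(\cdot,w)$ is not quite on target, since that proposition treats bounded boundary data. The paper instead derives harmonicity from \eqref{ERBMPKD} and then checks the ER mean-value identity \eqref{ERBMMeanvalue} by a Harnack/dominated convergence passage to the limit in \eqref{explicitPoisson}.
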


\begin{proof}
Using \eqref{ERBMPKD}, we see that $H_D^{ER}\paren{\cdot,w}$ is harmonic on $D$.  If $V$ is a Borel subset of $\partial A_0$, then it follows from the strong Markov property for ERBM and (3) of Definition \ref{characterizationERBM} that $\hm_D^{ER}\paren{\cdot,V}$ is ER-harmonic.  As a result, if $\gamma$ is as in \eqref{explicitPoisson} and $\eta$ and $U_i$ are as in Definition \ref{ERHarmonicDef}, then
\begin{align*}
H_D^{ER}\paren{A_i,w} &= \lim_{\epsilon \rightarrow 0} \frac{\hm_D^{ER}\paren{A_i,\gamma\paren{-\epsilon,\epsilon}}} {\int_{-\epsilon}^{\epsilon}\abs{\gamma'\paren{x}}~dx}\\
&=  \lim_{\epsilon \rightarrow 0} \int_{\eta} \frac{\hm_D^{ER}\paren{z,\gamma\paren{-\epsilon,\epsilon}}}{\int_{-\epsilon}^{\epsilon}\abs{\gamma'\paren{x}}~dx} \cdot \frac{H_{\partial U_i}\paren{A_i,z}}{\mathcal{E}_{U_i}\paren{A_i,\eta}} ~\abs{dz}\\
&=\int_{\eta_i} H_D^{ER}\paren{z,w} \cdot \frac{H_{\partial U_i}\paren{A_i,z}}{\mathcal{E}_{U_i}\paren{A_i,\eta}} ~\abs{dz},
\end{align*}  
where the last equality follows from the Harnack inequality and dominated convergence.  This proves that $H_D^{ER}\paren{\cdot,w}$ is ER-harmonic.  It is clear from \eqref{ERBMPKD} and Proposition \ref{noSubLinHar} that $H_D^{ER}\paren{\cdot,w}$ has the required asymptotics at $\partial A_0$.

Suppose $f$ is another positive ER-harmonic function that satisfies $f\paren{z} \rightarrow 0$ as $z \rightarrow w'$ for any $w' \in \partial A_0$ not equal to $w$.  The function
\[g\paren{z}:=f\paren{z}-\sum_{i=1}^n h_i\paren{z}f\paren{A_i}\]
is a harmonic function with $g\paren{A_i}=0$ for each $1 \leq i \leq n$ that has the same boundary conditions as $f$ at $ \partial A_0$.  It follows from Proposition \ref{noSubLinHar} that there is a $c>0$ such that $g\paren{z}=c H_D\paren{z,w}$.  As a result, \eqref{ERBMPKD} implies $f\paren{z}-c H^{ER}_D\paren{z,w}$ is an ER-harmonic function that is $0$ on $A_0$ and thus, by the maximal principle for ER-harmonic functions, $f\paren{z}=c H^{ER}_D\paren{z,w}$ for all $z \in D$.
\end{proof}

\begin{theorem} \label{PKConformalMapTheorem}
Let $D \in \mathcal{Y}_n$ and suppose $\partial A_0$ is a smooth Jordan curve (in the topology of $E$) such that there is no Jordan curve in $D$ with $A_0$ in its interior.  If $w \in \partial A_0$, then there is a $D' \in \mathcal{CY}_n$ and conformal map $f: D \rightarrow D'$ with $f\paren{w}=\infty$ such that $\imag\bracket{f\paren{z}}=H_D^{ER}\paren{z,w}$.  Furthermore, if $g$ is another such map, then there are real constants $r,x$ such that $g=r f+x$.
\end{theorem}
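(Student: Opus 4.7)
The plan is to reduce the theorem to the ER-harmonic characterization in Proposition~\ref{ERBMPoissonKernelCharacterization}. First I would invoke the classical slit-mapping theorem \cite{MR510197}: the hypothesis that $\partial A_0$ is a smooth Jordan curve not enclosed by any Jordan curve in $D$ puts $A_0$ in the role of the ``outer'' boundary component, which is exactly what is needed to produce a conformal map $g:D\to D''\in\mathcal{CY}_n$ with $g(w)=\infty$ (so that in local straightening coordinates at $w$, $g$ has a simple pole).

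Next I would verify that $v:=\imag g$ satisfies the hypotheses of Proposition~\ref{ERBMPoissonKernelCharacterization}. Certainly $v$ is positive and harmonic on $D$. Since $g$ maps each $\partial A_i$ for $i\geq 1$ onto a horizontal slit in $\half$, $v$ is constant on $\partial A_i$, so it extends continuously to $E$ with $v(A_i)>0$; and $v\to 0$ on $\partial A_0\setminus\{w\}$ because $g$ takes real values there. The crucial point is the ER-harmonicity of $v$. Writing $g=u+iv$, the Cauchy--Riemann equations give the pointwise identity $\frac{\partial v}{\partial n}\abs{dz}=-du$ along any smooth curve (parametrized with $D$ on the left), so for any Jordan curve $\eta_i$ surrounding $A_i$ alone one has
\[\int_{\eta_i}\frac{d}{dn}\,v\paren{z}\abs{dz}=-\oint_{\eta_i}du=0,\]
the right-hand side vanishing because $u$ is single-valued on $D$. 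Lemma~\ref{ERHarmonicCondition}(2) now yields that $v$ is ER-harmonic, and Proposition~\ref{ERBMPoissonKernelCharacterization} produces a constant $c>0$ with $v=c\,H_D^{ER}(\cdot,w)$. Setting $f:=g/c$ gives the required map: $f$ is conformal, $f(w)=\infty$, $\imag f=H_D^{ER}(\cdot,w)$, and $f(D)=D''/c$ is again a chordal standard domain since scaling by $1/c>0$ sends horizontal slits to horizontal slits.

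For uniqueness, if $\tilde g:D\to\tilde D\in\mathcal{CY}_n$ is another conformal map with $\tilde g(w)=\infty$, the identical argument yields $\imag\tilde g=r\imag f$ for some $r>0$. Then $\tilde g-rf$ is holomorphic on the connected domain $D$ with vanishing imaginary part, hence equals a real constant $x$, giving $\tilde g=rf+x$.

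The key step is the period calculation that converts global single-valuedness of the holomorphic function $g$ into the ER-harmonic integral identity $\int_{\eta_i}\frac{\partial v}{\partial n}\abs{dz}=0$; once this is established, the theorem is essentially a corollary of Proposition~\ref{ERBMPoissonKernelCharacterization}. I am treating the existence of a chordal slit mapping with the specified normalization $g(w)=\infty$ as a quoted classical fact rather than reproving it.
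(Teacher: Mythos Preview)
Your proof is correct and follows essentially the same approach as the paper: both invoke the classical slit-mapping theorem for existence, then use the period calculation (single-valuedness of the real part via Cauchy--Riemann, which is precisely what the paper packages as the equivalence between ER-harmonicity and being the imaginary part of a holomorphic function) together with Lemma~\ref{ERHarmonicCondition} and Proposition~\ref{ERBMPoissonKernelCharacterization} to identify the imaginary part. Your write-up is somewhat more explicit about the $\int_{\eta_i}\partial_n v\,\abs{dz}=-\oint_{\eta_i}du=0$ step, but the logical structure is identical.
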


\begin{proof}
Using Lemma \ref{ERHarmonicCondition} and (say) Proposition 13.3.5 of \cite{MR1344449}, we see that a harmonic function $h$ that is continuous on $E$ is the imaginary part of a holomorphic function if and only if it is ER-harmonic.  It follows that if $D'\in \mathcal{CY}_n$ and $f:D \rightarrow D'$ is a conformal map with $f\paren{w}=\infty$, then the imaginary part of $f$ is a positive ER-harmonic function such that $f\paren{z} \rightarrow 0$ as $z \rightarrow w'$ for any $w'\neq w$.  By Proposition \ref{ERBMPoissonKernelCharacterization}, this implies that the imaginary part of $f$ is a real constant multiple of $H_D^{ER}\paren{\cdot,w}$.  Combining this with the fact the imaginary part of a holomorphic function determines the real part up to a real additive constant, we obtain the uniqueness statement.

The existence of $f$ is a classical result of complex analysis.  The map can also be explicitly constructed using $H_D^{ER}\paren{\cdot,w}$ (see \cite{DreThesis} for details).
\end{proof}

\section{The Green's Function for ERBM} \label{sectGF}

\subsection{Definition and Basic Properties}

\subsection{Proofs of formulas (\ref{ERBMalltime}) and (\ref{ERBMnolimit})} \label{sectGFPOF}
\label{chapGFPOF}
The theory of Green's functions for ERBM can be used to prove formulas \eqref{ERBMalltime} and \eqref{ERBMnolimit}.  We start with a lemma.

\begin{lemma}
Let $A_{1,r} \in \mathcal{Y}_1$ be as in Lemma \ref{GreensFunctionAnnulus} and $\tau=\inf\set{t:B^{ER}_{A_{1,r}}\paren{t} \in \partial A_0}$.  If $f: A_{1,r} \rightarrow D$ is a conformal map and $D$ is bounded, then
\[\ev{z}{\int_0^{\tau} \abs{f'\paren{B^{ER}_{A_{1,r}}\paren{s}}}^2 ds} < \infty.\]
\label{allpathlemma}
\end{lemma}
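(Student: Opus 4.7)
The plan is to translate the expected integral into an $L^1$ integral against the ER-Green's function. By the ERBM analogue of Lemma \ref{greendefExtension} applied to $g(w) = \abs{f'(w)}^2$ (which lies in $L^1(A_{1,r})$ since $\int_{A_{1,r}} \abs{f'(w)}^2 \, dw$ is the area of $D$, finite because $D$ is bounded), we get
\[
\ev{z}{\int_0^{\tau} \abs{f'\paren{B^{ER}_{A_{1,r}}\paren{s}}}^2 ds} = \int_{A_{1,r}} G^{ER}_{A_{1,r}}\paren{z,w} \abs{f'(w)}^2 dw.
\]
So it suffices to bound the right-hand side.

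Next I would invoke Proposition \ref{ERBMGreenDecomp} to split
\[
G^{ER}_{A_{1,r}}\paren{z,w} = G_{A_{1,r}}\paren{z,w} + h_1(z)\, G^{ER}_{A_{1,r}}\paren{A_1,w},
\]
and handle the two terms separately. For the Brownian piece, the usual time-change argument for conformal invariance of Brownian motion (together with the standard Green's function identity) yields
\[
\int_{A_{1,r}} G_{A_{1,r}}\paren{z,w} \abs{f'(w)}^2 dw = \ev{f(z)}{\tau_D^{BM}},
\]
where $\tau_D^{BM}$ is the exit time of Brownian motion from $D$. Since $D$ is bounded, this expected exit time is finite.

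For the second piece, I would use Lemma \ref{GreensFunctionAnnulus}, which gives the explicit formula $G^{ER}_{A_{1,r}}\paren{A_1,w} = \paren{-\log\abs{w} + \log r}/\pi$. For $w \in A_{1,r}$, $\abs{w} \in [1,r]$, so this quantity is bounded above by $(\log r)/\pi$. Consequently
\[
h_1(z)\int_{A_{1,r}} G^{ER}_{A_{1,r}}\paren{A_1,w} \abs{f'(w)}^2 dw \leq \frac{h_1(z) \log r}{\pi} \cdot \text{Area}(D) < \infty.
\]
Summing the two bounds gives the conclusion.

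The main technical point, and the place I would need to be careful, is justifying the Brownian-motion identity $\int_{A_{1,r}} G_{A_{1,r}}\paren{z,w} \abs{f'(w)}^2 dw = \ev{f(z)}{\tau_D^{BM}}$: this requires the change-of-variables formula together with the classical time-change construction turning $f(B_t)$ (for $B_t$ Brownian motion in $A_{1,r}$) into a Brownian motion in $D$, and it uses that $\int_0^{\tau_{A_{1,r}}}\abs{f'(B_s)}^2 ds$ is a.s.\ finite (which in turn is what we are trying to conclude for ERBM, but for ordinary Brownian motion it follows from the boundedness of $D$ via the corresponding Brownian Green's-function formula on bounded domains). Once the Brownian case is in hand, the ERBM extension is essentially a bookkeeping of the decomposition above.
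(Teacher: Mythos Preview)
Your argument is correct. The paper proceeds differently: instead of decomposing $G^{ER}_{A_{1,r}}(z,\cdot)$ via Proposition \ref{ERBMGreenDecomp}, it splits the domain of integration into $B_\epsilon(z)$ and its complement, then uses that $|f'|$ is bounded near $z$ while $G^{ER}_{A_{1,r}}(z,\cdot)$ is locally integrable there, and conversely that $G^{ER}_{A_{1,r}}(z,\cdot)$ is bounded away from $z$ while $\int_{A_{1,r}}|f'|^2 = \mathrm{Area}(D)<\infty$. Your route is a bit more structural: it isolates the genuinely new ER contribution $G^{ER}_{A_{1,r}}(A_1,\cdot)$, which is uniformly bounded by the explicit formula of Lemma \ref{GreensFunctionAnnulus}, and reduces the remaining Brownian piece to the classical fact that $\int G_{A_{1,r}}(z,w)|f'(w)|^2\,dw=\int_D G_D(f(z),w')\,dw'=\ev{f(z)}{\tau_D^{BM}}<\infty$ for bounded $D$ (this identity follows from the analytic conformal invariance of the Brownian Green's function plus change of variables, so there is no circularity). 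The paper's argument is slightly more self-contained in that it uses only the two qualitative properties ``integrable near $z$'' and ``bounded away from $z$,'' which is exactly what is later reused to bootstrap the result to general conformal annuli; your version makes the role of the boundedness hypothesis on $D$ more transparent.
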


\begin{proof}
Using Lemma \ref{greendefExtension}, we have that for sufficiently small $\epsilon$
\begin{align*}
\ev{}{\int_0^{\tau} \abs{f'\paren{B^{ER}_{A_{1,r}}\paren{s}}}^2 ds}=&\int_{A_{1,r}}G^{ER}_{A_{1,r}}\paren{z,w}\abs{f'\paren{w}}^2 dw\\
=&\int_{B_{\epsilon}\paren{z}}G^{ER}_{A_{1,r}}\paren{z,w}\abs{f'\paren{w}}^2 dw\\
&+\int_{A_{1,r} \backslash {B_{\epsilon}\paren{z}}}G^{ER}_{A_{1,r}}\paren{z,w}\abs{f'\paren{w}}^2 dw.
\end{align*}
Since $\abs{f'\paren{r}}$ is bounded on ${B_{\epsilon}\paren{z}}$ and the Green's function for ERBM is integrable, the first integral in the sum is finite.  Since $G^{ER}_{A_{1,r}}\paren{z,\cdot}$ is bounded on $A_{1,r} \backslash B\paren{z,\epsilon}$ and $\int_{A_{1,r}} \abs{f'\paren{w}}^2 dw$ is equal to the area of $D$ (by a straightforward change of variables), the second integral in the sum is also bounded.  
\end{proof}

\begin{proposition} \label{allpaththeoremERBM}
Let $f: \C\backslash\D \rightarrow D$ be a conformal map sending $\infty$ to $\infty$. Then a.s. we have
\begin{equation}
\int_0^t \abs{f'\paren{B_{\C \backslash \D}^{ER}\paren{s}}}^2 ds < \infty
\label{nolimitproofERBM}
\end{equation}
and
\begin{equation}
\int_0^{\infty} \abs{f'\paren{B_{\C \backslash \D}^{ER}\paren{s}}}^2ds = \infty.
\label{allpathproofERBM}
\end{equation}
\end{proposition}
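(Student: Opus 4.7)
The plan is to prove the two statements by different localization strategies: \eqref{nolimitproofERBM} by cutting off at large radii so that Lemma~\ref{allpathlemma} applies, and \eqref{allpathproofERBM} by exploiting recurrence of the radial part together with nonvanishing of $f'$ on a compact annulus.

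For \eqref{nolimitproofERBM}, I would introduce the stopping times $\sigma_r := \inf\set{t \geq 0 : \abs{B^{ER}_{\C \backslash \D}(t)} \geq r}$ for $r > 1$.  The radial part of $B^{ER}_{\C \backslash \D}$ is reflected Brownian motion on $[1,\infty)$, which cannot reach $\infty$ in finite time, so $\sigma_r \to \infty$ a.s. as $r \to \infty$.  The key identification is that the law of $B^{ER}_{\C \backslash \D}$ stopped at $\sigma_r$ agrees with the law of $B^{ER}_{A_{1,r}}$ stopped at $\tau$: up to these stopping times both processes have the same Feller--Dynkin semigroup (identical uniform reflection at $\partial\D$, Brownian motion in the interior, and radial part equal to reflected Brownian motion stopped at level $r$), so the uniqueness invoked in Proposition~\ref{semigroupERBMThm} matches their laws.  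Since $D = \C \backslash A$ has $\partial A$ bounded, $f(A_{1,r})$ is bounded, and Lemma~\ref{allpathlemma} applied to $f|_{A_{1,r}}$ gives $\int_0^\tau \abs{f'(B^{ER}_{A_{1,r}}(s))}^2 ds < \infty$ a.s.  For any fixed $t > 0$, on the event $\set{\sigma_r > t}$ we get $\int_0^t \abs{f'(B^{ER}_{\C \backslash \D}(s))}^2 ds < \infty$; taking $r \to \infty$ and using $\prob{}{\sigma_r > t} \to 1$ yields \eqref{nolimitproofERBM}.

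For \eqref{allpathproofERBM}, let $V = \set{z : 3/2 \leq \abs{z} \leq 2}$, a compact subset of $\C \backslash \D$.  Since $f$ is conformal on $\C \backslash \D$, $f'$ is holomorphic and nonvanishing there, so by continuity there is a constant $c > 0$ with $\abs{f'} \geq c$ on $V$.  It therefore suffices to show $\int_0^\infty \mathbf{1}_V(B^{ER}_{\C \backslash \D}(s))\, ds = \infty$ a.s.  Because $V$ is rotationally symmetric, this occupation time depends only on the radial part $R_s$, which is reflected Brownian motion on $[1,\infty)$ and hence recurrent.  Defining inductively $\tau_0 = 0$, $\sigma_n = \inf\set{t > \tau_n : R_t = 2}$, and $\tau_{n+1} = \inf\set{t > \sigma_n : R_t = 1}$, all of these stopping times are a.s. finite by recurrence, and by the strong Markov property the occupation times $T_n$ of $[3/2,2]$ during $[\sigma_n,\tau_{n+1}]$ are i.i.d. with $\ev{}{T_1} > 0$.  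The strong law of large numbers then gives $\sum_n T_n = \infty$ a.s., which is what is required.

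The main obstacle is the clean identification of $B^{ER}_{\C \backslash \D}$ stopped at $\sigma_r$ with $B^{ER}_{A_{1,r}}$ stopped at $\tau$; the two processes are constructed separately in Sections~\ref{chapERBMCD} and~\ref{chapERBMCA}, but the identification is a bookkeeping matter about semigroups rather than a genuine analytic difficulty.  The argument for \eqref{allpathproofERBM} is robust to the specific choice of $V$, provided $V$ is a compact annulus inside $\C \backslash \D$ (so that $\abs{f'}$ is bounded below) and the radial part accumulates positive occupation in $V$ on each recurrent excursion between levels $1$ and $2$.
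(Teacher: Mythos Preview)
Your proof is correct and, for \eqref{nolimitproofERBM}, essentially identical to the paper's: both localize to $A_{1,r}$, invoke Lemma~\ref{allpathlemma}, and then let $r\to\infty$ using that ERBM paths are bounded on compact time intervals. Your identification of $B^{ER}_{\C\backslash\D}$ stopped at $\sigma_r$ with $B^{ER}_{A_{1,r}}$ stopped at $\tau$ is in fact trivial, since in this paper ERBM in $A_{1,r}$ is \emph{defined} as ERBM in $\C\backslash\D$ killed on $\partial B_r(0)$ (see the end of Section~\ref{chapERBMCA}); no semigroup matching is needed.

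For \eqref{allpathproofERBM} you take a slightly different route. The paper works on the unbounded set $\{\abs{z}>2\}$, noting that $\abs{f'}$ is bounded below there (this uses that $f'$ is nonvanishing and that $f'(z)\to a\neq 0$ as $z\to\infty$ since $f$ fixes $\infty$), and that the radial part, being reflecting Brownian motion on $[1,\infty)$, spends infinite time above level $2$. Your version uses a compact annulus instead, which makes the lower bound on $\abs{f'}$ immediate from continuity, at the cost of a slightly longer occupation-time argument via excursions and the strong law. Both arguments are sound; the paper's is shorter but leans on a small asymptotic fact about $f'$, while yours is more self-contained.
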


\begin{proof}
For fixed $t$, let $W$ be the set of $\omega$ in the underlying probability space such that the left hand side of \eqref{nolimitproofERBM} is infinite and for each $n \in \N$, let $W_n$ be the set of $\omega$ such that $B^{ER}_{D}$ has not left $A_{1,n}$ by time $t$.  By Lemma \ref{allpathlemma}, the measure of $W_n \cap W$ is zero.  It follows that for almost every $\omega \in W$, the path of $B^{ER}_{\C\backslash \D}$ up to time $t$ is unbounded.  It is easy to see from the definition of ERBM that this implies that $W$ has measure $0$.

It is easy to see that $\abs{f'}$ is bounded below on the set
\[\set{z \in \C: \abs{z}>2}.\]
Since the set of $t$ such that $\abs{B^{ER}_{\C\backslash\D}\paren{t}}>2$ has infinite measure, \eqref{allpathproofERBM} follows.
\end{proof}

Proposition \ref{allpaththeoremERBM} clarifies the implicit use of \eqref{nolimitproofERBM} and its analogs.  The reader can verify that the proof of Proposition \ref{allpaththeoremERBM} does not rely on any of the results that used \eqref{nolimitproofERBM}.  For instance, in the proof of Proposition \ref{ERBMGreenCI} we used the fact that a.s. \eqref{nolimitproofERBM} holds for any finitely connected region $D$. Using the definition of ERBM, it is easy to see that to prove this, it is enough to prove it for any domain conformally equivalent to $\C\backslash \D$.  Notice, however, that the only property of $\C\backslash \D$ we used in the proof of Lemma \ref{allpathlemma} was that $G_{A_{1,r}}^{ER}\paren{z,\cdot}$ is bounded away from $z$ and integrable in a neighborhood of $z$.  Once we know Proposition \ref{allpaththeoremERBM} holds, the proof of Proposition \ref{ERBMGreenCI} works for $D=\C\backslash \D$ and we can use Proposition \ref{ERBMGreenCI} and Proposition  \ref{ERBMGreenDecomp} to conclude that $G_{D}^{ER}\paren{z,\cdot}$ is bounded away from $z$ and integrable in a neighborhood of $z$ for any region $D$ conformally equivalent to $\C \backslash \D$.  This allows us to prove an analog of Proposition \ref{allpaththeoremERBM} for any conformal annulus, which is what we needed.

\subsection{Conformal Mapping Using $G_D^{ER}(z,\cdot)$}
Analogous to the connection between $H_D^{ER}\paren{\cdot,w}$ and conformal maps, there is a connection between $G_D^{ER}\paren{z,\cdot}$ and conformal maps into certain classes of finitely connected domains.

Recall that a domain is a bilateral standard domain if it is an annulus of outer radius $1$ with a finite number of concentric arcs removed.  It is a classical theorem of complex analysis \cite{MR510197} that any finitely connected domain is conformally equivalent to a bilateral standard domain.  The conformal map giving this equivalence is closely related to $G_D^{ER}\paren{A_i,\cdot}$, where $\partial A_i$ is the boundary component mapped to the inner circle of the annulus.

\begin{theorem}\label{bilateralGreenERBM}
Let $D \in \mathcal{Y}_n$ and suppose that there is no Jordan curve in $D$ with $A_0$ in its interior.  If $u=\pi G_D^{ER}\paren{A_i,\cdot}$, then there is a bilateral standard domain $D'$ and a conformal map $f=e^{-\paren{u+iv}}$ from $D$ onto $D'$.  Furthermore, if $g$ is another conformal map from $D$ onto a bilateral standard domain $D''$ and $g$ maps $\partial A_i$ onto the inner radius of $D''$ and $\partial A_0$ onto the outer radius of $D''$, then $f$ and $g$ differ by a rotation.
\end{theorem}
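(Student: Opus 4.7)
The plan is to construct $f=e^{-(u+iv)}$ by verifying that $u=\pi G_D^{ER}\paren{A_i,\cdot}$ admits a harmonic conjugate $v$ whose periods all lie in $2\pi\Z$, so that $e^{iv}$ (and hence $f$) is single-valued; then to check $f$ is a conformal bijection onto a bilateral standard domain; and finally to deduce uniqueness via a reflection argument.

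First I would observe that $u$ is harmonic on all of $D$ (no interior singularity) and, by Proposition \ref{ERBMGreenUnique} together with ER-harmonicity of $G_D^{ER}\paren{A_i,\cdot}$ on $D-\set{A_i}$, extends continuously to $E$ and is constant on each boundary component: $u\equiv 0$ on $\partial A_0$, $u\equiv M:=\pi G_D^{ER}\paren{A_i,A_i}$ on $\partial A_i$, and $u\equiv \pi G_D^{ER}\paren{A_i,A_j}$ on $\partial A_j$ for each remaining $j$. The period of the conjugate differential $*du$ around a loop $\eta_j$ surrounding $A_j$ equals $\int_{\eta_j}\frac{d}{dn}u\,\abs{dw}$, which by \eqref{GreenERBMNormalInt} scaled by $\pi$ equals $0$ for $j\neq i$ and $-2\pi$ for $j=i$. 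Since every period is an integer multiple of $2\pi$, the conjugate $v$ is well-defined modulo $2\pi$, so $e^{iv}$ is single-valued and $f:=e^{-(u+iv)}$ is a well-defined non-vanishing holomorphic function on $D$.

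The central step is to verify that $f$ is a conformal bijection onto a bilateral standard domain. Because $u$ is constant on each $\partial A_j$, the modulus $\abs{f}=e^{-u}$ is constant on each boundary component, so the image of $\partial A_j$ lies on a concentric circle of the appropriate radius, with $\partial A_0\mapsto\set{\abs{w}=1}$ and $\partial A_i\mapsto\set{\abs{w}=e^{-M}}$. Since the interior periods sum to zero, the period of $v$ around $\partial A_0$ is $2\pi$, so $\arg f$ winds once (clockwise) around the unit circle, and similarly $f$ covers the inner circle $\set{\abs{w}=e^{-M}}$ exactly once; the zero periods around the other $\partial A_j$ force the image of each such boundary to be a proper circular arc on the appropriate concentric circle. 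A standard argument principle count on these boundary loops, combined with the maximum principle applied to $\log\abs{f}$, will then yield injectivity and identify the image as a bilateral standard domain $D'$.

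For uniqueness, suppose $g:D\to D''$ is another such conformal map sending $\partial A_i$ to the inner circle and $\partial A_0$ to the unit circle. Then $h:=g\circ f^{-1}$ is a conformal map between bilateral standard domains preserving both the outer unit circle and the inner circle. The ratio of inner to outer radius is a conformal invariant (the modulus of the surrounding annulus), so the two inner radii coincide, and successive applications of the Schwarz reflection principle across the outer and inner circles extend $h$ to a M\"obius automorphism of $\C\setminus\set{0}$ fixing both circles setwise; the only such maps are rotations $w\mapsto e^{i\alpha}w$, so $g=e^{i\alpha}f$. The main obstacle will be the conformal mapping step, specifically producing a careful argument-principle count on the boundary of a general $D\in\mathcal{Y}_n$ with no smoothness hypothesis, while ruling out the possibility that $f$ multi-covers its image or that an interior $\partial A_j$ is mapped onto a full circle rather than an arc. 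The hypothesis that no Jordan curve in $D$ surrounds $A_0$ is essential here, since it rules out the topological configurations that would force $f$ to wrap the outer circle more than once.
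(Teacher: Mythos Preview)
Your proposal is correct in outline but takes a route essentially opposite to the paper's. You work constructively: start from $u=\pi G_D^{ER}(A_i,\cdot)$, use \eqref{GreenERBMNormalInt} to check that all conjugate periods lie in $2\pi\Z$, build $f=e^{-(u+iv)}$, and then attack injectivity and the shape of the image via an argument-principle count; uniqueness you handle by Schwarz reflection on $g\circ f^{-1}$. The paper instead cites both existence of the canonical map and its uniqueness up to rotation as classical, and devotes the proof to the \emph{identification} step: given any conformal $g$ onto a bilateral standard domain with the prescribed boundary correspondence, it shows $-\log|g|=\pi G_D^{ER}(A_i,\cdot)$. The argument is that $-\log|g|$ is harmonic, vanishes on $\partial A_0$, and---because $\log g$ is single-valued in a collar of each $\partial A_j$ with $j\neq i$---has zero flux through loops around those components, hence is ER-harmonic on $D\setminus\{A_i\}$ by Lemma~\ref{ERHarmonicCondition}; Proposition~\ref{ERBMGreenUnique} then forces it to be a multiple of $G_D^{ER}(A_i,\cdot)$, and \eqref{GreenERBMNormalInt} fixes the multiple as $\pi$.

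What each buys: the paper's route is short and completely sidesteps the injectivity/covering-degree verification you correctly flag as the main obstacle, at the price of importing the classical canonical-domain theorem as a black box. Your route is self-contained and makes the role of the ER-Green's function explicit in the construction itself, but the argument-principle step on a general $D\in\mathcal{Y}_n$ (no smoothness on $\partial A_j$) does require real work---one typically passes first to a conformally equivalent domain with analytic boundary and then runs the winding-number count there. Your Schwarz-reflection uniqueness argument is fine and is a standard alternative to the paper's characterization-based uniqueness.
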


\begin{proof}
The existence of a conformal map from $D$ onto a bilateral standard domain and the uniqueness of the map up to rotation are classical results of complex analysis.  It is also possible to explicitly construct the map $f$ using $G_D^{ER}\paren{A_i,\cdot}$, see \cite{DreThesis} for details.

Suppose $g=e^{-\paren{u+iv}}$ is a conformal map from $D$ onto a bilateral standard domain $D''$ and $g$ maps $\partial A_i$ onto the inner radius of $D''$ and $\partial A_0$ onto the outer radius of $D''$.  To complete the proof, we must show that $u\paren{z}=\pi G_D^{ER}\paren{A_i,z}$.  Observe that $-\log\paren{g}$ is a locally holomorphic, multi-valued function well-defined up to an integer multiple of $2\pi$.  As a result, $u$ is a well-defined harmonic function.  Let $\eta_j$ for $j\neq i$ be a Jordan curve surrounding $A_j$ whose interior contains no point of $A_k$ for $j\neq k$.  On the interior of $\eta_j$, $u+iv$ is a well-defined holomorphic map and as a result,
\[\int_{\eta_j'} \frac{d}{dn} u\paren{z} \abs{dz}=0\]
for any Jordan curve $\eta_j'$ surrounding $A_j$ and in the interior of $\eta_j$.  We conclude by Lemma \ref{ERHarmonicCondition} that $u$ is ER-harmonic on $D \backslash {A_i}$ and since it is equal to zero on $\partial A_0$, it must be a multiple of $G_D^{ER}\paren{A_i,\cdot}$.  Using \eqref{GreenERBMNormalInt}, it is easy to see that the only multiple that will work is $\pi$.
\end{proof}

Recall that a domain is a standard domain if it is the unit disk with a finite number of concentric arcs removed. There is a connection between $G_D^{ER}\paren{z,\cdot}$ for $z\in D$ and conformal maps from $D$ onto standard domains.

\begin{theorem}
Let $D \in \mathcal{Y}_n$ and suppose that there is no Jordan curve in $D$ with $A_0$ in its interior.  If $z \in D$ and $u=\pi G_D^{ER}\paren{z,\cdot}$, then there is a standard domain $D'$ and a conformal map $f=e^{-\paren{u+iv}}$ from $D$ onto $D'$.  Furthermore, if $g$ is another conformal map from $D$ onto a bilateral standard domain that sends $z$ to $0$, then $f$ and $g$ differ by a rotation.
\end{theorem}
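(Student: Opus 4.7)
The plan is to mimic the proof of Theorem \ref{bilateralGreenERBM}, adapting it to the case where the singularity is at an interior point $z\in D$ rather than at a boundary component. The existence of a conformal map $f:D\to D'$ onto a standard domain with $f\paren{z}=0$ is a classical theorem of complex analysis; the hypothesis that no Jordan curve in $D$ surrounds $A_0$ forces $f$ to send $\partial A_0$ to the unit circle, while each $\partial A_i$ for $i\geq 1$ is mapped to a concentric arc of some radius $r_i<1$. Writing $f=e^{-\paren{u+iv}}$, the function $u=-\log\abs{f}$ is well-defined and harmonic on $D\setminus\set{z}$, whereas $v$ is only a locally holomorphic multi-valued function. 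The goal is to show $u=\pi G_D^{ER}\paren{z,\cdot}$, after which Proposition \ref{ERBMGreenUnique} completes the identification.

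To make this identification, I check the characterizing properties from Proposition \ref{ERBMGreenUnique}. The function $u$ extends continuously to $E$: on $\partial A_0$ we have $\abs{f}=1$ so $u\equiv 0$, and on each $\partial A_i$ we have $\abs{f}=r_i$ so $u\equiv -\log r_i$. Since $f$ is conformal at $z$, $f'\paren{z}\neq 0$ and $f\paren{w}=f'\paren{z}\paren{w-z}+O\paren{\abs{w-z}^2}$, so $u\paren{w}=-\log\abs{w-z}+O\paren{1}$ as $w\to z$, matching the required singularity of $\pi G_D^{ER}\paren{z,\cdot}$.

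The main technical step is verifying that $u$ is ER-harmonic on $D\setminus \set{z}$. By Lemma \ref{ERHarmonicCondition}, it suffices to find, for each $1\leq j\leq n$, a Jordan curve $\eta_j'$ surrounding $A_j$ such that $\int_{\eta_j'}\frac{d}{dn}u\paren{w}\abs{dw}=0$. Choose $\eta_j'$ small enough that it surrounds only $A_j$ and has $z$ in its exterior. The winding number of $f$ around $0$ along $\eta_j'$ equals the number of zeros of $f$ inside $\eta_j'$, which is $0$. Hence $-\log f=u+iv$ is a single-valued holomorphic function on the annular region bounded by $\partial A_j$ and $\eta_j'$, and by the Cauchy-Riemann equations the normal derivative of $u$ along $\eta_j'$ integrates to the total change of the single-valued function $v$ around $\eta_j'$, namely $0$. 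Applying Proposition \ref{ERBMGreenUnique} now gives $u=\pi G_D^{ER}\paren{z,\cdot}$. This topological step about winding numbers is the crux of the argument; everything else is a direct translation of the bilateral case.

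For the uniqueness clause, suppose $g_1$ and $g_2$ are two conformal maps from $D$ onto standard domains sending $z$ to $0$. Since each $g_i$ is conformal, $g_i'\paren{z}\neq 0$, so $g_1/g_2$ has a removable singularity at $z$ with nonzero value and extends to a holomorphic, nonvanishing function on all of $D$. The first part of the argument gives $-\log\abs{g_1}=-\log\abs{g_2}=\pi G_D^{ER}\paren{z,\cdot}$, hence $\abs{g_1/g_2}\equiv 1$ on $D$; a holomorphic function of constant modulus is constant, so $g_1=\lambda g_2$ for some $\lambda$ with $\abs{\lambda}=1$, i.e., $f$ and $g$ differ by a rotation.
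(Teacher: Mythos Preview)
Your proof is correct and follows exactly the approach the paper intends: the paper's own proof simply reads ``The proof is similar to that of Theorem \ref{bilateralGreenERBM} and is omitted,'' and you have faithfully carried out that adaptation, replacing the boundary singularity at $A_i$ with the interior singularity at $z$ and invoking the first clause of Proposition \ref{ERBMGreenUnique} in place of the second. The winding-number justification that $\log f$ is single-valued near each $A_j$ is precisely the analogue of the paper's observation in the bilateral case that $u+iv$ is single-valued near $A_j$ for $j\neq i$.
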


\begin{proof}
The proof is similar to that of Theorem \ref{bilateralGreenERBM} and is omitted.
\end{proof}

\section{An Application to a Loewner Equation} \label{sectLE}

\subsection{The Complex Poisson Kernel for ERBM}

We want to prove the analog of Proposition \ref{HalfRMHullCon} for finitely connected domains.  We start with some preliminaries.

\begin{definition}
Let $D\in \mathcal{Y}^*$ and for each $x \in \R \cap \partial D$ define
\begin{equation*}
\displaystyle H_D^{ER}\paren{\infty,x}= \lim_{y \rightarrow \infty} y H_D^{ER}\paren{x+iy,x}.
\end{equation*} 
\end{definition}

When we write $H_D^{ER}\paren{\infty,x}$, it is assumed that $x \in \partial D \cap \R$ even if it is not explicitly stated.  $H_D^{ER}\paren{\infty,x}$ can be interpreted as the normal derivative of $H_D^{ER}\paren{\cdot,x}$ at $\infty$.

Let $R$ be such that $z \in D$ for all $z\in \half$ with $\abs{z}>R$.  Using the strong Markov property for ERBM and \eqref{PKHalfRMHalfDisk}, we have that if $\abs{z}>2R$, then
\begin{align}
H_D^{ER}\paren{z,x}&=R \int_0^{\pi} H_{H^R}\paren{z,Re^{i\theta}} H_D^{ER}\paren{Re^{i\theta},x}d\theta \nonumber \\
&=2 R H_{\half}\paren{z,0} \bracket{\int_0^{\pi} H_D^{ER}\paren{Re^{i\theta},x} \sin \theta~ d\theta}\bracket{1+O\paren{\abs{z^{-1}}}} \label{PKInfxInvar}.
\end{align}
Combining \eqref{PKInfxInvar} with \eqref{PKhalf}, we get the following lemma.

\begin{lemma}\label{PKInfinLem}
Let $D \in \mathcal{Y}^*$ and $R$ be such that $z \in D$ for all $z \in \half$ with $\abs{z}>R$. Then 
\begin{equation*}
H_D^{ER}\paren{\infty,0}=\frac{2R}{\pi} \int_0^{\pi} H_D^{ER}\paren{Re^{i\theta},0} \sin \theta ~d\theta.
\end{equation*}
\end{lemma}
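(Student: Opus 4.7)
The plan is to exploit the estimate in \eqref{PKInfxInvar} directly, specialized to the configuration defining $H_D^{ER}(\infty,0)$. First I would set $x=0$ and $z=iy$ for $y>2R$ in \eqref{PKInfxInvar}, which gives
\[
H_D^{ER}(iy,0) = 2R\, H_{\half}(iy,0)\left[\int_0^{\pi} H_D^{ER}(Re^{i\theta},0)\sin\theta\,d\theta\right]\bracket{1+O(y^{-1})}.
\]
Next I would use \eqref{PKhalf} to compute $H_{\half}(iy,0) = \frac{1}{\pi y}$. Multiplying both sides by $y$ then yields
\[
y H_D^{ER}(iy,0) = \frac{2R}{\pi}\left[\int_0^{\pi} H_D^{ER}(Re^{i\theta},0)\sin\theta\,d\theta\right]\bracket{1+O(y^{-1})}.
\]
The claim then follows by letting $y\to\infty$, provided the right-hand integral is a finite constant (so that the limit on the left even exists, giving a meaningful value to $H_D^{ER}(\infty,0)$).

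The only substantive point to verify is therefore the finiteness of $\int_0^{\pi} H_D^{ER}(Re^{i\theta},0)\sin\theta\,d\theta$. This is immediate from Lemma \ref{uniPKBound}: since $|Re^{i\theta}| = R > 0$, there is a constant $c_R$ with $H_D^{ER}(Re^{i\theta},0) \leq c_R$ uniformly in $\theta$, so the integral is bounded by $2c_R$. In particular, the limit defining $H_D^{ER}(\infty,0)$ exists, and the identity in the statement holds.

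There is essentially no obstacle here beyond bookkeeping, since the heavy lifting has already been done: the derivation of \eqref{PKInfxInvar} uses the strong Markov property for ERBM together with the estimate \eqref{PKHalfRMHalfDisk}, and these inputs were already assembled in the paragraph preceding the lemma. The lemma is really a clean restatement of the observation that the leading-order behavior of $H_D^{ER}(\cdot,0)$ at infinity is proportional to $H_{\half}(\cdot,0)$, with the proportionality constant given by the ``circular average'' $\frac{2R}{\pi}\int_0^{\pi} H_D^{ER}(Re^{i\theta},0)\sin\theta\,d\theta$. As a sanity check, the independence of this quantity from $R$ (for all sufficiently large $R$) is automatic from the existence of the limit $H_D^{ER}(\infty,0)$.
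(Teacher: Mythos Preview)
Your proposal is correct and follows exactly the approach the paper takes: the paper's entire proof is the sentence ``Combining \eqref{PKInfxInvar} with \eqref{PKhalf}, we get the following lemma,'' and you have simply spelled this out. One small remark: your appeal to Lemma~\ref{uniPKBound} for the finiteness of the integral is a forward reference in the paper's ordering, and is in any case unnecessary---$H_D^{ER}(\cdot,0)$ is harmonic on $D$, hence continuous on the compact arc $\{Re^{i\theta}:0\le\theta\le\pi\}\subset \overline D$, so the integrand is bounded without invoking any later result.
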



An analog of Proposition \ref{ERPKCI} holds for $H_D^{ER}\paren{\infty,x}$.

\begin{proposition}\label{PKatInfinCI}
Let $D \in \mathcal{Y}^*$ and suppose that $f$ is a conformal map such that $f\paren{D} \in \mathcal{Y}^*$ and $f\paren{A_i}$ is bounded for $1 \leq i \leq n$.  If 
\[ f\paren{z}=a_0+a_1 z + O\paren{\abs{z}^{-1}},~~~z \rightarrow \infty,\]
 then
\[H_D^{ER}\paren{\infty,x}= \frac{\abs{f'\paren{x}}}{a} H_{f\paren{D}}^{ER}\paren{\infty,f\paren{x}}.\]
\end{proposition}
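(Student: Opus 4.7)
The plan is to combine conformal invariance of the Poisson kernel (Proposition \ref{ERPKCI}) with a direction-independent strengthening of the limit that defines $H_{D'}^{ER}(\infty,\cdot)$. First, since $D,f(D)\in\mathcal{Y}^*$, the map $f$ sends $\R$ (outside a compact set) into $\R$ and preserves $\half$, so by Schwarz reflection the coefficients $a_0,a_1$ in the expansion are real, and orientation preservation forces $a_1>0$; in particular $|a_1|=a_1$, and the expansion yields $\imag(f(x+iy))=a_1 y+O(y^{-1})$ as $y\to\infty$, while $f(x+iy)\to\infty$ in $\half$.

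The auxiliary claim I would prove is that for $D'\in\mathcal{Y}^*$ with locally analytic boundary at $w\in\partial D'\cap\R$,
\[\lim_{\substack{z\to\infty\\ z\in\half}}\imag(z)\,H_{D'}^{ER}(z,w)=H_{D'}^{ER}(\infty,w);\]
that is, the limit defining $H_{D'}^{ER}(\infty,w)$ is independent of the direction in which $z\to\infty$. I would prove this by repeating the proof of Lemma \ref{PKInfinLem}, centering the intermediate semicircle at $w$ rather than at $0$. Choose $R$ so that $w+(\half\setminus\overline{B_R^+(0)})\subset D'$. Applying the strong Markov property for ERBM stopped at $\partial B_R^+(w)$ together with the translated version of \eqref{PKHalfRMHalfDisk} yields
\[H_{D'}^{ER}(z,w)=2R\,H_\half(z,w)\left[\int_0^\pi H_{D'}^{ER}(w+Re^{i\theta},w)\sin\theta\,d\theta\right]\bigl[1+O(|z-w|^{-1})\bigr]\]
for $|z-w|>2R$. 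Since $\imag(z)H_\half(z,w)\to\pi^{-1}$ as $z\to\infty$ in $\half$ by \eqref{PKhalf}, the claim follows from the translated form of Lemma \ref{PKInfinLem}.

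With this in hand, the proof is immediate. By Proposition \ref{ERPKCI}, $H_D^{ER}(x+iy,x)=|f'(x)|\,H_{f(D)}^{ER}(f(x+iy),f(x))$. Multiplying by $y$ and inserting $1=\imag(f(x+iy))/\imag(f(x+iy))$,
\[y\,H_D^{ER}(x+iy,x)=\frac{|f'(x)|}{a_1}\cdot\frac{a_1 y}{\imag(f(x+iy))}\cdot\imag(f(x+iy))\,H_{f(D)}^{ER}(f(x+iy),f(x)).\]
I then let $y\to\infty$. Since $f(x+iy)\to\infty$ in $\half$, the auxiliary claim applied with $D'=f(D)$, $w=f(x)$, along the sequence $z=f(x+iy)$ says the last factor tends to $H_{f(D)}^{ER}(\infty,f(x))$; the middle factor tends to $1$ by the expansion of $\imag(f(x+iy))$; and the left side tends to $H_D^{ER}(\infty,x)$ by definition. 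The claimed identity follows. The only substantive step is the direction-independent limit, and since that is essentially built into the proof of Lemma \ref{PKInfinLem} via the error term in \eqref{PKInfxInvar}, there is no real obstacle beyond checking that the argument carries over with the semicircle translated from $0$ to $w$.
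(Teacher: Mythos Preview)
Your approach is essentially the paper's: apply Proposition~\ref{ERPKCI} and then use the asymptotic \eqref{PKInfxInvar} (equivalently, Lemma~\ref{PKInfinLem}) to evaluate the limit along the image path. The paper does this in one line by writing $f(iy)=iay+O(1)$ and invoking \eqref{PKInfxInvar} with \eqref{PKhalf}; you do the same via your ``auxiliary claim'' and the factor $\imag(f(x+iy))$.

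There is, however, a genuine slip in the auxiliary claim as you state it. It is \emph{not} true that $\imag(z)\,H_\half(z,w)\to\pi^{-1}$ for arbitrary $z\to\infty$ in $\half$: from \eqref{PKhalf}, $\imag(z)\,H_\half(z,w)=\pi^{-1}(\imag z)^2/|z-w|^2$, which along the ray $z=re^{i\alpha}$ tends to $\pi^{-1}\sin^2\alpha$, not $\pi^{-1}$ (and along $z=x+i$ with $x\to\infty$ it tends to $0$). Consequently your displayed identity for $H_{D'}^{ER}(z,w)$ does \emph{not} yield a direction-independent limit for $\imag(z)\,H_{D'}^{ER}(z,w)$. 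What your computation actually proves, and what suffices, is the weaker statement that the limit equals $H_{D'}^{ER}(\infty,w)$ along any sequence $z_n\to\infty$ with $\imag z_n\to\infty$ and $\mathrm{Re}\,z_n$ bounded (more generally, $\mathrm{Re}\,z_n/\imag z_n\to 0$). Since $f(x+iy)=(a_0+a_1x)+ia_1y+O(y^{-1})$ has bounded real part, your application of the claim to $z=f(x+iy)$ is valid once the claim is downgraded in this way; the rest of the argument then goes through and matches the paper's.
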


\begin{proof}
Using Proposition \ref{ERPKCI}, we have
\begin{align*}
H_D^{ER}\paren{\infty,x}&=\lim_{y \rightarrow \infty} y H_D^{ER}\paren{iy,x}\\
&=\lim_{y \rightarrow \infty} y H_{f\paren{D}}^{ER}\paren{f\paren{iy},f\paren{x}}\abs{f'\paren{x}}\\
&=\lim_{y \rightarrow \infty} a y H_{f\paren{D}}^{ER}\paren{i a y+O\paren{1},f\paren{x}} \frac{\abs{f'\paren{x}}}{a}\\
&= \frac{\abs{f'\paren{x}}}{a} H_{f\paren{D}}^{ER}\paren{\infty,f\paren{x}}.
\end{align*}
The last equality follows from \eqref{PKInfxInvar} combined with \eqref{PKhalf}.
\end{proof}

The function introduced in the next proposition is a key component in the proof of the analog of Proposition \ref{HalfRMHullCon} for finitely connected domains.
  
\begin{proposition} \label{complexPK}
Let $D \in \mathcal{Y}$ be such that $A_0=\C\backslash \half$.  Then there is a unique function $\mathcal{H}_D^{ER}: D \times \R \rightarrow \C$ satisfying the following.
\begin{enumerate}
\item  For each $x \in \R$, $z \mapsto \mathcal{H}_D^{ER}\paren{z,x}$ is a conformal map onto a chordal standard domain.
\item  The imaginary part of $\mathcal{H}_D^{ER}\paren{z,x}$ is $\pi H_D^{ER}\paren{z,x}$.
\item  For each $x \in \R$,
\[\mathcal{H}_D^{ER}\paren{z,x}=\frac{-\pi H_{D}^{ER}\paren{\infty,x}}{z-x}+O\paren{\abs{z}^{-2}},~~~z \rightarrow \infty.\]
\item  For each $x \in \R$, there is a constant $r\paren{D,x}>0$ such that
\[\mathcal{H}_D^{ER}\paren{z,x}=\frac{-1}{z-x}+r\paren{D,x}+O\paren{\abs{z-x}},~~~z \rightarrow x.\]
\end{enumerate}
\end{proposition}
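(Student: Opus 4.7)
The plan is to construct $\mathcal{H}_D^{ER}(\cdot,x)$ as a suitably normalized conformal map onto a chordal standard domain sending $x \mapsto \infty$, and then verify the four conditions. For existence, I would first invoke (the natural extension to the case $A_0 = \C \setminus \half$ of) Theorem \ref{PKConformalMapTheorem} applied at the point $w = x$, which produces a conformal map $f : D \to D'$ with $D' \in \mathcal{CY}_n$, $f(x) = \infty$, and $\imag f = H_D^{ER}(\cdot, x)$. Since any chordal standard domain is preserved under real affine transformations $w \mapsto \pi w + b$, I would set $\mathcal{H}_D^{ER}(z, x) := \pi f(z) + b$ for a real constant $b$ to be chosen below; this automatically makes (1) and (2) hold.

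For (4), note that near $x$ the domain $D$ agrees with $\half$, so \eqref{ERBMPKD} together with \eqref{twoDomainPK} give $\pi H_D^{ER}(z, x) = \pi H_\half(z, x) + O(1) = \imag\bigl[-1/(z-x)\bigr] + O(1)$ as $z \to x$. Since $\imag \mathcal{H}_D^{ER}(z, x) = \pi H_D^{ER}(z, x)$, the function $g(z) := \mathcal{H}_D^{ER}(z, x) + 1/(z-x)$ has bounded imaginary part near $x$, so $g$ (a priori meromorphic in a neighborhood of $x$) cannot have a pole at $x$ and extends holomorphically across $x$ (after Schwarz reflection, since $g$ is real on $\R$ near $x$ because $\imag \mathcal{H}_D^{ER}(z', x) \to 0$ for $z' \to x' \ne x$ on $\R$). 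Reading off the Taylor expansion of $g$ at $x$ yields (4) with $r(D, x) = g(x)$.

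To fix $b$ and verify (3), observe that $f$ likewise extends continuously to all of $\R \setminus \{x\}$ and to the boundary point at infinity by Schwarz reflection, and $f(\infty) \in \R$; choose $b := -\pi f(\infty)$ so that $\mathcal{H}_D^{ER}(z, x) \to 0$ as $z \to \infty$. Schwarz reflection across $\R$ near infinity then gives a Laurent expansion $\mathcal{H}_D^{ER}(z, x) = \sum_{k \geq 1} a_k z^{-k}$ for large $|z|$. The proof of Lemma \ref{PKInfinLem} extends verbatim to arbitrary $x \in \R$, so \eqref{PKInfxInvar} yields $\pi H_D^{ER}(z, x) = \imag[-\pi H_D^{ER}(\infty, x)/z] + O(|z|^{-2})$; matching this against $\imag(a_1/z) + O(|z|^{-2})$ forces $a_1 = -\pi H_D^{ER}(\infty, x)$ (using that the only $\alpha \in \C$ with $\imag(\alpha/z) = O(|z|^{-2})$ as $z \to \infty$ in $\half$ is $\alpha = 0$, from the expansion $\alpha/z = \alpha\bar z/|z|^2$). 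Combined with $1/(z-x) = 1/z + O(|z|^{-2})$, this gives (3).

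Uniqueness is immediate: if $\mathcal{H}_1, \mathcal{H}_2$ both satisfy (1)--(4), then $\mathcal{H}_1 - \mathcal{H}_2$ is holomorphic on $D$ (the poles at $x$ cancel by (4)), has identically zero imaginary part by (2), hence is a real constant, which must vanish since both functions tend to zero at infinity by (3). The main obstacle is the careful bookkeeping ensuring that the two a priori independent normalizations forced by (3) and (4), combined with the single multiplicative constraint $\imag \mathcal{H}_D^{ER} = \pi H_D^{ER}(\cdot, x)$, are mutually consistent; this consistency ultimately reduces to the matching leading asymptotics of $H_D^{ER}(\cdot, x)$ near $x$ (where it agrees with $H_\half(\cdot, x)$) and near $\infty$ (where it equals $\pi H_\half(\cdot, 0) \cdot H_D^{ER}(\infty, x)$ to leading order).
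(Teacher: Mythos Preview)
Your strategy matches the paper's: obtain existence from Theorem \ref{PKConformalMapTheorem}, fix the real additive constant by the condition at infinity, and then read off (3) and (4) from the known behaviour of $H_D^{ER}(\cdot,x)$ combined with Schwarz reflection. Uniqueness is handled identically in both.

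There is one small gap in your argument for (4). After reflecting $g(z)=\mathcal{H}_D^{ER}(z,x)+1/(z-x)$ across $\R\setminus\{x\}$ you obtain a holomorphic function on a \emph{punctured} disk, not an a priori meromorphic one; ruling out a pole is not sufficient, since you must also exclude an essential singularity. This is immediate from Casorati--Weierstrass (a dense image would force $\imag g$ to be unbounded), so the repair is one line. The paper takes a different route here: it proves the sharper estimate $\imag g=O(\imag z)$ via \eqref{BubbleAt0} and the linear vanishing of each $h_i$ on $\R$, then invokes Lemma \ref{HarDerBd} to bound $\lvert g'\rvert$ and hence $\lvert g\rvert$ near $x$, after which removability is automatic. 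Your argument is shorter once patched; the paper's gives extra quantitative information (the leading coefficient is the Brownian bubble measure $\Gamma(D;x)$). For (3) the paper similarly works through $\imag f=O(\lvert z\rvert^{-2})$, Lemma \ref{HarDerBd}, and integration from infinity, whereas your Laurent-coefficient matching after reflection is a cleaner way to reach the same conclusion.
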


\begin{proof}
Theorem \ref{PKConformalMapTheorem} implies that for each $x \in \R$ there is a conformal map $\mathcal{H}_D^{ER}\paren{\cdot,x}$ with imaginary part $\pi H_D^{ER}\paren{z,x}$ from $D$ onto a chordal standard domain.  Since a conformal map is uniquely determined up to a real translation by its imaginary part, the uniqueness of $\mathcal{H}_D^{ER}\paren{\cdot,x}$ will follow once we prove its asymptotic at $\infty$. For the remainder of the proof, we will assume $x=0$.  The $x \neq 0$ case can be handled by considering $\mathcal{H}_{D-x}^{ER}\paren{z-x,0}$.

Let $R>0$ be such that $z \in D$ for all $z \in \half$ with $\abs{z}>R$.  For any $z \in D$ with $\abs{z}>2R$, using \eqref{PKHalfRMHalfDisk} and Lemma \ref{PKInfinLem}, we see
\begin{align*}
H_D^{ER}\paren{z,0} &= \frac{\imag\bracket{z}}{\abs{z}^2}\bracket{\frac{2R}{\pi}\int_0^{\pi} H_D^{ER}\paren{Re^{i\theta},0}\sin \theta~d\theta}\bracket{1+O\paren{\frac{R}{\abs{z}}}} \\
 &= \frac{H_D^{ER}\paren{\infty,0}\imag\bracket{z}}{\abs{z}^2}\bracket{1+O\paren{\frac{R}{\abs{z}}}}.
\end{align*}
As a result, if we let 
\[f\paren{z}=\mathcal{H}_D^{ER}\paren{z,0}+\frac{H_D^{ER}\paren{\infty,0}}{z},\] 
then $\imag \bracket{f\paren{z}}=O\paren{\abs{z}^{-2}}$ as $z \rightarrow \infty$.  Combining this with Lemma \ref{HarDerBd}, we see that $f'\paren{z}=O\paren{\abs{z}^{-3}}$ as $z \rightarrow \infty$ and since $f\paren{\infty}=0$, for $\abs{x+iy}>2R$ we have
\begin{align*}
\abs{f\paren{x+iy}}&=\abs{\int_y^{\infty}f'\paren{x+iy'}~dy'}\\
&\leq \int_y^{\infty}\abs{f'\paren{x+iy'}}~dy' \\
&=O\paren{\abs{x+iy}^{-2}}.
\end{align*}
The third statement of the proposition follows.

Let $f\paren{z}=\mathcal{H}_D^{ER}\paren{z,0}+\frac{1}{z}$ and observe that to prove the fourth statement it is enough to show that $f\paren{z}=f\paren{0}+O\paren{\abs{z}}$ as $z \rightarrow 0$.  This will follow by the Schwarz reflection principle if we can show that $\abs{f'\paren{z}}$ (and hence $f\paren{z}$) is bounded in a neighborhood of $0$. The Cauchy-Riemann equations imply that to show this, it is enough to show that the partial derivatives of $\abs{\imag \bracket{f\paren{z}}}$ are bounded in a neighborhood of $0$.  This will follow from Proposition \ref{HarDerBd} if we can show that $\imag\bracket{f\paren{z}}=O\paren{\imag \bracket{z}}$ as $z \rightarrow 0$.

Observe that \eqref{ERBMPKD} and \eqref{BubbleAt0} imply that
\begin{align*}
 \pi H_D^{ER}\paren{z,0}&= \pi H_D\paren{z,0}+\pi \sum_{i=1}^n h_i\paren{z}H_D^{ER}\paren{A_i,0}\\
&= \pi H_{\half}\paren{z,0}-\imag\bracket{z}\Gamma\paren{D;0}\bracket{1+O\paren{\abs{z}}}+ \pi \sum_{i=1}^n h_i\paren{z}H_D^{ER}\paren{A_i,0},
\end{align*}
as $z \rightarrow 0$.  It follows that
\begin{equation}\label{complexPK1}
 \imag\bracket{f}=-\imag\bracket{z}\Gamma\paren{D;0}\bracket{1+O\paren{\abs{z}}}+ \pi \sum_{i=1}^n h_i\paren{z}H_D^{ER}\paren{A_i,0},
\end{equation}
 as $z \rightarrow 0$.  Since $h_i\paren{z}$ is zero on $\R$, we can extend $h_i\paren{z}$ to a function that is harmonic in a neighborhood of $0$.  As a result, letting $z=x+iy$, we can write 
\[h_i\paren{x+iy}=\frac{\partial h_i}{\partial y}\paren{x}y+O\paren{y^2},\]
as $y \rightarrow 0$.  Substituting this into \eqref{complexPK1} and using the fact that $\frac{\partial h_i}{\partial y}\paren{x}$ is bounded in a neighborhood of $0$, the result follows.

\end{proof}

We call the map $\mathcal{H}_D^{ER}$ the \emph{complex Poisson kernel} for ERBM.

\begin{proposition}\label{phiMap}
Let $D \in \mathcal{Y}$ be such that $A_0=\C \backslash \half$ and denote the image of $D$ under the map $z \mapsto \frac{-1}{z-x}$ by $D_x^*$. Then there is a unique conformal map $\varphi_D$ satisfying
\[\lim_{z \rightarrow \infty} \varphi_D\paren{z}-z=0 \]
that maps $D$ onto a chordal standard domain.  Furthermore, for each $x \in \R$, we have
\begin{equation}\label{phiALT}
\varphi_D\paren{z}=\mathcal{H}_{D_x^*}\paren{\frac{-1}{z-x},0}+x-r\paren{D_x^{*},0}.
\end{equation}
\end{proposition}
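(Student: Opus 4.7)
The plan is to produce $\varphi_D$ explicitly by pushing $D$ forward under a Möbius transformation that sends $\infty$ to a finite boundary point, where Proposition \ref{complexPK} applies, and then reading off the asymptotic at $\infty$ from part (4) of that proposition. Fix $x\in\R$ and let $T_x(z)=-1/(z-x)$. Viewed as a Möbius transformation of $\widehat{\C}$, $T_x$ preserves $\half$ and $\R$, and satisfies $T_x(x)=\infty$ and $T_x(\infty)=0$. Since $\R\subset\partial D$ and each $A_i$ for $i\geq 1$ is a bounded subset of $\half$ disjoint from $\R$ (hence bounded away from $x$), the image $T_x(A_i)$ is a bounded subset of $\half$ bounded away from $0$. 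Consequently $D_x^*=T_x(D)\in\mathcal{Y}$ with $A_0=\C\backslash\half$, and $\partial D_x^*$ coincides with $\R$ in a neighborhood of $0$, so $0$ is a locally analytic boundary point of $D_x^*$ at which Proposition \ref{complexPK} is applicable.

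For existence, Proposition \ref{complexPK} produces a conformal map $\mathcal{H}_{D_x^*}^{ER}(\cdot,0)$ from $D_x^*$ onto a chordal standard domain, with the local expansion
\[
\mathcal{H}_{D_x^*}^{ER}(w,0)=-\frac{1}{w}+r(D_x^*,0)+O(|w|),\qquad w\to 0.
\]
Define $\varphi_D$ by the right-hand side of \eqref{phiALT}. As a composition of a Möbius bijection, the complex Poisson kernel map, and a real translation by the constant $x-r(D_x^*,0)$, $\varphi_D$ is conformal from $D$ onto a chordal standard domain (using that a real translate of a chordal standard domain is again chordal standard). For the normalization at infinity, as $z\to\infty$ one has $T_x(z)\to 0$, and substituting the displayed expansion gives
\[
\mathcal{H}_{D_x^*}^{ER}(T_x(z),0)=(z-x)+r(D_x^*,0)+O(|z|^{-1}),
\]
so $\varphi_D(z)=z+O(|z|^{-1})$ and in particular $\varphi_D(z)-z\to 0$.

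For uniqueness, suppose $\tilde{\varphi}$ is another conformal map from $D$ onto a chordal standard domain with $\tilde{\varphi}(z)-z\to 0$. By the classical theorem (see \cite{MR510197} and the discussion at the start of Section \ref{chapPKconMap}), the conformal equivalence of an $n$-connected domain onto a chordal standard domain is unique up to a positive real scaling and a real translation. Any scaling by $c\neq 1$ or real translation by a nonzero constant applied to $\varphi_D$ would violate $\varphi(z)-z\to 0$, so $\tilde{\varphi}=\varphi_D$. Formula \eqref{phiALT} therefore holds for every $x\in\R$, and consequently its right-hand side is independent of $x$ even though that is not obvious from the formula itself. The main technical step is the opening verification that $D_x^*\in\mathcal{Y}$ has the correct boundary structure at $0$ so that Proposition \ref{complexPK} applies there; once this is in place, both existence and the explicit formula follow from a routine substitution, and uniqueness is immediate from the classical normalization theorem.
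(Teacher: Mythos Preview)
Your proof is correct and follows the same approach as the paper. The paper's own proof is extremely brief---it simply says that using Proposition~\ref{complexPK} it is straightforward to verify the map in \eqref{phiALT} has the required properties, and that uniqueness follows from Theorem~\ref{PKConformalMapTheorem}---and you have spelled out exactly those details: checking $D_x^*$ has the correct form, substituting the expansion from part~(4) of Proposition~\ref{complexPK} to obtain the normalization at infinity, and invoking the classical rigidity of chordal standard domains (which is the content of the uniqueness clause in Theorem~\ref{PKConformalMapTheorem}) to rule out any other map.
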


\begin{proof}
Using Proposition \ref{complexPK}, it is straightforward to verify that the the map in \eqref{phiALT} has the required properties.  The uniqueness of $\varphi_D$ is easy to check using Theorem \ref{PKConformalMapTheorem}.
\end{proof}

A quantity that will be of particular interest to us is $\varphi_D'\paren{x}$ for $x \in \R$.  In what follows, we continue to use the setup of Proposition \ref{phiMap}.

\begin{lemma}
$\varphi_D$ can be extended to a map that is conformal in a neighborhood of any $x \in \R$.  Furthermore, we have
\begin{equation}\label{phiALTDeriv}
\varphi_D'\paren{x}=\pi H^{ER}_{D_x^*}\paren{\infty,0}.
\end{equation}
\end{lemma}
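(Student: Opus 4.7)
The plan is to reduce the statement to properties of the complex Poisson kernel $\mathcal{H}_{D_x^*}(\cdot,0)$ already established in Proposition \ref{complexPK}, via the representation \eqref{phiALT}. The key observation is that the Möbius map $T_x(z) := -1/(z-x)$ is conformal on $\C \setminus \set{x}$, preserves $\half$, sends $x$ to $\infty$, and sends $\infty$ to $0$. Hence questions about $\varphi_D$ near $x$ translate directly into questions about $\mathcal{H}_{D_x^*}(\cdot,0)$ near $\infty$.

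First I would show that $\mathcal{H}_{D_x^*}(\cdot,0)$ extends conformally across a full neighborhood of $\infty$ on the Riemann sphere. Since $D \subset \half$ with $\R \subset \partial D$ near $x$, the image $D_x^* = T_x(D)$ contains $\set{w \in \half : \abs{w} > M}$ for some $M$, and the corresponding arc $\R \setminus [-M,M]$ lies in $\partial D_x^*$. On this arc the imaginary part $\pi H^{ER}_{D_x^*}(\cdot,0)$ vanishes by property (2) of Proposition \ref{complexPK}, so the Schwarz reflection principle extends $\mathcal{H}_{D_x^*}(\cdot,0)$ analytically across it. The asymptotic in property (3), together with Lemma \ref{PKInfinLem} which gives $H^{ER}_{D_x^*}(\infty,0) > 0$, shows that in the local chart $u = 1/w$ the extended map has the expansion $-\pi H^{ER}_{D_x^*}(\infty,0)\,u + O(u^2)$ with nonzero linear coefficient, so it is conformal at $\infty$.

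Composing this extension with $T_x$ produces the desired conformal extension of $\varphi_D$ to a neighborhood of $x$. For the derivative formula, I would substitute $w = T_x(z)$ into the expansion from property (3) of Proposition \ref{complexPK}, obtaining
\[
\mathcal{H}_{D_x^*}\paren{\frac{-1}{z-x},0} = \pi H^{ER}_{D_x^*}\paren{\infty,0}\paren{z-x} + O(\abs{z-x}^2)
\]
as $z \to x$. Inserting this into \eqref{phiALT} yields $\varphi_D(z) = x - r(D_x^*,0) + \pi H^{ER}_{D_x^*}(\infty,0)\paren{z-x} + O(\abs{z-x}^2)$, and reading off the linear coefficient gives \eqref{phiALTDeriv}.

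The only delicate step is the Schwarz reflection: one must confirm that $D_x^*$ genuinely contains a one-sided disk neighborhood of $\infty$ bounded by an honest arc of $\R$. This is immediate from the hypothesis $A_0 = \C \setminus \half$, which forces every $x \in \R \cap \partial D$ to be a two-sided boundary point with a half-disk neighborhood inside $D$; everything else in the argument is a bookkeeping exercise in Taylor expansion combined with the already established properties of $\mathcal{H}_{D_x^*}(\cdot,0)$.
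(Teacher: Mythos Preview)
Your proposal is correct and follows essentially the same approach as the paper: the paper's proof simply says the first statement follows from the Schwarz reflection principle and that the derivative formula can be computed from \eqref{phiALT} using Proposition \ref{complexPK}, which is precisely what you have spelled out in detail. The only cosmetic difference is that one can apply Schwarz reflection directly to $\varphi_D$ near $x$ (it is real on the real segment near $x$) rather than to $\mathcal{H}_{D_x^*}(\cdot,0)$ near $\infty$, but via the conjugation by $T_x$ these are the same argument.
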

\begin{proof}
The first statement follows from the Schwarz reflection principle.  The formula for $\varphi_D'\paren{x}$ can be computed from \eqref{phiALT} using Proposition \ref{complexPK}.
\end{proof}

An important fact is that $\varphi_D'\paren{x}=\pi H_D^{ER}\paren{\infty,x}$.  This will follow from \eqref{phiALTDeriv} if we can show that $H_D^{ER}\paren{\infty,x}=H^{ER}_{D_x^*}\paren{\infty,0}$.

\begin{lemma}\label{PKinvarCPK}
For any $x\in \R$, $H_{D}^{ER}\paren{\infty,x}=H^{ER}_{D_x^*}\paren{\infty,0}$.
\end{lemma}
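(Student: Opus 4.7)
The plan is to exploit conformal invariance of the Green's function for ERBM so as to turn the desired identity $H_D^{ER}(\infty,x)=H_{D_x^*}^{ER}(\infty,0)$ into the equality of two different evaluations of the inner normal derivative of a single harmonic function on $D$. By translation we may assume $x=0$; let $g(z)=-1/z$ denote the conformal involution of $\half$ that maps $D$ onto $D_0^*$ and swaps $0$ with $\infty$. I would introduce the auxiliary function on $D$ defined by
\[
K_D(w):=\lim_{t\to\infty} t\,G_D^{ER}(it,w).
\]
By Proposition \ref{ERBMGreenCI}, $G_D^{ER}(it,w)=G_{D_0^*}^{ER}(i/t,-1/w)$, so with $\eta=1/t$ we have $t\,G_D^{ER}(it,w)=\eta^{-1}G_{D_0^*}^{ER}(i\eta,g(w))$. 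As $\eta\to 0^+$, the point $i\eta$ approaches the regular boundary point $0\in\partial D_0^*$ along the inner normal, so Proposition \ref{GreenND} together with the symmetry of the Green's function (Corollary \ref{GreenSym}) yields
\[
K_D(w)=2\,H_{D_0^*}^{ER}(g(w),0).
\]

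Next I would compute $\partial_n K_D(0)$ in two ways. From the closed form just derived, $K_D(is)=2H_{D_0^*}^{ER}(i/s,0)$, and so
\[
\partial_n K_D(0)=\lim_{s\to 0^+}\frac{K_D(is)}{s}=2\lim_{y\to\infty}y\,H_{D_0^*}^{ER}(iy,0)=2\,H_{D_0^*}^{ER}(\infty,0).
\]
On the other hand, Proposition \ref{GreenND} applied in the second argument of $G_D^{ER}(it,\cdot)$ gives $\partial_{n_w}G_D^{ER}(it,w)|_{w=0}=2H_D^{ER}(it,0)$, so interchanging the normal derivative at $0$ with the limit $t\to\infty$ produces
\[
\partial_n K_D(0)=\lim_{t\to\infty}2t\,H_D^{ER}(it,0)=2\,H_D^{ER}(\infty,0).
\]
Equating the two expressions yields $H_D^{ER}(\infty,0)=H_{D_0^*}^{ER}(\infty,0)$; the general case follows by translation.

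The main technical point is the exchange of $\lim_{t\to\infty}$ with the boundary normal derivative at $0$ in the second computation of $\partial_n K_D(0)$. I would justify this by expanding $t\,G_D^{ER}(it,is)/s$ and $2t\,H_D^{ER}(it,0)$ via Proposition \ref{ERBMGreenDecomp}; each reduces to contributions from $t\,G_\half(it,is)/s$ (respectively $t\,H_\half(it,0)$), the weights $t\,h_i(it)$, and $t$-weighted integrals against $\hm_D(it,\cdot)$, and for each such Brownian ingredient the double limits $t\to\infty,\,s\to 0^+$ may be taken in either order using the explicit half-plane Green's function expansion together with Proposition \ref{HCapMeas}. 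Matching the two expansions term by term confirms that they agree, completing the argument.
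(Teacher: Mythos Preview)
Your proof is correct and is essentially the same as the paper's. The paper computes
\[
H_{D_x^*}^{ER}(\infty,0)=\lim_{y}\lim_{\epsilon}\frac{y\,G_{D_x^*}^{ER}(iy,i\epsilon)}{2\epsilon}
=\lim_{y}\lim_{\epsilon}\frac{y\,G_{D}^{ER}(i/y+x,\,i/\epsilon+x)}{2\epsilon}
\]
via conformal invariance, applies the symmetry of $G_D^{ER}$, and then swaps the two limits to arrive at $H_D^{ER}(\infty,x)$; your introduction of the auxiliary function $K_D(w)=\lim_{t\to\infty}t\,G_D^{ER}(it,w)$ is just a repackaging of the same chain of equalities, with the two evaluations of $\partial_n K_D(0)$ corresponding exactly to the two orders of the iterated limit. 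The paper justifies the limit interchange with the same tool you invoke, namely Proposition~\ref{ERBMGreenDecomp}, reducing to the Brownian Green's function where the interchange is elementary.
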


\begin{proof}
Using Proposition \ref{GreenND}, Corollary \ref{GreenSym}, and the conformal invariance of $G_D^{ER}$, we have
\begin{align*}
H_{D_x^*}^{ER}\paren{\infty,0}&=\lim_{y \rightarrow \infty}y H_{D_x^*}^{ER}\paren{iy,0}\\
&=\lim_{y \rightarrow \infty} \lim_{\epsilon \rightarrow 0}\frac{y G_{D_x^*}^{ER}\paren{iy,i\epsilon}}{2\epsilon}\\
&=\lim_{y \rightarrow \infty} \lim_{\epsilon \rightarrow 0}\frac{y G_{D}^{ER}\paren{\frac{i}{y}+x,\frac{i}{\epsilon}+x}}{2\epsilon}\\
&=\lim_{y \rightarrow \infty} \lim_{\epsilon \rightarrow 0}\frac{y G_{D}^{ER}\paren{\frac{i}{\epsilon}+x,\frac{i}{y}+x}}{2\epsilon}\\
&=\lim_{\epsilon \rightarrow 0}\lim_{y \rightarrow \infty}\frac{y G_{D}^{ER}\paren{\frac{i}{\epsilon}+x,\frac{i}{y}+x}}{2\epsilon}\\
&= H_D^{ER}\paren{\infty,x}.
\end{align*}
The interchange of limits in the second to last equality is justified by Proposition \ref{ERBMGreenDecomp} and the fact that the interchange is valid when $G_D^{ER}$ is replaced by $G_D$.
\end{proof}

\begin{proposition}\label{phiDerivFin}
For any $x \in \R$, $\varphi_D'\paren{x}=\pi H_D^{ER}\paren{\infty,x}$.
\end{proposition}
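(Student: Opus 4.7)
The proof is essentially the composition of two results already established immediately above. The plan is to apply \eqref{phiALTDeriv} to express $\varphi_D'(x)$ in terms of the Poisson-kernel-at-infinity for the inverted domain $D_x^*$, and then invoke Lemma \ref{PKinvarCPK} to identify this with the Poisson-kernel-at-infinity for the original domain $D$ at the point $x$.

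More explicitly, first I would recall that \eqref{phiALTDeriv} gives
\[\varphi_D'(x) = \pi H^{ER}_{D_x^*}(\infty, 0),\]
which was obtained by differentiating the formula \eqref{phiALT} for $\varphi_D$ and using the expansion of $\mathcal{H}_{D_x^*}^{ER}(\cdot, 0)$ at infinity from Proposition \ref{complexPK}(3). Next I would apply Lemma \ref{PKinvarCPK}, which states that $H^{ER}_{D_x^*}(\infty, 0) = H^{ER}_D(\infty, x)$; this is the content that required the nontrivial interchange of limits argument using the symmetry of Green's function and its relationship to the Poisson kernel via Proposition \ref{GreenND}. Combining these two identities yields $\varphi_D'(x) = \pi H^{ER}_D(\infty, x)$, as desired.

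There is essentially no obstacle here since both ingredients have been established; the proof is a one-line concatenation. The real technical work lies upstream: the identification \eqref{phiALTDeriv} depends on the fine asymptotic expansion of the complex Poisson kernel in Proposition \ref{complexPK}, and the identity in Lemma \ref{PKinvarCPK} depends on the symmetry $G_D^{ER}(z,w) = G_D^{ER}(w,z)$ from Corollary \ref{GreenSym} together with the normal-derivative formula of Proposition \ref{GreenND}. Given these, the present proposition is a formal consequence.
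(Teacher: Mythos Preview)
Your proposal is correct and matches the paper's approach exactly: the proposition is stated without an explicit proof in the paper precisely because it is the immediate concatenation of \eqref{phiALTDeriv} and Lemma \ref{PKinvarCPK}, just as you describe.
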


\begin{corollary}\label{ERPKInfin1}
If $D$ is a chordal standard domain, then for any $x \in \R$ 
\[\pi H_D^{ER}\paren{\infty,x}=1.\]  
\end{corollary}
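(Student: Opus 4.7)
The plan is to derive this immediately from Proposition \ref{phiDerivFin} combined with the uniqueness part of Proposition \ref{phiMap}. Proposition \ref{phiDerivFin} gives the identity $\varphi_D'(x) = \pi H_D^{ER}(\infty,x)$ for any $D \in \mathcal{Y}^*$ with $A_0 = \C \backslash \half$ (and in particular for $D$ a chordal standard domain, which is automatically in this class). So the corollary reduces to showing that $\varphi_D'(x) = 1$ whenever $D$ is already a chordal standard domain.

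This last step is essentially a tautology. Proposition \ref{phiMap} asserts that there is a \emph{unique} conformal map $\varphi_D$ from $D$ onto a chordal standard domain satisfying $\lim_{z \to \infty}\varphi_D(z) - z = 0$. If $D$ is itself a chordal standard domain, then the identity map $\mathrm{id}:D \to D$ is a conformal map from $D$ onto a chordal standard domain with $\lim_{z \to \infty} \mathrm{id}(z) - z = 0$. By uniqueness, $\varphi_D = \mathrm{id}$, so $\varphi_D'(x) = 1$ for every $x \in \R$, and the corollary follows.

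There is no real obstacle; the work was done when Proposition \ref{phiMap} was established (existence and uniqueness of the normalized map into a chordal standard domain) and when Proposition \ref{phiDerivFin} identified $\varphi_D'(x)$ with $\pi H_D^{ER}(\infty,x)$. One only has to observe that the normalization $\varphi_D(z) - z \to 0$ forces the identity on a chordal standard domain.
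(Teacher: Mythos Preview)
Your proof is correct and matches the paper's own argument essentially verbatim: the paper simply notes that $\varphi_D(z)=z$ when $D$ is a chordal standard domain and invokes Proposition~\ref{phiDerivFin}. You have merely spelled out why $\varphi_D=\mathrm{id}$ by appealing to the uniqueness in Proposition~\ref{phiMap}, which is the implicit justification the paper relies on.
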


\begin{proof}
Since $\varphi_D\paren{z}=z$ when $D$ is a chordal standard domain, the result follows from Proposition \ref{phiDerivFin}.
\end{proof}

Using $\varphi_D\paren{z}$, we can prove an analog of Proposition \ref{HalfRMHullCon} for finitely connected domains.

\begin{proposition}\label{SCRMHullCon}
Let $D \in \mathcal{Y}$ be such that $A_0=\C \backslash \half$ and let $A \in \mathcal{Q}$ be such that $A \cap A_i=\emptyset$ for $1 \leq i \leq n$.  Then there is a unique conformal map $h_A^D$ satisfying
\[\displaystyle \lim_{z \rightarrow \infty}  h^D_A\paren{z}-z=0\]
that maps $D$ onto a chordal standard domain.
\end{proposition}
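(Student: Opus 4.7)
The plan is to reduce this to Proposition \ref{phiMap} by precomposing with the standard half-plane map $g_A:\half\backslash A \to \half$ from Proposition \ref{HalfRMHullCon} (so $h_A^D$ will be defined on $D\backslash A$, as suggested by the notation and by analogy with Theorem \ref{MainThm}). Since $A\cap A_i = \emptyset$ for every $1\le i \le n$, the map $g_A$ extends conformally to an open neighborhood of each $A_i$; if some $A_i$ happens to meet $\R$, this extension is carried out by the Schwarz reflection principle across $\R\backslash A$, using that $g_A$ sends $\R\backslash A$ into $\R$. Consequently $g_A$ restricts to a conformal bijection $D\backslash A \to D'$, where $D' := g_A(D\backslash A) = \half \backslash \bigcup_i g_A(A_i)$, and each image $g_A(A_i)$ is closed, bounded, simply connected, and larger than a single point. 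Thus $D' \in \mathcal{Y}$ with $A_0' = \C\backslash \half$.

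Next I would apply Proposition \ref{phiMap} to $D'$ to obtain the unique conformal map $\varphi_{D'}:D' \to $ chordal standard domain satisfying $\varphi_{D'}(w) - w \to 0$ as $w\to\infty$, and set $h_A^D := \varphi_{D'}\circ g_A$. Since $g_A(z) = z + O(|z|^{-1})$ at infinity by Proposition \ref{HalfRMHullCon}, composition with $\varphi_{D'}$ preserves the normalization $h_A^D(z) - z \to 0$, establishing existence.

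For uniqueness, suppose $\tilde h$ is a second conformal map from $D\backslash A$ onto a chordal standard domain $D''$ with $\tilde h(z) - z \to 0$ at infinity. Then $f := \tilde h \circ (h_A^D)^{-1}$ is a conformal map between the chordal standard domains $h_A^D(D\backslash A)$ and $D''$ which still satisfies $f(w) - w \to 0$ at infinity. Applying Proposition \ref{phiMap} to the chordal standard domain $h_A^D(D\backslash A)$, whose canonical map is forced to be the identity, the uniqueness clause gives $f = \mathrm{id}$, so $\tilde h = h_A^D$.

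There is no substantive obstacle here: the whole argument is a direct reduction to Proposition \ref{phiMap}. The one bookkeeping point that has to be checked is that $D'$ genuinely lies in $\mathcal{Y}$ with $A_0' = \C\backslash \half$, and this reduces to the elementary facts that $g_A$ is analytic on an open neighborhood of each $A_i$ and is a homeomorphism from $\overline{\half\backslash A}$ onto $\overline{\half}$ carrying $\R\backslash A$ into $\R$.
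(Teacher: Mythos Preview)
Your proof is correct and follows essentially the same approach as the paper: construct $h_A^D = \varphi_{g_A(D\backslash A)}\circ g_A$ using Proposition~\ref{HalfRMHullCon} and Proposition~\ref{phiMap}, then deduce uniqueness from the uniqueness clause already established for maps onto chordal standard domains (the paper cites Theorem~\ref{PKConformalMapTheorem} directly, you route through Proposition~\ref{phiMap}, which amounts to the same thing). One small simplification: since $A_0=\C\backslash\half$ is closed and the $A_i$ are closed and disjoint from $A_0$, each $A_i$ for $1\le i\le n$ is a compact subset of the open half-plane $\half$, so the Schwarz reflection step is never needed.
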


\begin{proof}
Proposition \ref{HalfRMHullCon} implies there exists a unique conformal map $g_A:\half \backslash A \rightarrow \half$ satisfying
\[\displaystyle \lim_{z \rightarrow \infty}  g_A\paren{z}-z=0.\]  
The map $h_A^D\paren{z}:=\varphi_{g_A\paren{D\backslash A}} \circ g_A\paren{z}$ satisfies the conditions of the proposition.  The uniqueness of $h_A^D\paren{z}$ is easy to check using Theorem \ref{PKConformalMapTheorem}.
\end{proof}

$h_A^D\paren{z}$ has an expansion at infinity
\begin{equation*}
h_A^D\paren{z}=z+\frac{a_1}{z}+O\paren{\abs{z}^{-2}},~~~ z \rightarrow \infty.
\end{equation*}
We call the constant $a_1$ the \emph{excursion reflected half-plane capacity (from infinity)} for $A$ in $D$ and denote it $\hcap^{ER}\paren{A}$.  Since $\hcap^{ER}\paren{A}$ depends not only on $A$, but also on the domain $D$, our notation is misleading, but it will usually be clear from context what domain we mean when we write $\hcap^{ER}\paren{A}$.  We continue to use the setup of Proposition \ref{SCRMHullCon}.

\begin{proposition}\label{ERHCapEqualities}
Let $\tau$ be the smallest $t$ such that $B^{ER}_D\paren{t} \in \partial \half \cup A.$   Then for all $z \in D \backslash A$, we have
\[\imag\bracket{z-h^D_A\paren{z}}=\ev{z}{\imag\bracket{B^{ER}_D\paren{\tau}}}.\]
Also, $\hcap^{ER}\paren{A}$ is equal to each of the following.
\begin{enumerate}
\item $\displaystyle \lim_{y \rightarrow \infty}y\ev{iy}{\imag\bracket{B^{ER}_D\paren{\tau}}}$
\item $\displaystyle \frac{2R}{\pi}\int_0^{\pi}\ev{Re^{i\theta}}{\imag\bracket{B^{ER}_D\paren{\tau}}}\sin \theta ~d\theta$ for any $R$ such that $z \in D$ for all $z \in \half$ with $\abs{z}>R$.
\end{enumerate}
\end{proposition}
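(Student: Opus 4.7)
The plan is to prove the three statements in sequence: first the functional identity $\imag\bracket{z-h_A^D(z)} = \ev{z}{\imag\bracket{B_D^{ER}(\tau)}}$ by a uniqueness argument parallel to Proposition \ref{HCapEqualities}, then the $y \to \infty$ limit formula (1), and finally the integral formula (2) via the strong Markov property applied at $\partial B_R^+(0)$.

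Let $v(z) := \imag\bracket{z - h_A^D(z)}$ and $w(z) := \ev{z}{\imag\bracket{B_D^{ER}(\tau)}}$ for $z \in D \setminus A$. I will show that both are ER-harmonic on $D \setminus A$, agree on the absorbing boundary $\partial \half \cup \partial A$, and vanish at infinity, then invoke the ER-maximal principle (Lemma \ref{ERBMMaximalprinciple}) applied to $v - w$. The boundary matching is straightforward: on $\partial \half$, the relation $h_A^D(\R) \subset \R$ gives $v = 0$, and starting at $z \in \R$ gives $\tau = 0$ with $B_D^{ER}(\tau) = z \in \R$, so $w = 0$; on $\partial A$, the factorization $h_A^D = \varphi_{g_A(D \setminus A)} \circ g_A$ together with $g_A(\partial A) \subset \R$ (from the simply connected theory) and $\varphi$ preserving $\R$ shows $h_A^D(\partial A) \subset \R$, so $v = \imag\bracket{z}$; and similarly $\tau = 0$ with $B_D^{ER}(\tau) = z$ yields $w = \imag\bracket{z}$ on $\partial A$. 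For decay at infinity, $v \to 0$ follows from $h_A^D(z) - z \to 0$, while $w \to 0$ follows from the bound $\abs{\imag\bracket{B_D^{ER}(\tau)}} \leq \rad(A)$ combined with $\prob{z}{B_D^{ER}~\text{hits}~A~\text{before}~\R} \to 0$ as $z \to \infty$.

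The main obstacle is verifying the ER-harmonicity condition at each reflecting boundary $\partial A_i$. For $w$, the post-time-zero law of ERBM started on $\partial A_i$ depends only on $A_i$ (via the roundhouse reflection mechanism), so $w$ is constant on each $\partial A_i$; the strong Markov property at the first hit of a Jordan curve $\eta_i$ surrounding $A_i$ supplies both interior harmonicity and the ER-mean-value identity of Definition \ref{ERHarmonicDef}. For $v$, the key input is that $\imag\bracket{h_A^D(z)} = y_i$ is constant on $\partial A_i$, where $y_i$ is the height of the horizontal slit $S_i = h_A^D(\partial A_i)$ in the image chordal standard domain; this allows $\imag\bracket{h_A^D}$ to extend naturally to a function on $E \setminus A$, and the criterion of Lemma \ref{ERHarmonicCondition} for the full expression $v$ is then verified via the vanishing of $\int_{\eta_i} \tfrac{d}{dn} v(z) \, \abs{dz}$, which follows from Green's theorem applied to the harmonic function $v$ together with the single-valuedness of $\Re\bracket{z - h_A^D(z)}$ around each $\eta_i$.

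Once $v = w$ is established, (1) follows from the asymptotic expansion $h_A^D(z) = z + \hcap^{ER}(A)/z + O(\abs{z}^{-2})$: setting $z = iy$ yields $\imag\bracket{iy - h_A^D(iy)} = \hcap^{ER}(A)/y + O(y^{-2})$, whence $y \, \ev{iy}{\imag\bracket{B_D^{ER}(\tau)}} \to \hcap^{ER}(A)$. For (2), I fix $R$ so that $A \cup \bigcup_i A_i \subset B_R(0)$, so that on $\{z \in \half : \abs{z} \geq R\}$ the ERBM coincides with Brownian motion. The strong Markov property at the first exit of this outer region, together with the observation that exits along $\R$ contribute zero to $\imag\bracket{B_D^{ER}(\tau)}$, gives
\[\ev{iy}{\imag\bracket{B_D^{ER}(\tau)}} = R \int_0^{\pi} H_{H^R}(iy, Re^{i\theta}) \, \ev{Re^{i\theta}}{\imag\bracket{B_D^{ER}(\tau)}} \, d\theta.\]
Inserting the asymptotic $H_{H^R}(iy, Re^{i\theta}) = (2 \sin \theta)/(\pi y)\paren{1 + O(R/y)}$ from \eqref{PKHalfRMHalfDisk}, multiplying through by $y$, and letting $y \to \infty$ combines with (1) to yield the integral formula $\hcap^{ER}(A) = (2R/\pi) \int_0^{\pi} \ev{Re^{i\theta}}{\imag\bracket{B_D^{ER}(\tau)}} \sin \theta \, d\theta$.
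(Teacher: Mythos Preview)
Your proposal is correct and follows essentially the same approach as the paper. The only cosmetic difference is that the paper dispatches the first identity in one line by citing Proposition~\ref{ERHarmonicFromERBM} (which packages both the ER-harmonicity of the expectation and the uniqueness given boundary data), whereas you unpack this by verifying ER-harmonicity of both $v$ and $w$ separately and then applying the maximal principle; your treatments of (1) and (2) are identical to the paper's.
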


\begin{proof}
Since $\imag\bracket{z-h^D_A\paren{z}}$ is a bounded ER-harmonic function and the imaginary part of $h_A\paren{z}$ is equal to $0$ on $\partial \half \cup A$, the first statement follows from Proposition \ref{ERHarmonicFromERBM}.

Using the first part of the proposition, we have
\begin{align*}
\displaystyle \lim_{y \rightarrow \infty} y \ev{iy}{\imag\bracket{B_D^{ER}\paren{\tau}}} &=\lim_{y \rightarrow \infty} y \imag\bracket{iy-h^D_A\paren{iy}}\\
&= \lim_{y \rightarrow \infty} y \imag\bracket{iy-\bracket{iy+\frac{\hcap^{ER}\paren{A}}{iy}+O\paren{y^{-2}}}}\\
&= \hcap^{ER}\paren{A}.
\end{align*}
This proves the first equality for $\hcap^{ER}\paren{A}$.

Using the first equality for $\hcap^{ER}\paren{A}$ and \eqref{PKHalfRMHalfDisk}, we have
\begin{align*}
\hcap^{ER}\paren{A}&=\lim_{y \rightarrow \infty}y \ev{i y}{\imag\bracket{B_D^{ER}\paren{\tau}}}\\
&= \lim_{y \rightarrow \infty}R y\int_0^{\pi} \ev{Re^{i\theta}}{\imag\bracket{B_D^{ER}\paren{\tau}}} H_{H^R}\paren{i y,Re^{i \theta}}~d\theta\\
&=\lim_{y \rightarrow \infty}R y\int_0^{\pi} \ev{Re^{i\theta}}{\imag\bracket{B_D^{ER}\paren{\tau}}} \bracket{\frac{2}{\pi y}\sin \theta \bracket{1+O\paren{y^{-1}}}}~d\theta \\
&=\frac{2R}{\pi}\int_0^{\pi}\ev{Re^{i\theta}}{\imag\bracket{B_D^{ER}\paren{\tau}}}\sin \theta ~d\theta.
\end{align*}
This proves the second equality for $\hcap^{ER}\paren{A}$.
\end{proof}

\begin{lemma}\label{hcapRadAEst}
Let $r=\rad\paren{A}$ and $\tau$ be as in Proposition \ref{ERHCapEqualities}.  Then
\[\hcap^{ER}\paren{A}=2r H_D^{ER}\paren{\infty,0}\bracket{\int_0^{\pi}  \ev{r e^{i\theta}}{\imag\bracket{B^{ER}_D\paren{\tau}}} \sin \theta~d\theta}\bracket{1+O\paren{r}}, \]
as $r \rightarrow 0$.
\end{lemma}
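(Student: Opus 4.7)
The plan is to combine part (2) of Proposition \ref{ERHCapEqualities} with the Poisson-kernel estimate of Proposition \ref{ERPKEstimate}, using the strong Markov property to decompose the excursion from a large radius according to the first passage through the semicircle $\partial B_r^+\paren{0}$.

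First I fix $R > 2r$ large enough that $z \in D$ for all $z \in \half$ with $\abs{z} > R$, and set $f\paren{z} := \ev{z}{\imag\bracket{B_D^{ER}\paren{\tau}}}$. Part (2) of Proposition \ref{ERHCapEqualities} then gives
\[\hcap^{ER}\paren{A} = \frac{2R}{\pi}\int_0^\pi f\paren{Re^{i\theta}} \sin \theta \, d\theta.\]
Because $\rad\paren{A}=r$ forces $A \subset \overline{B_r^+\paren{0}}$, an ERBM in $D$ started at $Re^{i\theta}$ cannot reach $A$ before hitting $\partial B_r^+\paren{0}$, and on the event of hitting $\R$ first one has $\imag\bracket{B_D^{ER}\paren{\tau}} = 0$. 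Applying the strong Markov property at the first hit of $\partial B_r^+\paren{0} \cup \R$, the hitting density on the semicircle is $H_{D^r}^{ER}\paren{Re^{i\theta}, re^{i\phi}}$ against arclength $r\,d\phi$, yielding
\[f\paren{Re^{i\theta}} = r \int_0^\pi f\paren{re^{i\phi}}\, H_{D^r}^{ER}\paren{Re^{i\theta}, re^{i\phi}}\, d\phi.\]

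The key step is to apply Proposition \ref{ERPKEstimate} (whose hypothesis that $\partial D$ agree with $\R$ near $0$ is immediate from $A_i \cap \R = \emptyset$) to replace $H_{D^r}^{ER}\paren{Re^{i\theta}, re^{i\phi}}$ by $2\, H_D^{ER}\paren{Re^{i\theta}, 0}\, \sin\phi \bracket{1 + O\paren{r}}$; the uniformity clause of the proposition makes the error uniform in $\theta$ since $\abs{Re^{i\theta}}=R$ is fixed above any chosen positive lower bound. Substituting into the double integral for $\hcap^{ER}\paren{A}$, the two integrations factor, and Lemma \ref{PKInfinLem} identifies
\[\frac{2R}{\pi}\int_0^\pi H_D^{ER}\paren{Re^{i\theta}, 0}\, \sin\theta \, d\theta = H_D^{ER}\paren{\infty, 0},\]
which collapses the $R$-dependence and produces the claimed formula.

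I expect the main subtlety to be the strong Markov decomposition itself: one must observe that the law of the ERBM in $D$ up to the first exit from $D^r$ through $\partial B_r^+\paren{0} \cup \R$ coincides with the law of ERBM in $D^r$ (the excursions from the other holes $A_1, \ldots, A_n$ are unaffected by removing the half-disk at $0$), so that the hitting density on the semicircle is genuinely the Poisson kernel $H_{D^r}^{ER}$ whose asymptotics Proposition \ref{ERPKEstimate} controls. Once this identification is in place, everything else is a routine substitution along the lines sketched above.
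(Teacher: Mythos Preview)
Your proof is correct and follows essentially the same route as the paper: start from the radius-$R$ formula in Proposition \ref{ERHCapEqualities}(2), decompose via the strong Markov property at the first hit of $\partial B_r^+\paren{0}$ to introduce $H_{D^r}^{ER}$, apply Proposition \ref{ERPKEstimate} to factor the double integral, and then collapse the $R$-integral using Lemma \ref{PKInfinLem}. Your explicit remark on why the hitting density in $D$ coincides with $H_{D^r}^{ER}$ is a point the paper leaves implicit.
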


\begin{proof}
Let $R$ be such that $z \in D$ for all $z \in \half$ with $\abs{z}>R$.  Using Proposition \ref{ERPKEstimate}, Proposition \ref{ERHCapEqualities}, and Lemma \ref{PKInfinLem}, we have
\begin{align*}
\displaystyle \hcap^{ER}\paren{A}=& \frac{2R}{\pi}\int_0^{\pi}\ev{Re^{i\theta_1}}{\imag\bracket{B^{ER}_D\paren{\tau}}}\sin \theta_1 ~d\theta_1\\
=& \frac{2 R r}{\pi}\int_0^{\pi} \int_0^{\pi} \ev{re^{i\theta_2}}{\imag\bracket{B^{ER}_D\paren{\tau}}}  H_{D^r}^{ER}\paren{Re^{i\theta_1},re^{i\theta_2}}\sin \theta_1 ~d\theta_2 ~ d\theta_1\\
=& \frac{2 R}{\pi} \bracket{\int_0^{\pi}  H_D^{ER}\paren{Re^{i\theta_1},0}\sin \theta_1 ~d\theta_1}  \\
&  2r\bracket{\int_0^{\pi} \ev{r e^{i\theta_2}}{\imag\bracket{B^{ER}_D\paren{\tau}}} \sin \theta_2  ~d\theta_2} \bracket{1+O\paren{r}}\\
=& 2r H_D^{ER}\paren{\infty,0} \bracket{\int_0^{\pi} \ev{r e^{i\theta_2}}{\imag\bracket{B^{ER}_D\paren{\tau_2}}} \sin \theta_2  ~d\theta_2} \bracket{1+O\paren{r}}
\end{align*}
\end{proof}

The next result gives a uniform bound on the difference between $h_A\paren{z}$ and $z- \frac{\hcap^{ER}\paren{A}\mathcal{H}_D^{ER}\paren{z,0}}{\pi H_D^{ER}\paren{\infty,0}}$ in terms of $\hcap\paren{A}$ and $\rad{A}$.  This can be interpreted as a proof of a Loewner equation for chordal standard domains at $t=0$.

\begin{proposition} \label{LEat0}
Let $D \in \mathcal{Y}$ be such that $A_0=\C \backslash \half$.  There is a constant $c< \infty$ that depends only on $D$ and $z$ such that for all $A \in \mathcal{Q}$ and $z \in D$ with $\abs{z}\geq 2\rad\paren{A}$
\[ \abs{ z - h_A^D\paren{z}- \frac{\hcap^{ER}\paren{A}\mathcal{H}_D^{ER}\paren{z,0}}{\pi H_D^{ER}\paren{\infty,0}}} \leq c \hcap^{ER}\paren{A}\rad\paren{A}.\]
\end{proposition}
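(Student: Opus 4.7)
The plan is to set
\[F(z) := z - h_A^D(z) - \frac{\hcap^{ER}(A)\,\mathcal{H}_D^{ER}(z,0)}{\pi H_D^{ER}(\infty,0)},\]
which is holomorphic on $D\setminus A$. The expansion $h_A^D(z)=z+\hcap^{ER}(A)/z+O(\abs{z}^{-2})$ at infinity (from the definition of $\hcap^{ER}(A)$), together with Proposition \ref{complexPK}(3), shows that the two $1/z$ terms of $F$ cancel, so $F(z)=O(\abs{z}^{-2})$ as $z\to\infty$. The goal is then to control $\abs{F}$ in two stages, first the imaginary part probabilistically and then the modulus by complex analysis.

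First I would bound $\imag\bracket{F}$ using ERBM. By Proposition \ref{ERHCapEqualities}, $\imag\bracket{z-h_A^D(z)} = \ev{z}{\imag\bracket{B_D^{ER}(\tau)}}$, where $\tau$ is the first hitting time of $\R \cup A$. Writing $r=\rad(A)$, for $\abs{z}\geq 2r$ an ERBM started at $z$ must first reach $\partial B_r^+(0)$ before touching $A$, so the strong Markov property gives
\[\ev{z}{\imag\bracket{B_D^{ER}(\tau)}} = r\int_0^\pi H_{D^r}^{ER}(z,re^{i\theta})\,\ev{re^{i\theta}}{\imag\bracket{B_D^{ER}(\tau)}}\,d\theta.\]
Applying Proposition \ref{ERPKEstimate} to factor $H_{D^r}^{ER}(z,re^{i\theta}) = 2H_D^{ER}(z,0)\sin\theta\,[1+O(r)]$ and then Lemma \ref{hcapRadAEst} to identify the resulting angular integral as $\hcap^{ER}(A)/(2rH_D^{ER}(\infty,0))\,[1+O(r)]$ yields
\[\ev{z}{\imag\bracket{B_D^{ER}(\tau)}} = \frac{\hcap^{ER}(A)\,H_D^{ER}(z,0)}{H_D^{ER}(\infty,0)}\bracket{1+O(r)}.\]
Since $\imag\bracket{\mathcal{H}_D^{ER}(z,0)} = \pi H_D^{ER}(z,0)$ by Proposition \ref{complexPK}(2), the leading contributions to $\imag\bracket{F(z)}$ cancel; combining this with Lemma \ref{uniPKBound} to bound the Poisson kernel ratio for fixed $z$ by a constant depending only on $D$ and $z$ gives $\abs{\imag\bracket{F(z)}} \leq c(D,z)\,\hcap^{ER}(A)\,\rad(A)$.

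The main obstacle is upgrading this imaginary-part estimate to a bound on $\abs{F}$. The plan is to exploit Schwarz reflection: each of $z$, $h_A^D(z)$, and $\mathcal{H}_D^{ER}(z,0)$ takes real values on $\R\cap\partial(D\setminus A)\setminus(\{0\}\cup\overline{A})$, since $h_A^D$ and $\mathcal{H}_D^{ER}(\cdot,0)$ are conformal maps into chordal standard domains that send these real boundary arcs into $\R$. Reflecting across this arc extends $F$ to a function $\tilde F$ holomorphic in a neighborhood of $\infty$ in addition to $D\setminus A$, still decaying as $O(\abs{z}^{-2})$ there. For fixed $z$ in the interior of $D\setminus A$ with $\abs{z}\geq 2r$, Cauchy's integral formula applied to a contour in the Schwarz-extended domain enclosing $z$ (wrapping $\overline{A}\cup\overline{\bar A}$ and the reflected copies of the $A_i$), together with the imaginary-part bound on the upper portion, its reflected analogue on the lower portion, and the $O(\abs{z}^{-2})$ decay at infinity, should produce $\abs{F(z)}\leq c(D,z)\,\hcap^{ER}(A)\,\rad(A)$. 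All geometric data (distance from $z$ to the contour, path lengths, uniform Poisson-kernel bounds) are absorbed into $c(D,z)$, which is the reason $c$ is allowed to depend on $z$ as well as $D$ in the statement.
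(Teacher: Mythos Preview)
Your treatment of the imaginary part is correct and matches the paper exactly: the strong Markov property at $\partial B_r^+(0)$, Proposition~\ref{ERPKEstimate}, and Lemma~\ref{hcapRadAEst} combine to give
\[
\bigl|\imag F(z)\bigr| \;\le\; c\,H_D^{ER}(z,0)\,\hcap^{ER}(A)\,\rad(A).
\]

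The gap is in the passage from $|\imag F|$ to $|F|$. Cauchy's integral formula expresses $F(z)$ as a contour integral of $F(w)/(w-z)$, so to bound $|F(z)|$ you need control of $|F(w)|$ on the contour, not merely of $|\imag F(w)|$. Schwarz reflection gives $\tilde F(\bar w)=\overline{F(w)}$, so on the reflected portion $|\imag \tilde F|$ is again small, but this is still only imaginary-part information; it does not bound the real part anywhere on your contour. The $O(|z|^{-2})$ decay at infinity is likewise not enough on its own, because the implicit constant in the $O(|z|^{-2})$ term of $z-h_A^D(z)-\hcap^{ER}(A)/z$ depends on $A$ in a way you have not quantified.

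The paper closes this gap by a derivative-and-integrate argument rather than a contour argument. From the pointwise bound on the harmonic function $v=\imag F$, Lemma~\ref{HarDerBd} bounds $|\nabla v|$ by $c\,\|v\|_\infty/\dist(\cdot,\partial D)$, and Cauchy--Riemann then bounds $|F'|$. One integrates $F'$ along a curve from $z$ to $i\,2R$ (picking up a constant depending on the curve's length, its distance to $\partial D$, and the maximum of $H_D^{ER}(\cdot,0)$ along it), and then from $i\,2R$ to $\infty$, where the explicit decay $H_D^{ER}(iy,0)=O(y^{-1})$ yields $|F'(iy)|=O(\hcap^{ER}(A)\rad(A)\,y^{-2})$; since $F(\infty)=0$, this gives $|F(i\,2R)|\le c\,\hcap^{ER}(A)\rad(A)$. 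Combining the two legs yields the statement. The key missing ingredient in your proposal is this use of Lemma~\ref{HarDerBd} to convert the imaginary-part bound into a derivative bound, together with the observation that $F$ vanishes at $\infty$ so that integrating $F'$ recovers $F$.
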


\begin{proof}
Let $r=\rad\paren{A},$
\[h\paren{z}=z - h_A^D\paren{z}- \frac{\hcap^{ER}\paren{A}\mathcal{H}_D^{ER}\paren{z,0}}{\pi H_D^{ER}\paren{\infty,0}},\]
and
\[v\paren{z} = \imag \bracket{h\paren{z}}=\imag \bracket{z-h_A^D\paren{z}}-\frac{ \hcap^{ER}\paren{A} H_D^{ER}\paren{z,0}} {H_D^{ER}\paren{\infty,0}}.\]
If $\abs{z} > 2\rad\paren{A}$, using Proposition \ref{ERPKEstimate}, Proposition \ref{ERHCapEqualities}, and Lemma \ref{hcapRadAEst}, we have
\begin{align*}
\imag \bracket{z-h_A^D\paren{z}}&= \ev {z}{\imag\bracket{B^{ER}_D\paren{\tau}}}\\
&=r\int_0^{\pi}  \ev{re^{i \theta}}{\imag\bracket{B^{ER}_D\paren{\tau}}} H_{D^r}^{ER}\paren{z,re^{i\theta}}~d\theta\\
&=2r H_D^{ER}\paren{z,0}\bracket{\int_0^{\pi}  \ev{r e^{i\theta}}{\imag\bracket{B^{ER}_D\paren{\tau}}}\sin \theta~d\theta } \bracket{1+O\paren{r}}\\
&=\bracket{\frac{ H_D^{ER}\paren{z,0}\hcap^{ER}\paren{A}}{ H_D^{ER}\paren{\infty,0}}}\bracket{1+O\paren{r}},
\end{align*}
as $r \rightarrow 0$.  It follows that there exists a $c>0$ such that
\begin{equation}\label{LEAt01} 
\abs{v\paren{z}} \leq  c H_D^{ER}\paren{z,0}\hcap^{ER}\paren{A}\rad\paren{A}.
\end{equation}

Let $R>r$ be such that $z \in D$ for all $z \in \half$ satisfying $\abs{z}>R$, $\tilde{\gamma}$ be a curve from $z$ to $i2R$ (that avoids $B_{2r}^+\paren{0}$), and $M_{\tilde{\gamma}}$ be the maximum value of $H_D^{ER}\paren{\cdot,0}$ restricted to $\tilde{\gamma}$.  Using Lemma \ref{HarDerBd} and \eqref{LEAt01}, we see that there is a $c>0$ such that the partial derivatives of $v$ restricted to $\tilde{\gamma}$ are bounded in absolute value by 
\begin{equation} \label{LEAt011}
c \frac{M_{\tilde{\gamma}}\hcap^{ER}\paren{A}\rad\paren{A}}{d},
\end{equation}
where $d$ is the distance from $\tilde{\gamma}$ to $\partial D$.  It follows that
\begin{equation}\label{LEAt02}
\abs{h\paren{z}-h\paren{i2R}} < c \frac{l M_{\tilde{\gamma}}\hcap^{ER}\paren{A}\rad\paren{A}}{d},
\end{equation}
where $l$ is the length of $\tilde{\gamma}$.  

Using \eqref{PKhalf}, it is easy to check that $H_D^{ER}\paren{iy,0}=O\paren{y^{-1}}$ as $iy \rightarrow \infty$.  Using this fact along with an argument similar to the one used to obtain \eqref{LEAt011}, we see that there is a $c>0$ such that if $y\geq 2R$, then
\begin{equation}\label{LEAt03}
\abs{h'\paren{iy}} \leq \frac{c H_D^{ER}\paren{iy,0}\hcap^{ER}\paren{A}\rad\paren{A}}{y}.
\end{equation}
Combining this with the fact that $h\paren{iy} \rightarrow 0$ as $y \rightarrow \infty$, we have
\begin{align*}
\abs{h\paren{i 2R}}&= \abs{\int_{2R}^{\infty} h'\paren{i y'}~dy'}\\
&\leq \int_{2R}^{\infty} \abs{h'\paren{i y'}}~dy'\\
&\leq  c\hcap^{ER}\paren{A}\rad\paren{A}\int_{2R}^{\infty} \frac{H_D^{ER}\paren{i y',0}}{y'}~dy'\\
&\leq  \frac{c\hcap^{ER}\paren{A}\rad\paren{A}}{2R}.
\end{align*}
Combining this with \eqref{LEAt02}, the result follows.
\end{proof}
With a little more work, it is possible to show that the constant $c$ can be chosen so as to depend only on $D$ and the distance from $z$ to $\partial D$.  We can also get an improved bound for $z \in D$ with $\abs{z}>2R$.

\subsection{The Chordal Loewner Equation in Standard Chordal Domains}
In what follows, let $D$ be a chordal standard domain and $\gamma:[0, \infty) \rightarrow D$ be a simple curve with $\gamma\paren{0}\in \R$.  Denote $\gamma\bracket{0,t}$ by $\gamma_t$ and, for each $t \geq 0$, let $D_t := D \backslash \gamma_t$, $g_t :\half \backslash \gamma_t \rightarrow \half$ be the unique conformal transformation satisfying $\displaystyle \lim_{z \rightarrow \infty} g_t\paren{z}-z=0$, $h_t$ be the unique conformal transformation satisfying $\displaystyle \lim_{z \rightarrow \infty} h_t\paren{z}-z=0$ that maps $D_t$ onto a chordal standard domain, and $\varphi_t$ be the unique map on $g_t\paren{D_t}$ such that $h_t=\varphi_t \circ g_t$.  For each $s>0$, let $\gamma^s\paren{t}=h_s\paren{\gamma\paren{s+t}}$ and $h_{s,t}=h_{\gamma^{s}_{t-s}}^{h_s\paren{D_s}}$.  Observe that $h_t = h_{s,t} \circ h_s$.

Let $b\paren{t}=\hcap^{ER}\paren{\gamma_t}$ and $a\paren{t}=\hcap\paren{\gamma_t}$.  Recall that $h_t$ has an expansion
\begin{equation*}
h_t\paren{z}=z+\frac{b\paren{t}}{z}+O\paren{\abs{z}^{-2}},~~~z \rightarrow \infty.
\end{equation*}
Reparametrizing if necessary, we may assume that $b\paren{t}$ is $C^1$. A priori, we do not know that $\dot{a}\paren{t}$ exists, but later we will show that, in fact, $\dot{a}\paren{t}$ exists if and only if $\dot{b}\paren{t}$ exists and give a formula relating the two quantities.  

Let
\begin{equation*}
\tilde{U}_t=\lim_{s \rightarrow t^-} h_s\paren{\gamma \paren{t}}.
\end{equation*}
Assuming for the moment that $\tilde{U}_t$ is well-defined, we can state our main theorem.

\begin{theorem}\label{SCLE}
For any $z \in D_t$, $h_t\paren{z}$ satisfies the initial value problem
\begin{equation*}
\dot{h}_t\paren{z}=-\dot{b}\paren{t}\mathcal{H}_{h_t\paren{D_t}}^{ER}\paren{h_t\paren{z},\tilde{U}_t},~~~h_0\paren{z}=z.
\end{equation*}
\end{theorem}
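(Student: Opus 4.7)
The plan is to mimic the proof of Theorem \ref{chordalLE}, using Proposition \ref{LEat0} in place of Proposition \ref{SCLEat0} and Corollary \ref{ERPKInfin1} to absorb the normalizing factor. First I would use the cocycle $h_{s+\epsilon}=h_{s,s+\epsilon}\circ h_s$ and apply Proposition \ref{LEat0} to $h_{s,s+\epsilon}$, which maps the chordal standard domain $h_s\paren{D_s}$ with the hull $A_{s,\epsilon}:=h_s\paren{\gamma\paren{s,s+\epsilon}}$ removed conformally onto another chordal standard domain with the hydrodynamic normalization at infinity. Translating so that $\tilde U_s\in\R$ becomes the origin, Proposition \ref{LEat0} yields
\begin{equation*}
h_{s,s+\epsilon}\paren{w}-w=-\frac{\hcap^{ER}\paren{A_{s,\epsilon}}\,\mathcal{H}^{ER}_{h_s\paren{D_s}}\paren{w,\tilde U_s}}{\pi H^{ER}_{h_s\paren{D_s}}\paren{\infty,\tilde U_s}}+O\paren{\hcap^{ER}\paren{A_{s,\epsilon}}\rad\paren{A_{s,\epsilon}}},
\end{equation*}
uniformly for $w$ away from $\tilde U_s$. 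Because $h_s\paren{D_s}$ is a chordal standard domain, Corollary \ref{ERPKInfin1} gives $\pi H^{ER}_{h_s\paren{D_s}}\paren{\infty,\tilde U_s}=1$, killing the denominator.

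Second, I would identify the capacity increment by comparing the $1/z$ coefficients in the Laurent expansions of both sides of $h_{s+\epsilon}=h_{s,s+\epsilon}\circ h_s$; this gives $\hcap^{ER}\paren{A_{s,\epsilon}}=b\paren{s+\epsilon}-b\paren{s}$. Substituting $w=h_s\paren{z}$ and dividing by $\epsilon$ produces
\begin{equation*}
\frac{h_{s+\epsilon}\paren{z}-h_s\paren{z}}{\epsilon}=-\frac{b\paren{s+\epsilon}-b\paren{s}}{\epsilon}\,\mathcal{H}^{ER}_{h_s\paren{D_s}}\paren{h_s\paren{z},\tilde U_s}+O\paren{\frac{\paren{b\paren{s+\epsilon}-b\paren{s}}\rad\paren{A_{s,\epsilon}}}{\epsilon}}.
\end{equation*}
The ER analog of Lemma \ref{gDiamCurve}, obtained from Beurling's estimate applied to $h_s$, forces $\rad\paren{A_{s,\epsilon}}\to 0$ as $\epsilon\to 0^+$; together with the $C^1$ assumption on $b$, the error term is $o\paren{1}$, and the right time derivative of $h_s\paren{z}$ exists and equals $-\dot b\paren{s}\,\mathcal{H}^{ER}_{h_s\paren{D_s}}\paren{h_s\paren{z},\tilde U_s}$.

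Third, I would show that this right-hand side is continuous in $s$ and then invoke Lemma \ref{conRDeriv} to upgrade the right derivative to a genuine derivative. Continuity splits into three pieces: continuity of $\dot b\paren{s}$, which is built into the parametrization; continuity of $s\mapsto\tilde U_s$, the analog of Proposition \ref{UtCont}; and joint continuity of $\mathcal{H}^{ER}_{D'}\paren{\cdot,x}$ in the domain $D'$ and the basepoint $x$, which I would handle using the integral representation \eqref{phiALT} together with the perturbation estimate Lemma \ref{ERPKEffHull} and Proposition \ref{ERPKEstimate}.

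The main obstacle will be the continuity of $\tilde U_t$. Writing $h_t=\varphi_{g_t\paren{D_t}}\circ g_t$, continuity of $U_t=g_t\paren{\gamma\paren{t}}$ is already supplied by Proposition \ref{UtCont}, so the task reduces to controlling $x\mapsto\varphi_{g_t\paren{D_t}}\paren{x}$ jointly in $t$ and $x$. The crucial identity here is $\varphi_D'\paren{x}=\pi H_D^{ER}\paren{\infty,x}$ from Proposition \ref{phiDerivFin}, which converts derivative control on $\varphi_{g_t\paren{D_t}}$ into Poisson-kernel control that must be uniform across the $t$-indexed family $g_t\paren{D_t}$ of chordal standard domains. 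This uniform control is precisely what is absent in the simply connected case, where the target of $g_t$ is always $\half$, and it is where the bulk of the uniform Poisson-kernel estimates assembled in Section \ref{sectPK} — in particular Lemmas \ref{uniPKAiBd}, \ref{uniPKBound}, \ref{ERPKDeriv}, and \ref{ERPKEffHull} — will have to be deployed in combination with the oscillation estimates of the form of Lemma \ref{gDiamCurve} transported through $\varphi$.
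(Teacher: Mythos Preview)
Your proposal is correct and follows essentially the same route as the paper: apply Proposition \ref{LEat0} to the incremental map $h_{s,s+\epsilon}$ (after translating $\tilde U_s$ to the origin), use Corollary \ref{ERPKInfin1} to eliminate the normalizing factor since $h_s\paren{D_s}$ is chordal standard, identify $\hcap^{ER}\paren{A_{s,\epsilon}}=b\paren{s+\epsilon}-b\paren{s}$, and then upgrade the right derivative via Lemma \ref{conRDeriv} once continuity of $\tilde U_t$ and of $s\mapsto\mathcal{H}^{ER}_{h_s\paren{D_s}}\paren{h_s\paren{z},\tilde U_s}$ are in hand; your identification of the continuity of $\tilde U_t$ as the main obstacle, and of $\varphi_D'\paren{x}=\pi H_D^{ER}\paren{\infty,x}$ as the bridge to the uniform Poisson-kernel estimates, matches the paper exactly.

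Two small corrections of phrasing, neither a real gap: the domains $g_t\paren{D_t}$ are \emph{not} chordal standard (the images $g_t\paren{A_i}$ of the slits need not be horizontal), which is precisely why the uniform estimates of Section \ref{sectPK} are stated for general $D\in\mathcal{Y}^*$; and the ER analog of Lemma \ref{gDiamCurve} (Proposition \ref{ERDiamCurve}) is not obtained by applying Beurling directly to $h_s$ but by transporting the $g_s$ estimate through derivative bounds on $\varphi_s$, exactly as you indicate in your final paragraph.
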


The first main step in the proof of Theorem \ref{SCLE} is Proposition \ref{LEat0}, which essentially establishes the theorem for $t=0$.  The second main step is proving that $\tilde{U}_t$ is a continuous function.  We know that
\begin{equation} \label{UtCont2}
U_t=\lim_{s \rightarrow t^-} g_s\paren{\gamma \paren{t}}
\end{equation}
is a well-defined continuous function.  To prove $\tilde{U}_t$ is continuous, the basic idea is to use the continuity of $U_t$ along with estimates for $\varphi'_t\paren{x}$.  Proposition \ref{phiDerivFin} implies that finding estimates for $\varphi'_t\paren{x}$ is equivalent to finding estimates for $H_{g_s\paren{D_s}}^{ER}\paren{\infty,x}$.  The following lemmas provide the necessary estimates.

\begin{lemma}\label{CPKDeriv0}
Let $D\in \mathcal{Y}^*$ be such that there exist constants $0<r<R$ such that $A_i \subset A^+_{r,R}$ for $1 \leq i \leq n$ and $x \in \partial A_0$ for all $x \in \R$ with $\abs{x}>R$.  Then there is a constant $C_{r,R} < \infty$ such that $H_{D}^{ER}\paren{\infty,0}<C_{r,R}$.
\end{lemma}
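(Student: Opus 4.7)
The plan is to reduce $H_D^{ER}\paren{\infty,0}$ to an average of ordinary Poisson kernels on a semicircle using Lemma~\ref{PKInfinLem}, then to bound the integrand by combining the decomposition \eqref{ERBMPKD} with the earlier uniform estimate Lemma~\ref{uniPKAiBd}.

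First I would invoke Lemma~\ref{PKInfinLem} with the given radius $R$ (the hypotheses on $\partial A_0$ and on the placement of the holes $A_1,\ldots,A_n \subset A_{r,R}^+$ ensure that every $z\in\half$ with $\abs{z}>R$ lies in $D$) to write
\[
H_D^{ER}\paren{\infty,0} \;=\; \frac{2R}{\pi}\int_0^{\pi} H_D^{ER}\paren{Re^{i\theta},0}\,\sin\theta\,d\theta.
\]
Thus it is enough to bound $H_D^{ER}\paren{Re^{i\theta},0}$ uniformly for $\theta\in\paren{0,\pi}$ by a constant depending only on $r$ and $R$.

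Next, I would apply \eqref{ERBMPKD} at $z=Re^{i\theta}$:
\[
H_D^{ER}\paren{Re^{i\theta},0} \;=\; H_D\paren{Re^{i\theta},0}+\sum_{i=1}^n h_i\paren{Re^{i\theta}}\,H_D^{ER}\paren{A_i,0}.
\]
The first term is controlled by monotonicity of the Poisson kernel under domain inclusion: $H_D\paren{Re^{i\theta},0}\leq H_{\half}\paren{Re^{i\theta},0}=\frac{\sin\theta}{\pi R}$. For the sum, the assumption $A_i\subset A_{r,R}^+$ together with the real-line boundary hypothesis means $B_r^+\paren{0}\subset D$ and $D$ contains $\half\setminus \overline{B_R^+\paren{0}}$, so Lemma~\ref{uniPKAiBd} applies and yields a single constant $c_r$ (depending only on $r$) with $H_D^{ER}\paren{A_i,0}\leq c_r$ for \emph{every} $i$.

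The subtle point—which is the only real obstacle—is that the number of holes $n$ is not bounded in terms of $r$ and $R$, so a termwise bound alone does not give a uniform estimate. The fix is to use that $h_i\paren{z}$ is the probability a Brownian motion from $z$ exits $D$ at $A_i$, so the events are disjoint and $\sum_{i=1}^n h_i\paren{z}\leq 1$. Hence
\[
\sum_{i=1}^n h_i\paren{Re^{i\theta}}\,H_D^{ER}\paren{A_i,0} \;\leq\; c_r,
\]
independently of $n$. Combining,
\[
H_D^{ER}\paren{Re^{i\theta},0}\;\leq\;\frac{\sin\theta}{\pi R}+c_r,
\]
and substituting back into the formula from Lemma~\ref{PKInfinLem} gives
\[
H_D^{ER}\paren{\infty,0} \;\leq\; \frac{2R}{\pi}\int_0^{\pi}\paren{\frac{\sin\theta}{\pi R}+c_r}\sin\theta\,d\theta \;=\; \frac{1}{\pi}+\frac{4Rc_r}{\pi}\;=:\;C_{r,R},
\]
which is the desired uniform bound.
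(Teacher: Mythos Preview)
Your proof is correct and follows essentially the same route as the paper: reduce via Lemma~\ref{PKInfinLem} to bounding $H_D^{ER}\paren{Re^{i\theta},0}$, then control that integrand uniformly in $r,R$. The only difference is cosmetic: the paper cites Lemma~\ref{uniPKBound} directly for the uniform bound on $H_D^{ER}\paren{Re^{i\theta},0}$, whereas you unpack that lemma into its constituent parts (the decomposition \eqref{ERBMPKD}, the half-plane comparison for $H_D$, Lemma~\ref{uniPKAiBd} for $H_D^{ER}\paren{A_i,0}$, and the observation $\sum_i h_i\le 1$).
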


\begin{proof}
Lemma \ref{PKInfinLem} implies 
\[H_{D}^{ER}\paren{\infty,0}=\frac{2R}{\pi}\int_0^{\pi}H_{D}^{ER}\paren{Re^{i\theta},0}\sin \theta~d\theta.\]  
Since by Lemma \ref{uniPKBound} there is a uniform bound depending only on $r$ for $H_D^{ER}\paren{Re^{i\theta},0}$, the result follows.
\end{proof}

\begin{lemma}\label{CPKDeriv1}
Let $D \in \mathcal{Y}$ be such that $A_0=\C \backslash \half$ and such that $A_i \subset B_R^+\paren{0}$ for $1 \leq i \leq n$.  Then there is a $C_R<\infty$ such that for all $x \in \R$ with $\abs{x}>2R$ we have 
\[\abs{H_{D}^{ER}\paren{\infty,x}-\frac{1}{\pi}}<C_R x^{-2}.\]
\end{lemma}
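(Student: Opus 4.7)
The plan is to use the decomposition \eqref{ERBMPKD}: applying it to $z = x+iy$, multiplying by $y$, and letting $y\to\infty$,
\[H_D^{ER}(\infty, x) = H_D(\infty, x) + \sum_{i=1}^n \widehat h_i(x)\, H_D^{ER}(A_i, x),\]
where $\widehat h_i(x) := \lim_{y\to\infty} y\, h_i(x+iy)$. I will show that $H_D(\infty, x) = 1/\pi + O(x^{-2})$ and that the sum is $O(x^{-2})$, with implicit constants depending only on $R$.

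For the Brownian piece, \eqref{twoDomainPK} gives $y H_D(x+iy, x) = y H_{\half}(x+iy, x) - y\,\ev{x+iy}{H_{\half}(B_{\tau_D}, x)}$. From \eqref{PKhalf}, $y H_{\half}(x+iy, x) = 1/\pi$ identically. In the expectation, only the portion of $B_{\tau_D}$ in $\bigcup_i A_i$ contributes (since $H_{\half}(\cdot, x)$ vanishes on $\R$), and for $w \in A_i \subset B_R^+(0)$ with $|x| > 2R$ we have $|x - \mathrm{Re}(w)| \geq |x|/2$, so \eqref{PKhalf} yields $H_{\half}(w, x) \leq 4R/(\pi x^2)$. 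This bounds the expectation term by $(4R/(\pi x^2))\cdot\sum_i h_i(x+iy)$, which after multiplication by $y$ and passage to the limit is $O(x^{-2})$ provided each $\widehat h_i(x)$ is uniformly bounded.

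To control $\widehat h_i(x)$, observe that $h_i$ vanishes on $\partial A_0 = \R$, and since $A_j \subset B_R^+(0) \subset \half$ avoids $\R$, the Schwarz reflection principle extends $h_i$ to a bounded harmonic function $\tilde h_i$ on $\{|z| > R\}$ with $\|\tilde h_i\|_\infty \leq 1$ and $\tilde h_i(\bar z) = -\tilde h_i(z)$. Since $\tilde h_i$ is bounded and odd under conjugation, its Laurent expansion at $\infty$ has no constant or logarithmic term, so $\tilde h_i(z) = \beta_i\, y/(x^2+y^2) + O(|z|^{-2})$ with $|\beta_i|\le cR$ by a Cauchy-coefficient bound applied to the Laurent coefficients using $\|\tilde h_i\|_\infty \leq 1$. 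Therefore $\widehat h_i(x) = \beta_i$ exists, is independent of $x$, and is uniformly bounded by $cR$. Combining with Lemma \ref{uniPKAiBd} (which gives $H_D^{ER}(A_i, x) \leq c_R x^{-2}$ for $|x| > 2R$), the sum in the decomposition is $O(x^{-2})$. The main technical step is the Schwarz-reflection argument, which yields a bound on $\widehat h_i(x)$ depending only on $R$ rather than on other features of the domain $D$; once this is in hand, the assembly of the pieces is routine.
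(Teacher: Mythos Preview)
Your argument is correct and takes a somewhat different route from the paper's. The paper compares $H_{\half}$ and $H_D^{ER}$ by stopping at the half-circle $\partial B_R^+(0)$: since ERBM agrees with Brownian motion until first hitting $\partial D$, one has
\[\abs{H_{\half}(\infty,x)-H_D^{ER}(\infty,x)} \le \Bigl(\lim_{y\to\infty} y\,\prob{x+iy}{\abs{B_\tau}=R}\Bigr)\cdot\sup_\theta\bigl(H_D^{ER}(Re^{i\theta},x)+H_{\half}(Re^{i\theta},x)\bigr),\]
and the first factor is bounded in terms of $R$ via \eqref{PKHalfRMHalfDisk} while the second is $O(x^{-2})$ via \eqref{PKhalf}, \eqref{ERBMPKD}, and Lemma~\ref{uniPKAiBd}. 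You instead push the decomposition \eqref{ERBMPKD} all the way to infinity, which forces you to analyze $\widehat h_i=\lim_{y\to\infty} y\,h_i(x+iy)$ through Schwarz reflection; this step is absent from the paper's proof but has the pleasant payoff that $\widehat h_i$ is a constant independent of~$x$. One minor point: bounding each $\widehat h_i$ separately makes your final constant scale with~$n$, whereas the statement (and the paper's argument) gives a constant depending only on~$R$. To fix this, apply your reflection argument to the single function $\sum_i h_i\le 1$ and then use $\sum_i \widehat h_i\,H_D^{ER}(A_i,x)\le \bigl(\sum_i \widehat h_i\bigr)\max_i H_D^{ER}(A_i,x)$, noting that each $\widehat h_i\ge 0$.
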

 
\begin{proof}
Since
\begin{equation*}
\displaystyle H_{\half}\paren{\infty,x}:= \lim_{y \rightarrow \infty} y H_{\half}\paren{x+iy,x}=1/\pi,
\end{equation*}
to complete the proof, it is enough to find a constant $C_R$ such that 
\[\abs{H_{\half}\paren{\infty,x}-H_D^{ER}\paren{\infty,x}}<C_R x^{-2}.\]  
Let $\tau$ be the first time a Brownian motion in $\half$ hits $\partial H^R$.  Since $B_D^{ER}$ has the distribution of a Brownian motion up until the first time it hits $\partial D$, we have
\begin{align*}
\displaystyle \abs{H_{\half}\paren{\infty,x} - H_D^{ER}\paren{\infty,x}} \leq & \lim_{y \rightarrow \infty} y \prob{x+iy}{\abs{B_{\tau}}= R}\\
& \sup_{\theta \in \paren{0,\pi}} \set{\abs{H_D^{ER}\paren{Re^{i\theta},x}+H_{\half}\paren{Re^{i\theta},x}}}.
\end{align*}
Using \eqref{PKHalfRMHalfDisk}, we see that $\displaystyle \lim_{y \rightarrow \infty} y \prob{x+iy}{\abs{B_{\tau}}= R}$ is bounded by a constant depending only on $R$.  Using \eqref{PKhalf}, we see that there is a $c>0$ depending only on $R$ such that 
\begin{equation}\label{CPKDeriv11}\abs{H_{\half}\paren{Re^{i\theta},x}}<c x^{-2}. \end{equation}
Finally, using \eqref{ERBMPKD} and \eqref{twoDomainPK}, we have
\begin{align*} \displaystyle
H_D^{ER}\paren{Re^{i\theta},x} &= H_D\paren{Re^{i\theta},x}+ \sum_{i=1}^n h_i\paren{Re^{i\theta}} H_D^{ER}\paren{A_i,x} \\
& \leq H_{\half}\paren{Re^{i\theta},x}+ \sup_{1 \leq i \leq n} H_D^{ER}\paren{A_i,x}.
\end{align*}
As a result, \eqref{CPKDeriv11} and Lemma \ref{uniPKAiBd} imply that there is a $c>0$ depending only on $R$ such that 
\[\abs{H_D^{ER}\paren{Re^{i\theta},x}}<c x^{-2}.\]
The result follows.
\end{proof}

\begin{lemma}\label{ERPKInfRHull}
Let $D \in \mathcal{Y}$ be such that $A_0=\C \backslash \half$ and suppose that there are constants $r'>0$ and $0<r<R$ such that $w \in D$ for all $w \in \half$ with $\imag\bracket{w}<r'$  and $A_i \subset A_{r,R}^+$ for $1 \leq i \leq n$.  If $A$ is a compact $\half$-hull contained in $B_{r/2}^+\paren{0}$ and $\epsilon=\rad \paren{A}$, then there is a $c>0$ depending only on $r$, $R$, and $r'$ such that
\[H_D^{ER}\paren{\infty,x}-H_{D\backslash A}^{ER}\paren{\infty,x} < c\epsilon,\]
for all $x \in \R$ with $\abs{x}>\epsilon+\sqrt{\epsilon}$.  Furthermore, if $\abs{x}>2R,$ then there is a $c>0$ depending only on $r$, $R$, and $r'$ such that
\[H_D^{ER}\paren{\infty,x}-H_{D\backslash A}^{ER}\paren{\infty,x} < \frac{c\epsilon^2}{x^2}.\]
\end{lemma}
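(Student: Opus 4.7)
The approach is to reduce the estimate to Lemma \ref{ERPKEffHull} via an integral representation of $H_D^{ER}(\infty,x)$ over a large semicircle, generalizing Lemma \ref{PKInfinLem} to arbitrary $x \in \R$. The derivation of \eqref{PKInfxInvar} goes through verbatim for any fixed $x \in \R$, provided the semicircle radius $R''$ is large enough that an ERBM started at $z$ with $|z|>2R''$ must pass through $\partial B_{R''}^+(0)$ before reaching $x$; this is guaranteed whenever $R'' > \max(R, |x|)$, because then $A_i, A, x$ all lie in $B_{R''}^+(0)$. Multiplying \eqref{PKInfxInvar} by $y$, sending $y \to \infty$, and using $y H_\half(x+iy,0) \to 1/\pi$ yields
\[
H_D^{ER}(\infty,x) = \frac{2R''}{\pi}\int_0^\pi H_D^{ER}(R'' e^{i\theta}, x)\,\sin\theta\, d\theta,
\]
and the identical formula holds with $D$ replaced by $D\backslash A$ since $A \subset B_{r/2}^+(0) \subset B_{R''}^+(0)$.

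Taking $R'' = \max(2R, 2|x|)$ and subtracting, I obtain
\[
H_D^{ER}(\infty,x)-H_{D\backslash A}^{ER}(\infty,x) = \frac{2R''}{\pi}\int_0^\pi \bigl[H_D^{ER}(R'' e^{i\theta}, x) - H_{D\backslash A}^{ER}(R'' e^{i\theta}, x)\bigr]\,\sin\theta\, d\theta.
\]
Since $|R'' e^{i\theta}| = R'' > r$ and $A \subset B_{r/2}^+(0)$, Lemma \ref{ERPKEffHull} applies pointwise at $z = R'' e^{i\theta}$. For $|x| > \epsilon + \sqrt{\epsilon}$ its first bound gives $H_D^{ER}(R''e^{i\theta},x) - H_{D\backslash A}^{ER}(R''e^{i\theta},x) \le c\,\epsilon\,H_D^{ER}(R''e^{i\theta}, 0)$, with $c$ depending only on $r,R,r'$. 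Substituting and recognizing the resulting integral via the $x=0$ version of the integral representation above converts the right-hand side into $c\,\epsilon\, H_D^{ER}(\infty,0)$, which by Lemma \ref{CPKDeriv0} is bounded by a constant depending only on $r,R$. This proves the first inequality with a constant depending only on $r, R, r'$.

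For the second statement, when $|x| > 2R$ I simply replace the first bound of Lemma \ref{ERPKEffHull} with its sharper counterpart $H_D^{ER}(z,x) - H_{D\backslash A}^{ER}(z,x) \le c\, H_D^{ER}(z,0)\,\epsilon^2/x^2$; the rest of the argument proceeds identically and yields $c' \epsilon^2/x^2$. There is no essential obstacle, since the analytic work is concentrated in Lemma \ref{ERPKEffHull}; the only minor subtlety is that the radius $R''$ in the integral representation must be chosen larger than $|x|$, but this $x$-dependence vanishes upon re-identifying the integral as $\tfrac{\pi}{2}H_D^{ER}(\infty,0)$, which is controlled uniformly by Lemma \ref{CPKDeriv0}.
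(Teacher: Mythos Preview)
Your proof is correct and follows essentially the same approach as the paper: both reduce to Lemma \ref{ERPKEffHull} and then invoke Lemma \ref{CPKDeriv0} to bound $H_D^{ER}(\infty,0)$. The paper's proof is terser, simply asserting that Lemma \ref{ERPKEffHull} implies $H_D^{ER}(\infty,x)-H_{D\backslash A}^{ER}(\infty,x)\le c\,\epsilon\,H_D^{ER}(\infty,0)$ without spelling out the passage to the limit at infinity, whereas you make this step explicit via the integral representation over a large semicircle; your observation that the $x$-dependent choice of $R''$ disappears once the integral is re-identified as $\tfrac{\pi}{2}H_D^{ER}(\infty,0)$ is exactly the right way to handle that subtlety.
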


\begin{proof}
Lemma \ref{ERPKEffHull} implies that there is a $c>0$ depending only on $r$, $R$, and $r'$ such that if $\abs{x}>\epsilon+\sqrt{\epsilon}$, then
\[H_D^{ER}\paren{\infty,x}-H_{D\backslash A}^{ER}\paren{\infty,x} \leq c H_D^{ER}\paren{\infty,0}\epsilon.\]
Since, by Lemma \ref{CPKDeriv0}, $H_D^{ER}\paren{\infty,0}$ is bounded by a constant depending only on $r$ and $R$, the first statement follows.  Using the second part of Lemma \ref{ERPKEffHull}, the second statement follows similarly.
\end{proof}

Using the Koebe distortion theorem, we can extend bounds for $\abs{\varphi'\paren{x}}$ on $\R$ to bounds for $\abs{\varphi'\paren{z}}$ restricted to a compact $\half$-hull.

\begin{lemma}\label{CPKDeriv2}
Let $D \in \mathcal{Y}_*$, $A \in \mathcal{Q}$, and $x \in A \cap \R$.  If $\delta>0$ is such that $\dist\paren{A,A_i}>\delta$ for $1 \leq i \leq n$ and $\dist\paren{A,z}>\delta$ for all $z \in \paren{\partial A_0} \backslash \R$, then there is a bound for $\abs{\varphi_D'\paren{z}}$ restricted to $A$ that depends only on $\delta$, $\rad \paren{A}$, and $\varphi'_D\paren{x}.$
\end{lemma}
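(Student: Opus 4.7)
The plan is to extend $\varphi_D$ across $\R$ by the Schwarz reflection principle and then apply the Koebe distortion theorem along a chain of overlapping disks connecting $x$ to an arbitrary point of $A$.

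Since $\varphi_D$ maps $D\in\mathcal{Y}^*$ conformally onto a chordal standard domain, it sends the open part of $\R\cap\partial D$ into $\R$, so the Schwarz reflection principle extends it to a univalent holomorphic map on the symmetric domain $\tilde D$ obtained by gluing $D$ to its reflection across $\R\cap\partial D$. Let $A^*=\{\overline z:z\in A\}$. The boundary $\partial\tilde D$ consists of the $A_i$ together with their reflections $A_i^*$, and, if present, the compact $\half$-hull forming the non-real part of $\partial A_0$ together with its reflection. For $z\in A\subset\overline{\half}$ and $w$ in any of these obstructions (all contained in $\overline{\half}$), the identity $|z-\overline w|^2=|z-w|^2+4\,\imag(z)\,\imag(w)$ gives $|z-\overline w|\ge|z-w|$, so the hypotheses force $\dist(A,\partial\tilde D)\ge\delta$. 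Consequently $B_\delta(z)\subset\tilde D$ for every $z\in A$, and the Koebe distortion theorem applied to the univalent function $\varphi_D$ on this disk yields an absolute constant $C>0$ with $|\varphi_D'(w)|\le C\,|\varphi_D'(z)|$ whenever $z\in A$ and $|w-z|\le\delta/2$.

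To pass from $x$ to a general $z\in A$, I would connect them by a polygonal path $\sigma\subset\tilde D$ of length $L\le c_1(\rad(A)+\delta)$ staying within distance $\delta/2$ of $A\cup A^*$. Such a path exists because every connected component of $A$ touches $\R$ (otherwise $\half\setminus A$ would fail to be simply connected, contradicting $A\in\mathcal{Q}$): one travels from $x\in A\cap\R$ along a thin tube near $\R$ to the base of the component of $A$ containing $z$, then enters the $\delta/2$-neighborhood of that component in $\tilde D$ and reaches $z$. Covering $\sigma$ by $N=O(L/\delta)=O(\rad(A)/\delta+1)$ overlapping disks of radius $\delta/2$ and iterating the local Koebe comparison yields
\[
|\varphi_D'(z)|\le C^{N}\,|\varphi_D'(x)|,
\]
which is a bound depending only on $\delta$, $\rad(A)$, and $|\varphi_D'(x)|$, as required.

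The main difficulty is the construction of $\sigma$ with length uniformly controlled by $\rad(A)$ and $\delta$, since $A$ may have multiple components and $\tilde D$ may contain obstructions close to $A\cap\R$. However, the combination of the distance hypotheses with the fact that every component of $A$ meets $\R$ ensures that a $\delta/2$-tube about $A\cup A^*$ lies entirely in $\tilde D$ and has Euclidean diameter $O(\rad(A))$, through which the path can be routed.
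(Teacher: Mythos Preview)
Your approach—Schwarz reflection followed by a Koebe distortion chain of overlapping disks—is exactly what the paper does; the paper phrases it as covering $A$ by $m$ balls $B_{\delta/4}(c_i)$ with $B_\delta(c_i)\subset\tilde D$, observes that $m$ can be bounded in terms of $\delta$ and $\rad(A)$, and then iterates Koebe from $x$ through the chain.

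One point worth flagging, which the paper glosses over as well: your claim that the path $\sigma$ can be routed entirely through the $\delta/2$-tube about $A\cup A^*$ breaks down when $A$ has several components whose bases on $\R$ are more than $\delta$ apart, since the tube is then disconnected. Traveling along $\R$ between such bases is not covered by the hypotheses—nothing prevents an $A_i$ from coming very close to $\R$ in that gap, so you cannot guarantee disks of radius comparable to $\delta$ there, and the number of Koebe steps is no longer controlled by $\delta$ and $\rad(A)$ alone. The paper's ball covering has the same implicit issue. This does not affect the sole use of the lemma in Proposition~\ref{ERDiamCurve}, where $A=g_s(\gamma(s,t))$ is the continuous image of an arc and therefore connected; in that case the $\delta/2$-tube about $A\cup A^*$ is connected and your argument goes through cleanly.
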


\begin{proof}
It is easy to see that we can find open balls $B_1, B_2, \ldots B_m$ satisfying the following. 
\begin{enumerate}
\item $B_i$ is a ball of radius $\delta/4$ centered at $c_i$ and $\displaystyle A \subset \bigcup_{i=1}^m B_i$
\item For all $1 \leq i \leq m$, $B_{\delta}\paren{c_i}$ does not intersect $A$ or $A_j$ for any $1 \leq j \leq n$
\item There is an upper bound for $m$ depending only on $\delta$ and $\rad\paren{A}$.
\end{enumerate}
Using the Koebe distortion theorem, we can find a bound depending only on $m$ and $\abs{\varphi_D'\paren{x}}$ for $\abs{\varphi_{s}'\paren{z}}$ restricted to $\displaystyle \bigcup_{i=1}^m B_i$.  Since $A \subset \displaystyle \bigcup_{i=1}^m B_i$, the result follows.
\end{proof}

In what follows, we once again let $D$ and $\gamma$ be as in Theorem \ref{SCLE} and fix $t_0>0$. In order to be able to apply the previous lemmas, we need to find estimates for the distance between $g_s\paren{A_i}$ and $g_s\paren{\gamma}$ that are uniform over all $0 \leq s \leq t_0$ and $1 \leq i \leq n$.

\begin{lemma} \label{gAvoid}
There exist positive constants $r_1<R_1$, $r_2$, and $d$ that depend only on $D$, $\gamma$, and $t_0$ such that for each $0 \leq s \leq t_0$ and $1 \leq i \leq n$ the following hold.
\begin{enumerate}
\item $g_s\paren{A_i} \subset A^+_{r_1,R_1} \paren{U_s}$
\item $g_s\paren{A_i} \subset \set{z \in \half : \imag\bracket{z}>r_2}$
\item The distance between $g_s\paren{\gamma}$ and $g_s\paren{A_i}$ is greater than $d$
\item If $i \neq j$, then the distance between $g_s\paren{A_i}$ and $g_s\paren{A_j}$ is greater than $d$.
\end{enumerate}
Finally, there is a uniform bound on $\diam \bracket{g_s\paren{\gamma\paren{s,t}}}$ over all $0 \leq s < t \leq t_0.$
\end{lemma}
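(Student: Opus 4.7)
The plan is to base the argument on compactness together with the joint continuity of $\paren{s,z}\mapsto g_s\paren{z}$. Since each $A_i$ ($1\leq i\leq n$) is a horizontal slit disjoint from the compact set $\gamma\bracket{0,t_0}\subset D$, one has $\delta_0:=\min_i \dist\paren{A_i,\gamma\bracket{0,t_0}}>0$ and $\min_i\dist\paren{A_i,\R}>0$. I would first invoke Lemma \ref{gDerivR} for the uniform pointwise bound $\abs{g_s\paren{z}-z}\leq 3\rad\paren{\gamma_s}\leq 3\rad\paren{\gamma\bracket{0,t_0}}$ on $\half\backslash \gamma_s$, so that every image $g_s\paren{A_i}$ lies in a fixed bounded subset of $\half$. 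The final diameter statement of the lemma is then immediate from Lemma \ref{gDiamCurve}:
\[\diam\bracket{g_s\paren{\gamma\paren{s,t}}}\leq c\sqrt{\diam\paren{\gamma\bracket{0,t_0}}\osc\paren{\gamma,t-s,t_0}}\leq c\,\diam\paren{\gamma\bracket{0,t_0}},\]
since $\osc\leq\diam$.

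The analytic core of the argument will be to establish joint continuity of $\paren{s,z}\mapsto g_s\paren{z}$ on
\[K:=\set{\paren{s,z}:s\in\bracket{0,t_0},\ z\in \textstyle \bigcup_{i=1}^n A_i\cup\gamma\bracket{s,t_0}},\]
with the convention $g_s\paren{\gamma\paren{s}}=U_s$. For $z\in\bigcup_i A_i$, or $z=\gamma\paren{t_*}$ with $t_*>s$, I would combine pointwise continuity of each $g_s$ with the sup-norm estimate $\left\|g_s-g_{s'}\right\|_\infty\leq c\sqrt{\diam\paren{\gamma\bracket{0,t_0}}\osc\paren{\gamma,\abs{s-s'},t_0}}$ from Lemma \ref{gDiamCurve}, applied on $\half\backslash\gamma_{t_0}$ or on $\half\backslash\gamma_t$ for some $t\in\paren{t_*,t_0}$. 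Continuity at a tip point $\paren{s_0,\gamma\paren{s_0}}$ will be handled by Proposition \ref{UtCont} together with the observation that $g_{s_n}\paren{\gamma\paren{t_n}}$ lies within $\diam\bracket{g_{s_n}\paren{\gamma\paren{s_n,t_n}}}=o\paren{1}$ of $U_{s_n}$ as $t_n\to s_n^+$, which is controlled again by Lemma \ref{gDiamCurve}.

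With joint continuity available, properties (2)--(4) will each reduce to the principle that a strictly positive, continuous function on a compact set attains a positive minimum. For (2), this function is $\imag\bracket{g_s\paren{z}}$ on $\bracket{0,t_0}\times\bigcup_i A_i$, strictly positive because $g_s$ maps $A_i\subset\half$ into $\half$; the infimum provides $r_2$. For (4), it is $\abs{g_s\paren{z}-g_s\paren{z'}}$ on $\bracket{0,t_0}\times A_i\times A_j$ for $i\neq j$, positive by injectivity of $g_s$. For (3), the same function on $\set{\paren{s,z,z'}:s\in\bracket{0,t_0},\ z\in A_i,\ z'\in\gamma\bracket{s,t_0}}$, where at the tip $z'=\gamma\paren{s}$ positivity reads $\abs{g_s\paren{z}-U_s}\geq \imag\bracket{g_s\paren{z}}\geq r_2$ since $U_s\in\R$. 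Property (1) then follows by combining $\abs{g_s\paren{z}-z}\leq 3\rad\paren{\gamma\bracket{0,t_0}}$ with continuity of $U_s$ on $\bracket{0,t_0}$ to get the upper bound $R_1$ on $\abs{g_s\paren{z}-U_s}$, while $r_1=r_2$ serves as the lower bound. The main obstacle will be the joint continuity at the tip, which mixes the Hausdorff-style estimate of Lemma \ref{gDiamCurve} with Proposition \ref{UtCont}; the rest is compactness.
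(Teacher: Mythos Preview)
Your proposal is correct and follows essentially the same approach as the paper: both reduce the lemma to continuity (via Lemma \ref{gDiamCurve} and Proposition \ref{UtCont}) plus compactness of $\bracket{0,t_0}$. The paper's proof is terser, phrasing things as continuity in $s$ of scalar functions like $R_i\paren{s}=\sup_{z\in A_i}\abs{g_s\paren{z}-U_s}$ rather than your joint continuity of $\paren{s,z}\mapsto g_s\paren{z}$, but the substance is the same and your version is more explicit.
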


\begin{proof}
Let $\displaystyle R_i\paren{s}=\sup_{z \in A_i} \abs{g_s\paren{z}-U_s}$ and $\displaystyle r_i\paren{s}=\inf_{z \in A_i} \abs{g_s\paren{z}-U_s}$. Lemma \ref{gDiamCurve} and Proposition \ref{UtCont} imply that $R_i\paren{s}$ and $r_i\paren{s}$ are continuous functions of $s$.  This proves the first statement.  The proofs of the remaining statements are similar.  %
\end{proof}

We also need a lemma similar to Lemma \ref{gAvoid} for $h_s$.

\begin{lemma}\label{hAvoid}
There exist constants $0<r<R$ and $r'$ depending only on $D$, $\gamma$, and $t_0$ such that for each $0 \leq s \leq t_0$ and $1 \leq i \leq n$ we have
\begin{enumerate}
\item $h_s\paren{A_i} \subset A^+_{r,R}\paren{\varphi_s\paren{U_s}}$
\item $h_s\paren{A_i}\subset \set{z \in \half : \imag\bracket{z}>r'}$.
\end{enumerate}
\end{lemma}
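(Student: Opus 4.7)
The plan is to exploit the factorization $h_s = \varphi_s \circ g_s$: the outer map $g_s$ is controlled uniformly in $s$ by Lemma \ref{gAvoid}, and the map $\varphi_s$ is controlled via the representation from Proposition \ref{phiMap} together with uniform Poisson kernel estimates on an inverted domain.

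First, I would apply Lemma \ref{gAvoid} to obtain constants $r_1 < R_1$, $r_2 > 0$, and $d > 0$, independent of $s \in [0, t_0]$ and $i \in \{1, \ldots, n\}$, such that each $g_s(A_i)$ lies in $A^+_{r_1, R_1}\paren{U_s}$ and has imaginary part at least $r_2$, with the sets $g_s(A_i)$ pairwise separated and at distance at least $d$ from $U_s \in \R$. In particular the family $g_s(D_s) = \half \setminus \bigcup_i g_s(A_i)$ varies in a compact subclass of $\mathcal{Y}_n^*$ in the Carath\'eodory sense as $s$ ranges over $[0, t_0]$.

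Next, Proposition \ref{phiMap} applied with $x = U_s$ yields
\[
h_s(z) - \varphi_s(U_s) = \mathcal{H}^{ER}_{D^*_{U_s}}\paren{\frac{-1}{g_s(z) - U_s},\, 0},
\]
where $D^*_{U_s}$ is the image of $g_s(D_s)$ under the M\"obius inversion $\zeta \mapsto -1/(\zeta - U_s)$. This inversion sends $A^+_{r_1, R_1}\paren{U_s}$ into $A^+_{1/R_1, 1/r_1}(0)$, so all holes of $D^*_{U_s}$ lie in that bounded annulus and $B^+_{1/R_1}(0) \subset D^*_{U_s}$, uniformly in $s$. For $z \in A_i$, the point $w := -1/(g_s(z) - U_s)$ lies in the same annulus, and $\imag\bracket{h_s(z) - \varphi_s(U_s)} = \pi H^{ER}_{D^*_{U_s}}(w, 0)$ is uniformly bounded above by Lemma \ref{uniPKBound} with $r = 1/R_1$, yielding the upper bound on $\imag h_s(A_i)$. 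For the upper bound on $\abs{h_s(z) - \varphi_s(U_s)}$, Proposition \ref{complexPK}(4) implies that $w \mapsto \mathcal{H}^{ER}_{D^*_{U_s}}(w, 0) + 1/w$ extends holomorphically across $w = 0$; its uniform boundedness on the compact annulus $\{1/R_1 \leq \abs{w} \leq 1/r_1\}$ then follows by a normal families argument based on the Carath\'eodory compactness of $\{D^*_{U_s}\}$.

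The main obstacle, I expect, will be the uniform positive lower bounds $\imag h_s(A_i) \geq r'$ and $\abs{h_s(z) - \varphi_s(U_s)} \geq r$. These amount to uniform positive lower bounds on $H^{ER}_{D^*_{U_s}}(A_i^*, 0)$ (with $A_i^*$ denoting the image of $g_s(A_i)$ in $D^*_{U_s}$) and on the slit diameters $\diam\paren{h_s(A_i)}$, neither of which is supplied by the one-sided estimates already in the text. I would handle both via the same Carath\'eodory compactness argument: any sequence $s_n \in [0, t_0]$ admits a subsequence along which $D^*_{U_{s_n}}$ converges to a limit $D^*_\infty \in \mathcal{Y}_n^*$ with non-degenerate, uniformly separated holes. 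On such a limit, univalence of the normalized conformal map onto a chordal standard domain forces the corresponding slit to have strictly positive diameter and height, and continuity of these quantities under Carath\'eodory convergence yields the desired uniform $r, r' > 0$.
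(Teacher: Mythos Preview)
Your route is genuinely different from the paper's. You work through the explicit inversion representation of Proposition~\ref{phiMap}, reduce the problem to controlling $\mathcal{H}^{ER}_{D^*_{U_s}}(\cdot,0)$ on a fixed annulus, and then appeal to a Carath\'eodory-type compactness argument on the family $\{D^*_{U_s}\}_{s\in[0,t_0]}$ for both the upper bound on $|h_s(z)-\varphi_s(U_s)|$ and the strictly positive lower bounds on slit height and slit diameter. The paper instead never touches the inverted domain or any compactness argument: it bounds $\varphi_s'(U_s)=\pi H^{ER}_{g_s(D_s)}(\infty,U_s)$ above via Lemma~\ref{CPKDeriv0} and below via Lemma~\ref{PKInfinLem} together with an elementary positive lower bound on $H^{ER}_{g_s(D_s)}(2R_1e^{i\theta},0)$ coming from the uniform strip $\{\imag z>r_2\}$ supplied by Lemma~\ref{gAvoid}. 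With $m\le|\varphi_s'(U_s)|\le M$ in hand, the Koebe $1/4$ theorem gives the inner radius $r$ directly, the Koebe distortion theorem propagates the upper bound to $\partial B^+_{2R_1}(U_s)$ to give $R$, and the same distortion estimate combined with Lemma~\ref{CPKDeriv1} gives a positive lower bound for $\varphi_s'$ on all of $\R$, from which Koebe $1/4$ again yields $r'$.

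What each buys: the paper's argument is entirely quantitative and uses only the Poisson-kernel estimates already assembled in the text; it also makes transparent exactly which geometric parameters ($r_1,R_1,r_2$) the constants depend on. Your approach is conceptually clean and would apply essentially unchanged to more general target classes, but the compactness step you flag as ``the main obstacle'' is real: Carath\'eodory kernel convergence for $n$-connected domains and the continuity of the normalized slit map under it are not developed anywhere in the paper, and making that step rigorous (in particular, ruling out degeneration of a hole of $D^*_{U_s}$ to a point, and establishing convergence of the slit images) would require importing or proving a nontrivial amount of additional material. So your outline is correct in principle, but the paper's Koebe-based argument closes the gap with tools already on the table.
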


\begin{proof}
Let $r_1$, $R_1$, and $r_2$ be as in Lemma \ref{gAvoid}.  Using Lemma \ref{CPKDeriv0}, we can find an upper bound $M$ for $\abs{\varphi_s'\paren{U_s}}$ that depends only on $r_1$ and $R_1$.  Using Lemma \ref{PKInfinLem} and the (easy) fact that there is a positive lower bound for $H_{{g_s\paren{D_s}}}^{ER}\paren{2R_1e^{i\theta},0}$ that depends only on $\theta$ and $r_2$, we see that there is a lower bound $m>0$ for $\abs{\varphi_s'\paren{U_s}}$ that depends only on $R_1$ and $r_2$.

The Koebe 1/4 theorem implies that there is a constant $r>0$ that depends only on $r_1$ and $m$ such that $B_r^+\paren{\varphi_s\paren{U_s}} \subset \varphi_s\paren{g_s\paren{D_s}}$.  Since $h_s=\varphi_s \circ g_s$, it follows that $B_r^+\paren{\varphi_s\paren{U_s}} \subset h_s\paren{D_s}$.

The Koebe distortion theorem implies that there is an upper bound that depends only on $M$, $r_2$, and $R_1$ for $\abs{\varphi_s'\paren{z}}$ restricted to the boundary of $ B_{2R_1}^+\paren{U_s}$.  As a result, there is a constant $R>0$ that depends only on $M$, $r_2$, and $R_1$ such that $\varphi_s\paren{B^+_{2R_1}\paren{U_s}} \subset B_{R}^+\paren{\varphi_s\paren{U_s}}.$  The first statement of the proposition follows.
 
Similarly, the Koebe distortion theorem implies that there is a lower bound greater than zero for $\abs{\varphi_s'\paren{x}}$ restricted to $\bracket{-2R_1,2R_1}$ that depends only on $m$, $r_2$, and $R_1$.  Combined with Lemma \ref{CPKDeriv1}, this gives a lower bound for $\abs{\varphi_s'\paren{x}}$ restricted to $\R$.  As a result, the Koebe 1/4 theorem implies that there is an $r'>0$ depending only on $m$, $r_2$, and $R_1$ such that for each $x \in \R$, $B_{r'}^+\paren{\varphi_s \paren{x}} \subset \varphi_s \paren{g_s\paren{D_s}}$.  The second statement of the proposition follows.
\end{proof}

We have the tools to prove an analog of Lemma \ref{gDiamCurve} for $h_s$.

\begin{proposition}\label{ERDiamCurve}
There exists a constant $c < \infty$ that depends only on $D$, $\gamma$, and $t_0$ such that if $0 \leq s < t \leq t_0< \infty$, then
\[\diam \bracket{h_s\paren{\gamma \paren{s,t}}}\leq c \sqrt{\osc \paren{\gamma,t-s,t_0}}\]
and
\[\left\|h_s-h_t\right\|_{\infty} \leq c \sqrt[4]{\osc\paren{\gamma,t-s,t_0}},\]
where
\[\osc\paren{\gamma,\delta,t_0}= \sup \set{\abs{\gamma\paren{s}-\gamma\paren{t}} :0\leq s, t\leq t_0;\abs{t-s} \leq \delta}\]
and $h_s - h_t$ is considered as a function on $D_t$.
\end{proposition}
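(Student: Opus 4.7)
The plan is to mimic the proof of Lemma \ref{gDiamCurve}, using the decomposition $h_s = \varphi_s \circ g_s$ together with the Poisson-kernel estimates for ERBM developed earlier. I handle the two inequalities in turn.

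For the diameter bound I would first invoke Lemma \ref{gDiamCurve} to obtain
\[
\diam[g_s(\gamma(s,t))] \le c\sqrt{\diam(\gamma[0,t_0])\,\osc(\gamma,t-s,t_0)}.
\]
Lemma \ref{gAvoid} places $g_s(\gamma(s,t))$ inside a bounded region that stays a uniform positive distance from each $g_s(A_i)$ as $s$ varies in $[0,t_0]$. Combining this with Lemma \ref{CPKDeriv0} (a uniform upper bound on $|\varphi_s'(U_s)|$) and Lemma \ref{CPKDeriv2} (extending the bound to a neighborhood of $g_s(\gamma(s,t))$ via the Koebe distortion theorem) produces a constant $M$, independent of $s\in[0,t_0]$, with $|\varphi_s'|\le M$ throughout $g_s(\gamma(s,t))$. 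Hence
\[
\diam[h_s(\gamma(s,t))] \le M\,\diam[g_s(\gamma(s,t))] \le c\sqrt{\osc(\gamma,t-s,t_0)},
\]
after absorbing the bounded factor $\sqrt{\diam(\gamma[0,t_0])}$ into $c$.

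For the sup-norm inequality, set $A = h_s(\gamma(s,t))$ and exploit the factorization $h_t = h_{s,t}\circ h_s$, so that $\|h_s - h_t\|_\infty = \sup_{w\in h_s(D_s)\setminus A}|w - h_{s,t}(w)|$. By the first step $\diam(A) \le \delta := c\sqrt{\osc(\gamma,t-s,t_0)}$, and by Lemma \ref{hAvoid} the domain $h_s(D_s)$ and the base point $\tilde U_s$ are uniformly well-placed in $s$. For $w$ with $|w-\tilde U_s|\ge 2\diam(A)$, I would apply Proposition \ref{LEat0} to $h_{s,t}$, which coincides with $h_A^{h_s(D_s)}$ by the uniqueness in Proposition \ref{SCRMHullCon} (after translating $\tilde U_s$ to the origin); Lemma \ref{hcapRadAEst} controls $\hcap^{ER}(A)$ by $O(\diam(A)^2)$, Proposition \ref{complexPK} controls $\mathcal{H}^{ER}_{h_s(D_s)}(\cdot,\tilde U_s)$ both near and away from $\tilde U_s$, and Corollary \ref{ERPKInfin1} identifies $\pi H^{ER}_{h_s(D_s)}(\infty,\tilde U_s)=1$ since $h_s(D_s)$ is a chordal standard domain. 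Together these give $|w - h_{s,t}(w)|\le c\delta$ in the far regime. For $w$ with $|w-\tilde U_s|<2\diam(A)$, a Beurling-type displacement estimate for the conformal map $h_{s,t}$ inside the chordal standard domain $h_s(D_s)$ yields the cruder bound $|w - h_{s,t}(w)|\le c\sqrt{\diam(A)}\le c\sqrt[4]{\osc(\gamma,t-s,t_0)}$. Combining the two regimes gives the claim.

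The main obstacle is the close regime. In the chordal case Lemma \ref{gDerivR} supplies the global bound $|g_A(z)-z|\le 3\rad(A)$ throughout $\half\setminus A$ and removes the need for any case analysis; no such global bound is available for $h_A^D$ when $D$ has holes, because Schwarz reflection across $\R$ collides with the reflected copies of the slits $A_i$. One therefore has to glue the sharp Proposition \ref{LEat0} estimate in the far regime to a uniform Beurling argument inside the chordal standard domain $h_s(D_s)$ in the close regime, using the distance bounds of Lemma \ref{hAvoid} to make the estimates uniform in $s\in[0,t_0]$.
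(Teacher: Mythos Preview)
Your argument for the first inequality is correct and is exactly the paper's proof: Lemma \ref{gDiamCurve} controls $\diam[g_s(\gamma(s,t))]$, and Lemmas \ref{gAvoid}, \ref{CPKDeriv0}, \ref{CPKDeriv2} give a uniform bound on $|\varphi_s'|$ along $g_s(\gamma(s,t))$.

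For the second inequality your route diverges from the paper's and carries a genuine gap. You invoke Proposition \ref{LEat0} directly on $h_{s,t}$, but the constant in that proposition depends on \emph{both} $D$ and $z$; the remark following its proof says it can be made to depend on $D$ and $\dist(z,\partial D)$, but that is still useless for $z$ close to $\R$, and your ``far regime'' $|w-\tilde U_s|\ge 2\diam(A)$ certainly contains such points. So the assertion ``together these give $|w-h_{s,t}(w)|\le c\delta$'' is not justified as written. The ``Beurling-type displacement estimate'' in the close regime is also unspecified; in fact, once the far regime is settled the close regime is easy (the image of $B^+_{2d_{s,t}}(\tilde U_s)\setminus A$ is trapped by the image of its outer arc), so the real obstacle is the uniformity problem in the far regime, not the near-tip behavior.

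The paper avoids this difficulty by a different decomposition: write $h_{s,t}=\phi_{s,t}\circ f_{s,t}$, where $f_{s,t}=g_{h_s(\gamma(s,t))}$ is the ordinary half-plane map. Lemma \ref{gDerivR} gives the \emph{global} bound $|f_{s,t}(z)-z|\le 3d_{s,t}$, which disposes of the near-$\R$ and near-tip issues in one stroke. The remaining factor $\phi_{s,t}$ has no hull to remove; by Proposition \ref{PKatInfinCI} and Corollary \ref{ERPKInfin1} its derivative on $\R$ equals $\pi H^{ER}_{f_{s,t}(h_s(D_s)\setminus A)}(\infty,\cdot)$, and Lemma \ref{ERPKInfRHull} together with Lemma \ref{gDerivOnR} controls $|\phi_{s,t}'(x)-1|$ in three zones on $\R$: bounded (by $M_\delta$) on an inner interval of width $\sim\sqrt{d_{s,t}}$, $O(d_{s,t})$ on a middle interval, and $O(d_{s,t}^2/x^2)$ beyond. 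Integrating from $\infty$ yields $|\phi_{s,t}(x)-x|\le c\sqrt{d_{s,t}}$, and the $\sqrt[4]{\osc}$ in the final estimate is exactly $\sqrt{d_{s,t}}$ combined with the first inequality. This derivative-on-$\R$ computation, not Proposition \ref{LEat0}, is the engine of the proof.
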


\begin{proof}
Let $r_1$, $R_1$, and $d$ be as in Lemma \ref{gAvoid} and $\tilde{U}_s=\varphi\paren{U_s}$.  Lemma \ref{CPKDeriv0}, combined with Proposition \ref{phiDerivFin}, shows that there is an upper bound for $\varphi_s'\paren{U_s}$ that depends only on $r_1$ and $R_1$.  As a result, Lemma \ref{CPKDeriv2} implies that there is a bound for $\abs{\varphi_s'\paren{z}}$ restricted to $g_s\paren{\gamma_t}$ that depends only on $r_1$, $R_1$, $d$, and $t_0$.  Since $h_s=\varphi_{s}\circ g_s$, the first statement of the proposition follows from Lemma \ref{gDiamCurve}.

Since $h_t=h_{s,t} \circ h_{s}$, to prove the second statement of the proposition it is enough to show that there is a $c<\infty$, depending only on $D$, $\gamma$, and $t_0$, such that
\begin{equation} \label{ERDiamCurve1}
\left\|h_{s,t}\paren{z}-z\right\|_{\infty} \leq c \sqrt[4]{\osc\paren{\gamma,t-s,t_0}}.
\end{equation}

Let $f_{s,t}:=g_{h_s\paren{\gamma_t}}$, $\phi_{s,t}$ be the unique conformal map such that $h_{s,t}=\phi_{s,t}\circ f_{s,t}$, $d_{s,t} = \diam\bracket{h_s\paren{\gamma \paren{s,t}}}$, and $r$, $R$, and $r'$ be as in Lemma \ref{hAvoid}.  Note that by the Schwarz reflection principle, $\phi_{s,t}$ can be extended to a conformal map on
\[\R \cup \set{z: z \text{ or } \overline{z} \text{ is in the image of } f_{s,t}}.\]

For any $z \in \overline{\half} \backslash h_s\paren{\gamma_t}$, Lemma \ref{gDerivR} implies that 
\begin{equation}\label{ERDiamCurve2}
\abs{f_{s,t}\paren{z}-z} \leq 3d_{s,t}.
\end{equation}
It follows that if $d_{s,t}$ is sufficiently small, then the image of $f_{s,t}$ restricted to $B_r^+\paren{\tilde{U}_s}$ contains $B_{4\sqrt{d_{s,t}}}^+\paren{\tilde{U}_s}$ and the image of $f_{s,t}$ restricted to $B_R^+\paren{\tilde{U}_s}$ is contained in a half-disk centered at $\tilde{U}_s$ with radius depending only on $d_{s,t}$ and $R$.  
Since the first part of the proposition shows that $d_{s,t} \rightarrow 0$ as $\abs{t-s} \rightarrow 0$, it follows that there is a $\delta>0$ and constants $4\sqrt{d_{s,t}}<r_{\delta}<R<R_{\delta}$
such that if $\abs{t-s}<\delta$, then $d_{s,t}<1$ and the image of $f_{s,t}$ contains all $z \in \half$ in the complement of $A_{r_{\delta},R_{\delta}}^+\paren{\tilde{U}_s}$. 

For the remainder of the proof, we assume that $\abs{t-s}<\delta$ and let $c>0$ be a (changing) constant that depends only on $r$, $R$, $r'$, and $\delta$.  Lemma \ref{CPKDeriv0}, combined with the Koebe distortion theorem, shows that there is an upper bound $M_{\delta}$ for $\phi_{s,t}'\paren{x}$ restricted to 
\[\bracket{\tilde{U}_s-\frac{10\sqrt{d_{s,t}}}{3},\tilde{U}_s+\frac{10\sqrt{d_{s,t}}}{3}}\]
that depends only on $r_{\delta}$ and $R_{\delta}$.  Lemma \ref{ERPKInfRHull} implies that if $x \in \R$ satisfies $\abs{x-\tilde{U}_s}>d_{s,t}+\sqrt{d_{s,t}}$, then
\begin{equation}\label{ERDiamCurve3}
H^{ER}_{h_s\paren{D_s}}\paren{\infty,x} - H^{ER}_{h_s\paren{D_s}\backslash h_s\paren{\gamma_t} }\paren{\infty,x}< c d_{s,t}.
\end{equation}
Lemma \ref{gDerivOnR} implies that if $x \in \R$ satisfies $\abs{x-\tilde{U}_s}>3d_{s,t}$, then
\begin{equation}\label{ERDiamCurve4}
1-\frac{c d_{s,t}^2}{\paren{x-\tilde{U}_s}^2}  \leq f_{s,t}'\paren{x} \leq 1.
\end{equation}
Since Lemma \ref{gDerivR} implies that
\[f_{s,t}\paren{\bracket{\tilde{U}_s-3\sqrt{d_{s,t}},\tilde{U}_s+3\sqrt{d_{s,t}}}}\subset \bracket{\tilde{U}_s-\frac{10 \sqrt{d_{s,t}}}{3},\tilde{U}_s+\frac{10 \sqrt{d_{s,t}}}{3}},\]
Corollary \ref{ERPKInfin1} and Proposition \ref{PKatInfinCI}, combined with \eqref{ERDiamCurve3} and \eqref{ERDiamCurve4}, show that for all $x \in \R$ such that $\abs{x-\tilde{U}_s}>\frac{10\sqrt{d_{s,t}}}{3}$, we have 
\begin{equation}\label{ERDiamCurve5}
\abs{\phi_{s,t}'\paren{x}-1} \leq c d_{s,t}.
\end{equation}
Using the second part of Lemma \ref{ERPKInfRHull}, we see that if $x \in \R$ satisfies $\abs{x-\tilde{U}_s}>2R$, then
\begin{equation}\label{ERDiamCurve6}
H^{ER}_{h_s\paren{D_s}}\paren{\infty,x} - H^{ER}_{h_s\paren{D_s}\backslash h_s\paren{\gamma_t} }\paren{\infty,x}< \frac{c d_{s,t}^2}{x^2}.
\end{equation}
Since Lemma \ref{gDerivR} implies that
\[f_{s,t}\paren{\bracket{\tilde{U}_s-2R,\tilde{U}_s+2R}}\subset \bracket{\tilde{U}_s-3R,\tilde{U}_s+3R},\]
using \eqref{ERDiamCurve3} and \eqref{ERDiamCurve6}, we see that for all $x \in \R$ such that $\abs{x-\tilde{U}_s}>3R$ we have 
\begin{equation}\label{ERDiamCurve7}
\abs{\phi_{s,t}'\paren{x}-1} \leq \frac{c d_{s,t}^2}{x^2}.
\end{equation}

Let $x\in \R$ and assume without loss of generality that $x>\tilde{U}_s$.  Since 
\[\displaystyle \lim_{y \rightarrow \infty}\paren{\phi_{s,t}\paren{y}-y}=0,\]
using \eqref{ERDiamCurve5} and \eqref{ERDiamCurve7}, we have
\begin{align*}
\abs{\phi_{s,t}\paren{x}-x}&= \lim_{y \rightarrow \infty} \abs{\paren{\phi_{s,t}\paren{y}-y}-\paren{\phi_{s,t}\paren{x}-x}} \\
&=\abs{\int_{x}^{\infty} \paren{\phi_{s,t}'\paren{y}-1}~dy}\\
&\leq \abs{4\sqrt{d_{s,t}}M_{\delta}+ 3Rc d_{s,t}+c d_{s,t}^2 \int_{3R}^{\infty} \frac{dy}{y^2}}\\
&\leq c\sqrt{d_{s,t}}.
\end{align*}
Combined with the first statement of the proposition and \eqref{ERDiamCurve2}, this proves \eqref{ERDiamCurve1} and hence the proposition in the special case that $\abs{s-t}<\delta.$  Since if $s<r<t$, then both $\osc\paren{\gamma,r-s,t_0}$ and $\osc\paren{\gamma,t-r,t_0}$ are less than $\osc\paren{\gamma,t-s,t_0}$, the general case follows from the special case and the triangle inequality.
\end{proof}

\begin{proposition}
For any $t>0$, there is a unique $\tilde{U}_t \in \R$ such that
\begin{equation*}
\displaystyle \lim_{z \rightarrow \gamma\paren{t}} h_t\paren{z}=\varphi_t\paren{U_t}=\tilde{U}_t,
\end{equation*}
where the limit is taken over $z \in \half \backslash \gamma_t$.  Furthermore,
\begin{equation*}
\tilde{U}_t=\lim_{s \rightarrow t^-} h_s\paren{\gamma\paren{t}}
\end{equation*}
and $t \mapsto \tilde{U}_t$ is a continuous map.
\end{proposition}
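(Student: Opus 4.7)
The plan is to decompose $h_t = \varphi_t \circ g_t$ and to leverage the analogous result for $g_t$ (Proposition \ref{UtCont}) together with the diameter and sup-norm estimates from Proposition \ref{ERDiamCurve}.

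First I would establish existence of $\tilde U_t$ via Schwarz reflection. The domain $g_t(D_t)$ lies in $\mathcal{Y}^*$, and because the horizontal slits of $D$ are bounded away from $\gamma$, their images under $g_t$ are bounded away from $U_t=g_t(\gamma(t))\in\R$. Hence $\partial g_t(D_t)$ agrees with $\R$ in a neighborhood of $U_t$. Since $\varphi_t$ maps into a chordal standard domain and is real on $\R\cap\partial g_t(D_t)$, Schwarz reflection extends $\varphi_t$ to a conformal map in a neighborhood of $U_t$. Combining this with Proposition \ref{UtCont} (which gives $g_t(z)\to U_t$ as $z\to\gamma(t)$) yields
\[
\lim_{z\to\gamma(t)}h_t(z)=\lim_{z\to\gamma(t)}\varphi_t(g_t(z))=\varphi_t(U_t),
\]
so we define $\tilde U_t:=\varphi_t(U_t)$; uniqueness is immediate from existence of the limit.

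Next I would identify $\tilde U_t$ with the limit $\lim_{s\to t^-}h_s(\gamma(t))$. For $s<t$ the curve $s'\mapsto h_s(\gamma(s'))$ on $(s,t]$ is a continuous curve in $h_s(D_s)$ whose limit at $s'\to s^+$ equals $\tilde U_s$ and whose endpoint at $s'=t$ is $h_s(\gamma(t))$. Therefore
\[
|h_s(\gamma(t))-\tilde U_s|\le \diam\bigl[h_s(\gamma(s,t])\bigr]\le c\sqrt{\osc(\gamma,t-s,t_0)}
\]
by Proposition \ref{ERDiamCurve}, which tends to $0$ as $s\to t^-$.

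For continuity, I would use a triangle inequality with a variable test point to handle both sides. Given $s$ close to $t$, pick $t'>\max(s,t)$ and write
\[
|\tilde U_s-\tilde U_t|\le |\tilde U_s-h_s(\gamma(t'))|+|h_s(\gamma(t'))-h_t(\gamma(t'))|+|h_t(\gamma(t'))-\tilde U_t|.
\]
The first and third terms are controlled by the diameter bound of Proposition \ref{ERDiamCurve} applied to the curves $h_s(\gamma(s,t'])$ and $h_t(\gamma(t,t'])$, while the middle term is bounded by $\|h_s-h_t\|_\infty\le c\sqrt[4]{\osc(\gamma,|s-t|,t_0)}$, also from Proposition \ref{ERDiamCurve}. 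Letting $t'\to\max(s,t)^+$ collapses the first two diameter terms into $c\sqrt{\osc(\gamma,|s-t|,t_0)}$, and then $s\to t$ forces the whole right side to $0$ by continuity of $\gamma$.

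The main technical obstacle is the justification in the first step that $\varphi_t$ reflects through $U_t$; once that is in hand, everything else is bookkeeping with the estimates of Proposition \ref{ERDiamCurve} in the spirit of the proof of Proposition \ref{UtCont}. A minor subtlety is that, unlike in the $g_t$ case, right-continuity in $t$ is not immediate from the one-sided definition $\tilde U_t=\lim_{s\to t^-}h_s(\gamma(t))$, which is why I introduced the auxiliary test point $\gamma(t')$ rather than trying to pass directly to the limit $s\to t^+$.
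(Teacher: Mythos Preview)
Your proposal is correct and matches the paper's approach: the paper's proof is a one-liner that defers to the argument for $g_t$ (Proposition \ref{UtCont}) with Proposition \ref{ERDiamCurve} in place of Lemma \ref{gDiamCurve}, and you have spelled that out, with the nice shortcut in your first step of using $h_t=\varphi_t\circ g_t$ together with the Schwarz-reflection extension of $\varphi_t$ (already recorded in the lemma following Proposition \ref{phiMap}) to inherit existence of the limit directly from Proposition \ref{UtCont}. One minor point of order: the bound in your second paragraph only yields $h_s(\gamma(t))-\tilde U_s\to 0$, so the identification $\tilde U_t=\lim_{s\to t^-}h_s(\gamma(t))$ should be deduced \emph{after} your continuity argument in the third paragraph rather than before it.
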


\begin{proof}
Using Proposition \ref{ERDiamCurve}, the proof is similar to the analogous proof for $g_t$ (see \cite{MR2129588}).
\end{proof}

The final ingredient in the proof of Theorem \ref{SCLE} is to show that for any $z \in D\backslash \gamma$ the map $t \mapsto \mathcal{H}_{h_t\paren{D_t}}^{ER}\paren{h_t\paren{z},\tilde{U}_t}$ is continuous.  We start by proving the analogous fact for $H_{h_t\paren{D_t}}^{ER}\paren{h_t\paren{z},\tilde{U}_t}$.

\begin{lemma}\label{ERPKCont}
Fix $z \in D$ and let $t_0$ be such that $z \notin \gamma_{t_0}$.  Then there are constants $\delta>0$ and $c>0$ that depend only on $\gamma$, $D$, $z$, and $t_0$ such that if $0<s<t<t_0$ and $t-s<\delta$, then
\[\abs{H_{h_s\paren{D_s}}^{ER}\paren{h_s\paren{z},\tilde{U}_s}-H_{h_t\paren{D_t}}^{ER}\paren{h_t\paren{z},\tilde{U}_t}}<c\sqrt[4]{\osc\paren{\gamma,t-s,t_0}}.\]
\end{lemma}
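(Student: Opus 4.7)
The plan is to reduce the statement to quantitative continuity of ERBM Poisson kernels in the slit half-plane $g_t(D_t) = \half \setminus \bigcup_{i=1}^n g_t(A_i)$ via the factorization $h_t = \varphi_t \circ g_t$, where $\varphi_t := \varphi_{g_t(D_t)}$ is the map from Proposition \ref{phiMap}. Since $\varphi_t$ extends conformally across $U_t \in \R$ by Schwarz reflection, Proposition \ref{ERPKCI} together with the identity $\varphi_t'(U_t) = \pi H_{g_t(D_t)}^{ER}(\infty, U_t)$ from Proposition \ref{phiDerivFin} yields
\[
F(t) := H_{h_t(D_t)}^{ER}(h_t(z), \tilde U_t) = \frac{H_{g_t(D_t)}^{ER}(g_t(z), U_t)}{\pi\, H_{g_t(D_t)}^{ER}(\infty, U_t)}.
\]
This moves the problem to a domain whose parameters $g_t(z), U_t, g_t(A_i)$ all depend continuously on $t$ with $|g_s(z) - g_t(z)|, |U_s - U_t| \le c\sqrt{\osc(\gamma, t-s, t_0)}$ (by Lemma \ref{gDiamCurve} and Proposition \ref{UtCont}) and uniform separation given by Lemma \ref{gAvoid}.

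I will then prove that both the numerator and denominator above are continuous in $t$ at rate $\sqrt{\osc}$. For the numerator, I expand via \eqref{ERBMPKD} into a Brownian Poisson kernel plus weighted slit contributions $h_i^{(t)}(g_t(z))\, H_{g_t(D_t)}^{ER}(g_t(A_i), U_t)$. The Brownian piece is handled by \eqref{twoDomainPK} combined with Lemma \ref{PKDeriv} and the continuity of $g_t$. The slit contributions are controlled in two steps: first, \eqref{twoDomainERPK} applied to the inclusion $g_s(D_t) = g_s(D_s) \setminus g_s(\gamma(s,t]) \subset g_s(D_s)$ removes a hull of diameter $O(\sqrt{\osc})$ with error $O(\sqrt{\osc})$ by Lemma \ref{ERPKEffHull}; second, conformal invariance under the near-identity map $g_{s,t} := g_t \circ g_s^{-1} : g_s(D_t) \to g_t(D_t)$ transports the kernel, using $g_{s,t}(z) - z = O(1/z)$ at infinity and the bound $g_{s,t}'(x) = 1 + O(\sqrt{\osc})$ on the real line away from $U_s$ from Lemma \ref{gDerivOnR}. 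The perturbation of $U_t$ by $O(\sqrt{\osc})$ contributes an additional $O(\sqrt{\osc})$ via Lemma \ref{ERPKDeriv}. For the denominator, Lemma \ref{PKInfinLem} writes $H_{g_t(D_t)}^{ER}(\infty, U_t)$ as an average over a fixed large circle around $U_t$, reducing to the same pointwise estimates; a uniform positive lower bound on this quantity for $t \in [0, t_0]$ (needed for the division) follows from the bounds in the proof of Lemma \ref{hAvoid}.

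Combining these estimates gives $|F(s) - F(t)| \le c\sqrt{\osc}$, which is at most $c\sqrt[4]{\osc(\gamma, t-s, t_0)}$ for $\osc \le 1$, proving the lemma. The main obstacle is the joint perturbation of slits and base point in the slit Poisson kernel $H_{g_t(D_t)}^{ER}(g_t(A_i), U_t)$; the two-step argument above (hull-removal via Lemma \ref{ERPKEffHull}, followed by the near-identity conformal change $g_{s,t}$) sidesteps the square-root singularity of $g_{s,t}$ at the tip $g_s(\gamma(t))$ because the conformal change need only be applied on the slits and near $\infty$, where $g_{s,t}$ is smooth and close to the identity.
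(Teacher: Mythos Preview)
Your approach is a valid alternative to the paper's, but it differs in where the analysis takes place: you pull everything back to the $g_t$-coordinates and track the quotient $H_{g_t(D_t)}^{ER}(g_t(z), U_t)\big/\bigl[\pi H_{g_t(D_t)}^{ER}(\infty, U_t)\bigr]$, whereas the paper stays in the chordal standard domains $h_s(D_s)$ and compares $H^{ER}_{h_s(D_s)}(h_s(z), \cdot)$ at $\tilde U_s$ to $H^{ER}_{h_t(D_t)}(h_t(z), \cdot)$ at $\tilde U_t$ directly via the map $h_{s,t}$. The advantage of the paper's choice is that in a chordal standard domain $\pi H^{ER}(\infty, x)\equiv 1$ by Corollary~\ref{ERPKInfin1}, so there is no denominator to control separately; the advantage of yours is that $g_{s,t}$ is a simply connected Loewner map, so the distortion estimates (Lemmas~\ref{gDerivR}, \ref{gDerivOnR}) apply out of the box without first proving Proposition~\ref{ERDiamCurve}.

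There is, however, a quantitative overstatement that you should fix. You assert $|F(s)-F(t)|\le c\sqrt{\osc}$, but the hull-removal step via Lemma~\ref{ERPKEffHull} (and its Brownian analogue through \eqref{twoDomainPK}) carries the hypothesis $|x-U_s|>\rad(A)+\sqrt{\rad(A)}$, where $\rad(A)\asymp\sqrt{\osc}$. You therefore cannot apply it at $x=U_s$ or $x=U_t$: you must first shift to an intermediate point at distance $\asymp\osc^{1/4}$, and then Lemma~\ref{ERPKDeriv} (or Lemma~\ref{PKDeriv}) contributes an error of that same order when shifting back. The paper does exactly this, taking $x=\tilde U_s+d_{s,t}+\sqrt{d_{s,t}}$; the two $O(\sqrt[4]{r_{s,t}})$ terms in \eqref{ERPKCont1} and \eqref{ERPKCont5} are precisely these displacement errors. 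Your ``Brownian piece via \eqref{twoDomainPK}'' has the same obstacle: at $x=U_s$, the Poisson kernel $H_{g_s(D_s)}(\cdot,U_s)$ evaluated on the small hull is of order $1/\rad(A)$ while the hitting probability of the hull is of order $\rad(A)$, so the expectation in \eqref{twoDomainPK} is only $O(1)$, not $O(\sqrt{\osc})$. With the intermediate-point correction inserted, your argument does go through and delivers $\sqrt[4]{\osc}$, which is exactly what the lemma claims---so the strategy is sound, but the advertised $\sqrt{\osc}$ rate is not attainable by these tools.
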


\begin{proof}
Throughout the proof, all constants will depend only on $D$, $\gamma$, $z$, and $t_0$.  Let $r_{s,t}=\osc\paren{\gamma,t-s,t_0}$, $d_{s,t}$ be as in the proof of Proposition \ref{ERDiamCurve}, and 
\[r_z=\min\set{r,\inf \set{\dist\paren{h_s\paren{z},\tilde{U}_s}: 0\leq s \leq t_0}},\]
where $r$ is as in Lemma \ref{hAvoid}.  Using Proposition \ref{ERDiamCurve}, it is easy to see that there is a $\delta_1>0$ such that if $0<t-s<\delta_1$, then $r_{s,t}<1$ and $h_s\paren{\gamma \paren{s,t}}\subset B_{r_z/2}^+\paren{\tilde{U}_s}$.  

Since Proposition \ref{ERDiamCurve} implies that 
\begin{equation}\label{ERPKCont0}
d_{s,t}+\sqrt{d_{s,t}}=O\paren{\sqrt [4] {r_{s,t}}}
\end{equation}
and $\abs{h_{s,t}\paren{x}-x}=O\paren{\sqrt [4] {r_{s,t}}}$, it follows that
\[h_{s,t}\paren{\tilde{U}_s+d_{s,t}+\sqrt{d_{s,t}}}- h_{s,t}\paren{\tilde{U}_s-d_{s,t}-\sqrt{d_{s,t}}} = O\paren{\sqrt [4] {r_{s,t}}}.\]
As a result, there is a $\delta_2>0$ such that if $0<t-s<\delta_2$, then $h_{s,t}\paren{x} \in B_{r_z/2}^+\paren{\tilde{U}_t}$ for all $x$ such that $\abs{x-\tilde{U}_s} \leq d_{s,t}+\sqrt{d_{s,t}}$.  Let $\delta=\min \set{\delta_1,\delta_2}$ and for the remainder of the proof, assume that $0<t-s<\delta$ and $x=\tilde{U}_s+d_{s,t}+\sqrt{d_{s,t}}$.

Lemma \ref{ERPKDeriv} implies that 
\begin{equation}\label{ERPKCont1}
\abs{H_{h_t\paren{D_t}}^{ER}\paren{h_t\paren{z},\tilde{U}_t}-H_{h_t\paren{D_t}}^{ER}\paren{h_t\paren{z},h_{s,t}\paren{x}}}=O\paren{\sqrt [4] {r_{s,t}}}.
\end{equation}
Using Proposition \ref{ERPKCI}, we see that
\begin{equation}\label{ERPKCont2}
\abs{h_{s,t}'\paren{x}}H_{h_t\paren{D_t}}^{ER}\paren{h_t\paren{z},h_{s,t}\paren{x}}=H_{h_s\paren{D_s}\backslash h_s\paren{\gamma_t}}^{ER}\paren{h_s\paren{z},x}.
\end{equation}
Using the chain rule, Proposition \ref{phiDerivFin}, and Proposition \ref{PKatInfinCI}, we see that
\[h_{s,t}'\paren{x}=\pi H_{g_{h_s\paren{\gamma_t}}\paren{h_s\paren{D_s}}}^{ER}\paren{\infty,g_{h_s\paren{\gamma_t}}\paren{x}}g_{h_s\paren{\gamma_t}}'\paren{x}=\pi H_{h_s\paren{D_s}\backslash h_s\paren{\gamma_t}}^{ER}\paren{\infty,x}.\]
Since $\pi H_{h_s\paren{D_s}}^{ER}\paren{\infty,x}=1$, Lemma \ref{ERPKInfRHull} and Proposition \ref{ERDiamCurve} together imply
\[h_{s,t}'\paren{x}=1+O\paren{\sqrt{r_{s,t}}}.\]
Combining this with \eqref{ERPKCont2} and Lemma \ref{uniPKBound}, we conclude that
\begin{equation}\label{ERPKCont3}
H_{h_s\paren{D_s}\backslash h_s\paren{\gamma_t}}^{ER}\paren{h_s\paren{z},x}-H_{h_t\paren{D_t}}^{ER}\paren{h_t\paren{z},h_{s,t}\paren{x}}= O\paren{\sqrt{r_{s,t}}}.
\end{equation}
Next, using Lemma \ref{ERPKEffHull}, we see that 
\begin{equation}\label{ERPKCont4}
H_{h_s\paren{D_s}\backslash h_s\paren{\gamma_t}}^{ER}\paren{h_s\paren{z},x}-H_{h_s\paren{D_s}}^{ER}\paren{h_s\paren{z},x}= O\paren{\sqrt{r_{s,t}}}.
\end{equation}
Finally, using \eqref{ERPKCont0} and arguing as in \eqref{ERPKCont1}, we see that
\begin{equation}\label{ERPKCont5}
H_{h_s\paren{D_s}}^{ER}\paren{h_s\paren{z},\tilde{U}_s}-H_{h_s\paren{D_s}}^{ER}\paren{h_s\paren{z},x}= O\paren{\sqrt[4]{r_{s,t}}}.
\end{equation}
Combining \eqref{ERPKCont1}, \eqref{ERPKCont3}, \eqref{ERPKCont4}, and \eqref{ERPKCont5}, the result follows.
\end{proof}

It is not hard to see that the proof of Lemma \ref{ERPKCont} can be modified to show that the constants $c$ and $\delta$ can be chosen uniformly over all $z$ in a compact set.

\begin{lemma} \label{ERCPKCont}
Fix $z \in D$ and let $t_0$ be such that $z \notin \gamma_{t_0}$.  Then the map 
\[t \mapsto \mathcal{H}_{h_t\paren{D_t}}^{ER}\paren{h_t\paren{z},\tilde{U}_t}\]
is a continuous function on $[0,t_0)$.
\end{lemma}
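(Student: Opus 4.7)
The plan is to write $F_t(z) := \mathcal{H}_{h_t(D_t)}^{ER}(h_t(z), \tilde{U}_t)$ as a holomorphic function on $D_t$ that vanishes at infinity (by Proposition \ref{complexPK}(3), since $h_t(z)\to\infty$ as $z\to\infty$), and to split the question of continuity into its imaginary and real parts. The imaginary part is $\pi H_{h_t(D_t)}^{ER}(h_t(z), \tilde U_t)$, which by Lemma \ref{ERPKCont} (and the remark following it) is continuous in $t$ uniformly for $z$ in compact subsets of $D\setminus\gamma_{t_0}$. Applying Lemma \ref{HarDerBd} to the difference of imaginary parts $\operatorname{Im} F_t-\operatorname{Im} F_s$ and then the Cauchy--Riemann equations, $F_t'(z)$ is continuous in $t$, uniformly on compact subsets $K\subset D\setminus \gamma_{t_0}$; such $K$ lie inside $D_t$ with $\dist(K,\partial D_t)$ bounded below uniformly in $t\leq t_0$, since $D_t\supseteq D_{t_0} = D\setminus\gamma_{t_0}$.

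For the real part, I would use the representation $F_t(z_0) = -\int_\beta F_t'(z)\,dz$ along a single path $\beta$ from $z_0$ out to $\infty$, chosen once and for all inside $D\setminus\gamma_{t_0}$ (hence inside every $D_t$ with $t\leq t_0$) and with its tail running along a vertical ray in $\{z\in\half:|z|>R_0\}$ for some fixed large $R_0$. By Proposition \ref{complexPK}(3) combined with Corollary \ref{ERPKInfin1}, $F_t(z)+1/(z-\tilde U_t) = O(|z|^{-2})$ as $z\to\infty$; differentiating via Cauchy's formula on disks of fixed radius inside the region $\{|z|>R_0\}\cap\half$ gives $F_t'(z) - 1/(z-\tilde U_t)^2 = O(|z|^{-3})$. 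Split $\beta=\beta_1\cup\beta_2$ with $\beta_1$ compact and $\beta_2$ the tail. The integral over $\beta_1$ is continuous in $t$ by the uniform continuity of $F_t'$ established in the previous paragraph. On $\beta_2$ the integrand differs from $(z-\tilde U_t)^{-2}$ by a uniformly $O(|z|^{-3})$ error, while $\int_{\beta_2}(z-\tilde U_t)^{-2}\,dz = [-(z-\tilde U_t)^{-1}]_{\partial\beta_2}$ is continuous in $t$ by continuity of $\tilde U_t$; the error term shrinks to $0$ uniformly in $t$ as $\beta_2$ is pushed out further. Choosing the tail first and then $|t-s|$ small yields $|F_t(z_0)-F_s(z_0)|<\epsilon$.

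The step I expect to be the main obstacle is verifying the uniformity in $t\in[0,t_0]$ of the expansion $F_t(z)+1/(z-\tilde U_t) = O(|z|^{-2})$. Retracing the proof of Proposition \ref{complexPK}(3), the constants that appear come from \eqref{PKHalfRMHalfDisk}, Lemma \ref{PKInfinLem}, and the interior derivative bound Lemma \ref{HarDerBd}; to make them uniform it suffices to choose a single $R$ with $\{z\in\half:|z|>R\}\subset h_t(D_t)$ for all $t\in[0,t_0]$, which is provided by Lemma \ref{hAvoid} together with Proposition \ref{ERDiamCurve} (bounding the location and diameter of all slits of $h_t(D_t)$ uniformly), and Lemma \ref{CPKDeriv0} supplies the uniform bound on $\pi H_{h_t(D_t)}^{ER}(\infty,0)$. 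Once this uniform asymptotic is in hand, the integral splitting in the previous paragraph completes the proof.
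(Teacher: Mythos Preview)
Your argument is correct and shares the paper's overall architecture: use Lemma~\ref{ERPKCont} for the imaginary part, pass to the derivative via Lemma~\ref{HarDerBd} and Cauchy--Riemann, then recover the real part by integrating $F_t'$ along a path in $D_{t_0}$ out to $\infty$, split into a compact piece and a vertical tail.

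The one genuine difference is how the tail is controlled. You bound each $F_t'$ on the tail separately by invoking the uniform-in-$t$ version of the asymptotic in Proposition~\ref{complexPK}(3), which you correctly identify as the crux and correctly justify via Lemma~\ref{hAvoid} and Corollary~\ref{ERPKInfin1}. The paper instead bounds the \emph{difference} $v_{s,t}=\imag[F_t-F_s]$ directly on the tail: since $|v_{s,t}|\le c\sqrt[4]{\osc}$ on $\partial B^+_{\tilde R}(0)$ by Lemma~\ref{ERPKCont}, and a Brownian motion from $iy$ hits $\partial B^+_{\tilde R}(0)$ with probability $O(1/y)$ by~\eqref{PKHalfRMHalfDisk}, one gets $|v_{s,t}(iy)|\le c\sqrt[4]{\osc}/y$ immediately, then Lemma~\ref{HarDerBd} gives $|F_t'(iy)-F_s'(iy)|\le c\sqrt[4]{\osc}/y^2$, and a single integration from $2\tilde R$ to $\infty$ finishes. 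This sidesteps the need to retrace Proposition~\ref{complexPK}(3) for uniformity and yields an explicit modulus of continuity; your route works but is a bit more laborious at that step.
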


\begin{proof}
Assume without loss of generality that $\gamma\paren{0}=0$. Let
\[f_t\paren{w}:=\mathcal{H}_{h_t\paren{D_t}}^{ER}\paren{h_t\paren{w},\tilde{U}_t}=u_t\paren{w}+iv_t\paren{w}\]
and if $s<t$, let $f_{s,t}:=f_t-f_s$ and $v_{s,t}:=v_t-v_s$.  
Let $R$ be as in Lemma \ref{hAvoid} and $\tilde{R}=\max \set{\diam \paren{\gamma_{t_0}},R}.$  Finally, let $\tilde{\gamma}$ be a path in $D_{t_0}$ from $z$ to $i2\tilde{R}$. 

Lemma \ref{ERPKCont} and Lemma \ref{HarDerBd} imply that there is a $\delta_1>0$ and $c>0$ such that if $\abs{t-s}<\delta_1$, then the partial derivatives of $v_{s,t}$ restricted to $\tilde{\gamma}$ are bounded in absolute value by $\frac{c\sqrt[4]{\osc\paren{\gamma, t-s,t_0}}}{d}$, where $d=\dist\paren{\tilde{\gamma},\partial D_{t_0}}$.  As a result, there is a constant $c>0$ that depends only on $\gamma$, $D$, $z$ and $t_0$ such that if $\abs{t-s}<\delta_1$, then $\abs{f_{s,t}'\paren{w}}<c\sqrt[4]{\osc\paren{\gamma, t-s,t_0}}$ for all $w \in \tilde{\gamma}$.  It follows that if $\abs{t-s}<\delta_1$, then
\begin{equation}\label{ERCPKCont0}
\abs{f_{s,t}\paren{z}-f_{s,t}\paren{i2\tilde{R}}}< cl\sqrt[4]{\osc\paren{\gamma, t-s,t_0}},
\end{equation}
where $l$ is the length of $\tilde{\gamma}$.

If $y>2\tilde{R}$, \eqref{PKHalfRMHalfDisk} implies that there is a $c>0$ such that the probability a Brownian motion in $\half$ started at $y$ leaves $\half \backslash \tilde{R}\D$ on $\partial B^+_{\tilde{R}}\paren{0}$ is less than $\frac{c \tilde{R}}{y}$.  Lemma \ref{ERPKCont} implies that there is a $\delta_2>0$ and $c>0$ such that if $\abs{t-s}<\delta_2$, then the maximum value of $\abs{v_{s,t}\paren{z}}$ restricted to $\partial B_{\tilde{R}}^+\paren{0}$ is less than $c\sqrt[4]{\osc\paren{\gamma, t-s,t_0}}$. It follows that there is a $c>0$ such that if $\abs{t-s}<\delta_2$, then
\begin{equation*}
v_{s,t}\paren{y}< \frac{c\sqrt[4]{\osc\paren{\gamma, t-s,t_0}}}{y}.
\end{equation*}
As a result, Lemma \ref{HarDerBd} implies that there is a $c>0$ such that if $\abs{t-s}<\delta_2$, then the partial derivatives of $v_{s,t}\paren{iy}$ are bounded in absolute value by $\frac{c\sqrt[4]{\osc\paren{\gamma, t-s,t_0}}}{y^2}$.  We conclude that there is a $c>0$ such that if $\abs{t-s}<\delta_2$, then
\begin{equation*}
\abs{f_{s,t}'\paren{iy}}<\frac{c\sqrt[4]{\osc\paren{\gamma, t-s,t_0}}}{y^2}
\end{equation*}
for all $y>2\tilde{R}$.  Since $\displaystyle \lim_{y \rightarrow \infty} f_{s,t}\paren{iy}=0$, it follows that if $\abs{t-s}<\delta_2$, then 
\begin{equation}\label{ERCPKCont1}
\abs{f_{s,t}\paren{i2\tilde{R}}}\leq \int_{2\tilde{R}}^{\infty} \abs{f_{s,t}'\paren{iy}}~dy \leq \frac{c\sqrt[4]{\osc\paren{\gamma, t-s,t_0}}}{2\tilde{R}}.
\end{equation}
Combining this with \eqref{ERCPKCont0}, the result follows.
\end{proof}

We have everything we need in order to prove Theorem \ref{SCLE}.

\begin{proof}[Proof of Theorem \ref{SCLE}]
Let $f$ be the conformal map such that 
\[h_{s,s+\epsilon}\paren{z}=f\paren{z-\tilde{U}_s}+\tilde{U}_s.\]  
Applying Proposition \ref{LEat0} to $f$, we see that for sufficiently small $\epsilon >0$,
\begin{align*}
h_{s+\epsilon}\paren{z}-h_s\paren{z}=&h_{s,s+\epsilon}\paren{h_s\paren{z}}-h_s\paren{z} \\
=&-\paren{b\paren{s+\epsilon}-b\paren{s}}\mathcal{H}_{h_s\paren{D_s}}^{ER}\paren{h_s\paren{z},\tilde{U}_s}\\
&+\diam \bracket{\gamma\paren{s,s+\epsilon}}\bracket{ b\paren{ s+\epsilon}-b\paren{s}}O\paren{1}.
\end{align*}
Dividing this by $\epsilon$ and taking the limit as $\epsilon \rightarrow 0$, we see that $h_t\paren{z}$ has right derivative at $s$ equal to
\begin{equation*}
-\dot{b}\paren{s}\mathcal{H}_{h_s\paren{D_s}}^{ER}\paren{h_s\paren{z},\tilde{U}_s}.
\end{equation*}
Using Lemma \ref{conRDeriv} and Lemma \ref{ERCPKCont}, the result follows.
\end{proof}

Up until now we have assumed that the curve $\gamma$ is parametrized such that $b\paren{t}$ is $C^1$.  While this was the most convenient parametrization to use when formulating and proving Theorem \ref{SCLE}, in applications we will often start with a curve that is only assumed to be parametrized such that $a\paren{t}$ is $C^1$.  This will not pose a problem though because, as we now prove, the ER half-plane capacity is $C^1$ if and only if the usual half-plane capacity is.  For the remainder of this section we will assume $D \in \mathcal{Y}^*$ and $\gamma\paren{0} \in \partial D \cap \R$.

\begin{lemma}\label{CapEqZr}
$\dot{b}\paren{0}$ exists if and only if $\dot{a}\paren{0}$ exists.  If both quantities exist, then
\[\dot{b}\paren{0}=\pi H_D^{ER}\paren{\infty,0}\dot{a}\paren{0}.\]
In particular, if $D$ is a chordal standard domain, then $\dot{a}\paren{0}=\dot{b}\paren{0}$.
\end{lemma}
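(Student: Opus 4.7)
The plan is to compare the integral representations of $a(t)$ and $b(t)$ obtained by applying Lemma \ref{hcapRadAEst} (for $b$) and its analog for $a$, derived by repeating the proof of Lemma \ref{hcapRadAEst} with Brownian motion in $\half$ replacing ERBM, Proposition \ref{HCapEqualities}(2) replacing Proposition \ref{ERHCapEqualities}(2), \eqref{PKHalfRMHalfDisk} replacing Proposition \ref{ERPKEstimate}, and the identity $\pi H_\half\paren{\infty, 0} = 1$ in place of Lemma \ref{PKInfinLem}. Writing $r_t = \rad\paren{\gamma_t}$ together with
\[
I_t^{ER} = \int_0^\pi \ev{r_t e^{i\theta}}{\imag\bracket{B_D^{ER}\paren{\tau}}} \sin\theta\,d\theta, \qquad I_t^{BM} = \int_0^\pi \ev{r_t e^{i\theta}}{\imag\bracket{B\paren{\tau}}} \sin\theta\,d\theta,
\]
this produces
\[
b(t) = 2 r_t H_D^{ER}\paren{\infty, 0}\, I_t^{ER} \bracket{1 + O\paren{r_t}}, \qquad a(t) = \tfrac{2 r_t}{\pi} I_t^{BM}\bracket{1 + O\paren{r_t}},
\]
as $r_t \to 0$.

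The central step is to show $I_t^{ER}/I_t^{BM} \to 1$. I would couple the ERBM in $D$ with Brownian motion in $\half$ so that both start at $r_t e^{i\theta}$ and agree until the first time the processes enter a reflecting component $A_i$; let $F_\theta$ denote this event. On $F_\theta^c$ the two paths agree and their stopping values on $\R \cup \gamma_t$ are identical, while on $F_\theta$ both $\imag\bracket{B_D^{ER}\paren{\tau}}$ and $\imag\bracket{B\paren{\tau}}$ are at most $r_t$. Since the $A_i$ remain bounded away from $0$, the bound \eqref{PKHD} gives $\prob{r_t e^{i\theta}}{F_\theta} = O\paren{r_t}$ uniformly in $\theta$, whence $\abs{I_t^{ER} - I_t^{BM}} = O\paren{r_t^2}$. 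Combining with $I_t^{BM} = \pi a(t)/(2 r_t)\bracket{1 + O\paren{r_t}}$ yields $I_t^{ER}/I_t^{BM} = 1 + O\paren{r_t^3/a(t)} \to 1$ as $t \to 0^+$.

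Dividing the two capacity formulas then gives $b(t)/a(t) \to \pi H_D^{ER}\paren{\infty, 0}$, so that if $\dot{a}(0)$ exists then $b(t)/t = \bracket{b(t)/a(t)}\bracket{a(t)/t} \to \pi H_D^{ER}\paren{\infty, 0}\, \dot{a}(0)$, establishing both the existence of $\dot{b}(0)$ and the asserted identity; the converse follows by symmetry. In the chordal standard case, Corollary \ref{ERPKInfin1} gives $\pi H_D^{ER}\paren{\infty, 0} = 1$, so the identity reduces to $\dot{a}(0) = \dot{b}(0)$. The main obstacle is the coupling estimate: one must show the Poisson-kernel bound on $F_\theta$ is uniform in $\theta$ and that the resulting error $O\paren{r_t^3/a(t)}$ genuinely vanishes, which relies on the regularity imposed on $\gamma$ by the $C^1$ parametrization near $t = 0$.
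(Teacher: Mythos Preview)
Your coupling idea is exactly the one the paper uses: ERBM and Brownian motion started at $r_t e^{i\theta}$ agree until the path reaches a reflecting component, and on that event both stopping values are small. The difficulty you flag at the end is real, however, and it is precisely where your argument breaks. Bounding the stopping values on $F_\theta$ crudely by $r_t$ gives $|I_t^{ER}-I_t^{BM}|=O(r_t^2)$ and forces you to control $r_t^3/a(t)$. Nothing in the hypotheses prevents $\gamma$ from running nearly parallel to $\R$, so $r_t$ can be arbitrarily large compared with $\sqrt{a(t)}$; a $C^1$ capacity parametrization does not supply the missing decay.

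The paper closes this gap by a bootstrap rather than an absolute bound. After the process escapes to (say) the unit circle, one does not bound the residual expectation by $r_t$ but by $O(r_t M_1(t))$, using Proposition~\ref{ERPKEstimate} once more at the fixed scale $1$ (this is \eqref{CapEqZr4}). The difference between the full expectation $M_1$ and the expectation restricted to paths that stay inside the unit half-disk, call it $M_2$, then satisfies $M_1-M_2\le c\,r_t M_1$, so $M_1\le 2M_2$ for small $t$. Since on the non-escape event ERBM and BM literally coincide, $M_2$ is the \emph{same} for both processes, and the equivalence of the two limits follows without ever needing $r_t^3/a(t)\to 0$. To repair your argument, replace the bound ``both stopping values are at most $r_t$'' on $F_\theta$ by the strong Markov estimate: from any point of $\bigcup A_i$ the expected stopping value is $O(r_t I_t^{ER})$ (for ERBM) or $O(r_t I_t^{BM})$ (for BM), and then run the same self-referential inequality.
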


\begin{proof}
Assume without loss of generality that $\gamma\paren{0}=0$ and $z \in D$ for all $z \in \half$ such that $\abs{z}<1$.  Let $r_t=\rad\paren{\gamma_t}$ and
\[\tilde{D}_t=\set{z\in D_t: \abs{z}<1}. \]
Define $\tau_1^t$ to be the first time a Brownian motion in $\half$ exits $\half \backslash \gamma_t$, $\tau_2^t$ to be the first time an ERBM in $D$ exits $D_t$, $X_1^t=\imag \bracket{B_{\tau_1^t}}$, and $X_2^t=\imag \bracket{B_D^{ER}\paren{\tau_2^t}}$.  Finally, define 
\[M_1\paren{t}=\int_0^{\pi} \ev{r_t e^{i\theta}}{X_2^t} \sin\theta~d\theta \]
and observe that Lemma \ref{hcapRadAEst} implies
\begin{equation}\label{CapEqZr0}
b\paren{t}=2r_t H_D^{ER}\paren{\infty,0}M_1\paren{t} \bracket{1+O\paren{r_t}},~~~r_t \rightarrow 0.
\end{equation}
As a result, $\dot{b}\paren{t}$ exists if and only if 
\begin{equation}\label{CapEqZr1}
\displaystyle \lim_{t \rightarrow 0} \frac{r_t M_1\paren{t}}{t}
\end{equation}
exists.  

Let $E_t^z$ be the event that a Brownian motion started at $z \in \tilde{D}_t$ does not leave $\tilde{D}_t$ on $\set{z \in \half:\abs{z}=1}$ and define
\[M_2\paren{t} =\int_0^{\pi} \ev{r_t e^{i\theta}}{X_2^t;E_t^{r_t e^{i\theta}}} \sin\theta~d\theta.\]
We claim that the limit in \eqref{CapEqZr1} exists if and only if
\begin{equation}\label{CapEqZr2}
\displaystyle \lim_{t \rightarrow 0} \frac{r_t M_2\paren{t}}{t}
\end{equation}
exists and if both limits exist, then they are equal.  Observe that
\begin{equation}\label{CapEqZr3}
\ev{r_t e^{i\theta}}{X_2^t}=\ev{r_t e^{i\theta}}{X_2^t;E_t^{r_t e^{i\theta}}}+\int_0^{\pi} \ev{e^{i\theta_1}}{X_2^t} H_{\tilde{D}_t}\paren{r_t e^{i\theta},e^{i\theta_1}}~d\theta_1
\end{equation}
and that, using Proposition \ref{ERPKEstimate},
\begin{equation}\label{CapEqZr4}
\ev{e^{i\theta_1}}{X_2^t}=2r_t H_{D}^{ER}\paren{e^{i\theta_1},0}M_1\paren{t}\bracket{1+O\paren{r_t}},
\end{equation}
for all $r_t<1/2$.  It follows that if the limit in \eqref{CapEqZr1} exists, then 
\[\lim_{t\rightarrow 0} \frac{\ev{e^{i\theta_1}}{X_2^t}}{t}\]
exists and is a continuous function of $\theta_1$.  In particular, there is an upper bound for 
\[\frac{\ev{e^{i\theta_1}}{X_2^t}}{t}\] 
that is uniform over all $0\leq \theta_1 \leq \pi$ and $t$ sufficiently small.  Since the remark following \eqref{PKHD} implies that 
\[\int_0^{\pi}H_{\tilde{D}_t}\paren{r_t e^{i\theta},e^{i\theta_1}}~d\theta_1\]
is comparable to $r_t \sin \theta$, it follows that
\begin{equation}\label{CapEqZr5}
\lim_{t \rightarrow 0} \frac{\int_0^{\pi}H_{\tilde{D}_t}\paren{r_t e^{i\theta},e^{i\theta_1}}\ev{e^{i\theta_1}}{X_2^t}~d\theta_1}{t}=0.
\end{equation}
Combining this with \eqref{CapEqZr3}, it is easy to check that the limit in \eqref{CapEqZr2} exists and is equal to the limit in \eqref{CapEqZr1}.

If the limit in \eqref{CapEqZr2} exists, then using \eqref{CapEqZr3}, \eqref{CapEqZr4}, Lemma \ref{uniPKBound}, and the remark following \eqref{PKHD}, we see that there is a $c>0$ independent of $t$ such that
\begin{align*}
M_1\paren{t}-M_2 \paren{t}&=\int_0^{\pi}\bracket{\int_0^{\pi}H_{\tilde{D}_t}\paren{r_t e^{i\theta},e^{i\theta_1}}\ev{e^{i\theta_1}}{X_2^t}~d\theta_1} \sin \theta~d\theta\\
&\leq  c M_1\paren{t}r_t,
\end{align*}
for all $r_t<1/2$.  Thus, for sufficiently small $t$, $M_1\paren{t} \leq 2M_2\paren{t}$ and as a result 
\[\limsup_{t \rightarrow 0} \frac{r_t M_1}{t}\leq 2\lim_{t \rightarrow 0} \frac{r_t M_2\paren{t}}{t}<\infty.\]
Using this, we can argue as before to show that \eqref{CapEqZr5} holds, from which it is easy using \eqref{CapEqZr3} to show that the limit in \eqref{CapEqZr1} exists and is equal to the limit in \eqref{CapEqZr2}.

Using \eqref{CapEqZr0} and our claim, it follows that $\dot{b}\paren{0}$ exists if and only if the limit in \eqref{CapEqZr2} exists and in that case,
\begin{equation}\label{CapEqZr6}
\dot{b}\paren{0}=2H_D^{ER}\paren{\infty,0}\lim_{t \rightarrow 0}  \frac{r_t M_2\paren{t}}{t}.
\end{equation}
Using Proposition \ref{HCapEqualities}, a similar argument as the one used to prove the claim shows that $\dot{a}\paren{0}$ exists if and only if
\[\lim_{t \rightarrow 0} \frac{\int_0^{\pi} \ev{r_t e^{i\theta}}{X_1^t;E_t^{r_t e^{i\theta}}} \sin\theta~d\theta}{t}\]
exists.  Since Brownian motion and ERBM have the same distribution in $\tilde{D}_t$, this limit is the same as the one in \eqref{CapEqZr2}.  It follows that $\dot{a}\paren{0}$ exists if and only if the limit in \eqref{CapEqZr2} exists and in that case,
\begin{equation}\label{CapEqZr7}
\dot{a}\paren{0}=\frac{2}{\pi}\lim_{t \rightarrow 0}  \frac{r_t M_2\paren{t}}{t}.
\end{equation}
Combining \eqref{CapEqZr6} and \eqref{CapEqZr7}, the result follows.
\end{proof}

\begin{proposition}
$\dot{b}\paren{t}$ exists if and only if $\dot{a}\paren{t}$ exists.  If both quantities exist, then
\[\dot{b}\paren{t}=\varphi_t'\paren{U_t}^2 \dot{a}\paren{t}.\]
\end{proposition}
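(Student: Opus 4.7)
The strategy is to reduce the general-$t$ statement to Lemma \ref{CapEqZr} (which handles $t=0$) applied to the time-shifted curve $\hat{\gamma}^t(r) := h_t(\gamma(t+r))$, a simple curve in the chordal standard domain $h_t(D_t)$ starting at $\tilde{U}_t \in \R$. The key preliminary observation is that $\varphi_t = h_t \circ g_t^{-1}$ extends conformally across $\R$ in a neighborhood of $U_t$: because $g_t$ ``unfolds the slit,'' $g_t(D_t)$ agrees with $\half$ locally near $U_t$, so Schwarz reflection applies to $\varphi_t$ there.

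First I will record two additive identities for capacities. Composing the expansions at infinity of $h_{t,t+r}$ and $h_t$ (using $h_{t+r}=h_{t,t+r}\circ h_t$) yields
\[
\hcap^{ER}_{h_t(D_t)}(\hat{\gamma}^t_r) = b(t+r) - b(t),
\]
so $\dot{b}(t)$ exists iff $\dot{\tilde{b}}(0)$ exists (and the two are equal), where $\tilde{b}(r) := \hcap^{ER}_{h_t(D_t)}(\hat{\gamma}^t_r)$. Analogously, the standard chordal Loewner setup gives $\hcap(g_t(\gamma[t,t+r])) = a(t+r) - a(t)$, so $\dot{a}(t)$ can be read off from the ordinary half-plane capacity of the $g_t$-image curve.

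Next I will pass from the $g_t$-image curve to the $h_t$-image curve using Proposition \ref{LRConHCap}. Setting $F(z) := \varphi_t(z + U_t) - \tilde{U}_t$, which is conformal in a ball about $0$ and sends $\R$ into $\R$ by the opening observation, and applying the proposition to the translated curve $g_t(\gamma[t,t+\cdot]) - U_t$, whose $F$-image is $\hat{\gamma}^t_\cdot - \tilde{U}_t$, I obtain
\[
\dot{\tilde{a}}(0) = \varphi_t'(U_t)^2\,\dot{a}(t),
\]
where $\tilde{a}(r) := \hcap(\hat{\gamma}^t_r)$, together with equivalence of existence of the two limits (this is the biconditional supplied by Proposition \ref{LRConHCap}).

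Finally I will apply Lemma \ref{CapEqZr} to $\hat{\gamma}^t$ in the chordal standard domain $h_t(D_t)$, after a translation by $-\tilde{U}_t$ to place the starting point at $0$ (which is harmless). Since the target is chordal standard, Corollary \ref{ERPKInfin1} gives $\pi H^{ER}_{h_t(D_t)}(\infty,\tilde{U}_t)=1$, so the lemma reduces to $\dot{\tilde{b}}(0)=\dot{\tilde{a}}(0)$ with equivalent existence. Chaining the three relations produces $\dot{b}(t) = \varphi_t'(U_t)^2\,\dot{a}(t)$, and the biconditional for existence follows because $\varphi_t'(U_t)$ is a finite, nonzero real number. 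The only step requiring real care is verifying the local conformality of $\varphi_t$ at $U_t$, since everything else -- additivity of capacities, Proposition \ref{LRConHCap}, Lemma \ref{CapEqZr}, and Corollary \ref{ERPKInfin1} -- is already in hand.
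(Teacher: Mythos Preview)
Your proof is correct and follows essentially the same approach as the paper: reduce to the time-shifted curve $\hat{\gamma}^t$ in the chordal standard domain $h_t(D_t)$, use additivity to identify $\dot{\tilde b}(0)$ with $\dot b(t)$, apply Lemma \ref{CapEqZr} (with Corollary \ref{ERPKInfin1} giving the factor $1$) to equate $\dot{\tilde b}(0)$ and $\dot{\tilde a}(0)$, and then invoke Proposition \ref{LRConHCap} with $F=\varphi_t(\,\cdot+U_t)-\tilde U_t$ to obtain $\dot{\tilde a}(0)=\varphi_t'(U_t)^2\dot a(t)$. Your explicit remark that $\varphi_t$ extends conformally across $\R$ near $U_t$ via Schwarz reflection is a welcome justification that the paper leaves implicit when applying Proposition \ref{LRConHCap}.
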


\begin{proof}
Recall that $\gamma^s\paren{t}=h_s\paren{\gamma\paren{s+t}}$ and define $\alpha\paren{t}= \hcap\paren{\gamma^s\paren{t}}$ and $\beta\paren{t}=\hcap^{ER}\paren{\gamma^s\paren{t}}.$  Since $\hcap^{ER}\paren{\gamma^s\paren{t}}=b\paren{s+t}-b\paren{t}$, $\dot{\beta}\paren{0}$ exists if and only if $\dot{b}\paren{t}$ exists and if they both exist, then they are equal.  Lemma \ref{CapEqZr} implies that $\dot{\alpha}\paren{0}$ exists if and only if $\dot{\beta}\paren{0}$ exists and, in that case, they are equal.  Finally, since $\gamma^s\paren{t}$ is the image under $\varphi_s$ of $g_s\paren{\gamma\paren{s+t}}$, Proposition \ref{LRConHCap} implies that $\dot{\alpha}\paren{0}$ exists if and only if $\dot{a}\paren{s}$ exists and if they both exist, then $\dot{\alpha}\paren{0}=\varphi_s'\paren{U_s}^2\dot{a}\paren{t}$.  The result follows.
\end{proof}

\bibliography{paper}
\bibliographystyle{amsplain}
\end{document}